\documentclass[nonblindrev]{informs3_noinforms}
\usepackage{mathrsfs}
\usepackage{pdfsync}
\usepackage{enumerate}
\usepackage{dsfont}
\usepackage{enumitem}
\usepackage[dvipsnames]{xcolor}
\usepackage{amsmath}
\usepackage{subcaption}

\usepackage{comment}

\newcommand{\linwei}[1]{ \textcolor{red}{(Linwei says:  #1)}}

\usepackage{algorithm, bm}
\usepackage{algpseudocode}
\usepackage{booktabs}

\OneAndAHalfSpacedXI % current default line spacing
\usepackage{endnotes}
\definecolor{chicago-maroon}{RGB}{128,0,0}
\definecolor{northwestern-purple}{RGB}{82,0,99}
\usepackage[nameinlink]{cleveref}
\crefname{assumption}{Assumption}{Assumptions}
\crefname{lemma}{Lemma}{Lemmas}
\crefname{theorem}{Theorem}{Theorems}
\crefname{corollary}{Corollary}{Corollaries}
\crefname{proposition}{Proposition}{Propositions}
\crefname{condition}{Condition}{Conditions}
\crefname{claim}{Claim}{Claims}
\crefname{procedure}{Procedure}{Procedures}
\crefname{algorithm}{Algorithm}{Algorithms}
\crefname{figure}{Figure}{Figures}
\crefname{remark}{Remark}{Remarks}
\crefname{section}{Section}{Sections}
\crefname{procedure}{Procedure}{Procedures}
\crefname{example}{Example}{Examples}
\crefname{definition}{Definition}{Definitions}
\crefname{table}{Table}{Tables}
\crefname{equation}{}{}
\crefname{enumi}{}{}
\crefname{conjecture}{Conjecture}{Conjectures}
\crefname{step}{Step}{Steps}
\crefname{appendix}{Appendix}{Appendices}
\crefname{footnote}{Footnote}{Footnotes}

\def\bbe{{\Bbb{E}}} %expectation

\newtheorem{case}{Case}

\numberwithin{subcase}{case}

\numberwithin{subsubcase}{subcase}

\renewcommand{\theequation}{\thesection.\arabic{equation}}

\usepackage{natbib}
 \bibpunct[, ]{(}{)}{,}{a}{}{,}%
 \def\bibfont{\small}%
\TheoremsNumberedThrough     % Preferred (Theorem 1, Lemma 1, Theorem 2)
\ECRepeatTheorems

\EquationsNumberedThrough    % Default: (1), (2), ...
\begin{document}%%%%%%%%%%%%%%%%

% Outcomment only when entries are known. Otherwise leave as is and
%   default values will be used.
%\setcounter{page}{1}
%\VOLUME{00}%
%\NO{0}%
%\MONTH{Xxxxx}% (month or a similar seasonal id)
%\YEAR{0000}% e.g., 2005
%\FIRSTPAGE{000}%
%\LASTPAGE{000}%
%\SHORTYEAR{00}% shortened year (two-digit)
%\ISSUE{0000} %
%\LONGFIRSTPAGE{0001} %
%\DOI{10.1287/xxxx.0000.0000}%

% Author's names for the running heads
% Sample depending on the number of authors;
% \RUNAUTHOR{Jones}
% \RUNAUTHOR{Jones and Wilson}
% \RUNAUTHOR{Jones, Miller, and Wilson}
% \RUNAUTHOR{Jones et al.} % for four or more authors
% Enter authors following the given pattern:
%\RUNAUTHOR{}

% Title or shortened title suitable for running heads. Sample:
% \RUNTITLE{Bundling Information Goods of Decreasing Value}
% Enter the (shortened) title:
\RUNTITLE{Online Order Fulfillment with Replenishment}
% Full title. Sample:
% \TITLE{Bundling Information Goods of Decreasing Value}
% Enter the full title:
\TITLE{Online Order Fulfillment with Replenishment}

% Block of authors and their affiliations starts here:
% NOTE: Authors with same affiliation, if the order of authors allows,
%   should be entered in ONE field, separated by a comma.
%   \EMAIL field can be repeated if more than one author

\ARTICLEAUTHORS{%
\AUTHOR{\bf Zi Ling}
\AFF{Booth School of Business, University of Chicago, Chicago, IL}
\AFF{zling@chicagobooth.edu}
\AUTHOR{\bf Jiashuo Jiang}
\AFF{Department of Industrial Engineering \& Decision Analytics, Hong Kong University of Science and Technology}
\AFF{jsjiang@ust.hk}
\AUTHOR{\bf Linwei Xin}
\AFF{School of Operations Research and Information Engineering, Cornell University, Ithaca, NY}
\AFF{lx267@cornell.edu}
\RUNAUTHOR{Ling, Jiang and Xin}
}

% \ABSTRACT{In this paper, we address the question of which factor plays a more crucial role: inventory replenishment policies or real-time fulfillment algorithms. This question naturally arises in the context of e-commerce, where e-retailers must simultaneously determine both inventory replenishment policies and real-time fulfillment algorithms. We provide managerial insights into situations where inventory replenishment policies are more influential and vice versa.
% }

\ABSTRACT{In modern e-commerce and service operations, firms must jointly manage inventory replenishment and real-time order fulfillment to maximize profit under demand uncertainty. While each component has been studied extensively in isolation, their interaction remains underexplored. This paper investigates a fundamental operational question: which lever plays a more decisive role in overall system performance, replenishment or fulfillment?

We model the system as a one-location online order fulfillment problem with lost sales and stochastic customer arrivals, each offering heterogeneous rewards. Replenishment follows either a base-stock or constant-order policy, while real-time fulfillment decisions are made using online algorithms. Our core performance metric is the expected average profit per replenishment cycle, evaluated across all combinations of these policies and algorithms.
Our main theoretical result shows that when the replenishment cycle is long, the cumulative regret of online fulfillment remains of the same order as in a corresponding single-cycle problem, even under repeated replenishment, revealing a form of regret stability. This phenomenon also extends to a multi-location setting. We further develop a regret-based framework that quantitatively compares the value of improving replenishment versus improving fulfillment, and we characterize regimes in which optimizing replenishment yields a larger revenue impact than refining the online fulfillment algorithm (and vice versa). Motivated by examples where myopic algorithms underperform, we introduce a novel look-ahead online algorithm that anticipates future replenishment and demand. Numerical experiments verify that this algorithm outperforms myopic baselines.
Overall, our results provide both theoretical and managerial insights into situations where inventory replenishment policies are more influential and vice versa.}

% Sample
%\KEYWORDS{deterministic inventory theory; infinite linear programming duality;
%  existence of optimal policies; semi-Markov decision process; cyclic schedule}

% Fill in data. If unknown, outcomment the field
\KEYWORDS{Online fulfillment; Inventory replenishment; Regret analysis; Lost sales}
\HISTORY{\today{}}
\maketitle

\section{Introduction}
How to effectively fulfill online orders under uncertainty is a major challenge for retailers, especially given the massive volume and associated costs in modern e-commerce. For example, Amazon's shipping expenses surged from \$27.7 billion in 2018 to \$89.5 billion in 2023\footnote{\url{https://www.statista.com/statistics/806498/amazon-shipping-costs/}}.
A key aspect of online order fulfillment is the real-time allocation of limited inventory to orders that arrive sequentially from diverse locations and can be fulfilled from multiple fulfillment centers (FCs) across the country. Assigning an order to a particular FC incurs a specific shipping cost, and each incoming order must be accepted or rejected immediately, subject to inventory constraints over a finite-horizon (i.e., within an inventory replenishment cycle). Retailers therefore must carefully coordinate inventory replenishment with real-time order allocation decisions, as replenishment delays directly affect availability and fulfillment. While effective allocation can reduce shipping costs and improve customer service, untimely restocking leads to stockouts or costly last-minute adjustments.

% However, in many practical environments especially in retailing, inventory replenishment is indispensable.\elaine{consider to throw out the problem here (give the motivation and story of this question), add the importance of replenishment which is studied extensively and citing some book} In such settings, replenishment delays, costs, and cycle structures critically shape both inventory availability and fulfillment opportunities. The practical importance of coordinating real-time fulfillment with periodic replenishment is therefore evident: effective order allocation can reduce costs and improve service, but without timely restocking, stockouts or costly last-minute adjustments are inevitable. Despite its clear importance, replenishment is rarely modeled jointly with online fulfillment in existing secretary-like or dynamic allocation literature.

% typical scenario is the one-location online fulfillment problem, where sequentially arriving heterogeneous demands (or “secretaries”) must be accepted or rejected in real-time, subject to a fixed resource constraint. This is also called the multi-secretary problem, which has been extensively studied across operations research, revenue management, and online algorithms (e.g., \citealp{kleinberg2005multiple}). it typically assumes a fixed, non-replenishable capacity constraint, with no mechanism to restore inventory over time. \linwei{move later}

% a stream of papers on online resource allocation focuses exclusively on fulfillment decisions, largely overlooking the inventory replenishment side of the problem. 
Unfortunately, these two decision streams have largely been studied in isolation in the existing literature. Specifically, the problem of online fulfillment optimization has attracted substantial research interest since its introduction in the seminal work of \cite{Xu09}. A particularly compelling aspect of this area is its strong connection to industry practice, with many important contributions developed in collaboration with companies, including \cite{Xu09}, \cite{Acimovic15}, \cite{Andrews19}, and \cite{DeValve23}. However, most of these studies focus solely on online fulfillment, largely neglecting the inventory replenishment dimension of the problem.

In contrast, a separate body of literature on classical inventory control focuses exclusively on replenishment decisions, without incorporating online fulfillment.
% a stream of papers on classical inventory control focuses exclusively on replenishment decisions without considering the online fulfillment side ...
Inventory replenishment has been studied for over seven decades, beginning with the work of \cite{arrow1951optimal}, and has produced extensive research on a wide range of models and replenishment policies. Despite substantial progress in characterizing the structure of optimal policies and developing effective heuristics, the existing literature predominantly assumes an offline fulfillment setting, i.e., demand is fulfilled after it is realized at each period, subject to available inventory, with no online decision component. This focus is partly rooted in the traditional brick-and-mortar retail context in which much of the early research was conducted. 
%Historically, retail was dominated by in-store sales, where demand was served whenever inventory was available, so models treated fulfillment as an immediate consequence of realized demand rather than a real-time decision. 
The rise of e-commerce and platform logistics has made real-time inventory allocation decisions central, yet their interaction with replenishment remains underexplored.
%As a result, the two research streams--online order fulfillment and inventory replenishment--have largely evolved in parallel, and their interaction remains underexplored.

Although there has been recent work attempting to bridge this gap, no existing study jointly addresses online fulfillment and dynamic replenishment over multiple periods, let alone incorporates replenishment lead times, a key practical feature of inventory systems. Addressing this gap presents significant challenges. First, defining an appropriate performance benchmark is far from straightforward. A clairvoyant planner with full information could trivially optimize both replenishment and fulfillment decisions. However, in the absence of such foresight--particularly when future demand beyond the next replenishment cannot be anticipated--the problem becomes substantially more complex, and finding an optimal algorithm becomes highly challenging. Second, the decision process involves complex interdependence: replenishment is proactive and forward-looking, requiring anticipation of future demand, while online fulfillment is reactive, responding to real-time orders that directly affect future inventory levels. This intricate coupling results in a high-dimensional stochastic control problem that is generally intractable to solve exactly.

Since replenishment and fulfillment are deeply intertwined, decisions in one domain can significantly amplify (or undermine) the effectiveness of the other. 
To illustrate this interplay, consider a scenario where replenishment is so precisely calibrated that inventory aligns perfectly with demand upon arrival. In such an idealized setting, the choice of fulfillment algorithm – whether optimal, strategic, or even greedy – has minimal impact, as inventory is always in the right place at the right time. Conversely, if replenishment is poorly planned, even the most sophisticated real-time fulfillment algorithms may fall short, unable to offset stockouts or mistimed inventory arrivals. These contrasting scenarios highlight a fundamental and practically relevant question: \textit{Which lever plays a more decisive role in overall system performance, replenishment or fulfillment?}
% This thought experiment raises a fundamental question: \textit{Which aspect of the system matters more for overall performance – fulfillment or replenishment}?

\subsection{Our Contributions}
In this paper, we develop a framework that integrates online fulfillment decisions with dynamic inventory replenishment. Specifically, we study a single-resource  (i.e., single-location) online order fulfillment problem with replenishment, where customers of different types--offering different rewards--arrive sequentially over multiple replenishment cycles. Inventory is replenished using either a constant-order or a base-stock policy, both of which are well-established in the inventory control literature. Fulfillment decisions, on the other hand, are made either online—without access to any future demand information—or offline, with partial foresight. 

Here, we define partial foresight as knowledge of all demand realizations within the current replenishment cycle, prior to the arrival of the next replenishment order. This benchmark aligns with the traditional setting that optimizes fulfillment within a single cycle without replenishment. In contrast, full foresight refers to perfect information about all future demand across multiple cycles, rendering replenishment decisions trivial and thus unsuitable as an effective benchmark. Our performance metric is the expected average profit per replenishment cycle, accounting for both the rewards from fulfilled orders and the inventory holding costs. We evaluate performance using a regret framework, where regret is defined as the profit gap between an online fulfillment algorithm and an offline benchmark with partial foresight. This framework allows a rigorous quantification of the relative impact of fulfillment algorithms and replenishment policies on overall system performance. Our contributions are threefold. 

% This paper addresses the following key question: \textit{Which matters more for system performance—fulfillment or replenishment?} Specifically, we focus on a multi-secretary problem with replenishment, where customers of different types (offering different rewards) arrive sequentially over multiple replenishment cycles. Replenishment follows either a constant-order or base-stock policy, and fulfillment decisions are made either online (without future demand information) or offline (with full foresight).
% Our performance metric is the long-run average profit per replenishment cycle, allowing us to quantify the relative impact of fulfillment algorithms and replenishment policies.

    % \item \textbf{Interplay between fulfillment and replenishment:} We develop a unified framework that quantitatively compares the impact of fulfillment algorithms (online versus offline) and replenishment polices (constant-order versus base-stock) on long-run performance. Our analysis reveals that fulfillment and replenishment decisions cannot be optimized in isolation; rather, their interaction critically shapes system outcomes.

%\textbf{Regret stability under replenishment:} 
First, we show that the average regret across multiple replenishment cycles grows at the same order as the regret in a single cycle without replenishment. In other words, introducing replenishment dynamics does not fundamentally increase the difficulty of online fulfillment, and regret does not accumulate over cycles. Moreover, our analysis naturally extends to settings involving multiple resources.

%\textbf{Replenishment optimization versus Fulfillment optimization:} By comparing different fulfillment algorithm and replenishment policy combinations, we 
Second, we provide a quantitative comparison between optimizing replenishment and optimizing fulfillment. We show that for certain replenishment cycle lengths (particularly small \(T\)), improving the replenishment policy can yield greater long‑run gains than improving the fulfillment algorithm. 
% In particular, a base-stock policy with myopic offline fulfillment (i.e., using partial foresight to accept the highest-reward orders first) may outperform a constant-order policy paired with a sophisticated online fulfillment algorithm. 
% This is because base-stock policies dynamically adjust order quantities to maintain a target inventory level, allowing the system to better respond to demand variability and reduce stockouts or overstocking. In contrast, constant-order policies replenish the same quantity in every cycle, potentially leading to mismatches between supply and realized demand. 
% This finding offers a partial answer to the question of whether fulfillment or replenishment has a greater influence on overall system performance.
More specifically, when lead times are short, it is well known that base-stock policies outperform constant-order policies. In such settings, we show that a base-stock policy combined with a simple greedy fulfillment algorithm (i.e., accepting all arriving customers while inventory is available) can outperform a constant-order policy paired with a sophisticated fulfillment algorithm. This highlights that, under favorable conditions, replenishment decisions may have a larger impact on performance than fulfillment, partially answering which lever matters more.

%\textbf{Look-ahead algorithms outperform myopic ones:} 
Third, we extend our analysis to look-ahead online fulfillment algorithms that anticipate future demand across multiple replenishment cycles, assuming knowledge of future demand distribution. Prior approaches, including the myopic offline benchmark with partial foresight and the earlier online fulfillment algorithms, focus on a single replenishment cycle and ignore long‑term effects. We construct a counterexample showing that a myopic offline algorithm with partial foresight can underperform a purely online algorithm, when neither uses knowledge of future demand. Motivated by this, we develop and numerically evaluate a look-ahead online algorithm that balances current realized rewards with information about future demand. Simulations show substantial gains over both myopic offline and traditional online baselines, especially with long lead times or high demand variability.

\subsection{Literature Review}
\label{sec:literature_review}

Our paper is mainly related to three research streams: online fulfillment algorithms, lost-sales inventory models with lead times, and joint fulfillment and replenishment.

\noindent\textbf{Online fulfillment algorithms:} 
% \elaine{consider to introduce in two different categories: multi-item, one-item}
While this topic has since attracted substantial research interest, much of the literature has treated online fulfillment in isolation, without incorporating inventory replenishment considerations. In particular, online fulfillment research has focused on developing algorithms with provable performance guarantees. Early studies emphasize the value of forward-looking decision-making. For example, \cite{Xu09} highlight the cost benefits of periodic reevaluation over myopic decisions, while \cite{Acimovic15} propose anticipatory heuristics that can significantly reduce shipping costs. %Building on this, \cite{Andrews19} introduce primal-dual algorithms with a constant-factor competitive guarantee, and \cite{DeValve23} explore the value of fulfillment flexibility across distribution centers, providing bounded performance loss.
A distinctive feature of the fulfillment problem is the multi-item setting, where orders consist of multiple products. In this context, several studies focus on LP-based allocation and rounding schemes (e.g., \citealp{Jasin15,ma2023order,amil2025multi,Zhao25}) leading to algorithms with provable performance guarantees.

% \linwei{\cite{arlotto2019uniformly} for the sectary problem, He Wang's paper to network revenue management.}
Parallel efforts have focused on developing low-regret online algorithms. An early contribution by \cite{reiman2008asymptotically} demonstrates that a single re-solving step can substantially reduce the order of magnitude of regret in network revenue management. More recently, significant progress has been made toward achieving constant regret, with applications spanning the multi-secretary problem (e.g., \citealp{arlotto2019uniformly}), network revenue management (e.g., \citealp{bumpensanti2020re}), and general online resource allocation problems (e.g., \citealp{vera2021bayesian,banerjee2024good}). Building on this line of work, \cite{Xie24} extend the Bayes-Selector framework to fulfillment settings, highlighting the benefits of delayed real-time decision-making and revealing the insight that “a little delay is all we need.” \cite{li2025infrequentresolvingalgorithmonline} extend the study to settings with unknown demand distributions. 
% \xinedit{Constant regret primal-dual policy for multi-way dynamic matching} All the aforementioned studies assume demand distributions with finite support. 
When demand and reward distributions are continuous, \cite{bray2024logarithmic} and \cite{Jiang2025degeneracy} demonstrate that constant regret is no longer achievable, and 
regret grows logarithmically over time.  Online algorithms with constant regret have also been studied in two-way matching models (e.g., \citealp{wei2023constant}) and multi-way matching models (e.g., \citealp{Kerimov2023}).
We refer interested readers to \cite{balseiro2024survey} for a comprehensive survey on this topic. Our study extends the algorithmic analysis to online settings with recurring replenishment.

Alongside these algorithmic advancements, recent studies have explored emerging practical contexts, including the integration of order fulfillment with dynamic pricing \citep{Lei2018}, online fulfillment in omnichannel contexts (\citealp{Gao22}), real-time personalized order-holding (\citealp{Aminian23}), middle-mile transportation management (\citealp{Feng24}), and order fulfillment with time window \citep{ZhouGumusMiao2025}.

%motivating further use of re-solving methods. %\cite{chen2024resolvingheuristicdynamicassortment} develop a re-solving heuristic for dynamic assortment optimization under knapsack constraints with logarithmic regret guarantees, whereas 
% In a different direction, \cite{balseiro2020dual} \elaine{change to Balseiro (2021)} propose an algorithm for online allocation that avoids solving linear programs entirely, while still achieving regret that grows at the square-root rate over time.  
% In a related vein, \cite{chen2024resolvingheuristicdynamicassortment} introduce a re-solving heuristic for dynamic assortment optimization under knapsack constraints, ensuring logarithmic regret relative to an idealized benchmark. In addition, \cite{li2025infrequentresolvingalgorithmonline} propose an infrequent resolving algorithm for online linear programming that achieves constant regret in the setting of an unknown demand distribution. Moreover, \cite{Kerimov2023} analyze greedy policies for dynamic matching markets, establishing their (hindsight) optimality in two-sided settings under appropriate conditions. Alongside these advances, emerging topics have gained attention, including online fulfillment in omnichannel contexts (\citealp{Gao22}), real-time personalized order-holding (\citealp{Aminian23}), and middle-mile transportation management (\citealp{Feng24}). 
%Taken together, this body of research underscores the value of regret-based classification in evaluating online algorithms.

\noindent\textbf{Lost-sales inventory models with lead times:}
Alongside advances in online fulfillment research, significant attention has been directed toward inventory management. Among the various models studied, the lost-sales inventory model with lead times, first introduced by \citet{scarf1958inventory}, stands out as a foundational framework due to its practical relevance and analytical challenges. Although numerous papers have attempted to compute the exact optimal policy, it remains poorly understood because of the curse of dimensionality. In recent years, research has shifted toward analyzing simple yet effective policies. Two particularly important classes are: (i) base-stock policies, which are asymptotically optimal as the lost-sales penalty grows large (e.g., \citealp{huh2009asymptotic, wei2021deterministic}), with \cite{Bu2022} further extending this optimality to perishable settings; and (ii) constant-order policies, which are asymptotically optimal as the lead time grows (e.g., \citealp{goldberg2016asymptotic}, \citealp{xin2016optimality}, \citealp{xin2021understanding}). We refer readers to \citet{goldberg2021survey} for a comprehensive review of asymptotic analysis in inventory systems. 
% In addition to these classic policies, novel inventory control approaches have been proposed for specialized settings—for instance, 
% % \cite{Chao2024} develop adaptive Lagrangian policies for multi-warehouse, multi-store lost-sales systems that achieve near-optimal performance in multi-echelon contexts, while 
% \cite{Huang2024} utilize Taylor-series approximations to derive near-optimal policies for one-warehouse, multi-retailer systems with demand heterogeneity, extending their results to general lead times. \linwei{let us remove the Huang paper: it is not relevant at all}
% \elaine{check the inventory placement algorithm}
% \linwei{do these two papers assume lost-sales and lead times?}\elaine{\cite{Chao2024} assumes no replenishment for warehouses and zero lead time for stores; \cite{Huang2024} extends main results to general lead times in the Appendix (its base model assumes one period of lead time for the warehouse and zero lead time for retailers)} \elaine{find one paper by Sentao on online fulfillment or lost-sales with lead times}
% \xinedit{A survey of recent progress in the asymptotic analysis of inventory systems}
In this paper, we adapt these two families of heuristic policies as our replenishment policies, in conjunction with online fulfillment decisions.

\noindent\textbf{Joint fulfillment and replenishment:}
Several recent studies have begun to bridge this gap. \cite{Lim2020} integrate replenishment decisions with reactive fulfillment through a robust optimization framework, wherein the retailer allocates inventory across warehouses prior to demand realization and subsequently fulfills orders from the allocated stock. \cite{Govindarajan21} analyze a one-time inventory placement problem followed by online fulfillment in omnichannel retail networks, while \cite{Arlotto2023} study a similar setting, deriving regret bounds under a joint regret metric that simultaneously captures placement and fulfillment decisions. \cite{Bai2025} jointly optimize initial inventory placement together with dynamic delivery-time commitments. Although these studies represent important progress toward integrating replenishment and online fulfillment, they fall short of capturing the fully dynamic, multi-replenishment setting that is the focus of our work. In particular, \cite{Lim2020} assume that fulfillment decisions are made after all demands are realized, rather than in a one-by-one fashion, while \cite{Govindarajan21, Arlotto2023, Bai2025} all focus on a single, static inventory placement decision, without considering recurring replenishment. The critical gap remains: no existing work jointly addresses online fulfillment and dynamic replenishment over multiple replenishment cycles. Our paper closes this gap by studying an online order fulfillment problem with dynamic inventory replenishment with lead times.

\subsection{Outline of Paper}
The remainder of the paper is organized as follows. \cref{sec:model} introduces the system model and notation, including the replenishment policies and the classes of online fulfillment algorithms and offline (partial-foresight) benchmarks studied in this paper. \cref{sec:Replenishment} presents the main regret analysis and establishes regret stability across replenishment cycles. \cref{sec:reple-vs-fulfill} develops a regret-based framework for comparing the relative impact of improving replenishment versus improving fulfillment, and characterizes regimes in which one lever plays a more decisive role. \cref{sec:look-ahead} proposes a look-ahead fulfillment algorithm that leverages information about order arrivals and future demand distributions, and provides numerical evidence of its performance advantages over myopic baselines. \cref{sec:extension} extends the regret-stability insights to a multi-resource setting. Finally, \cref{sec:conclusion} concludes the paper. All proofs are deferred to the electronic companion.

% \paragraph{Additional notation.}
% For any real number $z$, define $[z]^+ \triangleq \max\{0,z\}$ and $\lfloor z \rfloor$ as the greatest integer less than or equal to $z$. For any positive integer \(K\), let \([K] \triangleq \{1,2,\ldots,K\}\).
% We use $\mathbb{E}[\cdot]$ and $\mathbb{P}(\cdot)$ to denote expectation and probability, respectively, and $\mathbf{1}\{\cdot\}$ to denote the indicator function.
% We use standard asymptotic notations $\mathcal{O}(\cdot), \Omega(\cdot)$ and $\Theta(\cdot)$.

\section{Model} \label{sec:model}

In this section, we introduce our model. 
We consider a single-location lost-sales inventory system operating over a finite horizon of \(N\in\mathbb N\) replenishment cycles, each consisting of \(T\in\mathbb N\) discrete time periods. We will extend the model to a multi-location setting (i.e., multiple inventory resources) in \cref{sec:extension}. In each period, at most one customer arrives. Let $[M]\triangleq\{1,\ldots,M\}$. There are \(M\) customer types and each type \(j\in [M]\) offers a reward \(r_j\) upon acceptance, with strictly increasing values \(0<r_1<r_2<\ldots<r_M\). Here, different types can be interpreted as customers from different locations with distinct fulfillment costs. 
We assume \(M\ge2\) to avoid the single-type case: when \(M=1\), acceptance occurs immediately whenever inventory is available and the problem reduces to the classical single-product lost-sales replenishment model, making comparisons between fulfillment algorithms meaningless.
% \elaine{consider the finite support of $r$ or not; if assume continuous distribution, only lower bound in section 4.2 fails} 
Arrivals are independent and identically distributed (i.i.d.)\ across periods and cycles: type \(j\in[M]\) arrives with probability \(\lambda_j\), and no customer arrives with probability \(\lambda_0=1-\sum_{j=1}^M\lambda_j\). Let \(\mu\triangleq 1-\lambda_0\) denote the probability that a customer arrives in a period.

For each replenishment cycle \(n\in[N]\), let \((j_1^n,\ldots,j_T^n)\in([M]\cup\{0\})^T\) denote the realized sequence of arrivals, where \(j_t^n=j\) indicates a type-\(j\) customer arriving at period \(t\in[T]\) and \(j_t^n=0\) indicates no arrival. The arrivals over all cycles are denoted as \(\mathcal{H}\triangleq\{   j_t^n   :   n\in[N],   t\in[T]   \}\). For \(t_1\le t_2\) and cycle \(n\), define the type-\(j\) cumulative demand by
$D_j^n[t_1,t_2]\triangleq \sum_{t=t_1}^{t_2}\mathbb{I}\{j_t^n=j\}$. In particular, \(D_j^n[t,t]=\mathbb{I}\{j_t^n=j\}\) represent the realized type-\(j\) demand in period \(t\) of cycle \(n\). For brevity, we write \(D^n\triangleq \sum_{j=1}^M D_j^n[1,T]\) to represent the total demand in cycle \(n\). Note that \(\{D^n\}_{n\in[N]}\) are i.i.d.\ with \(D^n\sim\mathrm{Binomial}\left(T,   1-\lambda_0\right)\).
%The system then evolves according to the chosen replenishment policy and fulfillment algorithm in response to these realized arrivals.

\textbf{State and inventory.}
Let $L\in\mathbb N$ denote the deterministic lead time for replenishment. 
% We fix a fulfillment algorithm ``\(\text{alg}\)" (online, offline, or as specified when needed) \emph{ex ante}—that is, it is chosen before observing the realized demand path $\mathcal{H}$ (the precise information each class of algorithm may use is discussed later)—and we introduce a replenishment parameter $z$ that is determined by the policy and is independent of $\mathcal{H}$. 
We fix a fulfillment algorithm, denoted by $\text{alg}$ (online, offline, or as specified when needed), and introduce a replenishment parameter $z$ that is chosen by the policy and is independent of the realized demand path $\mathcal H$. 
Given a realized arrival path $\mathcal H$, we define:
%\jiashuo{Why not describe the state and inventory for each replenishment policy separately. In this way, you can avoid introducing $z$.}

\begin{itemize}
  \item $I_{t}^{n,\text{alg}}(z,\mathcal{H})$: on-hand inventory at the \emph{beginning} of period $t$ in cycle $n$;
  \item $q^{n+l,\text{alg}}(z,\mathcal{H})$: the order that \emph{arrives} at the beginning of cycle $n+l$ for \(l=1,\ldots,L\). Hence, the order \emph{placed} at the start of cycle $n$ is $q^{n+L,\text{alg}}(z,\mathcal{H})$.
\end{itemize}
Accordingly, the pipeline at the start of cycle $n$ is 
\[
\bm{q}^{n,\text{alg}}(z,\mathcal{H})\triangleq \left(q^{n+1,\text{alg}}(z,\mathcal{H}),\ldots,q^{n+L-1,\text{alg}}(z,\mathcal{H})\right),
\]
and the system state is $\left(I_{1}^{n,\text{alg}}(z,\mathcal{H}),  \bm{q}^{n,\text{alg}}(z,\mathcal{H})\right)$. 
Here $I_{1}^{n,\text{alg}}(z,\mathcal{H})$ is the start–of–cycle inventory after arrival of replenishment \(q^{n,\text{alg}}(z,\mathcal{H})\), and $I_{T+1}^{n,\text{alg}}(z,\mathcal{H})$ is the end–of–cycle leftover after fulfillment.  
A unit holding cost $h$ (\(h>0\)) is incurred on $I_{T+1}^{n,\text{alg}}(z,\mathcal{H})$ at the end of each cycle.

\textbf{Replenishment policies.}
At the beginning of cycle $n$, prior to fulfillment, a new order is placed that will arrive $L$ cycles later, i.e., $q^{n+L,\text{alg}}(z,\mathcal{H})$, based on $(I_{1}^{n,\text{alg}}(z,\mathcal{H}),  \bm{q}^{n,\text{alg}}(z,\mathcal{H}))$. Note that our replenishment problem is more general than the classical lost-sales inventory problem (associated with the single-type case \(M=1\)), which is notoriously difficult to analyze and has been extensively studied in the literature since the seminal work of \cite{scarf1958inventory}. 
Because the optimal replenishment policy is out of reach, we focus on two well-studied heuristic policies from the literature:

\begin{itemize}
    \item \textbf{Base-Stock Policy} ($z=S$). Each policy is associated with a target inventory position $S$ (optimized in advance). After cycle \(n\), the on-hand inventory and the replenishment order are updated according to:
    \begin{equation}\label{eq:base-replenish}
    q^{n+L,\text{alg}}(S,\mathcal{H})
    = S - I_{1}^{n,\text{alg}}(S,\mathcal{H}) - \sum_{l=1}^{L-1} q^{n+l,\text{alg}}(S,\mathcal{H}),
    \end{equation}
    \[
    I_{1}^{n+1,\text{alg}}(S,\mathcal{H})=I_{T+1}^{n,\text{alg}}(S,\mathcal{H})+q^{n+1,\text{alg}}(S,\mathcal{H}),
    \]
    with the usual pipeline shift $\bm{q}^{n+1,\text{alg}}(S,\mathcal{H}) = \left(q^{n+2,\text{alg}}(S,\mathcal{H}), \dots, q^{n+L,\text{alg}}(S,\mathcal{H})\right)$.
    Unless otherwise stated, we allow any feasible initial state \(\left(I_{1}^{1,\text{alg}}(S,\mathcal{H}),  \bm{q}^{1,\text{alg}}(S,\mathcal{H})\right)\) whose initial inventory position satisfies
    \(
    I_{1}^{1,\text{alg}}(S,\mathcal{H})  +  \sum_{l=1}^{L-1} q^{1+l,\text{alg}}(S,\mathcal{H})\ \le\ S,
    \)
    with all components integer and nonnegative. This accommodates arbitrary starting pipelines while keeping the analysis independent of a particular initialization.

    % We assume the initial state is $I_{1}^{1,\text{alg}}(S,\mathcal{H})=\left\lfloor \frac{S}{L+1}\right\rfloor$,
    % $\textbf{q}^{1,\text{alg}}(S,\mathcal{H})=\left(\left\lfloor \frac{S}{L+1}\right\rfloor,\ldots,\left\lfloor \frac{S}{L+1}\right\rfloor\right)$,
    % and thus $q^{L+1,\text{alg}}(S,\mathcal{H})=S-I_{1}^{1,\text{alg}}(S,\mathcal{H})-\sum_{l=1}^{L-1} q^{1+l,\text{alg}}(S,\mathcal{H})$. \jiashuo{Not exactly? Also the take integer sign is not formally defined.}
    % This choice makes the initial inventory position hit the target \(S\) while keeping all quantities integer and nonnegative, and we adopt it to avoid transient effect—extreme initializations (e.g., \(I^{1,\text{alg}}(S,\mathcal{H})=S\) with an empty pipeline) 
    % could skew early-cycle revenues when \(N\) is small.

    \item \textbf{Constant-Order Policy} ($z=c$). Each policy is associated with a fixed order quantity $c$ (optimized in advance). Under a constant-order policy, the replenishment arriving at the start of each cycle has fixed size $c$, i.e., $q^{n,\text{alg}}(c,\mathcal{H})=c$ for all $n$.
    % (equivalently, the order placed each cycle is also $c$, with the usual $L$-cycle shift).
    The on-hand inventory is then updated by $I_1^{n+1,\text{alg}}(c,\mathcal{H}) = I_{T+1}^{n,\text{alg}}(c,\mathcal{H}) + c$. Since we focus on the long-run average setting, we allow the initial state \(I_{1}^{1,\text{alg}}(c,\mathcal{H})\) to be any feasible value which is integer and nonnegative, as it is well-known that the long-run average performance of the constant order policy is independent of the initial inventory levels.
    We therefore set \(I_{1}^{1,\text{alg}}(c,\mathcal{H})=c\) without loss of generality. 
    % We assume the initial state is \(I^{1,\text{alg}}(c,\mathcal{H})=c\), \(\bm{q}^{1,\text{alg}}(S,\mathcal{H}) = \left( c, \dots, c \right)\) and  \(q^{L+1,\text{alg}}(c,\mathcal{H}) = c\).
\end{itemize}
Since base-stock policies are known to outperform constant-order policies for small $L$, we treat them as the ``good" and ``bad" reference replenishment policies.

\textbf{Fulfillment algorithms.}
In parallel with replenishment policies, the system chooses an online fulfillment algorithm to decide whether to accept or reject each arriving customer. Let $x_t^{n,\text{alg}}\in\{0,1\}$ denote the acceptance indicator in period $t$ of cycle $n$ under a fulfillment algorithm (denoted by ``\(\text{alg}\)"), with feasibility $x_t^{n,\text{alg}}\le \mathbb I\{j_t^n\neq 0\}$ and
$x_t^{n,\text{alg}}\le I_t^{n,\text{alg}}(z,\mathcal{H})$, and inventory dynamics
$I_{t+1}^{n,\text{alg}}(z,\mathcal{H})=I_t^{n,\text{alg}}(z,\mathcal{H})-x_t^{n,\text{alg}}$.
The per-cycle profit is
\begin{equation}\label{eq:cycle-profit}
V^{n,\text{alg}}(T,I_1^{n,\text{alg}}(z,\mathcal{H})) \triangleq\ \sum_{t=1}^T r_{j_t^n}   x_t^{n,\text{alg}} - h   I_{T+1}^{n,\text{alg}}(z,\mathcal{H}), \text{ for } z \in \{S,c\}.
\end{equation}

We now define our performance metrics. 
Let \( \pi_\mathbf{b}^{\text{alg}}(T,S,N) \) denote the expected average profit per cycle under a base-stock replenishment policy with base-stock level \(S\) and fulfillment algorithm \textit{alg} (which can be random):
\begin{equation}\label{eq:pi-b}
\pi_{\mathbf b}^{\text{alg}}(T,S,N)\triangleq\ \frac{1}{N}   \mathbb E_{\mathcal{H}, \text{alg}}\left[\sum_{n=1}^N V^{n,\text{alg}}(T,I_1^{n,\text{alg}}(S,\mathcal{H}))\right],
\end{equation}
%\jiashuo{Since you already have the parameter $S$, drop $b$ and $alg$ will make the formulation much simpler.}
where the expectation is taken over the randomness in demand arrivals across all \(N\) cycles and any algorithmic randomness. 
% For notation simplicity, we supress $\pi_\mathbf{b}^{\text{alg}}(T,S,N)$ as $\pi_\mathbf{b}^{\text{alg}}(T, S,N)$\linwei{to add more} 
Similarly, under a constant-order policy with quantity $c$, we define
\begin{equation}\label{eq:pi-c}
\pi_{\mathbf c}^{\text{alg}}(T,c,N)\triangleq \frac{1}{N}   \mathbb E_{\mathcal{H}, \text{alg}} \left[\sum_{n=1}^N V^{n,\text{alg}}(T,I_1^{n,\text{alg}}(c,\mathcal{H}))\right].
\end{equation}
We also define $\pi_{\mathbf b}^{\text{alg}}(T,S,\infty) \triangleq \lim_{N\to\infty}\pi_{\mathbf b}^{\text{alg}}(T,S,N)$ and $\pi_{\mathbf c}^{\text{alg}}(T,c,\infty) \triangleq \lim_{N\to\infty}\pi_{\mathbf c}^{\text{alg}}(T,c,N)$ as the long-run average profits. 
% Under our initial-state definitions, a fixed replenishment policy and lead time $L$ induce a unique starting state for fair comparison.\linwei{this sentence seems wrong}\elaine{Here I want to show that the same \(S\) or \(c\) and the same \(L\) will induce the same initial state.}

We aim to compare the performance of different combinations of fulfillment algorithms and replenishment policies, and to provide insights into which lever (fulfillment or replenishment) plays a more significant role in overall system performance.

\subsection{Offline Fulfillment Algorithm}
% \elaine{when first time define offline algorithm, use "partial foresight"}
In this subsection, we describe offline fulfillment algorithms with partial foresight. Recall that partial foresight means that, within a given replenishment cycle \(n\), the algorithm knows the entire realized demand vector \(\{D_j^n[1,T]\}_{j\in[M]}\) in advance and makes all accept/reject decisions before the arrival of the next replenishment order.
%This benchmark aligns with the traditional single-cycle setting that optimizes fulfillment within a cycle without replenishment. In contrast, \emph{full foresight} would grant perfect information about all future demand across multiple cycles, rendering replenishment decisions trivial and thus unsuitable as a benchmark in our setting. 
Throughout the paper, unless otherwise specified, the term ``offline fulfillment algorithm” refers to a fulfillment algorithm with partial foresight. Within this class, a natural benchmark is the myopic (or greedy) offline algorithm--we use the two terms interchangeably--which prioritizes fulfilling higher-reward customer types first. Specifically, it sequentially accepts demand from the highest-reward type to the lowest until inventory is depleted, without accounting for the order pipeline. Accordingly, when the fulfillment algorithm is the myopic offline benchmark, we replace the superscript “\(\text{alg}\)” by “\(\mathrm{off}\)” in all previously defined objects (e.g., \(I_t^{n,\text{off}}(z,\mathcal{H})\),  \(\pi_{\mathbf b}^{\text{off}}(T,S,N)\), \(\pi_{\mathbf c}^{\text{off}}(T,c,N)\)). Importantly, under replenishment and lead times, this partial-foresight “offline” benchmark does not necessarily upper bound the profit attainable by well-designed online algorithms that account for this problem, so it serves as a reference point rather than an oracle upper bound.

This failure to upper bound arises because the myopic offline algorithm is not generally optimal in a multi-cycle setting with replenishment: it overlooks the potential value of strategically conserving inventory by rejecting low-reward customers. It may be more beneficial to reserve inventory for fulfilling higher-reward demand in future cycles rather than depleting it immediately. As such, in \cref{sec:look-ahead}, we introduce a more sophisticated offline algorithm with \emph{look-ahead} that explicitly trades off current-cycle rewards against future-cycle opportunities.

\subsection{Online Fulfillment Algorithm}  
In this subsection, we describe online fulfillment algorithms. In the online setting, at each period \( t \) within a replenishment cycle \( n \), the decision-maker observes the realized demand sequence \( D_j[1,t-1] \) and has knowledge of the distribution for the remaining demand. For instance, the Bayes Selector, a well-performing online algorithm (e.g., \citealp{vera2021bayesian}), uses expected remaining demand $\Lambda_j[t,T]\triangleq (T-t+1)\lambda_j$ for each type $j$ to guide accept/reject decisions. Likewise, when the fulfillment algorithm is online, we write a superscript “on” in place of “alg” (e.g., \(I_t^{n,\text{on}}(z,\mathcal{H})\)).

% Based solely on this available information, the online algorithm must decide whether to accept or reject the current demand.

We start from the definition of regret for a single cycle of length $T$ without replenishment.
Suppose both offline and online algorithms start from the same inventory $I$.
Let $V^{\text{off}}(T,I,\mathcal{H})$ and $V^{\text{on}}(T,I,\mathcal{H})$ be their \emph{total rewards} over this cycle (i.e., excluding holding costs; recall $V^{n,\mathrm{alg}}$ defined earlier denotes per-cycle profit in the replenishment model).
The regret of an online algorithm, in the classical single-cycle setting without any replenishment decisions, is defined as follows:
\[
\mathrm{REG}\triangleq \mathbb{E}_{\mathcal{H}, \text{on}} \left[V^{\text{off}}(T,I,\mathcal{H})-V^{\text{on}}(T,I,\mathcal{H})\right].
\]
It has been well studied under various online algorithms in the literature. Building on this, we make the following assumption to extend the concept of regret to the multi-cycle setting.

\begin{assumption}%[General Online Fulfillment Algorithm] 
\label{ass:online-algorithm}
An online fulfillment algorithm makes sequential accept/reject decisions without access to future realizations and achieves polynomially bounded regret
\[
\mathrm{REG} \leq C_1 T^\alpha,
\]
for some constants \( C_1 > 0 \) and \( \alpha \in [0,1] \), independent of \(I\) and \(T\).
\end{assumption}

\cref{ass:online-algorithm} captures a broad class of online fulfillment algorithms. The ideal case is $\alpha=0$, where the algorithm achieves constant regret, indicating near-optimal performance compared with the optimal offline benchmark with hindsight. For example, the Bayes Selector algorithm (\citealp{vera2021bayesian}) attains $\alpha=0$ with $C_1=\frac{2 r_M e^{-\lambda_{\min}}}{\lambda_{\min}^2}$, where $\lambda_{\min}\triangleq \min_j\lambda_j>0$. Larger values of $\alpha$ reflect increasing difficulty in matching the offline performance as the horizon length grows. Note that it is without loss of generality to assume $\alpha\le 1$, because the regret per cycle cannot grow faster than linearly in $T$. Even an extremely naive online algorithm, such as one that never accepts, incurs a regret no more than the total offline reward in a cycle, which is at most $O(T)$. %This setting allows us to analyze and compare different combinations of fulfillment and replenishment policies based on their average performance per cycle.

Note that for both the myopic offline and the above assumed online fulfillment algorithms, in-cycle fulfillment decisions depend on $I_t^{n,\text{alg}}(z,\mathcal{H})$ and the information about demand within the cycle, but not on the replenishment pipeline $\bm{q}^{n,\text{alg}}(z,\mathcal{H})$. %The pipeline affects future inventory availability via incoming orders. 
In \cref{sec:look-ahead}, we explore how incorporating information about future replenishment orders 
\(\bm{q}^{n,\text{alg}}(z,\mathcal{H})\) into the online fulfillment algorithm can improve long-term performance.

% For clarity, we denote the ending inventory at the end of cycle \( n \) under the online and offline fulfillment algorithms as \( I_{T+1}^{n,\text{on}}(x^{n,\text{on}}, \bm{q}^{n,\text{on}}) \) and \( I_{T+1}^{n,\text{off}}(x^{n,\text{off}}, \bm{q}^{n,\text{off}}) \), respectively. Since both the assumed online algorithms and the myopic offline algorithm make fulfillment decisions solely based on the current on-hand inventory \( x^n \) but not on the pipeline \( \bm{q}^n \), we simplify the notation by writing \( I_1^{n,\cdot} \triangleq I_1^{n,\cdot}(x^{n,\cdot}, \bm{q}^{n,\cdot}) = x^{n,\cdot} \) and \( I_{T+1}^{n,\cdot} \triangleq I_{T+1}^{n,\cdot}(x^{n,\cdot}, \bm{q}^{n,\cdot}) \) when no ambiguity arises.\linwei{I do not understand this sentence} This simplification is justified because the initial on-hand inventory \(x^{n,\cdot}\) can be directly represented by \(I_1^{n,\cdot}\). Therefore, we omit explicit references to \(x^{n,\cdot}\), when it is clear from the context and no specific value needs to be emphasized — such as when referring to the initial state in the first cycle or when used as an intermediate quantity in derivations or proofs.

The following result characterizes the relationship between the ending inventories of the myopic offline algorithm and an online algorithm within a given cycle, assuming they start from the same initial inventory. This is useful for our later analysis of multiple cycles. Throughout \cref{lem:inv-diff}, we suppress the replenishment parameter \(z\) and write \(I_t^{n,\mathrm{on}}(\mathcal{H})\) and \(I_t^{n,\mathrm{off}}(\mathcal{H})\), since within a cycle the fulfillment decision depends only on the start-of-cycle inventory and the realized demand path.

\begin{lemma}\label{lem:inv-diff}
Under \cref{ass:online-algorithm}, for any cycle \(n\), if the start-of-cycle inventories coincide (\(I_1^{n,\mathrm{on}}(\mathcal{H})=I_1^{n,\mathrm{off}}(\mathcal{H})\)), then for every demand path \(\mathcal{H}\),
\[
I_{T+1}^{n,\mathrm{on}}(\mathcal{H}) - I_{T+1}^{n,\mathrm{off}}(\mathcal{H})\ \ge\ 0.
\]
Moreover, the expected inventory gap is bounded:
\[
\mathbb{E}_{\mathcal{H},\mathrm{on}}  \left[I_{T+1}^{n,\mathrm{on}}(\mathcal{H}) - I_{T+1}^{n,\mathrm{off}}(\mathcal{H})\right]
\ \le\ C_2  T^{\alpha},
\]
for some constant \(C_2>0\) independent of the start-of-cycle inventories and \(T\).
\end{lemma}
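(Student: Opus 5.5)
The plan is to deduce both claims from the structure of the myopic offline algorithm together with the regret bound in \cref{ass:online-algorithm}. Fix a cycle $n$ and a path $\mathcal H$, and write $I\triangleq I_1^{n,\mathrm{on}}(\mathcal H)=I_1^{n,\mathrm{off}}(\mathcal H)$. The myopic offline algorithm accepts exactly $\min\{I,D^n\}$ customers: it takes the highest-reward types first and stops only when inventory runs out or demand is exhausted. Hence $I_{T+1}^{n,\mathrm{off}}(\mathcal H)=[I-D^n]^+$.

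For the first claim, note that any feasible online algorithm accepts at most one unit per arriving customer and cannot accept once stock is zero. So its number of acceptances is at most $\min\{I,D^n\}$, which gives $I_{T+1}^{n,\mathrm{on}}(\mathcal H)=I-\sum_t x_t^{n,\mathrm{on}}\ge [I-D^n]^+$. This holds pathwise, and also for every realization of the algorithm's internal randomness.

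For the second claim, I will link the inventory gap to the reward gap. Let $\Delta\triangleq I_{T+1}^{n,\mathrm{on}}-I_{T+1}^{n,\mathrm{off}}$, which equals the number of customers the offline algorithm accepts beyond the online one. The key step is a pathwise exchange inequality:
\[
V^{\mathrm{off}}(T,I,\mathcal H)-V^{\mathrm{on}}(T,I,\mathcal H)\ \ge\ r_1\,\Delta .
\]
\begin{itemize}
\item If $I\ge D^n$, the offline algorithm serves everyone. The online algorithm serves a subset that is $\Delta$ customers smaller, so it loses at least $r_1$ per missing customer.
\item If $I<D^n$, the offline algorithm serves the top-$I$ customers by reward, while the online algorithm serves some set $A$ with $|A|=I-\Delta$. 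The reward of $A$ is at most the reward of the top $I-\Delta$ customers. The top-$I$ set exceeds that by $\Delta$ further customers, each worth at least $r_1$.
\end{itemize}
Since every reward is at least $r_1>0$, this is a standard greedy/majorization comparison.

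Taking expectations over $\mathcal H$ and the online algorithm's randomness, and applying \cref{ass:online-algorithm} with the same start-of-cycle inventory, gives
\[
\mathbb E[\Delta]\le \mathrm{REG}/r_1\le (C_1/r_1)T^\alpha .
\]
So we may take $C_2=C_1/r_1$, which does not depend on $I$ or $T$.

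The main obstacle is a conditioning subtlety. In the multi-cycle system, $I$ is itself random and depends on past cycles. The regret bound, however, is stated for a deterministic starting inventory. I will handle this by conditioning on the history up to the start of cycle $n$. By the i.i.d.\ arrivals, the within-cycle demand is independent of $I$. Applying the bound conditionally, which is valid because $C_1$ is uniform in $I$, and then averaging yields the result.
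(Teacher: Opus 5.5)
Your proposal is correct and follows the paper's argument. Nonnegativity holds pathwise because the myopic offline algorithm consumes $\min\{I,D^n\}$ units, and the expectation bound comes from $\mathrm{REG}\ge r_1\,\mathbb{E}[I_{T+1}^{n,\mathrm{on}}-I_{T+1}^{n,\mathrm{off}}]$ combined with \cref{ass:online-algorithm}. The only difference is that you derive $C_2=C_1/r_1$ directly and justify the exchange inequality pathwise, whereas the paper phrases the same step as a contradiction against growth of order $T^\beta$ with $\beta>\alpha$. That contradiction argument is less sharp than your direct version.
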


\section{Replenishment Policies} \label{sec:Replenishment}
In this section, we incorporate the replenishment component into the system. Recall that we focus on two well-studied heuristics: the base-stock policy and the constant-order policy. 
%In the previous section, we introduced a general class of online fulfillment algorithms and established key assumptions characterizing their performance within a single replenishment cycle (see \cref{ass:online-algorithm}). However, practical order fulfillment systems operate over multiple replenishment cycles and typically adopt structured replenishment policies to manage inventory availability over time. We therefore study the long-run behavior of online fulfillment under . quantify the resulting \emph{performance gap (profit regret)} relative to an offline (partial-foresight) benchmark.
We are interested in comparing the performance of the online and offline fulfillment algorithms under the same replenishment policy; specifically, we aim to bound
\[
\pi_{\mathbf b}^{\text{off}}(T,S^{\text{off}},N)-\pi_{\mathbf b}^{\text{on}}(T,S^{\text{on}},N)
\quad\text{and}\quad
\pi_{\mathbf c}^{\text{off}}(T,c^{\text{off}},N)-\pi_{\mathbf c}^{\text{on}}(T,c^{\text{on}},N),
\]
where \(S^{\text{on}}\) and \(S^{\text{off}}\) denote the optimal base-stock levels for the online and offline fulfillment algorithms, and \(c^{\text{on}}\) and \(c^{\text{off}}\) denote the corresponding optimal constant-order quantities.

% To simplify notation, we omit explicit subscripts (e.g., \(I_t^n,q^n\)) for the replenishment policy in the following subsections, except when referring to average profit terms such as $\pi^{\text{alg}}_\mathbf{b}(\cdot,\cdot,\cdot), \pi^{\text{alg}}_\mathbf{c}(\cdot,\cdot,\cdot)$. \linwei{??}\elaine{resolved}
% %The context will make the intended policy unambiguous throughout.

\subsection{Base-Stock Policy} 
\label{subsec:base-replenishment}
In this section, we focus on base-stock policies. Recall that \cref{lem:inv-diff} bounds the difference in ending inventory between the online and offline algorithms during the first cycle, assuming they start from the same initial inventory. The next result extends to the multi-cycle setting.

%%%%%%%%%%%%%%%%%%%inventory difference%%%%%%%%%%%%%%%%%

\begin{lemma}
    \label{lem:inv-diff-base}
    Under \cref{ass:online-algorithm}, the same initial inventory pipeline and base-stock policy with \(S\), for any cycle \(n\) and every demand path \(\mathcal{H}\), the start-of-cycle inventory difference between the online and the myopic offline algorithms
    can be bounded by:
    \[
    I_1^{n,\text{on}}(S,\mathcal{H}) - I_1^{n,\text{off}}(S,\mathcal{H}) \ge 0, 
    \qquad 
    \mathbb{E}_{\mathcal{H},\text{on}}\left[I_1^{n,\text{on}}(S,\mathcal{H}) - I_1^{n,\text{off}}(S,\mathcal{H})\right] \le C_2 L T^\alpha;
    \]
    and similarly for the end-of-cycle inventory:
    \[
    I_{T+1}^{n,\text{on}}(S,\mathcal{H}) - I_{T+1}^{n,\text{off}}(S,\mathcal{H}) \ge 0, 
    \qquad 
    \mathbb{E}_{\mathcal{H},\text{on}}\left[I_{T+1}^{n,\text{on}}(S,\mathcal{H}) - I_{T+1}^{n,\text{off}}(S,\mathcal{H})\right] \le C_2 (L+1) T^\alpha,
    \]
    where $C_2$ is the constant as defined in \cref{lem:inv-diff}. 
\end{lemma}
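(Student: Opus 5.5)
The plan is to reduce the multi-cycle comparison to the single-cycle \cref{lem:inv-diff} through a bookkeeping identity for the base-stock policy. Write $s_k^{\mathrm{alg}}\triangleq I_1^{k,\mathrm{alg}}(S,\mathcal H)-I_{T+1}^{k,\mathrm{alg}}(S,\mathcal H)$ for the number of units sold in cycle $k$.

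\textbf{Step 1: the bookkeeping identity.} By \eqref{eq:base-replenish}, after ordering at the start of cycle $k$ the inventory position equals $S$. Hence the position at the start of cycle $k+1$, before ordering, equals $S-s_k^{\mathrm{alg}}$, and the order placed is $q^{k+1+L,\mathrm{alg}}=s_k^{\mathrm{alg}}$. Unrolling the pipeline shift gives, for $n$ beyond the initial transient,
\[
I_1^{n,\mathrm{alg}}(S,\mathcal H)=S-\sum_{k=n-L}^{n-1}s_k^{\mathrm{alg}}.
\]
In the first $L$ cycles, some pipeline units come from the common initial state instead of from past sales. Those terms are identical for online and offline and cancel in the difference, so only fewer than $L$ sales terms appear there. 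Therefore
\[
\Delta_n\triangleq I_1^{n,\mathrm{on}}-I_1^{n,\mathrm{off}}=\sum_{k=\max(1,n-L)}^{n-1}\bigl(s_k^{\mathrm{off}}-s_k^{\mathrm{on}}\bigr).
\]
In addition, $E_n\triangleq I_{T+1}^{n,\mathrm{on}}-I_{T+1}^{n,\mathrm{off}}=\Delta_n-(s_n^{\mathrm{off}}-s_n^{\mathrm{on}})$.

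\textbf{Step 2: per-cycle decomposition by coupling.} The two systems start cycle $k$ with different inventories, so \cref{lem:inv-diff} does not apply directly. I will introduce a fictitious myopic offline run that starts cycle $k$ from $I_1^{k,\mathrm{on}}$ and faces the same demand $D^k$. Denote its sales by $\tilde s_k=\min(I_1^{k,\mathrm{on}},D^k)$, since myopic offline sells $\min(I,D^k)$. Then
\[
s_k^{\mathrm{off}}-s_k^{\mathrm{on}}=G_k-\bigl(\tilde s_k-s_k^{\mathrm{off}}\bigr),\qquad G_k\triangleq \tilde s_k-s_k^{\mathrm{on}}.
\]
Here $G_k$ is exactly the same-start ending-inventory gap of \cref{lem:inv-diff}. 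Hence $G_k\ge0$ pathwise, and conditionally on the history before cycle $k$, $\mathbb E[G_k\mid\cdot]\le C_2T^\alpha$. This uses that $C_2$ does not depend on the start inventory and that $D^k$ is independent of the past. The map $I\mapsto\min(I,D^k)$ is nondecreasing and $1$-Lipschitz, so whenever $\Delta_k\ge0$ we get $0\le\tilde s_k-s_k^{\mathrm{off}}\le\Delta_k$.

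\textbf{Step 3: induction and the expectation bounds.} I will induct on $n$ to prove $\Delta_n\ge0$ and $\Delta_n\le\sum_{k=\max(1,n-L)}^{n-1}G_k$. The upper bound is immediate from Step 2: each summand satisfies $s_k^{\mathrm{off}}-s_k^{\mathrm{on}}\le G_k$ once $\Delta_k\ge0$. Taking expectations and using the tower property over at most $L$ terms gives $\mathbb E[\Delta_n]\le C_2LT^\alpha$.

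For the end of the cycle, pathwise nonnegativity follows from monotonicity:
\[
I_{T+1}^{n,\mathrm{on}}\ge(I_1^{n,\mathrm{on}}-D^n)^+\ge(I_1^{n,\mathrm{off}}-D^n)^+=I_{T+1}^{n,\mathrm{off}}.
\]
The first inequality holds because online sales never exceed $\tilde s_n$, and the second uses $\Delta_n\ge0$. For the expectation, $E_n=\Delta_n-(s_n^{\mathrm{off}}-s_n^{\mathrm{on}})=\Delta_n-G_n+(\tilde s_n-s_n^{\mathrm{off}})$. Since $\tilde s_n-s_n^{\mathrm{off}}\le\Delta_n$, this is at most $G_n+\Delta_n$ after a more careful split. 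Specifically, I will write $E_n=(I_{T+1}^{n,\mathrm{on}}-\tilde I_{T+1}^n)+(\tilde I_{T+1}^n-I_{T+1}^{n,\mathrm{off}})$, where $\tilde I_{T+1}^n$ is the fictitious run's ending inventory. The first bracket equals $G_n$. The second is at most $\Delta_n$ because $(\cdot-D)^+$ is $1$-Lipschitz. This yields $\mathbb E[E_n]\le C_2(L+1)T^\alpha$.

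\textbf{Main obstacle.} I expect the hard part to be the pathwise lower bound $\Delta_n\ge0$. The window sum can contain negative summands $s_k^{\mathrm{off}}-s_k^{\mathrm{on}}$: when offline started cycle $k$ with less stock, it may have sold less. So nonnegativity of each term is not enough, and one must show that the whole window compensates. My first attempt is to pass to a different identity: $\Delta_{n+1}=E_n+(q^{n+1,\mathrm{on}}-q^{n+1,\mathrm{off}})$, with $q^{n+1}=s_{n-L}$. Then $\Delta_{n+1}=E_n-(s_{n-L}^{\mathrm{on}}-s_{n-L}^{\mathrm{off}})$, and I would try to show that the deficit carried by $E_n\ge E$-terms dominates, using that a unit unsold by online in cycle $n-L$ remains on hand, since units are never discarded. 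Concretely, I would consider the total stock "on hand plus received" and argue that online's cumulative sales over any window never exceed offline's plus online's extra on-hand stock. If this fails pathwise, the fallback is a Lindley-type representation $I_1^{n,\mathrm{off}}=\min$ over partial windows of $\bigl(S-\sum D\bigr)$-type expressions, and comparing it directly with the online recursion.
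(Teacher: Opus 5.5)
Your Step 1 window identity is correct; it is the same bookkeeping that underlies the paper's conditions (b)--(c). Steps 2--3 are also valid once you know $\Delta_k\ge 0$ for every $k$ in the window. That sign, however, is never established. It matters beyond the lower bound: your Step 3 upper bound $s_k^{\mathrm{off}}-s_k^{\mathrm{on}}\le G_k$ also needs it, because when $\Delta_k<0$ the correction $\tilde s_k-s_k^{\mathrm{off}}$ is negative with size up to $|\Delta_k|$. So none of the four claims is actually proved.

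Worse, for an online policy that spends its surplus stock, the pathwise claim is false, so neither of your fallback routes can close it. Take $L=1$, $S=1$, $I_1^1=1$. At the start of cycle 1 both systems order $q^2=S-I_1^1=0$. In cycle 1 the only arrival is a type-1 customer in period 1. Myopic offline sells to it. The online policy rejects it, reserving the unit for likely higher types; changing a policy at one state does not affect \cref{ass:online-algorithm}. In cycle 2 the only arrival is type $M$; online sells it and offline has nothing to sell. This gives
\[
I_1^{2,\mathrm{on}}-I_1^{2,\mathrm{off}}=1-0,\qquad q^{3,\mathrm{on}}-q^{3,\mathrm{off}}=0-1,\qquad I_1^{3,\mathrm{on}}-I_1^{3,\mathrm{off}}=0-1=-1 .
\]
This is exactly a negative summand $s_2^{\mathrm{off}}-s_2^{\mathrm{on}}=-1$ in your window sum. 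If cycle 3 has no arrivals, the end-of-cycle difference is $-1$ as well.

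The missing idea is the device the paper uses. The online policy ignores its surplus $\Delta I_1^k$ and fulfills as if it had started cycle $k$ with $I_1^{k,\mathrm{off}}$. This is implementable online, since the offline trajectory through cycle $k-1$ is determined by past demand. Your coupling runs offline from the online start, which is what creates the sign-dependent correction term. The paper couples the other way: online sales in cycle $k$ are literally $f^{\mathrm{on}}(I_1^{k,\mathrm{off}})$.

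Then every summand of your own identity equals $\epsilon(I_1^{k,\mathrm{off}})\ge 0$, with conditional mean at most $C_2T^\alpha$ by \cref{lem:inv-diff}. Hence $\Delta_n=\sum_{k=\max(1,n-L)}^{n-1}\epsilon(I_1^{k,\mathrm{off}})$ and $E_n=\Delta_n+\epsilon(I_1^{n,\mathrm{off}})$, which give all four claims at once. This is the content of the paper's condition (c), $\Delta I_1^{k+1}+\sum_{l=1}^{L-1}\Delta q^{k+1+l}=\epsilon(I_1^{k,\mathrm{off}})$, and your Step 2 becomes unnecessary. Be aware that this proves the lemma for the surplus-ignoring implementation of the online policy, not for an arbitrary online policy that uses all of its stock.
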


% \begin{lemma}
%     \label{lem:inv-diff-base}
%     Suppose the online and offline algorithms begin with the same initial inventory pipeline in the first replenishment cycle and operate under the same base-stock replenishment policy with base-stock level \(S\) and lead time \(L\). Then, for any replenishment cycle \( n \), the difference in start-of-cycle inventory between the online and the myopic offline algorithms can be nonnegative, with expected value bounded by \( LC_2 T^\alpha \), i.e.,
%     \[
%     I_1^{n,\text{on}} - I_1^{n,\text{off}} \ge 0, \qquad 
%     \mathbb{E}\left[I_1^{n,\text{on}} - I_1^{n,\text{off}}\right] \le LC_2 T^\alpha.
%     \]
%     Similarly, the difference in end-of-cycle inventory between the online and the myopic offline algorithms can also be nonnegative, with expected value bounded by \( (L+1)C_2 T^\alpha \). \linwei{express using equation?}
% \end{lemma}

An immediate observation from \cref{lem:inv-diff-base} is that the bound on the inventory gap grows linearly with the replenishment lead time \( L \), while preserving the same order with respect to the cycle length \( T \) as in the single-cycle analysis. %Specifically, the bound remains proportional to \( T^\alpha \) for every cycle. 
This arises from the use of a base-stock replenishment policy, which continuously stabilizes the inventory level around the target base-stock level \( S \), preventing discrepancies from compounding over time. As a result, the gap does not accumulate over multiple cycles. By leveraging this property, we can bound the performance gap.

%%%%%%%%%%%%%%%%%%%profit difference%%%%%%%%%%%%%%%%%

% \begin{theorem} \label{thm:reg-total-base}
%     Given lead time \(L\), under \cref{ass:online-algorithm} and the same base-stock policy with base-stock level \(S\), the expected average profit difference between the myopic offline and the online algorithms over \(N\) cycles can be upper bounded by:
%     \[
%        \pi_\mathbf{b}^{\text{off}}(T, S,N) - \pi_\mathbf{b}^{\text{on}}(T, S,N)   \le C_1 T^\alpha + h (L+1) C_2 T^\alpha,
%     \]
%     where \(C_1\) and \(C_2\) are as defined in \cref{ass:online-algorithm} and \cref{lem:inv-diff}, respectively. In particular, \(\pi_\mathbf{b}^{\text{off}}(T, S,N) - \pi_\mathbf{b}^{\text{on}}(T, S,N) = \mathcal{O}(T^\alpha)\).
% \end{theorem}

% \begin{corollary} \label{coro:optimized-s-gap-base}
% Given lead time \(L\), under \cref{ass:online-algorithm} and the base-stock policy with their respective optimal base-stock levels \(S^{\text{off}}\) and \(S^{\text{on}}\), the expected average profit difference between the myopic offline and the online algorithms over \(N\) cycles can be upper bounded by:
% \[
%        \pi_\mathbf{b}^{\text{off}}(T, S^{\text{off}},N) - \pi_\mathbf{b}^{\text{on}}(T, S^{\text{on}},N)   \le C_1 T^\alpha + h (L+1) C_2 T^\alpha ,
%     \]
%     where \(C_1\) and \(C_2\) are as defined in \cref{ass:online-algorithm} and \cref{lem:inv-diff}, respectively. In particular, \(\pi_\mathbf{b}^{\text{off}}(T, S^{\text{off}},N) - \pi_\mathbf{b}^{\text{on}}(T, S^{\text{on}},N) = \mathcal{O}(T^\alpha)\).
% \end{corollary}

\begin{theorem} \label{thm:reg-total-base}
    Under \cref{ass:online-algorithm}, %the base-stock policy with their respective \(S^{\text{off}}\) and \(S^{\text{on}}\), the expected average profit difference can be upper bounded by:
    \[
       \pi_\mathbf{b}^{\text{off}}(T, S^{\text{off}},N) - \pi_\mathbf{b}^{\text{on}}(T, S^{\text{on}},N)   \le\ C_1 T^\alpha + h C_2 (L+1)  T^\alpha =\ \mathcal{O}(T^\alpha),
    \]
    where \(C_1\) and \(C_2\) are as defined in \cref{ass:online-algorithm} and \cref{lem:inv-diff}, respectively.
\end{theorem}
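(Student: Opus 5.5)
The plan is to first reduce to a common base-stock level and then compare the two algorithms cycle by cycle. Since \(S^{\text{on}}\) is optimal for the online algorithm,
\[
\pi_{\mathbf b}^{\text{on}}(T,S^{\text{on}},N)\ \ge\ \pi_{\mathbf b}^{\text{on}}(T,S^{\text{off}},N).
\]
It therefore suffices to bound \(\pi_{\mathbf b}^{\text{off}}(T,S,N)-\pi_{\mathbf b}^{\text{on}}(T,S,N)\) for the fixed level \(S=S^{\text{off}}\), with both systems started from the same initial pipeline. The difference of average profits is \(\frac1N\sum_n\) of the per-cycle profit differences. It suffices to show that each cycle contributes at most \(C_1T^\alpha+hC_2(L+1)T^\alpha\) in expectation.

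For a fixed cycle \(n\), I decompose the per-cycle gap into three pieces:
\[
V^{n,\text{off}}(T,I_1^{n,\text{off}})-V^{n,\text{on}}(T,I_1^{n,\text{on}}) = \Big[R^{\text{off}}(I_1^{n,\text{off}})-R^{\text{off}}(I_1^{n,\text{on}})\Big] + \Big[R^{\text{off}}(I_1^{n,\text{on}})-R^{\text{on}}(I_1^{n,\text{on}})\Big] + h\Big[I_{T+1}^{n,\text{on}}-I_{T+1}^{n,\text{off}}\Big].
\]
Here \(R^{\text{alg}}(I)\) denotes the total in-cycle reward of the algorithm started from inventory \(I\) on the cycle-\(n\) arrivals.

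\begin{enumerate}
\item The first bracket is \(\le 0\). By \cref{lem:inv-diff-base}, \(I_1^{n,\text{on}}\ge I_1^{n,\text{off}}\) pathwise. The myopic offline reward, being the sum of the top \(\min(I,D^n)\) realized rewards, is nondecreasing in starting inventory.
\item The second bracket is a single-cycle regret with a common start \(I=I_1^{n,\text{on}}\). The key observation is that \(I_1^{n,\text{on}}\) is determined by arrivals of cycles \(1,\dots,n-1\) (and earlier algorithmic randomness). It is therefore independent of the cycle-\(n\) arrivals. Conditioning on it and applying \cref{ass:online-algorithm}, whose constant is uniform in \(I\), gives expected value \(\le C_1T^\alpha\). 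If the online algorithm's internal randomness is shared across cycles, I condition on the whole past instead, which still works because the cycle-\(n\) arrivals are fresh.
\item The third bracket is at most \(hC_2(L+1)T^\alpha\) in expectation, directly from the end-of-cycle part of \cref{lem:inv-diff-base}.
\end{enumerate}

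Summing these bounds over \(n\), dividing by \(N\), and combining with the reduction to \(S=S^{\text{off}}\) yields the claimed bound, which is \(\mathcal O(T^\alpha)\) for fixed \(L\).

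The main obstacle is the middle step: justifying that the single-cycle regret assumption applies even though the starting inventory is random and path-dependent. This needs the independence and conditioning argument above, together with the uniformity of \(C_1\) in \(I\). A secondary point is confirming the monotonicity of the myopic offline reward, so that the online system's extra start-of-cycle inventory never hurts the comparison. Everything else is bookkeeping.
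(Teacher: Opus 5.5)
Your proposal is correct and is essentially the paper's proof. The paper also reduces to the common level $S^{\text{off}}$ via optimality of $S^{\text{on}}$, bounds the per-cycle reward gap by $C_1T^\alpha$ because the online system never starts a cycle with less stock (\cref{lem:inv-diff-base}), and bounds the holding-cost gap by $hC_2(L+1)T^\alpha$ from the same lemma. Your three-term split just makes rigorous what the paper asserts in one line: monotonicity of the myopic offline reward, plus conditioning on the past to apply the single-cycle regret at a random but independent starting inventory.

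One caution, inherited from the paper rather than introduced by you. The pathwise inequality $I_1^{n,\text{on}}\ge I_1^{n,\text{off}}$ in \cref{lem:inv-diff-base} is proved there only for an online implementation that sets its surplus aside and runs as if it held $I_1^{n,\text{off}}$. For the unmodified algorithm it can fail: with $L=1$, a system holding a surplus orders less and, if it then spends that surplus, starts the next cycle short. Under the surplus-ignoring implementation, the online cycle reward is $R^{\text{on}}(I_1^{n,\text{off}})$, so you should apply the regret assumption at $I_1^{n,\text{off}}$. Your first bracket then disappears, and you get the same bound.
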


This result shows that even when the online and the myopic offline fulfillment algorithms independently optimize their respective base-stock levels \(S^{\text{on}}\) and \(S^{\text{off}}\), the resulting performance gap remains the same order of magnitude as the case when starting from the same initial inventory level \cref{lem:inv-diff-base}. This again validates that base-stock replenishment preserves the same regret order regarding \(T\) when accounting for both fulfillment and replenishment decisions.

%%%%%%%%%%%%%%%%%%%%%%%%%%%%%%%%%%%%%%%%%%

\subsection{Constant-Order Policy}
\label{subsec:constant-replenishment}

In the previous subsection, we analyzed the regret of online fulfillment under a base-stock replenishment policy, leveraging the natural correction mechanism provided by dynamic base-stock adjustments to control inventory differences over time. In contrast, under a constant-order replenishment policy, the order quantity remains fixed across cycles, regardless of the evolving inventory state. This lack of adaptivity introduces new challenges: inventory mismatches between the online and the myopic offline algorithms can accumulate and propagate across cycles, requiring a fundamentally different approach to bounding the inventory difference.

% Despite these differences, we will follow a similar two-step analysis: we first establish bounds on the inventory difference between the online and the myopic offline algorithms, and then use these bounds to control the overall profit regret. However, the techniques for bounding inventory differences must be carefully adapted to address the persistent drift inherent in constant-order replenishment.

\textbf{Setup for this subsection.} 
Let \( f^{n,\text{on}}(I) \) and \( f^{n,\text{off}}(I) \) denote the number of fulfilled demands of all types in cycle \( n \), starting from initial inventory \( I \)
, under the online and offline algorithms, respectively. We assume that the fixed replenishment order quantity satisfies
\[
0\le c < T(1-\lambda_0)
\quad\text{and}\quad
T(1-\lambda_0)-c = T\delta_c+o(T), \quad \text{for some constant } \delta_c>0.
\]
The motivation is that the binomial demand $D^n$ has fluctuations on the order of $\sqrt{T}$, so keeping a gap of order $T$
between the mean demand and the order quantity yields a meaningful safety buffer and leads to a strictly negative drift in the
resulting inventory recursion, which in turn provides control of stockout events and carryover inventory in the heavy-traffic
analysis. This regime is also consistent with the constant-order optimizers used later. Since at most $T$ units can be fulfilled
within a cycle, choosing $c$ larger than order $T$ cannot improve the number of fulfilled demands and only increases carryover,
so one may restrict attention to $c=O(T)$. In addition, if $c\ge T(1-\lambda_0)$ then the inventory recursion has nonnegative
drift and the expected carryover inventory grows, which is suboptimal whenever there is any nontrivial holding cost.

% Finally, the condition $\delta_c>0$ reflects that the optimal constant-order quantity remains a fixed fraction below the
% total mean demand as $T$ grows. This is natural in our setting because the objective is of order $T$ per cycle and
% is therefore governed by a fluid tradeoff: increasing $c/T$ beyond some point mainly adds capacity for low-value
% demand and increases expected carryover, while providing only marginal improvement in the leading-order profit.
% Consequently, the fluid optimizer is attained at an interior point strictly below $1-\lambda_0$, which implies $T(1-\lambda_0)-c = T\delta_c+o(T)$ for some $\delta_c>0$.

Before bounding the inventory difference under constant-order replenishment, we first establish a key structural property as follows.
% , which guarantees that the online algorithm is never disadvantaged in terms of available inventory for fulfillment due to the fixed replenishment quantity.

\begin{lemma}\label{lem:constant-off-more}
    Under \cref{ass:online-algorithm} and the same constant-order policy with order quantity \(c\), for any cycle \(n\) and every demand path \(\mathcal{H}\), the end-of-cycle inventory under the myopic offline algorithm is always less than or equal to the end-of-cycle inventory under the online algorithm, i.e., 
    $
    I_{T+1}^{n,\text{off}}(c,\mathcal{H}) \leq I_{T+1}^{n,\text{on}}(c,\mathcal{H}).
    $
\end{lemma}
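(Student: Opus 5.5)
Our plan is to argue by induction on the cycle index $n$. The pathwise (not merely expected) comparison follows from a monotonicity property of the myopic offline algorithm: starting from on-hand inventory $I$ in a cycle with realized total demand $D^n$, the offline algorithm fulfills exactly $\min\{I, D^n\}$ units, the largest number any feasible algorithm can fulfill. Hence its end-of-cycle inventory is $I^{n,\mathrm{off}}_{T+1} = (I - D^n)^+$. Any online algorithm starting from inventory $I'$ fulfills at most $\min\{I', D^n\}$, so $I^{n,\mathrm{on}}_{T+1} \ge (I' - D^n)^+$. Also, the map $I \mapsto (I - D^n)^+$ is nondecreasing.

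The induction runs as follows.

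\textbf{Base case.} For $n=1$, both systems start from $I_1^{1,\mathrm{alg}}(c,\mathcal H)=c$. By the first part of \cref{lem:inv-diff} (or directly by the observation above), $I_{T+1}^{1,\mathrm{off}}(c,\mathcal H)\le I_{T+1}^{1,\mathrm{on}}(c,\mathcal H)$.

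\textbf{Inductive step.} Suppose $I_{T+1}^{n,\mathrm{off}}\le I_{T+1}^{n,\mathrm{on}}$. Under the constant-order policy the replenishment $c$ does not depend on the state, so
\[
I_1^{n+1,\mathrm{off}}=I_{T+1}^{n,\mathrm{off}}+c\ \le\ I_{T+1}^{n,\mathrm{on}}+c=I_1^{n+1,\mathrm{on}}.
\]
Using the formula for the offline leftover, monotonicity, and the lower bound for the online leftover, we get
\[
I_{T+1}^{n+1,\mathrm{off}}=(I_1^{n+1,\mathrm{off}}-D^{n+1})^+\le (I_1^{n+1,\mathrm{on}}-D^{n+1})^+\le I_{T+1}^{n+1,\mathrm{on}},
\]
which closes the induction.

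The main (mild) obstacle is justifying rigorously that the myopic offline algorithm fulfills exactly $\min\{I,D^n\}$ units. It accepts types in decreasing reward order until inventory runs out or all demand in the cycle is served, so its total acceptance is $\min\{I,D^n\}$. The online bound holds because the feasibility constraints $x_t\le \mathbb I\{j_t^n\ne0\}$ and $x_t\le I_t$ cap cumulative acceptances by both the total demand and the starting inventory. Note also that, unlike the base-stock case, we never need the two systems to start a cycle with equal inventory; monotonicity of $(\cdot-D)^+$ handles the unequal starting levels directly. The argument holds for every realization of the online algorithm's randomness, so the statement is pathwise.
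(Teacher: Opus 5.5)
Your proof is correct and is essentially the paper's argument: the paper also inducts over cycles using $I_1^{k+1}=I_{T+1}^{k}+c$. Its two cases, $D^k\le I_1^{k,\text{off}}$ and $D^k> I_1^{k,\text{off}}$, are exactly the two branches of your monotonicity step $(I_1^{k,\text{off}}-D^k)^+\le (I_1^{k,\text{on}}-D^k)^+\le I_{T+1}^{k,\text{on}}$. The only cosmetic difference is that the paper inducts on start-of-cycle inventory while you induct on end-of-cycle inventory.
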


This lemma ensures the online algorithm starts every cycle with at least as much inventory as the myopic offline algorithm, so its per-cycle fulfilled-reward regret is bounded at the rate in \cref{ass:online-algorithm}. Building on this, we next analyze the difference in end-of-cycle inventory between two algorithms, which directly impacts the holding cost.

Under the myopic offline fulfillment algorithm, the end-of-cycle inventory \( I_{T+1}^{n,\text{off}}(c,\mathcal{H}) \) follows the same distribution as the steady-state waiting time in the corresponding GI/GI/1 queue with the interarrival distribution \(D^n\sim \text{Binomial}(T, 1 - \lambda_0) \) and the processing time the constant \( c \). To characterize its expectation, we invoke a classical result in \cite{kingman1962}, which provides an expression for the expected waiting time in a GI/GI/1 queue, which we directly apply to the context of our fulfillment setting. 

\begin{lemma}[Theorem 1 in \cite{kingman1962}]
\label{lem:Inv_off_expectation}
    For all \( c \leq \mathbb{E}[D^n] \) and \( n\geq 1 \),
    $$
    \mathbb{E}_{\mathcal{H}}\left[I_{T+1}^{n,\text{off}}(c,\mathcal{H})\right] =\sum_{j=1}^n \frac{1}{j} \mathbb{E}_{\mathcal{H}}\left[\left(jc - \sum_{i=1}^j D^i\right)^+\right].
    $$
\end{lemma}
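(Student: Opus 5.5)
The plan is to reduce the statement to the classical Lindley recursion and then apply Kingman's identity for the steady-state/transient waiting time. Under the myopic offline algorithm with partial foresight, every unit of on-hand inventory is used as long as there is demand in the cycle, because the algorithm accepts types from highest to lowest reward until inventory or demand runs out. Hence the number fulfilled in cycle \(n\) is \(f^{n,\text{off}}(I)=\min\{I,D^n\}\). This gives the end-of-cycle recursion
\[
I_{T+1}^{n,\text{off}}(c,\mathcal{H})=\left(I_{T+1}^{n-1,\text{off}}(c,\mathcal{H})+c-D^n\right)^+ .
\]
Here \(I_1^{n,\text{off}}=I_{T+1}^{n-1,\text{off}}+c\). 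The initialization \(I_1^{1,\text{off}}=c\) corresponds to \(I_{T+1}^{0,\text{off}}=0\).

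The first step is to unroll this recursion into the random-walk maximum representation. Set \(X_i\triangleq c-D^i\). Since \(W_0=0\) and \(W_n=(W_{n-1}+X_n)^+\), a standard induction gives
\[
W_n=\max_{0\le k\le n}\sum_{i=n-k+1}^{n}X_i .
\]
The \(X_i\) are i.i.d. because the \(D^i\) are i.i.d. Binomial\((T,1-\lambda_0)\). Reversing time therefore gives
\[
W_n \overset{d}{=} M_n\triangleq\max_{0\le k\le n}S_k, \qquad S_k\triangleq\sum_{i=1}^k X_i=kc-\sum_{i=1}^k D^i .
\]
So \(\mathbb{E}[I_{T+1}^{n,\text{off}}]=\mathbb{E}[M_n]\).

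The remaining step is to show \(\mathbb{E}[M_n]=\sum_{j=1}^n \frac1j\mathbb{E}[S_j^+]\). This is Spitzer's identity, which is the content of Kingman's Theorem 1. I would simply cite it, since the paper attributes the lemma to \cite{kingman1962}. If a self-contained argument is wanted, I would use the combinatorial route, which is the main obstacle:
\begin{enumerate}
\item Write \(M_n=(X_1+M'_{n-1})^+\), where \(M'_{n-1}\) is the maximum of the walk started at step 2. Then \(M'_{n-1}\overset{d}{=}M_{n-1}\) and it is independent of \(X_1\).
\item Derive \(\mathbb{E}[M_n]-\mathbb{E}[M_{n-1}]=\frac1n\mathbb{E}[S_n^+]\) by a cyclic-exchangeability argument on the \(n\) cyclic shifts of \((X_1,\dots,X_n)\). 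Equivalently, use the generating-function form of Spitzer's identity and compare coefficients.
\item Sum the increments over \(n\) to obtain the claim.
\end{enumerate}
All quantities are integrable because \(0\le M_n\le nc\).

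The hypothesis \(c\le\mathbb{E}[D^n]\) is not needed for the finite-\(n\) identity itself. It only guarantees that the series converges as \(n\to\infty\), which gives the stationary interpretation mentioned before the lemma. I would note this rather than use it.
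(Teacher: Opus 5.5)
Your proposal is correct and follows essentially the paper's route: the paper identifies $I_{T+1}^{n,\mathrm{off}}(c,\mathcal{H})$ with the waiting time in a GI/GI/1 queue (interarrival $D^n$, service $c$) and invokes Kingman's result (Spitzer's identity) directly. Your explicit Lindley recursion $I_{T+1}^{n,\mathrm{off}}=(I_{T+1}^{n-1,\mathrm{off}}+c-D^n)^+$ with $I_{T+1}^{0,\mathrm{off}}=0$, together with the time reversal to $\max_{0\le k\le n}S_k$, makes that identification precise. You are also right that this is the transient waiting time started from empty, so the finite-$n$ identity does not need $c\le\mathbb{E}[D^n]$; that condition only matters for the stationary limit.
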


While \cref{lem:Inv_off_expectation} gives an exact expression for the expected end-of-cycle inventory under the myopic offline algorithm, we still need a handle on the online counterpart to bound inventory gaps and profit. However, the online algorithm does not admit a simple closed-form recursion like the offline case, due to its potentially more conservative fulfillment behavior.

To address this, we leverage a key observation (used in the proof of \cref{lem:constant-off-more}): for any \( I \geq 0 \),
\(
f^{n,\text{off}}(I) \geq f^{n,\text{on}}(I).
\) Therefore, we define the nonnegative difference as
\[
\epsilon^n \triangleq f^{n,\text{off}}(I_{1}^{n,\text{on}}(c,\mathcal{H})) - f^{n,\text{on}}(I_{1}^{n,\text{on}}(c,\mathcal{H})) \in [0,T],
\]
which represents the additional demand fulfilled by the myopic offline algorithm relative to the online algorithm in cycle \( n \). By \cref{lem:inv-diff}, we have \( \mathbb{E}_{\mathcal{H},\text{on}}[\epsilon^n] \leq C_2 T^\alpha \) for all \( n \geq 1 \).

We can then derive a recursive representation for the end-of-cycle inventory under the online algorithm across cycles. Specifically, the evolution of \( I_{T+1}^{n,\text{on}}(c,\mathcal{H}) \) follows:
\begin{align*}
    I_{T+1}^{1,\text{on}}(c,\mathcal{H}) &= c - (f^{1,\text{off}}(c) - \epsilon^1) = c - \min\{c, D^1\} + \epsilon^1 = (c - D^1)^+ + \epsilon^1,\\
    I_{T+1}^{2,\text{on}}(c,\mathcal{H}) &= (c - D^1)^+ + \epsilon^1 + c - \min\{(c - D^1)^+ + \epsilon^1 + c, D^2\} + \epsilon^2\\
    &= [(c - D^1)^+ + \epsilon^1 + c - D^2]^+ + \epsilon^2,\\
    & \vdots \\
    I_{T+1}^{n,\text{on}}(c,\mathcal{H}) &= (c + I_{T+1}^{n-1,\text{on}}(c,\mathcal{H}) - D^n)^+ + \epsilon^n.
\end{align*}
Equivalently, if we define a \emph{modified} offline sequence by
\[
\tilde I_{T+1}^{1,\text{off}}(c,\mathcal{H})=(c-D^1)^+,\qquad
\tilde I_{T+1}^{n,\text{off}}(c,\mathcal{H})=\left(\tilde I_{T+1}^{n-1,\text{off}}(c,\mathcal{H})+c+\epsilon^{n-1}-D^n\right)^+,
\]
then it follows that
\[
I_{T+1}^{n,\text{on}}(c,\mathcal{H})=\tilde I_{T+1}^{n,\text{off}}(c,\mathcal{H})+\epsilon^n.
\]
Here, \( \tilde{I}_{T+1}^{n,\text{off}}(c,\mathcal{H}) \) can be interpreted as the end-of-cycle inventory under a modified myopic offline algorithm where the initial inventory at cycle 1 is \(c\) and it replenishes \( (c + \epsilon^{n-1}) \) units at the beginning of cycle \( n \) for \(n>1\). Building on this structure and \cref{lem:Inv_off_expectation}, we next establish an upper bound on the expected end-of-cycle inventory under the online fulfillment algorithm.

% \elaine{we assume \(\alpha<1\) first to revise the following results}
\begin{lemma} \label{lem:Inv-on-bound}
    Suppose \(\alpha<1\). If \( T \) is sufficiently large that \( T(1-\lambda_0) > c + C_2 T^\alpha \), then under \cref{ass:online-algorithm}, for any cycle \(n\), there exists a \( \theta > 0 \), such that \(\beta_\theta<1\) and
    $$
    \mathbb{E}_{\mathcal{H},\text{on}}[I_{T+1}^{n,\text{on}}(c,\mathcal{H})] \leq - (e\theta)^{-1} \ln(1-\beta_\theta) + C_2T^\alpha,
    $$
    where
    $
    \beta_\theta
    \triangleq
    \exp\!\left(\theta c+\theta C_2T^\alpha + \frac{C_2}{2}T^{\alpha-1}h(2\theta T)\right)
    \cdot\left(\lambda_0+(1-\lambda_0)e^{-2\theta}\right)^{T/2},
    $
    $
    h(u)\triangleq (1+u)\ln(1+u)-u.
    $
    In particular, for all sufficiently large $T$, $\beta_\theta\le e^{-\kappa T}$ for some $\kappa>0$,
    so $-(e\theta)^{-1}\ln(1-\beta_\theta)=o(1)$ and
    \[
    \mathbb{E}_{\mathcal H,\mathrm{on}}\!\left[I_{T+1}^{n,\mathrm{on}}(c,\mathcal H)\right]
    =O(T^\alpha).
    \]
    % \linwei{this is problematic as c can be linear in $T$ so that $\beta_\theta$ can $>1$} \elaine{we can solve it by replacing "for all \(\theta\)" to there exists a \(\theta\) (the proof has beed updated accordingly to find this \(\theta\)) which doesn't affect the order of the regret.}
\end{lemma}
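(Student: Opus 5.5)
The plan is to work from the decomposition $I_{T+1}^{n,\mathrm{on}}(c,\mathcal H)=\tilde I_{T+1}^{n,\mathrm{off}}(c,\mathcal H)+\epsilon^n$ established just before the statement. Since $\mathbb E[\epsilon^n]\le C_2T^\alpha$ by \cref{lem:inv-diff}, it suffices to show $\mathbb E[\tilde I_{T+1}^{n,\mathrm{off}}]\le -(e\theta)^{-1}\ln(1-\beta_\theta)$.

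\textbf{Step 1: unroll the Lindley recursion.} The modified offline sequence is a Lindley recursion with increments $X_k\triangleq c+\epsilon^{k-1}-D^k$ (with $\epsilon^0\triangleq0$). Unrolling it gives
\[
\tilde I_{T+1}^{n,\mathrm{off}}=\max_{0\le m\le n-1}\Big(\sum_{k=n-m}^{n}X_k\Big)^+ .
\]
Jensen's inequality then gives $\mathbb E[\max_m Y_m^+]\le \theta^{-1}\ln \mathbb E[\max_m e^{\theta Y_m^+}]$. Alternatively, one can use the elementary bound $y^+\le (e\theta)^{-1}e^{\theta y}$, which is the source of the $(e\theta)^{-1}$ factor. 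Summing over $m$ reduces the problem to bounding
\[
\sum_{j\ge1}\mathbb E\big[e^{\theta\sum_{k}X_k}\big],
\]
where each inner sum has $j$ terms. If each such term is at most $\beta_\theta^j$, the geometric series gives $\sum_j\beta_\theta^j/j=-\ln(1-\beta_\theta)$. This mirrors the Kingman-type form $\sum_j\frac1j\mathbb E[(\cdot)^+]$ of \cref{lem:Inv_off_expectation}.

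\textbf{Step 2: per-cycle exponential moment (the main obstacle).} The difficulty is that $\epsilon^{k-1}$ depends on the past inventory and on within-cycle demand, so it is not independent of $D^k$. My first attempt is Cauchy--Schwarz, which decouples the two factors:
\[
\mathbb E\big[e^{\theta(\epsilon^{k-1}-D^k)}\big]\le \mathbb E\big[e^{2\theta\epsilon^{k-1}}\big]^{1/2}\,\mathbb E\big[e^{-2\theta D^k}\big]^{1/2}.
\]
This explains the factor $(\lambda_0+(1-\lambda_0)e^{-2\theta})^{T/2}$ in $\beta_\theta$, since that is the binomial moment generating function raised to the power $1/2$.

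For the $\epsilon$ factor, I will use $\epsilon\in[0,T]$ with mean at most $C_2T^\alpha$. Among $[0,T]$-valued variables with a given mean, convexity of the exponential bounds the moment generating function by that of the two-point distribution on $\{0,T\}$. Bennett's inequality in the form $\ln\mathbb E e^{s\epsilon}\le s\,\mathbb E\epsilon+\frac{\mathbb E\epsilon}{T}h(sT)$, with $s=2\theta$ and the square root taken, produces exactly the terms $\theta C_2T^\alpha+\frac{C_2}{2}T^{\alpha-1}h(2\theta T)$.

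For products across consecutive cycles, I will condition sequentially on the filtration generated by past cycles. Then $D^k$ is independent of the past, and the bound on $\epsilon^{k-1}$ holds uniformly in the starting inventory, because $C_2$ is independent of the start-of-cycle inventory in \cref{lem:inv-diff}. This should give the product bound $\beta_\theta^j$. The step I am least sure of is making the conditional version of the bound on $\mathbb E[\epsilon]$ rigorous, since $\epsilon^{k-1}$ and $D^{k-1}$ live in the same cycle. If the conditioning does not go through cleanly, I will instead pair each $\epsilon^{k-1}$ with its own cycle's demand and apply Hölder across the $j$ terms.

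\textbf{Step 3: choosing $\theta$ and concluding.} Taking logarithms,
\[
\ln\beta_\theta\approx \theta(c+C_2T^\alpha)-\theta T(1-\lambda_0)+O(\theta^2T)+\frac{C_2}{2}T^{\alpha-1}h(2\theta T).
\]
I will pick $\theta=\theta_0/T$ with $\theta_0$ a small constant. Then $h(2\theta T)=h(2\theta_0)=O(1)$, and the drift term is $-\theta_0\delta_c+o(1)$, using the assumptions $T(1-\lambda_0)-c=T\delta_c+o(T)$ and $\alpha<1$.

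This choice makes $\ln\beta_\theta$ at most a negative constant, but not of order $-\kappa T$. So to obtain $\beta_\theta\le e^{-\kappa T}$, I instead take $\theta$ to be a fixed constant. The negative linear drift $\theta T\delta_c$ then dominates provided $h(2\theta T)T^{\alpha-1}=o(T)$, which requires checking because $h$ grows like $u\ln u$. If $\theta=\Theta(1)$ fails that check, I will accept $\theta=\Theta(1/T)$, for which $-(e\theta)^{-1}\ln(1-\beta_\theta)=O(T)$. Since this would not give $O(T^\alpha)$, I expect the real tuning to be a $\theta$ chosen so that $\beta_\theta$ is exponentially small while $h$ stays controlled, and I will verify this choice last.
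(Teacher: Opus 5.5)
\textbf{There is a genuine gap at the Bennett step in Step~2, and the fixed-$\theta$ conclusion of Step~3 depends entirely on it.}

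Your plan follows the paper's proof almost step for step: the Lindley form of $\tilde I_{T+1}^{n,\mathrm{off}}$, the bound $y^+\le(e\theta)^{-1}e^{\theta y}$, and conditional Cauchy--Schwarz to split off $(\lambda_0+(1-\lambda_0)e^{-2\theta})^{T/2}$. It then uses a Bennett bound on the $\epsilon$-factor and a fixed $\theta\le\min\{1,\delta_c/(4(1-\lambda_0))\}$, for which $T^{\alpha-1}h(2\theta T)=O(T^\alpha\ln T)=o(T)$. The failure is the inequality $\ln\mathbb E e^{s\epsilon}\le s\,\mathbb E\epsilon+\frac{\mathbb E\epsilon}{T}h(sT)$, which is false in exactly the regime you need.

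Evaluate your own two-point reduction. For $\epsilon\in[0,T]$ with mean $m$ it gives $\mathbb E e^{s\epsilon}\le 1+\frac mT(e^{sT}-1)$, with equality for the law on $\{0,T\}$. With $m=C_2T^\alpha$ and $s=2\theta$ fixed, the logarithm of this is $2\theta T-(1-\alpha)\ln T+O(1)$. Your claimed bound $sm+\frac mT h(sT)$ is only $O(T^\alpha\ln T)$.

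Bennett's MGF bound carries $\phi(u)=e^u-1-u$, not $h$; $h=\phi^*$ is the exponent of the \emph{tail} bound. The paper's proof uses the same $h$-form inequality (its ``standard Bennett MGF bound''), so following the paper does not close this gap.

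With the correct bound, the per-cycle factor you can certify is $e^{\theta c}\bigl(1+C_2T^{\alpha-1}(e^{2\theta T}-1)\bigr)^{1/2}(\lambda_0+(1-\lambda_0)e^{-2\theta})^{T/2}$. By concavity of $\ln$, this is at least of order $T^{-(1-\alpha)/2}e^{\theta T(1-\delta_c)+o(T)}$. It therefore exceeds $1$ for every fixed $\theta>0$, since $\delta_c\le 1-\lambda_0<1$. Only $\theta=O(1/T)$ gives a factor below $1$, and then the prefactor $(e\theta)^{-1}$ makes the bound $O(T)$, which is the dead end you foresaw in Step~3.

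So no tuning of $\theta$ rescues this route. Knowing only that $\epsilon^i\in[0,T]$ with conditional mean at most $C_2T^\alpha$, you cannot control $\mathbb E e^{2\theta\epsilon}$ at fixed $\theta$.

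A repair that should give $O(T^\alpha)$ (sketched, not checked in full) keeps $\epsilon$ additive rather than exponentiating it.
\begin{itemize}
\item Unroll $I^{n,\mathrm{on}}_{T+1}=(I^{n-1,\mathrm{on}}_{T+1}+c-D^n)^++\epsilon^n$ into a maximum over windows.
\item Bound it by the maximum of the i.i.d.\ walk with steps $c-D^i+T\delta_c/2$. This has exponentially small mean by Chernoff, as in \cref{lem:constant-Inv-off-bound-total}.
\item Add $\epsilon^n+\max_{\ell\ge 0}\sum_{i=n-\ell}^{n-1}(\epsilon^i-T\delta_c/2)$ and control it by conditional Markov bounds. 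A window of length $\ell$ can be positive only if more than $\ell\delta_c/4$ of its $\epsilon^i$ exceed $T\delta_c/4$, each with conditional probability $O(T^{\alpha-1})$.
\end{itemize}
This yields $O(T^\alpha)$, but not the displayed $\beta_\theta$ form with leading constant $C_2$.

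\textbf{Two bookkeeping points, both secondary.}

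First, in Step~1 a union bound over windows gives $\sum_j\mathbb E e^{\theta S_j}\le\sum_j\beta_\theta^j=\beta_\theta/(1-\beta_\theta)$, not $\sum_j\beta_\theta^j/j$. The $1/j$ weights come from Spitzer's identity, which the paper imports by applying \cref{lem:Inv_off_expectation} to $\tilde I_{T+1}^{n,\mathrm{off}}$. That identity needs i.i.d.\ increments, and $c+\epsilon^{i-1}-D^i$ are not i.i.d. Your union bound is therefore the more defensible step, but it proves $(e\theta)^{-1}\beta_\theta/(1-\beta_\theta)$ rather than the displayed $-(e\theta)^{-1}\ln(1-\beta_\theta)$.

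Second, for the dependence you flagged, use your pairing idea without H\"older. Write $S_j=\sum_{i\le j}(c+\epsilon^{i-1}-D^i)$. Since $\epsilon^j\ge 0$, you have $S_j\le\sum_{i\le j}(c+\epsilon^i-D^i)$. Each summand is a function of cycle $i$ and the $\mathcal F_{i-1}$-measurable start inventory. You can then peel off one cycle at a time with the tower property, conditional Cauchy--Schwarz, and the uniform-in-start-inventory form of \cref{lem:inv-diff}. The paper's tower step does this implicitly when it writes $\epsilon^j$ next to $D^j$. H\"older across the $j$ factors would require exponential moments at $j\theta$ and would not produce $\beta_\theta^j$.

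These fixes repair only the bookkeeping; the argument still stands or falls with the exponential moment of $\epsilon$, and that is where it fails.
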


\cref{lem:Inv-on-bound} upper-bounds the online algorithm’s expected end-of-cycle inventory by capturing the joint effect of the fixed order quantity \(c\) and the additional gap \(C_2T^\alpha\) induced by online decisions. 
For $\alpha<1$, the condition $T(1-\lambda_0) > c + C_2T^\alpha$ holds for all sufficiently large $T$, which in turn guarantees an inventory bound under the online algorithm used to control holding costs in the proof of \cref{thm:reg-total-constant}. When $\alpha=1$, the condition can be empty,  while in that regime we can use a stability argument at the optimal $c^{\text{on}}$ and $c^{\text{off}}$ to obtain a linear-in-$T$ bound.

\begin{theorem}\label{thm:reg-total-constant}
Under \cref{ass:online-algorithm},
\begin{enumerate}
\item[\textnormal{(i)}] \textbf{Case \(\alpha<1\).} There exists \(T_0<\infty\) such that for all \(T\ge T_0\) there exists \(\theta>0\) with \(\beta_\theta<1\) and
\begin{equation}\label{ineq:reg-total-constant-case-a}
\pi_\mathbf{c}^{\text{off}}(T, c^{\text{off}},N) - \pi_\mathbf{c}^{\text{on}}(T, c^{\text{on}},N)
\le C_1T^\alpha + h\cdot\left[C_2T^\alpha - (e\theta)^{-1}\ln\left(1-\beta_\theta\right)\right],
\end{equation}
where \(C_1, C_2\) and \(\beta_\theta\) are defined in \cref{ass:online-algorithm}, \cref{lem:inv-diff} and \cref{lem:Inv-on-bound}, respectively, with \(\beta_\theta\) evaluated at \(c=c^{\text{off}}\). So the per-cycle gap is \(\mathcal{O}(T^\alpha)\).
\item[\textnormal{(ii)}] \textbf{Case \(\alpha=1\).} There exists a finite constant \(K\) (independent of \(N\)) such that
\begin{equation}\label{ineq:reg-total-constant-case-b}
\pi_\mathbf{c}^{\text{off}}(T, c^{\text{off}},N) - \pi_\mathbf{c}^{\text{on}}(T, c^{\text{on}},N)
\le C_1 T + h K T
=\mathcal{O}(T).
\end{equation}
\end{enumerate}
\end{theorem}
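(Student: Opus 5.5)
The plan is to compare at a common order quantity. Since $c^{\text{on}}$ is optimal for the online algorithm, $\pi_\mathbf{c}^{\text{on}}(T,c^{\text{on}},N)\ge \pi_\mathbf{c}^{\text{on}}(T,c^{\text{off}},N)$. It therefore suffices to bound $\pi_\mathbf{c}^{\text{off}}(T,c^{\text{off}},N)-\pi_\mathbf{c}^{\text{on}}(T,c^{\text{off}},N)$, with both algorithms run under the same constant order $c=c^{\text{off}}$ and initial inventory $c$. We split each cycle's profit into reward and holding cost and treat the two differences separately.

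\textbf{Reward term.} By \cref{lem:constant-off-more}, we have $I_1^{n,\text{on}}(c,\mathcal H)\ge I_1^{n,\text{off}}(c,\mathcal H)$ for every $n$ and every path, because both receive the same $c$ on top of ordered leftovers. The myopic offline reward within a cycle is nondecreasing in the starting inventory, since it takes the top-reward demands. Hence the offline reward in cycle $n$ from $I_1^{n,\text{off}}$ is at most the offline reward from $I_1^{n,\text{on}}$. The online reward from $I_1^{n,\text{on}}$ trails that quantity by at most $C_1T^\alpha$ in expectation. To justify this, condition on the history up to cycle $n$: demand within cycle $n$ is independent of $I_1^{n,\text{on}}$, and \cref{ass:online-algorithm} holds uniformly in $I$, so the regret bound applies cycle by cycle. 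Averaging over $n$ gives a reward gap of at most $C_1T^\alpha$.

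\textbf{Holding term.} The offline holding cost is nonnegative and the online leftover is nonnegative, so the holding difference $h(\mathbb E I^{n,\text{on}}_{T+1}-\mathbb E I^{n,\text{off}}_{T+1})$ is at most $h\,\mathbb E I^{n,\text{on}}_{T+1}$. This is the route that yields the displayed bound. In case (i), the standing assumption $T(1-\lambda_0)-c^{\text{off}}=T\delta_c+o(T)$ with $\alpha<1$ guarantees $T(1-\lambda_0)>c^{\text{off}}+C_2T^\alpha$ for all $T\ge T_0$. Then \cref{lem:Inv-on-bound} applies with $c=c^{\text{off}}$ and supplies $\theta$ with $\beta_\theta<1$ and $\mathbb E I_{T+1}^{n,\text{on}}\le C_2T^\alpha-(e\theta)^{-1}\ln(1-\beta_\theta)$ uniformly in $n$. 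Summing, dividing by $N$, and adding the reward bound gives \eqref{ineq:reg-total-constant-case-a}. The $O(T^\alpha)$ conclusion follows from the last part of \cref{lem:Inv-on-bound}.

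\textbf{Case (ii), $\alpha=1$.} Here the reward term is bounded as before by $C_1T$. For the holding term, \cref{lem:Inv-on-bound} is unavailable, so I would instead use $c^{\text{off}}$ directly, since it sits a linear gap $T\delta_c$ below mean demand. Because $\epsilon^n\in[0,T]$, the recursion $I^{n}=(c+I^{n-1}-D^n)^++\epsilon^n$ could drift upward if $\epsilon^n$ is large. I would therefore work with the pair $(c^{\text{on}},c^{\text{off}})$ via a stability argument. Optimality of $c^{\text{on}}$ forces the online long-run profit to be at least that of ordering $0$, namely $-hc$-bounded quantities. Together with the crude fact that the per-cycle offline profit is at most $r_MT$, this lets us compare profits of order $T$ directly. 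Concretely, offline profit is at most $r_M\mu T$, and online profit at $c^{\text{on}}$ is at least online profit at $c=0$ with $I_1=0$, which is $\ge 0$. This yields a gap of at most $r_M T$, which is of the form $C_1T+hKT$ with a suitable $K$ (for instance absorbing $r_M/h$). I expect this case to be the main obstacle if a sharper $K$ tied to inventory drift is wanted. The cheap bound above suffices for the stated $O(T)$ form, because the initial inventory under $c=0$ is $0$ and no holding cost accrues.
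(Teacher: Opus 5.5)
Your case (i) is the paper's argument. It uses the same reduction to the common quantity $c^{\text{off}}$ via optimality of $c^{\text{on}}$ and the same reward/holding split. The reward gap comes from \cref{lem:constant-off-more} plus \cref{ass:online-algorithm}; you helpfully make explicit two things the paper leaves implicit, namely that the myopic offline reward is monotone in starting inventory, and the conditioning that lets the uniform-in-$I$ regret bound apply cycle by cycle. The holding gap is bounded by $h\,\mathbb{E}[I_{T+1}^{n,\text{on}}]$ and then by \cref{lem:Inv-on-bound} at $c=c^{\text{off}}$, exactly as in the paper.

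Case (ii) is where you genuinely depart. The paper keeps the reward/holding decomposition and appeals to a ``stability from optimality'' bound $\sup_n\mathbb{E}[I_{T+1}^{n,\text{on}}(c^{\text{on}},\mathcal H)]\le KT$ that it asserts but does not prove. Your sandwich has two pieces:
\begin{itemize}
\item $\pi_{\mathbf c}^{\text{off}}(T,c^{\text{off}},N)\le r_M\mu T$, because at most $D^n$ units are served and holding costs are nonnegative;
\item $\pi_{\mathbf c}^{\text{on}}(T,c^{\text{on}},N)\ge\pi_{\mathbf c}^{\text{on}}(T,0,N)=0$, because ordering nothing from $I_1^1=0$ earns and costs nothing.
\end{itemize}
This is fully rigorous and gives the stated form with $K=r_M/h$, since $C_1T\ge 0$. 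You should state explicitly that $c=0$ is an admissible order quantity; it is, under the standing condition $0\le c<T(1-\lambda_0)$ with $\delta_c=1-\lambda_0$.

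What your route buys is rigor and generality. It needs neither \cref{ass:online-algorithm} nor any inventory analysis, and it shows that part (ii) as stated holds for any online algorithm. What it gives up is meaning: your $K$ says nothing about carryover. The paper's $K$, if its stability claim were established, would genuinely bound the online system's expected inventory.

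Two small clean-ups for case (ii): delete the garbled phrase about ``$-hc$-bounded quantities'', and drop the $C_1T$ reward bound, which your final argument never uses.
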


Across both replenishment policies, base-stock and constant-order, the \emph{per-cycle} performance gap between the online and myopic offline algorithms is polynomial in \(T\) with exponent \(\alpha\) when \(T\) is large. Under constant orders, there is an additional exponentially small term in \(T\) that vanishes as \(T\) grows, so the \emph{cumulative} gap over \(N\) cycles ultimately scales as \(\mathcal{O}(N T^\alpha)\) for both policies. This suggests that, although replenishment policies affect operations, for long cycles the dominant driver of regret is the quality of the online fulfillment decisions rather than the replenishment algorithm. From a managerial perspective, this highlights the importance of investing in stronger online fulfillment algorithms when replenishments are infrequent.

When \(T\) is small, however, randomness plays a larger role and replenishments occur more frequently relative to the fulfillment process. In this regime, the choice of replenishment policy intuitively has greater influence on overall performance, and the constant-order case may not yet exhibit the clean \(\mathcal{O}(N T^\alpha)\) behavior. This motivates our next analysis: with limited \(T\), how should one assess the relative importance of online fulfillment decisions versus replenishment policies? 
% From a practical standpoint, understanding this tradeoff is crucial for businesses operating with shorter planning horizons or higher demand volatility, where both replenishment design and fulfillment algorithms can significantly impact overall profit. We address it in the next section.

% Intuitively, when \( T \) is large, the law of large numbers smooths out random demand fluctuations, making inventory dynamics more predictable and allowing the online algorithm to closely track the myopic offline benchmark. Conversely, when \( T \) is small, not only does randomness play a bigger role, but replenishments also occur more frequently relative to the fulfillment process. As a result, the choice of replenishment policy has a stronger influence on overall system performance.

\section{Replenishment vs. Online Fulfillment} 
\label{sec:reple-vs-fulfill}

\iffalse
{\color{red}Suppose the initial inventory at an epoch is $I_1$. The expected profit under offline fulfillment of that epoch is $F(I_1)= \bbe [f(I_1, D_1,\ldots,D_T)]$. Conjecture:
\[
   F(I_1) - F(I_2)\geq c_1|I_1-I_2|+c_2,
\]
where $c_1,c_2$ are related to $T$. Find a lower bound of $\bbe|I_1-I_2|$ through their dynamics.}
\fi

% From \cref{sec:Replenishment}, for each replenishment policy \(i\in\{\mathbf b,\mathbf c\}\) (with parameter \(z_{\mathbf b}=S\) and \(z_{\mathbf c}=c\)), let \(z_i^{\mathrm{on}}\) and \(z_i^{\mathrm{off}}\) denote the respective optimizers under the online and offline fulfillment algorithms. Then, for any \(N\) and all sufficiently large \(T\),
% \[
% \pi_i^{\mathrm{off}}(T, z_i^{\mathrm{off}},N)-\pi_i^{\mathrm{on}}(T, z_i^{\mathrm{on}},N)
%   \le   \mathcal O  \left(T^\alpha+\exp\{-C T+\theta C_2 T^\alpha\}\right).
% \]

% This result implies that as \( T \) grows large, the exponential term vanishes and the per-cycle expected gap matches the one-cycle regret order, \(\mathcal O(T^\alpha)\), uniformly in \(N\). In particular, if \(\alpha=0\), the gap converges to a constant. Thus, for long cycles the online fulfillment algorithm is the primary driver of performance; the choice of replenishment policy plays a secondary role.

In this section, we will study under what conditions replenishment decisions matter more than online fulfillment decisions. To build intuition, we first present a simulation study examining the interaction between these two decisions across different replenishment cycle lengths. Specifically, we simulate average profit outcomes under four policy combinations: (i) an online Bayes Selector (BS) algorithm \citep{vera2021bayesian} or an online greedy algorithm (accept all arrivals while in stock), each paired with (ii) either a base-stock or a constant-order policy. 
% We set lead time \(L=2\), horizon \(N=1,000\) cycles, holding cost \(h=2\), rewards \([r_1,r_2,r_3]=[5,8,10]\), arrival probabilities \([\lambda_0,\lambda_1,\lambda_2,\lambda_3]=[0.3,0.2,0.3,0.2]\), and \(K=10{,}000\) sample paths.

\begin{figure}[htbp]
  \centering
  \includegraphics[width=0.7\textwidth]{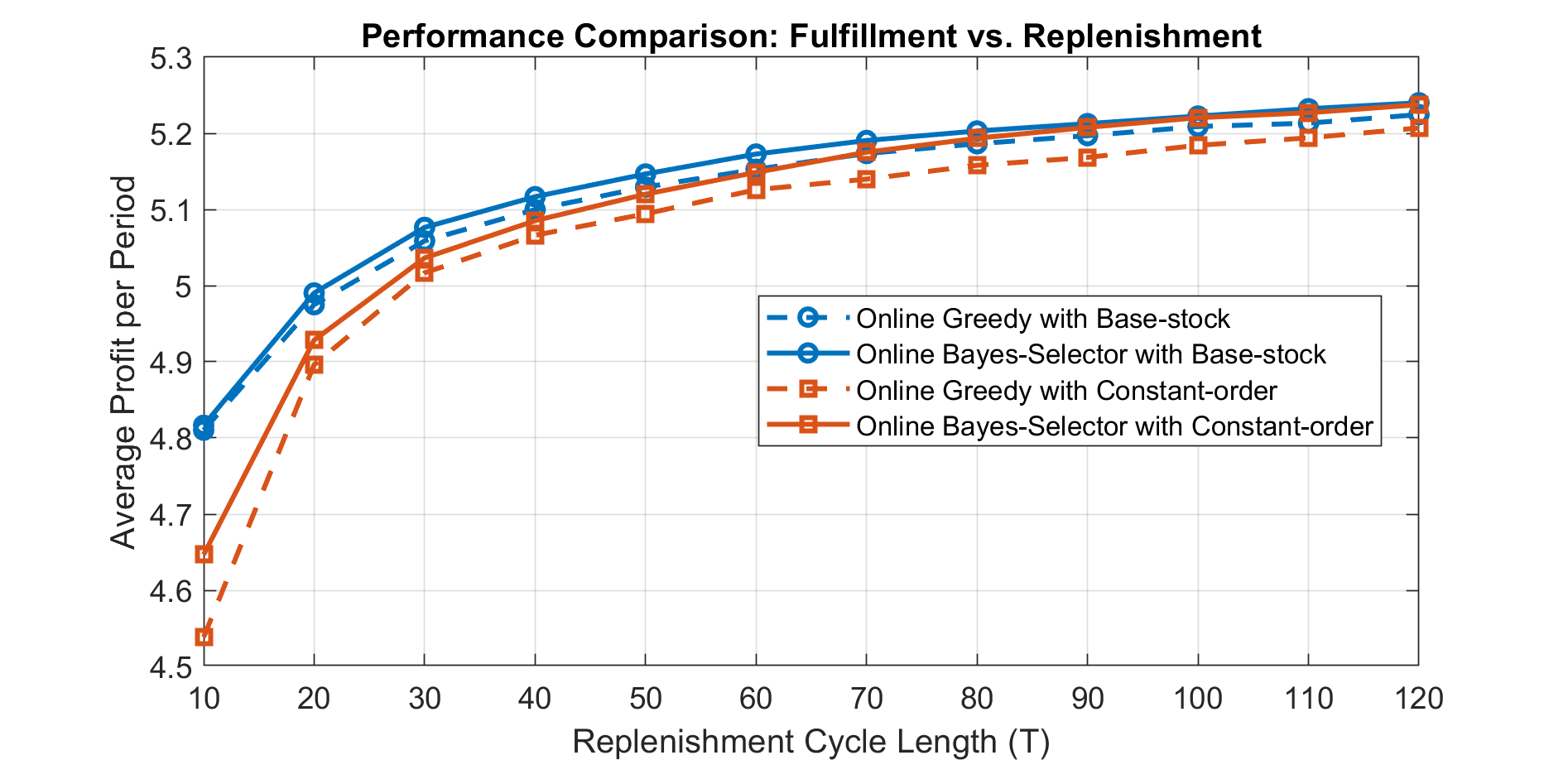}
  \caption{Average profit versus cycle length \(T\) under four policy combinations (online BS vs.\ online greedy) \(\times\)
  (base-stock vs.\ constant-order). Parameters: \(L=2\), \(N=1{,}000\), \(h=2\),
  \([r_1,r_2,r_3]=[5,8,10]\), \([\lambda_0,\lambda_1,\lambda_2,\lambda_3]=[0.3,0.2,0.3,0.2]\), and \(K=10{,}000\) sample paths.}
  % \caption{Simulation results comparing average profit under four policy combinations across different replenishment cycle lengths \(T\).}
  \label{fig:sim-comparison}
\end{figure}

\cref{fig:sim-comparison} shows a clear regime change. For large \(T\), the BS-based policies dominate their greedy
counterparts under both replenishment algorithms, consistent with \cref{sec:Replenishment}, indicating that fulfillment decision is the primary driver when cycles are long. In contrast, when \( T \) is small, replenishment can dominate: base-stock with greedy fulfillment outperforms constant-order with BS, suggesting that under short replenishment cycles (frequent replenishment), improving the replenishment algorithm can outweigh refinements in online fulfillment.

% \cref{fig:sim-comparison} plots average profit versus \(T\). For large \(T\), the BS-based policies dominate their greedy counterparts under both replenishment algorithms, consistent with \cref{sec:Replenishment}: as \(T\) grows, replenishment choice matters less and fulfillment decisions dominate. In contrast, when \( T \) is small, the ranking of policies highlights a different insight: the base-stock policy paired with the greedy fulfillment algorithm yields higher profit than the constant-order policy paired with a sophisticated online algorithm. This reversal suggests that under short replenishment cycles—where inventory can be frequently adjusted—an effective replenishment policy has greater influence on system performance than marginal improvements in fulfillment logic. In other words, there exists a condition where optimizing the replenishment policy is crucial, even if the fulfillment algorithm remains simplistic.

Motivated by these findings, we next compare the performance of the \textit{base-stock policy} (\( \pi_\mathbf{b} \)) and the \textit{constant-order policy} (\( \pi_\mathbf{c} \)) when paired with either \textit{the myopic offline fulfillment} or different \textit{online fulfillment algorithms}. Since we focus on long‐run average profit (\(N\rightarrow\infty\)) in this section, we abbreviate \( \pi_\mathbf{b}\triangleq  \pi_\mathbf{b} (T, S,\infty)\) and \( \pi_\mathbf{c}\triangleq  \pi_\mathbf{c} (T, c,\infty)\) for notational simplicity.
Define the gaps
\[
\pi_\mathbf{b}^{\text{off}} - \pi_\mathbf{c}^{\text{off}} \triangleq \eta_0, \qquad
\pi_\mathbf{b}^{\text{off}} - \pi_\mathbf{b}^{\text{greedy}} \triangleq \eta_1, \qquad
\pi_\mathbf{c}^{\text{off}} - \pi_\mathbf{c}^{\text{on}} \triangleq \eta_2,
\]
where \( \pi_\mathbf{b}^{\text{greedy}} \) denotes the average profit under the base-stock replenishment policy with online greedy fulfillment, and \( \pi_\mathbf{c}^{\text{on}} \) denotes the average profit under the constant-order policy with the general online fulfillment algorithm (as assumed in \cref{ass:online-algorithm}).

Here, the \textit{online greedy algorithm} refers to an algorithm that simply accepts all arriving customers as long as inventory is available, without any attempt to balance future opportunities or optimize inventory usage. In contrast, a general online fulfillment algorithm (e.g., the online algorithm in \cite{vera2021bayesian}) is designed to control inventory consumption more carefully based on estimated future demand. Comparing the greedy algorithm to a general online algorithm provides a meaningful benchmark: the greedy algorithm represents a worst-case, non-optimized fulfillment behavior, while the general online algorithm reflects more strategic decision-making. 
% \elaine{I can't find a reference on the regret of greedy algorithm without replenishment in multi-secretary problem}

These definitions yield the following relationship:
\begin{equation}\label{eq:ineq}
\pi_\mathbf{b}^{\text{greedy}} - \pi_\mathbf{c}^{\text{on}} = \eta_0 - \eta_1 + \eta_2.
\end{equation}
Since the online greedy algorithm is suboptimal compared to another general online algorithm (\( 0<\alpha <1\)), and the base-stock policy typically outperforms constant-order replenishment, the difference \( \pi_\mathbf{b}^{\text{greedy}} - \pi_\mathbf{c}^{\text{on}} \) provides a useful indicator. In particular, if
\[
\pi_\mathbf{b}^{\text{greedy}} - \pi_\mathbf{c}^{\text{on}} \geq 0,
\]
then the gain from improved replenishment outweighs the loss from using a weaker online fulfillment algorithm. Thus, this provides a \textit{sufficient condition} for determining when replenishment is the dominant factor affecting profit outcomes. We therefore expect a threshold \(T^*\) such that this sufficient condition holds for all \(T\le T^*\), i.e., replenishment becomes the dominant driver when cycles are short.

% then the benefit of a better replenishment policy (base-stock over constant-order) outweighs the disadvantage of a worse online fulfillment algorithm (greedy over general online). Thus, this provides a \textit{sufficient condition} for determining when replenishment is the dominant factor affecting profit outcomes. We expect there to exist some threshold \(T^*\) such that this sufficient condition holds for all \(T\le T^*\). In other words, when the replenishment cycle is short, the dominant driver of regret becomes the replenishment policy rather than the quality of the online fulfillment decisions.

% We expect this sufficient condition to hold within a specific range of the planning horizon \( T \in [T_1, T_2] \), where \( T \) is neither too small (where there's not enough difference between different replenishment policies) nor too large (where online fulfillment becomes asymptotically optimal if \(\alpha = 0\)). Identifying this intermediate regime highlights the scenarios where choosing the right replenishment policy can be most critical in practice.

%%%%%%%%%%%%%%%%%%%%%%%%%%%%%%%%%%%%%%%%%%%%%%%%%%%%%%

\subsection{Base-Stock vs. Constant-Order under Offline Fulfillment}
\label{subsec:base-constant-off}

In this subsection, we will analyze the offline profit gap
\(
\eta_0 \triangleq \pi_\mathbf{b}^{\text{off}} - \pi_\mathbf{c}^{\text{off}}.
\)
Rather than work directly with profits, we bound the corresponding long-run average \emph{costs} under myopic offline fulfillment, using a \emph{uniform} per-unit penalty \(p\) for lost sales or backorders, for base-stock and constant-order policies. This cost objective differs from the earlier profit formulations with class-dependent rewards, so we introduce compact cost-based notation and the associated optimal policy parameters tailored to the minimization problem.

Let \(C_\mathbf{b}^{\mathscr{L},S}(h,p)\) and \(C_\mathbf{b}^{\mathscr{B},S}(h,p)\) denote the long-run average costs
of a base-stock policy with level \(S\) in the lost-sales and backorder systems, respectively, with holding cost \(h\)
and penalty \(p\). Let \(S^{\mathscr{L}*}(h,p)\in\arg\min_S C_\mathbf{b}^{\mathscr{L},S}(h,p)\) and
\(S^{\mathscr{B}*}(h,p)\in\arg\min_S C_\mathbf{b}^{\mathscr{B},S}(h,p)\) be the corresponding optimal base-stock
levels. Similarly, let \(C_\mathbf{c}^c(h,p)\) be the long-run average cost of a constant-order policy with order
quantity \(c\), and define \(c^*(h,p)\in\arg\min_c C_\mathbf{c}^c(h,p)\).
Note that all objects may depend on \(T\) and \(L\). Throughout this subsection, we treat \(T,L\) as fixed and focus on variation in \(h, p, S\) and \(c\), suppressing the explicit dependence on \(T,L\) in the notation.

% Let \( C_\mathbf{b}^{\mathscr{L}, S}(h,p) \) and \( C_\mathbf{b}^{\mathscr{B}, S}(h,p) \) denote the long-run average costs of a base-stock policy with level \(S\) in the lost-sales and backorder systems, respectively, with holding cost \(h\) and lost-sales/backorder penalty \(p\). Define \(S^{\mathscr{L}*}(h,p)\) and \(S^{\mathscr{B}*}(h,p)\) as the optimal base-stock levels in the two systems, i.e., \(S^{\mathscr{L}*}(h,p) \in \arg\min_{S} C_\mathbf{b}^{\mathscr{L}, S}  \left(h, p\right)\) and \(S^{\mathscr{B}*}(h,p) \in \arg\min_{S} C_\mathbf{b}^{\mathscr{B}, S}  \left(h, p\right)\). 
% For a fixed base-stock level \(S\), let \( I_1^{\infty,\mathscr{L},S} \) and \( I_1^{\infty,\mathscr{B},S} \) be the steady-state inventory levels in the lost-sales and backorder systems, respectively.
% Similarly, let \( C^{c}_\mathbf{c}(h,p) \) be the long-run average cost of a constant-order policy with order quantity \(c\). Define \( c^*(h,p)\) as the optimal constant-order quantity in this system, i.e.,
% $
% c^*(h,p) \in \arg \min_c C^c_\mathbf{c}(h, p)
% $. 

% Note that the above objects also depend (possibly implicitly) on the lead time \(L\) and the period length \(T\); for brevity we treat \(T,\) and \(L\) as fixed and focus on variation in \(h, p, S\) and \(c\), suppressing the explicit dependence on \(T,L\) in the notation.

We will use several standard bridges between lost-sales and backorder base-stock models. For completeness, we state
them in \cref{app:proof_base_constant_off} (\cref{lem:lost-back-bridges}). These results allow us to
bound the lost-sales cost by analyzing a corresponding backorder system with suitably adjusted penalty parameters.

We write \(P_k\) for the probability that the \emph{total} demand arriving over the entire pipeline window (namely across \((L+1)\) cycles with \(T\) periods each) equals \(k\). Under our Bernoulli-arrival model, this aggregate demand is binomial with \((L+1)T\) trials and success probability \(1-\lambda_0\), so
\[P_k \triangleq \Pr  \left(\sum_{n=1}^{L+1} D^n = k\right) = \binom{(L+1)T}{k}(1-\lambda_0)^k \lambda_0^{(L+1)T-k}.\] 
Using these exact probabilities, we obtain computable \emph{upper} and \emph{lower} bounds on the lost-sales cost under a base-stock policy by comparing to backorder systems with adjusted penalties (by \cref{lem:lost-back-bridges}\cref{lem:lost-back-bridges:a}). In both expressions below, the first sum represents the expected holding cost when the total pipeline demand \(K\) does not exhaust the base-stock level (i.e., \((S-K)^+\)); the second sum represents the expected shortage when demand exceeds the base-stock level (i.e., \((K-S)^+\)), multiplied by the appropriate adjusted penalty.

\noindent\textbf{Upper bound (evaluate at the backorder optimizer with penalty \(p{+}Lh\)):}
\[
\bar{a}_1\left(T,h,p,L\right) \triangleq 
h \sum_{k=0}^{\bar{S}^{\mathscr{B}}(h,p)} \left(\bar{S}^{\mathscr{B}}(h,p)-k\right) P_k 
+ \left(p+Lh\right) \sum_{k=\bar{S}^{\mathscr{B}}(h,p)+1}^{\left(L+1\right)T} \left(k-\bar{S}^{\mathscr{B}}(h,p)\right) P_k.
\]
\textbf{Lower bound (evaluate at the backorder optimizer with penalty \(p/(L{+}1)\)):}
\[
\underline{a}_1\left(T,h,p,L\right) \triangleq
h \sum_{k=0}^{\underline{S}^{\mathscr{B}}(h,p)} \left(\underline{S}^{\mathscr{B}}(h,p)-k\right) P_k 
+ \frac{p}{L+1} \sum_{k=\underline{S}^{\mathscr{B}}(h,p)+1}^{\left(L+1\right)T} \left(k-\underline{S}^{\mathscr{B}}(h,p)\right) P_k,
\]
where \(\bar{S}^{\mathscr{B}}(h,p)\triangleq S^{\mathscr{B}*}\left(h,p+Lh\right)\) and \(\underline{S}^{\mathscr{B}}(h,p)\triangleq S^{\mathscr{B}*}\left(h,p/(L+1)\right)\) are the base-stock levels that minimize the corresponding backorder costs. Equivalently, they satisfy the critical fractiles 
\(\sum_{k=0}^{\bar{S}^{\mathscr{B}}(h,p)} P_k=\frac{p+Lh}{p+\left(L+1\right)h}\) and 
\(\sum_{k=0}^{\underline{S}^{\mathscr{B}}(h,p)} P_k=\frac{p}{p+\left(L+1\right)h}\).
Intuitively, \(\bar{a}_1\) evaluates holding and shortage using a larger shortage penalty \(p{+}Lh\) (therefore it lies above the true lost-sales cost), while \(\underline{a}_1\) uses a smaller effective penalty \(p/(L{+}1)\) (therefore it lies below), with both quantities computed exactly from the binomial law of total pipeline demand.

\begin{lemma}
\label{lem:bounds-base-off}
For all \(T\), the long-run average cost of the lost-sales system with the optimal base-stock level \( S^{\mathscr{L}*}(h,p) \) is bounded as:
\[
\underline{a}_1\left(T,h,p,L\right) \le  C_\mathbf{b}^{\mathscr{L},    S^{\mathscr{L}*}(h,p)}  \left(h,p\right)   \le   \bar{a}_1\left(T,h,p,L\right),
\]
where \(\bar{a}_1\left(T,h,p,L\right)\) and \(\underline{a}_1\left(T,h,p,L\right)\) are as defined above.
\end{lemma}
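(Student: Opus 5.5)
The plan is to sandwich the optimal lost-sales base-stock cost between two optimally tuned backorder systems, using the bridges collected in \cref{lem:lost-back-bridges}. Part \cref{lem:lost-back-bridges:a} should give, for every fixed \(S\),
\[
C_\mathbf{b}^{\mathscr{B},S}\!\left(h,\tfrac{p}{L+1}\right)\ \le\ C_\mathbf{b}^{\mathscr{L},S}(h,p)\ \le\ C_\mathbf{b}^{\mathscr{B},S}(h,p+Lh).
\]
This is the classical Janakiraman--Seshadri--Shanthikumar type comparison. For the upper inequality, a lost sale in the lost-sales system corresponds to a backorder in the backorder system. That backorder is penalized at \(p\), and the lost-sales system additionally carries the extra holding that the lost unit causes over at most \(L\) subsequent cycles, hence the factor \(p+Lh\). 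For the lower inequality, each shortage unit in the backorder system is counted in up to \(L+1\) consecutive cycles, which explains the division by \(L+1\).

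Given this pointwise sandwich, the lemma follows by optimizing each side. \emph{Upper bound:} use \(C_\mathbf{b}^{\mathscr{L},S^{\mathscr{L}*}}(h,p)\le C_\mathbf{b}^{\mathscr{L},\bar S^{\mathscr{B}}}(h,p)\le C_\mathbf{b}^{\mathscr{B},\bar S^{\mathscr{B}}}(h,p+Lh)\). \emph{Lower bound:} use \(C_\mathbf{b}^{\mathscr{L},S^{\mathscr{L}*}}(h,p)\ge C_\mathbf{b}^{\mathscr{B},S^{\mathscr{L}*}}(h,p/(L+1))\ge \min_S C_\mathbf{b}^{\mathscr{B},S}(h,p/(L+1))=C_\mathbf{b}^{\mathscr{B},\underline S^{\mathscr{B}}}(h,p/(L+1))\).

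It remains to show that the backorder base-stock cost at level \(S\) with penalty \(p'\) equals \(h\sum_{k\le S}(S-k)P_k+p'\sum_{k>S}(k-S)P_k\). In the backorder system with cycle-level review and lead time \(L\) cycles, the standard identity is that the net inventory at the end of a cycle equals \(S\) minus the demand over the \(L+1\) cycles covered by the order placed \(L\) cycles earlier. That demand is \(\sum_{n=1}^{L+1}D^n\sim\mathrm{Binomial}((L+1)T,1-\lambda_0)\), with law \(P_k\). Holding cost is then charged on \((S-K)^+\) and the penalty on \((K-S)^+\), where the penalty is per unit of end-of-cycle backlog in this convention; this matches the per-cycle holding-cost convention of the model. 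The stationarity of \(K\) is immediate because the \(D^n\) are i.i.d. The long-run average over \(N\to\infty\) coincides with this expectation after an initial transient of \(L\) cycles, and arbitrary feasible initial pipelines do not matter in the limit.

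The critical-fractile characterizations of \(\bar S^{\mathscr{B}}\) and \(\underline S^{\mathscr{B}}\) are the usual newsvendor optimality conditions, understood as the smallest \(S\) at which the cdf reaches the ratio, since the distribution is discrete. They are used only to identify the minimizers, and the minimum values are what appear in \(\bar a_1\) and \(\underline a_1\).

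The main obstacle is getting the bookkeeping in \cref{lem:lost-back-bridges}\cref{lem:lost-back-bridges:a} right for the exact model. The paper reviews once per cycle of \(T\) periods and charges holding only at the end of a cycle. Also, under offline myopic fulfillment, the total units fulfilled in a cycle is \(\min\{I,D^n\}\), which reduces exactly to a single-product lost-sales system with per-cycle demand \(D^n\). If the bridge is stated in the literature for unit-period systems, I would re-derive it by a sample-path coupling. Run both systems on the same demand path with the same \(S\), and show that backlog in the backorder system dominates cumulative lost sales. Then show that each lost unit raises the lost-sales on-hand relative to the backorder net inventory for at most \(L\) cycles, which yields the \(Lh\) term. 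For the lower bound, show that each backorder persists at most \(L+1\) cycles, which yields \(p/(L+1)\).
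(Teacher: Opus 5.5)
Your proposal is correct and follows essentially the same route as the paper. For the upper bound, use optimality of $S^{\mathscr{L}*}(h,p)$ and then the cost sandwich of \cref{lem:lost-back-bridges}\cref{lem:lost-back-bridges:a} at $\bar S^{\mathscr{B}}(h,p)$. For the lower bound, apply the sandwich at $S^{\mathscr{L}*}(h,p)$ and minimize over $S$ in the $p/(L+1)$ backorder system. Finally, expand the backorder cost through the binomial law of $(L+1)$-cycle demand. Your additional remarks are sound refinements of steps the paper simply cites or asserts: you justify the identity net inventory $=S-\sum_{n=1}^{L+1}D^n$, read the critical fractiles as smallest-$S$ conditions for the discrete law, and sketch a cycle-level coupling for the bridge.
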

This two-sided bound follows by comparing lost sales to two backorder systems with adjusted penalties (\cref{lem:lost-back-bridges}), and then choosing the newsvendor-optimal base-stock levels in those backorder systems. Intuitively, converting lost sales to backorders with a smaller penalty \(p/(L+1)\) yields a cost that is too optimistic (a lower bound), whereas inflating the penalty to \(p+Lh\) yields a conservative backorder cost (an upper bound). Because backorder costs decompose into expected overstock and understock with respect to total pipeline demand, the resulting envelopes \(\underline{a}_1\) and \(\bar{a}_1\) are explicitly computable from the exact binomial probabilities \(P_k\).

%%%%%%%%%%%%%%%%%%%%%%%%%%%%%%%%%%%%%%%%%%%%%%%%%

We now turn to the constant-order replenishment policy and derive an exact expression for its long-run cost under myopic offline fulfillment.

For \(n\ge1\), let \( P_{n,k} \) be the exact probability mass function of the binomially distributed demand over \(n\) cycles: \(P_{n,k} \triangleq \Pr  \left(\sum_{i=1}^{n} D^i = k\right)
= \binom{nT}{k}  \left(1-\lambda_0\right)^{k}\lambda_0^{   nT-k}\). 
% Define
% \[
% a_2(T,h,p) \triangleq  h \sum_{n=1}^{\infty} \sum_{k=0}^{n c^*(h,p)} \left(c^*(h,p) - \frac{k}{n} \right) P_{n,k} + p T(1-\lambda_0) - p c^*(h,p).
% \]

\begin{lemma} \label{lem:bound-constant-off}
For all \( T \), the exact solution for \( C_\mathbf{c}^{c^*(h,p)}(h, p) \) is given by:
\[
C_\mathbf{c}^{c^*(h,p)}(h, p) = a_2(T,h,p) \triangleq  h \sum_{n=1}^{\infty} \sum_{k=0}^{n c^*(h,p)} \left(c^*(h,p) - \frac{k}{n} \right) P_{n,k} + p T(1-\lambda_0) - p c^*(h,p),
\]
where the optimal order quantity \( c^*(h,p) \) satisfies:
\[
\sum_{n=1}^{\infty} \Pr\left(\sum_{i=1}^n D^i \leq n c^*(h,p)\right) = \frac{p}{h}.
\]
\end{lemma}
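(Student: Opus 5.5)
We write the long-run average cost per cycle of the constant-order policy under myopic offline fulfillment as expected holding cost plus expected lost-sales penalty, and evaluate each term in steady state.

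\textbf{Cost decomposition.} Under myopic offline fulfillment the end-of-cycle inventory follows the Lindley recursion
\[
I_{T+1}^{n,\text{off}}=\bigl(I_{T+1}^{n-1,\text{off}}+c-D^n\bigr)^+ .
\]
The number of lost units in cycle $n$ is $(D^n-I_{T+1}^{n-1,\text{off}}-c)^+$. Using $x=x^+-(-x)^+$, we get
\[
\text{lost}^n = D^n - c - I_{T+1}^{n-1,\text{off}} + I_{T+1}^{n,\text{off}} .
\]
Average this over $n$ and let $N\to\infty$. The telescoping terms vanish, provided the inventory process is stable, i.e.\ $c<\mathbb E[D^n]$, as assumed in \cref{subsec:constant-replenishment}. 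Then $\mathbb E[D^n]=T(1-\lambda_0)$ gives average lost sales $T(1-\lambda_0)-c$. This is the term $pT(1-\lambda_0)-pc$ in $a_2$.

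\textbf{Holding term.} This is $h$ times the steady-state mean of $I_{T+1}^{\infty,\text{off}}$. Let $n\to\infty$ in \cref{lem:Inv_off_expectation}; this is Spitzer's identity, and the series converges by exponential concentration of the binomial partial sums when $c<\mathbb E[D]$. The result is
\[
\sum_{n\ge1}\frac1n\,\mathbb E\Bigl[\bigl(nc-\textstyle\sum_{i=1}^nD^i\bigr)^+\Bigr]
=\sum_{n\ge1}\sum_{k=0}^{nc}\Bigl(c-\frac kn\Bigr)P_{n,k},
\]
since $\sum_{i\le n}D^i\sim\mathrm{Binomial}(nT,1-\lambda_0)$. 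Combining the two terms gives $C_\mathbf c^c(h,p)$ in closed form for every $c$, and evaluating at $c=c^*(h,p)$ yields $a_2$.

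\textbf{Optimality condition.} Differentiate $C_\mathbf c^c$ in $c$. For each $n$, the derivative of $\frac1n\mathbb E[(nc-\Sigma_n)^+]$ in $c$ is $\Pr(\Sigma_n\le nc)$, where $\Sigma_n=\sum_{i\le n}D^i$. So the derivative of the cost is
\[
h\sum_{n\ge1}\Pr(\Sigma_n\le nc)-p .
\]
Each summand is nondecreasing in $c$, so $C_\mathbf c^c$ is convex. Setting the derivative to zero gives the stated first-order condition.

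\textbf{Main obstacle.} The hardest step is making this rigorous, with two issues to handle.
\begin{itemize}
\item Integrality: $c$ is integer and the distribution is discrete. We would treat $c$ as continuous, so that the map is piecewise linear and convex, and read the condition as a subgradient condition. Alternatively we assume, as the statement implicitly does, that equality is attained.
\item Exchanging the derivative with the infinite sum: we justify this by dominated convergence, using a Chernoff bound $\Pr(\Sigma_n\le nc)\le e^{-n\kappa}$ uniformly for $c$ in a compact subset of $[0,T(1-\lambda_0))$.
\end{itemize}
The same Chernoff bound also justifies the $N\to\infty$ limit and the vanishing of the telescoping boundary terms, via bounded second moments of the stationary waiting time.
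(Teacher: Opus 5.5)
Your proposal is correct and follows essentially the same route as the paper. Both arguments take the holding cost from the stationary Kingman/Spitzer representation $\sum_{n\ge1}\frac1n\mathbb{E}[(nc-\sum_{i\le n}D^i)^+]$, expand it via $P_{n,k}$, add the lost-sales term $p(T(1-\lambda_0)-c)$, and differentiate termwise to get the first-order condition. Your telescoping derivation of the mean lost sales, and your caveats on integrality (subgradient condition, since equality need not be attained) and on swapping the derivative with the series, are rigor refinements that the paper's proof omits.
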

To obtain the exact expression in \cref{lem:bound-constant-off}, we follow the same idea as in \cref{lem:bounds-base-off}: instead of tracing the full inventory path, we work with the distribution of \emph{total} pipeline demand. With the exact binomial probabilities for aggregate demand, the expected cost reduces to a closed-form series combining holding and lost-sales terms. Optimizing \(c\) then yields a newsvendor-style condition for \(c^*(h,p)\), which in turn gives \(a_2(T,h,p)\).

%%%%%%%%%%%%%%%%%%%%%%%%%%%%%%%%%%%%%%%%%%%%%%%%%%%%%%%%%%
With the cost bounds for base-stock and constant-order policies in hand, we can now bound 
\(\eta_0 \triangleq \pi_\mathbf{b}^{\text{off}} - \pi_\mathbf{c}^{\text{off}}\).

\begin{theorem} \label{thm:replenishment-gap}
The difference in long-run average profit per cycle between the base-stock policy and the constant-order policy under the myopic offline fulfillment algorithm is bounded as:
\begin{align*}
     a_2(T,h,r_1) - \bar{a}_1(T,h,r_M,L) \leq  \pi_{\mathbf{b}}^{\text{off}} - \pi_{\mathbf{c}}^{\text{off}} \leq a_2(T,h,r_M) - \underline{a}_1(T,h,r_1,L),
\end{align*}
where \( \bar{a}_1(\cdot)\), \(\underline{a}_1(\cdot) \) and \( a_2(\cdot) \) are as defined in \cref{lem:bounds-base-off,lem:bound-constant-off}.
\end{theorem}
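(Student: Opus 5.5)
The plan is to translate profits into costs via a sandwich argument. Under myopic offline fulfillment, each cycle's reward is a sum of rewards over fulfilled units. Every fulfilled unit earns some $r_j\in[r_1,r_M]$, and every unit of unmet demand forgoes at most $r_M$ and at least $r_1$.

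Write total potential reward $R^n\triangleq\sum_j r_j D_j^n[1,T]$, so that per-cycle profit equals $R^n-(\text{forgone reward on lost units})-h I_{T+1}^n$. Since $\mathbb E[R^n]$ is identical under both replenishment policies, it cancels in $\eta_0$. Only the long-run average of (forgone reward $+$ holding cost) matters, and this is a cost with a type-dependent per-unit penalty.

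The number of lost units in a cycle is $(D^n-I_1^n)^+$, which does not depend on which types are rejected. Hence, for a fixed replenishment parameter $z$ and sample path, the forgone reward lies between $r_1(D^n-I_1^n)^+$ and $r_M(D^n-I_1^n)^+$. Since the inventory trajectory under either policy is driven only by total demand $D^n$ and not by types, this gives
\[
C^{z}(h,r_1)\le \bar R-\pi^{\text{off}}(z)\le C^{z}(h,r_M),
\]
where $\bar R$ is the expected potential reward and $C^z(h,p)$ is the uniform-penalty lost-sales cost of the paper's setup. This holds for $z=S$ (lost-sales base-stock) and $z=c$ (constant-order).

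\textbf{Optimizing both sides.} For the base-stock policy, take $\pi_{\mathbf b}^{\text{off}}=\max_S\pi^{\text{off}}_{\mathbf b}(S)$. This gives $\bar R-\pi_{\mathbf b}^{\text{off}}=\min_S(\cdot)$, which lies between $\min_S C_{\mathbf b}^{\mathscr L,S}(h,r_1)$ and $\min_S C_{\mathbf b}^{\mathscr L,S}(h,r_M)$. By \cref{lem:bounds-base-off}, the former is at least $\underline a_1(T,h,r_1,L)$ and the latter at most $\bar a_1(T,h,r_M,L)$. Similarly, for constant orders, $\bar R-\pi_{\mathbf c}^{\text{off}}$ lies between $\min_c C_{\mathbf c}^c(h,r_1)=a_2(T,h,r_1)$ and $a_2(T,h,r_M)$ by \cref{lem:bound-constant-off}. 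Subtracting yields
\[
\eta_0=(\bar R-\pi_{\mathbf c}^{\text{off}})-(\bar R-\pi_{\mathbf b}^{\text{off}}),
\]
bounded below by $a_2(T,h,r_1)-\bar a_1(T,h,r_M,L)$ and above by $a_2(T,h,r_M)-\underline a_1(T,h,r_1,L)$. This is exactly the claim.

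\textbf{Main obstacle.} The delicate step is justifying that the per-cycle lost-unit count and the leftover inventory under myopic offline fulfillment coincide with those of the classical single-type lost-sales recursion $I_{T+1}^n=(I_1^n-D^n)^+$. This requires showing that the myopic algorithm fulfills exactly $\min\{I_1^n,D^n\}$ units, because it accepts highest types first until stock runs out and never withholds inventory. It also requires checking that the cost accounting matches the paper's cost model: a penalty per lost unit and holding cost on end-of-cycle stock, with lead time counted in cycles.

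A second point to check is that long-run averages exist ($N\to\infty$) and that min/max exchange correctly with the pathwise inequalities. The pathwise bounds hold for each fixed $z$, so taking limits and then optimizing over $z$ preserves them.
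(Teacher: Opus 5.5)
Your proposal is correct and follows essentially the same route as the paper. The paper likewise writes each profit as expected total demand value minus holding cost minus the value of lost sales, sandwiches the lost-sales value between $r_1$ and $r_M$ times the number of lost units, uses optimality of each policy's own parameter, and then invokes \cref{lem:bounds-base-off,lem:bound-constant-off}. The only difference is cosmetic: the paper evaluates directly at the specific minimizers $S^{\mathscr{L}*}(h,r_M)$, $S^{\mathscr{L}*}(h,r_1)$, $c^*(h,r_1)$ and $c^*(h,r_M)$, whereas you state a pointwise sandwich and then minimize over $z$, which is equivalent.
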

\textit{Proof sketch.} We outline the argument for the lower bound in \cref{thm:replenishment-gap} and the upper bound follows by the same logic with the roles of the two policies reversed. For each policy, rewrite the long-run profit as the expected \emph{total demand value} (i.e., the sum of rewards over all arrivals) minus (holding cost + the value of lost sales). The total demand value depends only on the realized demand stream and is therefore identical under both replenishment policies. It cancels when taking the difference, and thus it suffices to compare the corresponding cost terms.
To obtain a lower bound on \(\pi_{\mathbf{b}}^{\text{off}} - \pi_{\mathbf{c}}^{\text{off}}\), we make the comparison conservative for base-stock and favorable for constant-order: in the base-stock system we value each lost sale at the highest reward \(r_M\) and optimize its base-stock level, whereas in the constant-order system we value each lost sale at the lowest reward \(r_1\) and optimizing its order quantity. Combining the resulting base-stock cost bounds with the closed-form cost expression for the constant-order policy in \cref{lem:bounds-base-off,lem:bound-constant-off} yields \(a_2(T,h,r_1) - \bar{a}_1(T,h,r_M,L)\) as a valid lower bound. \hfill \Halmos 
\endproof

To formalize the asymptotic scaling of these bounds on the profit gap with the cycle length \(T\), we state the following corollary.
\begin{corollary}
\label{coro:replenishment-gap-sqrtT-scaling}
As \(T\to\infty\) (with \(L,h,p\) fixed), the three functions satisfy:
\[
\bar{a}_1(T,h,p,L) = \Theta(\sqrt{T}),\quad \underline{a}_1(T,h,p,L) = \Theta(\sqrt{T}) \quad \text{and} \quad a_2(T,h,p) =\Theta(\sqrt{T}).
\]
\end{corollary}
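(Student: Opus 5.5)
We treat the three quantities separately. In each case we exploit that the underlying demand is binomial, and hence approximately normal on the $\sqrt{T}$ scale. Write $\mu=1-\lambda_0$ and $\sigma^2=\lambda_0(1-\lambda_0)$.

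\textbf{Base-stock bounds.} The quantity $\bar a_1$ is exactly a newsvendor cost
\[
h\,\mathbb E[(\bar S-K)^+]+(p+Lh)\,\mathbb E[(K-\bar S)^+],
\]
evaluated at the optimal critical fractile, where $K\sim\mathrm{Binomial}((L+1)T,\mu)$. Write $\bar S=(L+1)T\mu+z_T\sqrt{(L+1)T}\,\sigma$. Since the critical ratio $\frac{p+Lh}{p+(L+1)h}\in(0,1)$ is a fixed constant, the central limit theorem (in Berry--Esseen form) shows that $z_T\to z^*=\Phi^{-1}$ of that ratio. The standardized variable $(K-(L+1)T\mu)/\sqrt{(L+1)T\sigma^2}$ converges to $N(0,1)$. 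Its second moments are bounded uniformly, so it is uniformly integrable, and therefore $\mathbb E[(\bar S-K)^+]/\sqrt{T}$ and $\mathbb E[(K-\bar S)^+]/\sqrt{T}$ converge to $\sqrt{(L+1)}\,\sigma$ times the standard normal loss functions at $\pm z^*$. Both limits are strictly positive. Hence $\bar a_1\sim \kappa\sqrt T$ with $\kappa>0$. The same argument with the ratio $\frac{p}{p+(L+1)h}$ and penalty $p/(L+1)$ gives $\underline a_1=\Theta(\sqrt T)$.

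For a cleaner upper bound that avoids limits, note that for every $S$ we have $\mathbb E(S-K)^+ +\mathbb E(K-S)^+\le |S-\mathbb E K|+\sqrt{\mathrm{Var}K}$. The optimal cost is at most the cost at $S=\lfloor \mathbb E K\rfloor$, so it is $O(\sqrt T)$. For the lower bound, the optimal cost is at least $\min(h,p')\,\mathbb E|K-S|\ge \min(h,p')\,\mathbb E|K-\mathrm{med}K|$, and by the CLT this is $\ge c\sqrt T$. Together these give $\Theta(\sqrt T)$ directly.

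\textbf{Constant-order term.} For $a_2$, rewrite it through Lemma~\ref{lem:Inv_off_expectation} as
\[
a_2 = h\,\mathbb E[W_\infty]+p\,(T\mu-c^*),
\]
where $W_\infty=\sum_{n\ge1}\frac1n\mathbb E\big(nc^*-\sum_{i\le n}D^i\big)^+$ is the stationary leftover. The stationarity identity behind the lost-sales term is that long-run lost demand equals $T\mu-c$. For the upper bound, choose $c=T\mu-y\sqrt T$ with $y>0$ fixed. The walk $\sum_{i\le n}(c-D^i)$ then has drift $-y\sqrt T$ per step and step variance $T\sigma^2$. By Kingman's heavy-traffic bound, $\mathbb E W\le \frac{T\sigma^2}{2y\sqrt T}=O(\sqrt T)$. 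The lost-sales term $py\sqrt T$ is also $O(\sqrt T)$. Since $c^*$ is optimal, $a_2\le$ the cost at this $c$, which is $O(\sqrt T)$. For the lower bound, let $\delta=T\mu-c^*\ge0$. If $\delta\ge \epsilon\sqrt T$, the lost-sales term alone is $\ge p\epsilon\sqrt T$. Otherwise the $n=1$ term of $W_\infty$ gives
\[
\mathbb E W_\infty\ \ge\ \mathbb E(c^*-D^1)^+\ \ge\ \mathbb E(T\mu-D^1)^+-\epsilon\sqrt T\ \ge\ (\sigma/\sqrt{2\pi}-o(1)-\epsilon)\sqrt T,
\]
which is a positive multiple of $\sqrt T$ once $\epsilon$ is chosen small.

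\textbf{Main obstacle.} The step I expect to need the most care is making the normal approximations uniform, specifically showing that $\mathbb E(T\mu-D^1)^+\ge c\sqrt T$ for all large $T$ and not merely in the limit. I would handle this through uniform integrability of the standardized binomial (its variance is exactly $1$) combined with the CLT, or alternatively through the explicit mean absolute deviation formula for the binomial, which is $\Theta(\sqrt T)$.
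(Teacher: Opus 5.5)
Your proposal is correct. Apart from your first CLT/uniform-integrability argument for the base-stock terms, which is the limiting version of what the paper does, it takes a genuinely different and more elementary route.

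For $\bar a_1$ and $\underline a_1$, the paper traps the standardized newsvendor quantile in a fixed interval $[z_-,z_+]$ using the Kolmogorov form of Berry--Esseen. It then uses the Wasserstein form to replace the binomial loss functions by normal ones up to an $O(1)$ error, and finishes with monotonicity of the normal loss functions. Your sandwich $\min(h,p')\,\mathbb E|K-S|\le h\,\mathbb E(S-K)^++p'\,\mathbb E(K-S)^+\le \max(h,p')\,\mathbb E|K-S|$ works differently. Combined with the fact that $\bar a_1,\underline a_1$ are \emph{minimal} backorder costs, it needs no information about where the critical fractile lies, only that $\min_s\mathbb E|K-s|=\Theta(\sqrt T)$.

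For $a_2$, the paper works through the first-order condition. It proves $0<\delta_-\le \delta_T\le\delta_+$ for $\delta_T=(T\mu-c^*)/\sqrt{T\sigma^2}$, using Berry--Esseen for $\delta_-$ and Chernoff for $\delta_+$. It then Chernoff-bounds every term of the series for the upper bound, and keeps only the linear term $p\,\delta_T\sqrt{T\sigma^2}$ for the lower bound. You bypass the FOC entirely:
\begin{itemize}
\item Your upper bound comes from optimality of $c^*$ against the test order $T\mu-y\sqrt T$, plus Kingman's bound $\mathbb E W\le \mathrm{Var}(c-D)/(2|\mathbb E(c-D)|)$. This is Kingman's exact GI/G/1 inequality, valid at every load rather than a heavy-traffic approximation. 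The series equals $\mathbb E\sup_n\sum_{i\le n}(c-D^i)$ by Spitzer's identity.
\item Your lower bound comes from the case split between the lost-sales term and the $n=1$ holding term.
\end{itemize}

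The paper's route yields the structural fact $T\mu-c^*=\Theta(\sqrt T)$. You recover only the $O(\sqrt T)$ half of that, from $p\delta\le a_2$. Your route is shorter, and it does not care that with integer demand the FOC is a step function that need not have an exact root.

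One step needs adding. You use $\delta=T\mu-c^*\ge 0$ without justification. For $c\ge T\mu$, the $n$-th term of $\sum_n\frac1n\mathbb E(nc-K_n)^+$ is of order $\sqrt{T/n}$, so the series diverges. Your finite upper bound therefore forces $c^*<T\mu$.
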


\paragraph{Discussion about scaling in \(T\).}
\cref{thm:replenishment-gap} provides computable bounds on the profit gap between base-stock and constant-order policies under myopic offline fulfillment. We now summarize how these bounds scale with the cycle length \(T\).

When \(T\) is small (short cycles), replenishments are frequent and each period contributes \(\mathcal{O}(1)\) expected holding/shortage cost, so both policies track demand closely. Aggregating over \(T\) periods then makes \(a_2,\ \bar{a}_1,\ \underline{a}_1\) grow approximately linearly in \(T\). This is iconsistent with the formulas: \(a_2(T,h,p)\) contains the explicit linear term \(p   T(1-\lambda_0)\), and the sums in \(\bar{a}_1,\underline{a}_1\) run over \(k=0,\dots,(L+1)T\) with \(\mathcal{O}(1)\) summands, yielding first-order \(\Theta(T)\).

As \(T\) becomes large, the behavior is instead driven by stochastic fluctuations of cumulative demand over a pipelin. In particular, the Binomial variability over \((L+1)T\) periods induces deviations of order \(\sqrt{T}\) around the mean inventory position. \cref{coro:replenishment-gap-sqrtT-scaling} formalizes this intuition and shows that, as \(T\to\infty\) (with \(L,h,p\) fixed), each of the three functions \(\bar{a}_1(T,h,p,L)\), \(\underline{a}_1(T,h,p,L)\), and \(a_2(T,h,p)\) scales as \(\Theta(\sqrt{T})\). Consequently, the bounds in \cref{thm:replenishment-gap} imply that, in the large-\(T\) regime, the profit gap between the base-stock and constant-order policies grows at rate \(\mathcal{0}(\sqrt{T})\), whereas for short cycles their costs are essentially linear in \(T\).

\subsection{Offline vs. Greedy under Base-Stock Policy}
\label{subsec:off-greedy-base}

We next analyze the profit difference between the \emph{myopic offline fulfillment} algorithm and the \emph{online greedy} algorithm under the base-stock policy. We first fix a common \emph{base-stock} replenishment algorithm and compare their per-cycle rewards under that same replenishment algorithm. With a common base stock, the inventory trajectories are identical: in each cycle, both algorithms serve as many demands as inventory allows and stop when inventory is depleted. So let \(I_1^{\infty,S}\) and \(q^{\infty,S}\) denote the stationary start-of-cycle inventory and order quantity under both fulfillment policies with the same base-stock level \(S\). Thus, conditional on the same start-of-cycle inventory and demand sequence, any profit difference is solely due to fulfillment rewards, not holding or lost-sales costs. 
% After bounding the reward gap under a fixed base-stock, we then let each algorithm use its \emph{own} (profit-maximizing) base-stock level. By optimality, this can only increase the corresponding long-run profit and thus preserves the direction of the bounds.

\textbf{Setup for this subsection.}
Let
\(R^{n,\text{off}}(I_1^n,\mathcal{H}^n)\) and \(R^{n,\text{greedy}}(I_1^n,\mathcal{H}^n)\)
denote the reward earned in cycle \(n\) under the offline and greedy fulfillment, respectively, given on-hand inventory \(I_1^n\) and realized arrivals \(\mathcal{H}^n \triangleq \{j_t^n: t\in[T]\}\). Let \(S^{\text{greedy}}\) and \(S^{\text{off}}\) denote base-stock levels that maximize the long-run average profit under the greedy and offline fulfillment, respectively.
We do not attempt to characterize these optimizers in closed form. Fix a base-stock level \(S\)  and define the (mean lead-time) slack
\(
\Delta_T \triangleq (L+1)T(1-\lambda_0) - S > 0.
\)
We work in an \emph{interior base-stock regime}:
\begin{equation}\label{eq:base-stock-assump}
(L+1)T\lambda_M \ <\ S\ <\ (L+1)T(1-\lambda_0),
\qquad
\Delta_T = \Theta(T^\beta)\ \ \text{for some }\beta\in[0,1].
\end{equation}
That is, for all sufficiently large \(T\), there exist constants \(0<\underline K_1\le \bar K_1<\infty\) (independent of \(T\)) such that
\(\underline K_1T^\beta\le \Delta_T\le \bar K_1T^\beta\).
The interval in \eqref{eq:base-stock-assump} is consistent with the optimizer: if \(S\ge (L+1)T(1-\lambda_0)\), then the base stock meets or exceeds mean lead-time demand, which induces persistently large inventory positions (of order \(T\)) and is dominated whenever holding costs are strictly positive; if \(S\le (L+1)T\lambda_M\), then (in expectation) the highest-reward class alone can exhaust the entire pipeline over \(L{+}1\) cycles, leading to large lost-sales and poor long-run reward. The additional slack condition \(\Delta_T = \Theta(T^\beta)\) simply records that the distance between the base-stock level and mean lead-time demand grows polynomially in \(T\). This covers the classical
case in which the base-stock tracks mean lead-time demand up to a \(\Theta(\sqrt{T})\) safety term, and it also allows larger
slacks (up to linear order) when the profit tradeoff favors operating further below mean demand. 
We assume that \(S^{\text{greedy}}\) and \(S^{\text{off}}\) both satisfy \cref{eq:base-stock-assump} with the \emph{same} exponent \(\beta\), and write
\(\Delta_T^{\text{greedy}}\triangleq (L+1)T(1-\lambda_0)-S^{\text{greedy}}\) and
\(\Delta_T^{\text{off}}\triangleq (L+1)T(1-\lambda_0)-S^{\text{off}}\).

To quantify how the above base-stock level \(S\) translates into the stationary start-of-cycle inventory that enters the reward-gap bounds, we next summarize the steady-state properties of the base-stock pipeline.

\begin{lemma}\label{lem:stationary-and-concentration}
Under the base-stock policy with level \(S\) satisfying \cref{eq:base-stock-assump}, the inventory pipeline
\((I_1^n,q^{n+1},\ldots,q^{n+L})\) admits a stationary distribution. In stationarity, the pipeline marginals are identical,
\(q^{n+1}\stackrel{d}{=}\cdots\stackrel{d}{=}q^{n+L}\).
Moreover, there exists a finite constant \(K_2\) independent of \(T\) such that, in stationarity,
\begin{equation}\label{eq:bs-concentration}
\mathbb{E}\!\left[\left|I_1^{\infty,S}-\frac{S}{L+1}\right|\right]\le K_2\sqrt{T}.
\end{equation}
Consequently,
\(
\frac{1}{T}\left(I_1^{\infty,S}-\frac{S}{L+1}\right)\ \to\ 0
\text{ in probability as }T\to\infty.
\)
\end{lemma}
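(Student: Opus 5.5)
Under the base-stock policy with level \(S\), with either myopic offline or greedy fulfillment, every arrival in cycle \(n\) is served while stock lasts. Hence the cycle's consumption is \(\min\{I_1^n, D^n\}\) and \(I_{T+1}^n=(I_1^n-D^n)^+\). The inventory position at the start of each cycle is reset to \(S\) by \eqref{eq:base-replenish}, so the order placed at the start of cycle \(n\) equals the consumption of cycle \(n-1\). The state \((I_1^n,q^{n+1},\ldots,q^{n+L})\) therefore evolves as a Markov chain on the finite set \(\{(i,q_1,\ldots,q_L)\in\mathbb Z_{\ge0}^{L+1}: i+\sum_l q_l= S\}\), driven by i.i.d.\ \(D^n\). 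The plan has three steps.

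\textbf{Step 1: stationarity.} Existence of a stationary distribution is immediate from finiteness of the state space. Uniqueness and convergence come from aperiodicity and a single recurrent class: because \(\lambda_0>0\), the event \(D^n=0\) has positive probability, and on \(L+1\) consecutive such cycles with a large-demand cycle in between, the chain is driven to a fixed state. For example, a cycle with \(D^n\ge S\), which has positive probability when \(S\le T\), or a suitable sequence of full-depletion cycles, can be used to reach a common state from any starting point.

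\textbf{Step 2: identical pipeline marginals.} The pipeline entries are successive consumptions, \(q^{n+l}=\min\{I_1^{n+l-L-1},D^{n+l-L-1}\}\), and the pipeline shifts by one slot each cycle. In stationarity, \(q^{n+l}\) at time \(n\) is the same random variable as \(q^{(n+1)+(l-1)}\) at time \(n+1\). By shift invariance the law of the \(l\)-th slot equals that of the \((l-1)\)-th slot, so all marginals coincide. The same argument identifies their common law as the stationary law of cycle consumption \(\min\{I_1,D\}\).

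\textbf{Step 3: concentration, the main obstacle.} Since \(I_1^n=S-\sum_{l=1}^{L}q^{n+l}\) and the \(q\)'s are consumptions of the previous \(L\) cycles, I will write
\[
I_1^n-\tfrac{S}{L+1}=\tfrac{S}{L+1}-\sum_{m=n-L}^{n-1}\bigl(\min\{I_1^m,D^m\}-\tfrac{S}{L+1}\bigr).
\]
It therefore suffices to show \(\mathbb E|\min\{I_1^m,D^m\}-S/(L+1)|\le K\sqrt T\) plus a coupling bound. My first attempt is a drift/Lyapunov argument on \(X_n\triangleq I_1^n-S/(L+1)\), split into two cases:
\begin{itemize}
\item When \(I_1^m\le D^m\), consumption equals \(I_1^m\) and the cycle empties. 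Under \eqref{eq:base-stock-assump}, \(S/(L+1)<T(1-\lambda_0)=\mathbb E D^m\), so demand typically exceeds \(S/(L+1)\), with gap \(\Delta_T/(L+1)\).
\item When \(I_1^m>D^m\), consumption equals \(D^m\), which deviates from \(T(1-\lambda_0)\) by \(O_p(\sqrt T)\).
\end{itemize}

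A cleaner route exploits the interior regime. I will compare the system to the deterministic fluid cycle in which each cycle receives \(S/(L+1)\). Deviations are caused by binomial fluctuations \(D^m-\mathbb ED^m\), each with \(L^1\) norm \(O(\sqrt T)\), and by truncation, which only pulls consumption toward \(I_1^m\). The map from \((I_1^m,D^m)\) to consumption is \(1\)-Lipschitz in each argument. This gives a recursion
\[
\mathbb E|X_{n+1}|\le \rho\,\mathbb E|X_{n-L}|+c\sqrt T .
\]
Obtaining a contraction factor \(\rho<1\) is the delicate part. The difficulty is that when \(S/(L+1)\) is well below mean demand, stationary stock is typically emptied each cycle, so \(X\) behaves like a lagged copy of itself. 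I would handle this by bounding the \(L\)-cycle sum of consumptions directly: over any window of \(L+1\) cycles the total consumption is at most \(S\) plus the total demand fluctuation. Combined with \(I_1\ge0\), this bounds \(\mathbb E|X|\) by \(L\cdot O(\sqrt T)\) after averaging over a stationary window.

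The convergence in probability of \(\frac1T(I_1^{\infty,S}-\frac{S}{L+1})\) to \(0\) then follows from \eqref{eq:bs-concentration} by Markov's inequality, since \(K_2\sqrt T/T\to0\).
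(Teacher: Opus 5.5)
Your Steps 1--2 are fine. Existence follows from finiteness of $\{i+\sum_l q_l=S\}$, and the shift argument for the marginals is exactly the paper's. For uniqueness, $L$ consecutive zero-demand cycles already send every state to $(S,0,\dots,0)$; large-demand or full-depletion cycles are not needed and do not couple states.

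The gap is in Step 3. Your identity should read $I_1^n-\frac{S}{L+1}=-\sum_{m=n-L}^{n-1}\bigl(\min\{I_1^m,D^m\}-\frac{S}{L+1}\bigr)$. Reducing to $\mathbb{E}|\min\{I_1,D\}-\frac{S}{L+1}|$ is circular, since on stock-out cycles consumption equals $I_1$. As you observe, while every cycle stocks out one has $I_1^n=I_1^{n-L-1}$. So any periodic pattern $(a_0,\dots,a_L)$ with $\sum_i a_i=S$ and each $a_i$ below its cycle's demand is invariant, and $\rho=1$. Your window-sum fix cannot repair this, because window sums are pinned by the exact identity
\[
\sum_{m=n-L}^{n} I_1^m \;=\; S+\sum_{m=n-L}^{n-1}\bigl(I_1^m-D^m\bigr)^+ .
\]
They therefore carry no information about how $S$ is split among the $L+1$ slots. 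Window sums, $I_1\ge 0$ and the stock-out constraint are all consistent with a pattern having one slot about $\frac{L}{L+1}\Delta_T$ below $\frac{S}{L+1}$ and the others just below mean demand. At best they give an envelope of order $\Delta_T+\sqrt T$, which is not $O(\sqrt T)$ when $\beta>1/2$.

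What is missing is the mechanism that breaks this neutrality. A cycle with $D^m<I_1^m$ moves the leftover $e=I_1^m-D^m$ into the next slot, turning $(\dots,a_i,a_{i+1},\dots)$ into $(\dots,a_i-e,a_{i+1}+e,\dots)$. Since this happens most often in the largest slots, it tends to level the pattern. A proof needs a Lyapunov function for the dispersion of the pattern (e.g.\ $\sum_i (a_i-\frac{S}{L+1})^2$) showing that this leveling beats the dispersion injected by the overshoots $e$, uniformly in $T$. For $\beta>1/2$ these events have probability of order $\exp(-c\,\Delta_T^2/T)$ near the balanced pattern. So you must compare event rates across slots rather than look for a per-cycle contraction.

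The paper takes a shorter route. It sets $\mathbb{E}[I_1^{\infty,S}]=\frac{S}{L+1}$ by claiming the post-order components have identical marginals. It then bounds the variance by Efron--Stein over one cycle's arrival slots, conditional on the pipeline. This does not supply your missing step, for two reasons.
\begin{enumerate}
\item Efron--Stein given the state controls only the conditional variance, not $\mathrm{Var}\bigl(\mathbb{E}[I_1^{n+1}\mid\text{state}]\bigr)$. In the stock-out regime that term is precisely the pattern dispersion.
\item The on-hand component $(I_1^n-D^n)^++q^{n+1}$ is not equal in law to a pipeline order. In fact $(L+1)\,\mathbb{E}[I_1^{\infty,S}]=S+L\,\mathbb{E}[(I_1^{\infty,S}-D)^+]$, with $D$ an independent cycle demand.
\end{enumerate}
So do not borrow that shortcut; a leveling argument of the kind above is what the lemma actually requires.
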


With \cref{lem:stationary-and-concentration} in hand, we can control the steady-state shortfall of start-of-cycle inventory from mean demand under a given base-stock level, which yields a sharper upper bound on the offline--greedy reward gap.

\begin{theorem}\label{thm:offline-greedy-UB}
The difference in long-run average profit per cycle between the myopic offline and the online greedy fulfillment algorithms under their respective optimal base-stock levels is upper bounded by:
\[
\pi_\mathbf{b}^{\text{off}} - \pi_\mathbf{b}^{\text{greedy}}
\le
(r_M-r_1)\frac{1-\lambda_0-\lambda_1}{1-\lambda_0}
\left(
\frac{\Delta_T^{\text{off}}}{L+1}
+\Bigl(K_2+\sqrt{(1-\lambda_0)\lambda_0}\Bigr)\sqrt{T}
\right).
\]
In particular, since \(\Delta_T^{\text{off}}=\Theta(T^\beta)\), the profit gap scales as \(\mathcal{O}(T^\beta+\sqrt{T})\).
\end{theorem}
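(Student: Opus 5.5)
Since greedy is feasible for any base-stock level, optimality of \(S^{\text{greedy}}\) gives \(\pi_\mathbf{b}^{\text{greedy}}(S^{\text{greedy}})\ge \pi_\mathbf{b}^{\text{greedy}}(S^{\text{off}})\). So it suffices to bound \(\pi_\mathbf{b}^{\text{off}}(S^{\text{off}})-\pi_\mathbf{b}^{\text{greedy}}(S^{\text{off}})\) at the common level \(S=S^{\text{off}}\).

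\textbf{Reduction to a per-cycle reward gap.} At a common \(S\), both algorithms fulfill exactly \(\min\{I_1^n,D^n\}\) units in every cycle. Hence the inventory trajectories, the orders \eqref{eq:base-replenish}, and the holding costs coincide pathwise. The profit gap therefore equals the long-run average of \(\mathbb{E}[R^{n,\text{off}}(I_1^n,\mathcal H^n)-R^{n,\text{greedy}}(I_1^n,\mathcal H^n)]\), evaluated under the stationary law of \(I_1^{\infty,S}\) from \cref{lem:stationary-and-concentration}.

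\textbf{Per-cycle bound.} Fix \(I=I_1^n\) and a demand sequence. If \(D^n\le I\), both algorithms accept everything and the gap is zero. If \(D^n>I\), both accept exactly \(I\) units: offline takes the \(I\) highest rewards, greedy takes the first \(I\) arrivals. Pair the excess demand \(D^n-I\): each swap trades one accepted unit worth at least \(r_1\) for one worth at most \(r_M\). This gives the crude bound \((r_M-r_1)(D^n-I)^+\).

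To obtain the factor \(\frac{1-\lambda_0-\lambda_1}{1-\lambda_0}\), refine the count. The gap is at most \((r_M-r_1)\) times the number of rejected-by-greedy units that offline would accept in their place. A swap can only help when the unit greedy missed is of type \(\ge2\). Greedy rejects exactly the last \(D^n-I\) arrivals. Conditional on \(D^n\), arrival types are i.i.d. with \(\Pr(\text{type }\ge2)=\frac{1-\lambda_0-\lambda_1}{1-\lambda_0}\), independent of the arrival positions. Since \(I\) is independent of the current-cycle demand, this yields
\[
\mathbb{E}[\text{gap}\mid I]\le (r_M-r_1)\tfrac{1-\lambda_0-\lambda_1}{1-\lambda_0}\,\mathbb{E}[(D^n-I)^+\mid I].
\]
Making this rigorous is the step I expect to be most delicate. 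The plan is to bound the gap by \((r_M-r_1)\) times the number of type-\(\ge2\) units among greedy's rejected arrivals. This holds because offline's accepted set, minus greedy's, consists only of rejected units, and a rejected type-1 unit cannot displace anything profitably.

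\textbf{Bounding the shortfall.} Write \((D^n-I)^+\le |D^n-T(1-\lambda_0)|+|T(1-\lambda_0)-\tfrac{S}{L+1}|+|\tfrac{S}{L+1}-I|\). The middle term equals \(\Delta_T/(L+1)\) by the definition of \(\Delta_T\). The first term has expectation at most \(\sqrt{\mathrm{Var}(D^n)}=\sqrt{T(1-\lambda_0)\lambda_0}\) by Jensen. The last term has expectation at most \(K_2\sqrt T\) by \eqref{eq:bs-concentration}.

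Combining these, averaging over \(n\), taking \(N\to\infty\), and setting \(S=S^{\text{off}}\) gives the stated bound. The \(\mathcal O(T^\beta+\sqrt T)\) rate then follows from \(\Delta_T^{\text{off}}=\Theta(T^\beta)\).
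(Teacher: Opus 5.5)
Your proposal is correct and follows essentially the same route as the paper. You reduce to the common level $S^{\text{off}}$ via optimality of $S^{\text{greedy}}$, note that inventory and holding costs coincide pathwise, and bound the per-cycle reward gap by $(r_M-r_1)$ times the number of type-$\ge 2$ arrivals among greedy's rejected arrivals, whose conditional mean gives the factor $\frac{1-\lambda_0-\lambda_1}{1-\lambda_0}$. You then split $\mathbb{E}[(D^n-I)^+]$ into the $\sqrt{\mathrm{Var}(D^n)}$ term, the slack $\Delta_T^{\text{off}}/(L+1)$, and the $K_2\sqrt{T}$ term from \cref{lem:stationary-and-concentration}; your justification of the swap count (offline's extra units are all greedy-rejected, and a rejected type-1 unit cannot displace anything profitably) is slightly more careful than the paper's wording, which speaks only of replacing type-1 units.
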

\textit{Proof sketch.}
We upper bound the gap by comparing both algorithms under the same base-stock level \(S^{\mathrm{off}}\). With a common base stock, replenishment and holding costs depend only on how many units are consumed, so both algorithms follow the same inventory trajectory;\ and the profit difference is entirely due to how inventory is allocated across demand types. The offline algorithm can beat greedy only in stockout cycles, by ``swapping’’ units that
greedy spends on early low-reward requests to instead serve later high-reward requests. The number of such swaps is limited by the demand overshoot \((D-I)^+\), i.e., the customers greedy cannot serve. This overshoot is driven by two effects: the \(\sqrt{T}\)-scale randomness of binomial demand and the systematic slack between the base stock and mean lead-time demand, which contributes order \(\Delta_T^{\mathrm{off}}/(L+1)=\Theta(T^\beta)\) to the typical shortfall (up to another \(\sqrt{T}\) term from steady-state concentration). Since each swap gains at most \(r_M-r_1\), the total gap is at most a constant times \(T^\beta+\sqrt{T}\), yielding the stated bound.
\hfill \Halmos 
\endproof

The upper bound in \cref{thm:offline-greedy-UB} shows that the offline--greedy gap is controlled by the base-stock slack and
\(\sqrt{T}\)-scale demand fluctuations. We next complement it with a matching lower bound that attributes a comparable loss to greedy whenever the steady-state start-of-cycle inventory falls in an intermediate ``rationing'' range.
Define
\[
A_k   \triangleq   \sum_{j=k}^M r_j\lambda_j   -   r_{k-1}  \sum_{j=k}^M \lambda_j,\quad k=2,\dots,M,
\qquad
A   \triangleq   \min_{2\le k\le M} A_k.
\]
Note that \(A>0\) under the nondegeneracy \(r_1<r_2<\ldots<r_M\) and \(\sum_{j=1}^{M-1}\lambda_j>0\). 

\begin{theorem}\label{thm:offline-greedy-LB}
The difference in long-run average profit per cycle between the myopic offline and the online greedy fulfillment algorithms under their respective optimal base-stock levels is lower bounded by: for all \(T\ge 1\),
\begin{equation}\label{eq:LB-allT}
\pi_\mathbf{b}^{\text{off}}-\pi_\mathbf{b}^{\text{greedy}}
\ \ge\
\frac{A}{1-\lambda_0}\cdot \frac{\Delta_T^{\text{greedy}}}{4(L+1)}\,
\Pr\!\left(
T\lambda_M < I_1^{\infty,S^{\text{greedy}}}
\le
T(1-\lambda_0)-\frac{\Delta_T^{\text{greedy}}}{4(L+1)}
\right)
- M_1\sqrt{T},
\end{equation}
where \(M_1=\sum_{j=1}^M\sqrt{\lambda_j(1-\lambda_j)}\).

Moreover, there exists \(T_1<\infty\) such that for all \(T\ge T_1\),
\begin{equation}\label{eq:LB-Delta-minus-sqrt}
\pi_\mathbf b^{\mathrm{off}}-\pi_\mathbf b^{\mathrm{greedy}}
\ \ge\
\frac{A}{2(1-\lambda_0)(L+1)}\,\Delta_T^{\text{greedy}}
\ -\
\left(\frac{A K_2}{1-\lambda_0}+M_1\right)\sqrt{T}.
\end{equation}
In particular, since \(\Delta_T^{\text{greedy}}=\Theta(T^\beta)\), the RHS is \(\Theta(T^\beta)-\Theta(\sqrt{T})\).
\end{theorem}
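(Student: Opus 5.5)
We reduce to a common base-stock level and then lower bound the per-cycle reward gap between offline and greedy on a "rationing" event. Since \(S^{\text{off}}\) is optimal for offline, \(\pi_{\mathbf b}^{\text{off}}(S^{\text{off}})\ge \pi_{\mathbf b}^{\text{off}}(S^{\text{greedy}})\). So it suffices to lower bound \(\pi_{\mathbf b}^{\text{off}}(S^{\text{greedy}})-\pi_{\mathbf b}^{\text{greedy}}(S^{\text{greedy}})\). As noted at the start of this subsection, both algorithms consume \(\min\{I_1^n,D^n\}\) units per cycle, so under a common \(S\) they follow identical inventory trajectories and pay identical holding costs. They therefore share the stationary start-of-cycle inventory \(I_1^{\infty,S}\) of \cref{lem:stationary-and-concentration}, and the profit gap equals \(\mathbb E[R^{\text{off}}(I,\mathcal H)-R^{\text{greedy}}(I,\mathcal H)]\), where \(I=I_1^{\infty,S}\) is independent of the current cycle's arrivals \(\mathcal H\) (it depends only on past cycles).

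\textbf{Per-cycle gap for fixed \(I\).} For deterministic \(I\), offline earns the top-\(I\) rewards among the arrivals, while greedy earns the rewards of the first \(I\) arrivals. Suppose \(T\lambda_M<I\le T(1-\lambda_0)-\Delta/(4(L+1))\), with \(\Delta=\Delta_T^{\text{greedy}}\). Choose \(k\) such that the fluid mass of types \(\ge k\) is \(\le I/T\) while the mass of types \(\ge k-1\) exceeds it. The fluid version of the offline reward is \(T\sum_{j\ge k} r_j\lambda_j+r_{k-1}\bigl(I-T\sum_{j\ge k}\lambda_j\bigr)\). Greedy's first \(I\) arrivals have an average reward of roughly \(\bar r=\sum_j r_j\lambda_j/(1-\lambda_0)\). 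Comparing the two fluid values and using \(A_k\ge A\), we expect a gap of at least \(\frac{A}{1-\lambda_0}(T(1-\lambda_0)-I)\), which is at least \(\frac{A}{1-\lambda_0}\frac{\Delta}{4(L+1)}\) on the event. This is the step I expect to be hardest: making the fluid inequality rigorous and identifying exactly why the constant is \(A/(1-\lambda_0)\). I would first try a linear interpolation in \(I\) between fluid breakpoints, combined with concavity of the offline fluid reward in \(I\).

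\textbf{Stochastic error.} Replacing each realized type count \(D_j^n[1,T]\) and each prefix count by its mean introduces an error of at most \(\sum_j r_j\mathbb E|D_j-T\lambda_j|\). By Cauchy–Schwarz this is at most \(\sum_j\sqrt{T\lambda_j(1-\lambda_j)}\). After normalization (absorbing reward scale into the definition of \(A\), or bounding differences by \(r_M\)), this yields the \(-M_1\sqrt T\) term. Outside the event, we simply use that the gap is nonnegative, since offline chooses the top-\(I\) arrivals. Taking expectations then gives \eqref{eq:LB-allT}.

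\textbf{Large-\(T\) form.} For \eqref{eq:LB-Delta-minus-sqrt} I would avoid the probability factor entirely and integrate the pointwise fluid bound \(\frac{A}{1-\lambda_0}\bigl(T(1-\lambda_0)-I\bigr)^+\) over the stationary law of \(I\). This bound is valid whenever \(I>T\lambda_M\), and we handle \(I\le T\lambda_M\) separately since it has vanishing probability for large \(T\). By Jensen's inequality and \cref{lem:stationary-and-concentration},
\[
\mathbb E\bigl[(T(1-\lambda_0)-I)^+\bigr]\ \ge\ T(1-\lambda_0)-\frac{S}{L+1}-K_2\sqrt T=\frac{\Delta}{L+1}-K_2\sqrt T.
\]
The event \(I\le T\lambda_M\) has probability \(o(1)\) because \(S/(L+1)>T\lambda_M\) by a linear margin, together with the concentration bound. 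Choosing \(T_1\) so that this loss is at most half the main term gives the factor \(\tfrac12\) and the constant \(\frac{AK_2}{1-\lambda_0}+M_1\).
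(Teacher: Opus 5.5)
Your architecture is the paper's: reduce to the common level $S^{\text{greedy}}$, bound the stationary one-cycle reward gap from below by the fluid gap $D(I)$ minus a $\sqrt T$ relaxation error, and restrict to the rationing event for \eqref{eq:LB-allT}. As written, though, three steps do not go through.

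\emph{Fluid inequality.} The step you leave as ``we expect'' is a one-line identity. On the piece $T\sum_{j\ge k}\lambda_j\le I<T\sum_{j\ge k-1}\lambda_j$, the fluid offline reward is $TA_k+r_{k-1}I$ and the fluid greedy reward is $\bar r I$, with $\bar r=\sum_j r_j\lambda_j/(1-\lambda_0)$. Hence
\[
D(I)-A_k\Bigl(T-\frac{I}{1-\lambda_0}\Bigr)=\frac{I}{1-\lambda_0}\sum_{j<k}(r_{k-1}-r_j)\lambda_j\ \ge\ 0,
\]
and $A_k\ge A$ together with $T-I/(1-\lambda_0)\ge 0$ finishes it. No interpolation or concavity is needed.

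\emph{Relaxation error.} Replacing greedy's ``prefix counts'' by their means in both directions creates a second error term, so your single sum does not follow. The observation you need, and the one the paper uses, is that the greedy side costs nothing. Given $D^n$, the served types are i.i.d., so $\mathbb E[R^{n,\text{greedy}}(I,\mathcal H^n)]=\bar r\,\mathbb E\min\{I,D^n\}\le \bar r\min\{I,T(1-\lambda_0)\}$ by concavity, which is the favorable direction.

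On the offline side, truncating the fluid optimizer $x^*$ to $\min\{x^*_j,D_j^n[1,T]\}$ loses at most $\sum_j r_j\mathbb E(T\lambda_j-D_j^n[1,T])^+\le\frac12\sum_j r_j\sqrt{T\lambda_j(1-\lambda_j)}$. That constant carries the rewards. It cannot be ``absorbed into $A$'', because $A$ is fixed and multiplies the main term. The reward-free $M_1$ in the statement cannot hold at arbitrary reward scale: scaling all $r_j$ scales the gap but not $M_1$. The paper obtains $M_1$ only by citing Arlotto--Gurvich. A self-contained version of your argument should carry $\frac12\sum_j r_j\sqrt{\lambda_j(1-\lambda_j)}$ in place of $M_1$.

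\emph{Large-$T$ bound.} Subtracting $\mathbb E[(T(1-\lambda_0)-I)^+\mathbf 1\{I\le T\lambda_M\}]$ from your Jensen bound costs order $T\Pr(I\le T\lambda_M)$. The first-moment bound in \cref{lem:stationary-and-concentration} only makes this $O(\sqrt T)$. That cannot be hidden in half of $\Delta/(L+1)$ when $\beta\le\frac12$, and charging it to the $\sqrt T$ term changes the constant $AK_2/(1-\lambda_0)$.

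Drop the positive part instead. For every $I>T\lambda_M$ you have $D(I)\ge\frac{A}{1-\lambda_0}(T(1-\lambda_0)-I)$; for $I\ge T(1-\lambda_0)$, $D=0$ and the right side is $\le 0$. Otherwise $D\ge 0$. Writing $T(1-\lambda_0)-I=\frac{\Delta}{L+1}+\bigl(\frac{S}{L+1}-I\bigr)$ gives
\[
\mathbb E[D(I)]\ \ge\ \frac{A}{1-\lambda_0}\Bigl(\frac{\Delta}{L+1}\Pr(I>T\lambda_M)-K_2\sqrt T\Bigr),
\]
which is exactly \eqref{eq:LB-Delta-minus-sqrt} once $\Pr(I>T\lambda_M)\ge\frac12$.

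This finish genuinely differs from the paper's. The paper uses the event $F_T=\{|I-\frac{S}{L+1}|\le\frac{\Delta}{2(L+1)}\}$ with Markov, and needs the deterministic inclusion $F_T\subseteq\{I>T\lambda_M\}$, i.e.\ $\frac{S}{L+1}-T\lambda_M>\frac{\Delta}{2(L+1)}$. Yours needs only $\frac{S}{L+1}-T\lambda_M\ge 2K_2\sqrt T$, so it survives $\beta=1$ whenever that margin is still linear. Neither requirement, nor your ``linear margin'', follows from the interior condition alone when $\beta=1$: take $\frac{\Delta}{L+1}$ just below $T(1-\lambda_0-\lambda_M)$. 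For $\beta<1$ the margin is linear and both routes close.
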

\textit{Proof sketch.}
We couple the offline and greedy algorithms under the same base-stock level \(S^{\mathrm{greedy}}\) similar to the proof of \cref{thm:offline-greedy-UB}, so the long-run profit gap reduces to the
steady-state \emph{reward} gap in a single cycle.
To lower bound this reward gap, we compare both algorithms to a deterministic ``mean-arrival'' benchmark. The offline algorithm’s
expected reward under random arrivals is within \(O(\sqrt{T})\) of its deterministic reward (\cref{lem:deter_relax_gap}),
while the greedy algorithm’s expected reward is no larger than its deterministic counterpart (by Jensen’s inequality). Hence the
offline--greedy gap is at least the deterministic reward difference \(D(I)\) minus an \(O(\sqrt{T})\) relaxation error.

A direct evaluation shows \(D(I)=0\) when inventory is abundant, but \(D(I)\) grows linearly as inventory falls below mean
demand: offline effectively reserves scarce units for high-reward classes, whereas greedy spreads limited inventory across the
demand mix and ``dilutes'' reward. Finally, under the greedy base-stock, the stationary start-of-cycle inventory is
concentrated within \(O(\sqrt{T})\) of its target level (\cref{lem:stationary-and-concentration}), so the typical shortfall
from mean demand is of order \(\Delta_T^{\mathrm{greedy}}\). Averaging \(D(I)\) over this stationary distribution yields a
profit gap of order \(\Delta_T^{\mathrm{greedy}}\), up to the \(O(\sqrt{T})\) relaxation term.
\hfill \Halmos 
\endproof

\begin{corollary} \label{coro:offline-greedy-base}
If \(\beta>1/2\), then
\(\pi_\mathbf b^{\mathrm{off}}-\pi_\mathbf b^{\mathrm{greedy}}=\Theta(T^\beta)\).
\end{corollary}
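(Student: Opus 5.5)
The corollary follows by sandwiching the gap between \cref{thm:offline-greedy-UB} and \cref{thm:offline-greedy-LB}. When \(\beta>1/2\), the \(T^\beta\) terms in both bounds dominate the \(\sqrt{T}\) terms.

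\emph{Upper bound.} Start from \cref{thm:offline-greedy-UB}. By the standing assumption \cref{eq:base-stock-assump} applied to \(S^{\text{off}}\), for all sufficiently large \(T\) we have \(\Delta_T^{\text{off}}\le \bar K_1 T^\beta\). The right-hand side of \cref{thm:offline-greedy-UB} is then at most
\[
(r_M-r_1)\tfrac{1-\lambda_0-\lambda_1}{1-\lambda_0}\Bigl(\tfrac{\bar K_1}{L+1}T^\beta+\bigl(K_2+\sqrt{(1-\lambda_0)\lambda_0}\bigr)\sqrt{T}\Bigr).
\]
Since \(\sqrt{T}\le T^\beta\) for \(T\ge1\) and \(\beta>1/2\), this is \(\mathcal{O}(T^\beta)\).

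\emph{Lower bound.} Use the second inequality \cref{eq:LB-Delta-minus-sqrt} of \cref{thm:offline-greedy-LB}, which holds for \(T\ge T_1\). Insert \(\Delta_T^{\text{greedy}}\ge \underline K_1 T^\beta\), which is valid because \(S^{\text{greedy}}\) also satisfies \cref{eq:base-stock-assump} with the same exponent \(\beta\). This gives
\[
\pi_\mathbf b^{\mathrm{off}}-\pi_\mathbf b^{\mathrm{greedy}}\ \ge\ c_1T^\beta-c_2\sqrt{T},
\qquad
c_1=\frac{A\underline K_1}{2(1-\lambda_0)(L+1)}>0,\quad
c_2=\frac{AK_2}{1-\lambda_0}+M_1.
\]
Note that \(c_1>0\) because \(A>0\) under the nondegeneracy assumption. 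Because \(\beta>1/2\), we have \(c_2\sqrt{T}\le \tfrac{c_1}{2}T^\beta\) once \(T\ge (2c_2/c_1)^{1/(\beta-1/2)}\). So for \(T\ge T_2\triangleq\max\{T_1,(2c_2/c_1)^{1/(\beta-1/2)},T_{\text{assump}}\}\), the gap is at least \(\tfrac{c_1}{2}T^\beta\). Here \(T_{\text{assump}}\) is the threshold beyond which the two-sided bounds on \(\Delta_T\) hold.

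Combining the two directions gives \(\Theta(T^\beta)\) for all sufficiently large \(T\), which is the asymptotic meaning of the claim.

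The only delicate point is the lower bound, where the \(\sqrt{T}\) error must be absorbed. This is exactly why the strict inequality \(\beta>1/2\) is needed. For \(\beta=1/2\), the difference \(c_1\sqrt{T}-c_2\sqrt{T}\) could be nonpositive, and the corollary would not follow from these bounds. The argument also relies on the constants \(K_2\), \(M_1\), \(A\), \(\underline K_1\) and \(\bar K_1\) being independent of \(T\), which the cited results guarantee.
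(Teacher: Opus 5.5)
Your proposal is correct and follows the paper's proof: it sandwiches the gap between the upper bound of \cref{thm:offline-greedy-UB} and the second lower bound \eqref{eq:LB-Delta-minus-sqrt} of \cref{thm:offline-greedy-LB}, using $\beta>1/2$ to absorb the $\sqrt{T}$ terms. Your explicit constants $c_1,c_2$ and the threshold $(2c_2/c_1)^{1/(\beta-1/2)}$ make concrete the paper's step asserting that some $T_2$ and $c>0$ exist.
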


Taken together, \cref{thm:offline-greedy-UB,thm:offline-greedy-LB} characterize how the offline--greedy profit gap
depends on the slack from mean lead-time demand. Under the interior base-stock regime \cref{eq:base-stock-assump},
the upper bound scales as \(\mathcal O(T^\beta+\sqrt{T})\), reflecting two sources of mismatch: the systematic slack
\(\Delta_T\) and the intrinsic \(\sqrt{T}\) demand fluctuations. The lower bound in \cref{thm:offline-greedy-LB} shows that,
up to the same \(\sqrt{T}\) relaxation term, the gap is at least of order \(\Delta_T^{\mathrm{greedy}}\).
Consequently, when \(\beta>1/2\), \cref{coro:offline-greedy-base} implies a tight scaling
\(\pi_\mathbf b^{\mathrm{off}}-\pi_\mathbf b^{\mathrm{greedy}}=\Theta(T^\beta)\).
The constants reveal the main drivers: the gap grows with the reward spread \(r_M-r_1\) and with the prevalence of
non-type-1 arrivals among non-empty requests \((1-\lambda_0-\lambda_1)/(1-\lambda_0)\), while the lower-bound slope is
captured by \(A\), which measures how much the offline algorithm gains from prioritizing higher-reward classes when inventory
is scarce.

%%%%%%%%%%%%%%%%%%%%%%%%%%%%%%%%%%%%%%%%%%%%%%%%%
\subsection{Online vs. Offline Fulfillment under Constant-Order Policy}
\label{subsec:online-offline-constant}

We now analyze the last term \( \eta_2 \) in the decomposition
\(
\pi_{\mathbf{b}}^{\text{greedy}} - \pi_{\mathbf{c}}^{\text{on}} = \eta_0 - \eta_1 + \eta_2,
\)
where \( \eta_2 \) captures the expected profit loss of the myopic offline algorithm relative to the online fulfillment algorithm, both operating under the same constant-order replenishment. An upper bound for
\(\pi_{\mathbf{c}}^{\text{off}} - \pi_{\mathbf{c}}^{\text{on}}\)
was established in \cref{subsec:constant-replenishment}; hence, to bracket the gap, it suffices here to bound the opposite direction \(\pi_{\mathbf{c}}^{\text{on}} - \pi_{\mathbf{c}}^{\text{off}}\).

\emph{Remark.} As discussed in \cref{sec:model}, the myopic offline algorithm is not globally optimal: it uses only within-cycle foresight and ignores cross-cycle trade-offs. Accordingly, it can be outperformed by a purely online policy, which explains why a nonzero upper bound on \(\eta_2\) may arise. An illustrative example will be provided in \cref{sec:look-ahead}.

\begin{theorem} \label{thm:on_vs_off_constant}
    The expected average profit difference between the online algorithm (assumed in \cref{ass:online-algorithm}) and the myopic offline algorithm is upper bounded by:
    \[
       \pi_{\mathbf{c}}^{\text{on}} -\pi_{\mathbf{c}}^{\text{off}}  \le (r_M - r_1 - h) C_2 T^\alpha.
    \]
\end{theorem}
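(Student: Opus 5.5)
The plan is to compare the two fulfillment algorithms under a \emph{common} order quantity. We then pass to the optimizers, using that
\[
\pi_{\mathbf c}^{\text{on}}(T,c^{\text{on}},N)-\pi_{\mathbf c}^{\text{off}}(T,c^{\text{off}},N)\le \pi_{\mathbf c}^{\text{on}}(T,c^{\text{on}},N)-\pi_{\mathbf c}^{\text{off}}(T,c^{\text{on}},N),
\]
which holds because \(c^{\text{off}}\) maximizes the offline profit. So it suffices to fix one \(c\) and couple the online and myopic offline systems on the same demand path \(\mathcal H\), with the same initial inventory \(I_1^{1}=c\) and the same deterministic arrivals \(c\) each cycle. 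Write \(g_n\triangleq I_1^{n,\text{on}}-I_1^{n,\text{off}}\) and \(G_n\triangleq I_{T+1}^{n,\text{on}}-I_{T+1}^{n,\text{off}}\). Since both systems receive the same arrival \(c\), we have \(g_n=G_{n-1}\) with \(G_0=0\). By \cref{lem:constant-off-more}, \(G_n\ge 0\).

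The first step is a per-cycle reward comparison, in two pieces. First, online serves \(f^{n,\text{on}}(I_1^{n,\text{on}})=f^{n,\text{off}}(I_1^{n,\text{on}})-\epsilon^n\) units. The myopic offline algorithm started from \(I_1^{n,\text{on}}\) takes the top \(f^{n,\text{off}}(I_1^{n,\text{on}})\) arrivals by reward. Online's reward is therefore at most that offline reward minus the \(\epsilon^n\) smallest accepted rewards, each of which is at least \(r_1\):
\[
R^{n,\text{on}}\le R^{n,\text{off}}(I_1^{n,\text{on}})-r_1\epsilon^n .
\]
Second, myopic offline reward is nondecreasing in the starting inventory, and each extra unit earns at most \(r_M\):
\[
R^{n,\text{off}}(I_1^{n,\text{on}})-R^{n,\text{off}}(I_1^{n,\text{off}})\le r_M\bigl(f^{n,\text{off}}(I_1^{n,\text{on}})-f^{n,\text{off}}(I_1^{n,\text{off}})\bigr).
\]
Inventory balance in cycle \(n\) gives \(G_n=g_n-f^{n,\text{on}}(I_1^{n,\text{on}})+f^{n,\text{off}}(I_1^{n,\text{off}})\). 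Hence the bracket above equals \(g_n-G_n+\epsilon^n\). Including holding costs, the per-cycle profit difference is then at most
\[
r_M\bigl(G_{n-1}-G_n\bigr)+(r_M-r_1)\epsilon^n-hG_n .
\]

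The second step converts the holding term into an \(\epsilon^n\) term, via the inequality \(G_n\ge \epsilon^n\). This comes from the representation already derived before \cref{lem:Inv-on-bound}: \(I_{T+1}^{n,\text{on}}=\tilde I_{T+1}^{n,\text{off}}+\epsilon^n\), where \(\tilde I^{\text{off}}\) is the offline recursion \((x+c-D^n)^+\) with extra nonnegative inputs \(\epsilon^{n-1}\). Because that recursion is monotone in its input, induction gives \(\tilde I_{T+1}^{n,\text{off}}\ge I_{T+1}^{n,\text{off}}\), and therefore \(G_n\ge\epsilon^n\). Summing the per-cycle bound over \(n=1,\dots,N\), the \(r_M\) terms telescope to \(r_M(G_0-G_N)\le 0\). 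This leaves
\[
\sum_{n=1}^N\bigl[(r_M-r_1)\epsilon^n-hG_n\bigr]\le (r_M-r_1-h)\sum_{n=1}^N\epsilon^n .
\]
Taking expectations and using \(\mathbb E[\epsilon^n]\le C_2T^\alpha\) from \cref{lem:inv-diff}, then dividing by \(N\), yields the claim.

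The main obstacle is the sign of \(r_M-r_1-h\). The final step uses \(\mathbb E[\epsilon^n]\le C_2T^\alpha\), which is valid only when the coefficient \(r_M-r_1-h\) is nonnegative. If \(h>r_M-r_1\), I would instead keep the bound \((r_M-r_1-h)\sum_n\epsilon^n\le 0\), which is stronger than needed only in the sense that it gives a nonpositive right-hand side. Reconciling this with the displayed constant requires either assuming \(h\le r_M-r_1\), or stating the bound as \(\max\{r_M-r_1-h,0\}\,C_2T^\alpha\). I would also double-check the monotonicity of the myopic offline reward in inventory, and the top-\(k\) dominance argument in the first reward bound, as these carry the whole coupling.
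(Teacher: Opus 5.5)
Your proposal is correct and follows the same route as the paper's proof. Both use \cref{lem:constant-off-more} for the sign of the holding-cost gap, per-surplus-unit accounting with coefficient $r_M-r_1-h$ together with $\mathbb{E}[\epsilon^n]\le C_2T^\alpha$ from \cref{lem:inv-diff}, and optimality of $c^{\text{off}}$ to pass to the optimizers; your telescoping of $r_M(G_{n-1}-G_n)$ and the inequality $G_n\ge\epsilon^n$ make rigorous the cross-cycle accounting the paper only sketches. Your sign caveat is also well founded: the paper silently assumes $r_M-r_1-h\ge 0$ when it substitutes $C_2T^\alpha$ for the expected surplus, so without that condition only $\max\{r_M-r_1-h,0\}\,C_2T^\alpha$ follows.
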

\textit{Proof sketch.} For any fixed constant order, \cref{lem:constant-off-more} implies the online algorithm always ends a cycle with (weakly) more inventory than the myopic offline algorithm, and thus cannot have lower holding cost. By \cref{lem:inv-diff}, the expected end-of-cycle inventory gap is at most \(C_2T^\alpha\). This gap can be viewed as an upper bound on the number of units on which the online allocation deviates from the myopic offline allocation. In the best case, each such unit can increase reward by at most \(r_M-r_1\) (replacing a low-reward service with a high-reward one) while incurring at least one unit of holding cost \(h\) from being carried. Therefore,
\(\pi_{\mathbf c}^{\text{on}}-\pi_{\mathbf c}^{\text{off}} \le (r_M-r_1-h)C_2T^\alpha\),
and the bound remains valid after optimizing over constant-order levels.
\hfill \Halmos

\cref{thm:on_vs_off_constant} shows that, under a constant-order policy, the online–offline profit gap is sublinear (at most \(\mathcal{O}(T^\alpha)\)) in the cycle length. Combined with the linear bounds obtained for base-stock policies, this indicates that, in the overall comparison \(\pi_{\mathbf{b}}^{\text{greedy}} - \pi_{\mathbf{c}}^{\text{on}}\), the dominant terms come from the base-stock side.

% In the next section, we present a numerical experiment to illustrate the pattern of \( \pi_{\mathbf{b}}^{\text{greedy}} - \pi_{\mathbf{c}}^{\text{on}} \), providing insights into when replenishment or fulfillment plays a more critical role in system performance.

\subsection{Comparison between Replenishment and Online Fulfillment}
\label{subsec:comparison}

We have the following result regarding the comparison between the influence of replenishment and online fulfillment policies.
\begin{corollary}
\label{coro:replenish-fulfill-gap}
Combining the effects of replenishment and fulfillment, the long-run average profit difference between the online greedy algorithm with base-stock replenishment and the general online algorithm with constant-order replenishment satisfies the computable lower bound
\begin{equation}
\begin{aligned}
\pi_{\mathbf{b}}^{\text{greedy}} - \pi_{\mathbf{c}}^{\text{on}} 
&\geq a_2(T,h,r_1) - \bar{a}_1(T,h,r_M,L) \\
&\quad - \left\{
\frac{A}{2(1-\lambda_0)(L+1)}\,\Delta_T^{\text{greedy}}
-\left(\frac{A K_2}{1-\lambda_0}+M_1\right)\sqrt{T}
\right\} \\
&\quad - (r_M - r_1 - h) \cdot C_2 T^\alpha,
\end{aligned}
\label{ineq:replenish-fulfill-LB}
\end{equation}
and the upper bound
% \begin{equation}
% \begin{aligned}
% \pi_{\mathbf{b}}^{\text{greedy}} - \pi_{\mathbf{c}}^{\text{on}} 
% &\leq a_2(T,h,r_M) - \underline{a}_1(T,h,r_1,L) \\
% &\quad - \left\{
% \frac{A}{1-\lambda_0}\cdot \frac{\Delta_T^{\text{greedy}}}{4(L+1)}\,
% \Pr\!\left(
% T\lambda_M < I_1^{\infty,S^{\text{greedy}}}
% \le
% T(1-\lambda_0)-\frac{\Delta_T^{\text{greedy}}}{4(L+1)}
% \right)
% - M_1\sqrt{T}
% \right\} \\
% &\quad + \Bigl\{C_1T^\alpha + h\bigl[C_2T^\alpha - \tfrac{1}{e\theta} \ln(1-\beta_\theta)\bigr]\cdot\mathbf{1}_{\alpha<1}+hKT\cdot\mathbf{1}_{\alpha=1}\Bigr\}.
% \end{aligned}
% \label{ineq:replenish-fulfill-UB}
% \end{equation}
% \end{corollary}
\begin{align}
\pi_{\mathbf{b}}^{\text{greedy}} - \pi_{\mathbf{c}}^{\text{on}} 
&\leq a_2(T,h,r_M) - \underline{a}_1(T,h,r_1,L) \nonumber \\
&\quad - \left\{
\frac{A}{1-\lambda_0}\cdot \frac{\Delta_T^{\text{greedy}}}{4(L+1)}\,
\Pr\!\left(
T\lambda_M < I_1^{\infty,S^{\text{greedy}}}
\le
T(1-\lambda_0)-\frac{\Delta_T^{\text{greedy}}}{4(L+1)}
\right)
- M_1\sqrt{T}
\right\} \nonumber \\
&\quad + \Bigl\{C_1T^\alpha + h\bigl[C_2T^\alpha - \tfrac{1}{e\theta} \ln(1-\beta_\theta)\bigr]\cdot\mathbf{1}_{\alpha<1}+hKT\cdot\mathbf{1}_{\alpha=1}\Bigr\}.
\label{ineq:replenish-fulfill-UB}
\end{align}
\end{corollary}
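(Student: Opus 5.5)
The plan is to work entirely through the identity \eqref{eq:ineq}, $\pi_{\mathbf b}^{\text{greedy}}-\pi_{\mathbf c}^{\text{on}}=\eta_0-\eta_1+\eta_2$. I will bound each of $\eta_0,\eta_1,\eta_2$ from the appropriate side using the results already established, and then add. No new probabilistic analysis should be needed except possibly for the $\eta_1$ step flagged below. Throughout, the optimizers $S^{\text{off}},S^{\text{greedy}},c^{\text{off}},c^{\text{on}}$ are fixed, and every bound is taken at these respective optimal parameters, as in the theorems being invoked.

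\emph{Upper bound \eqref{ineq:replenish-fulfill-UB}.} This part is mechanical. First, the right inequality of \cref{thm:replenishment-gap} gives $\eta_0\le a_2(T,h,r_M)-\underline a_1(T,h,r_1,L)$. Second, $-\eta_1$ is bounded above by the negative of the all-$T$ lower bound \eqref{eq:LB-allT} of \cref{thm:offline-greedy-LB}; this is exactly the second braced term. Third, $\eta_2=\pi_{\mathbf c}^{\text{off}}-\pi_{\mathbf c}^{\text{on}}$ is bounded above by \cref{thm:reg-total-constant}: use \eqref{ineq:reg-total-constant-case-a} when $\alpha<1$ and \eqref{ineq:reg-total-constant-case-b} when $\alpha=1$. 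Writing these two cases with the indicators $\mathbf 1_{\alpha<1}$ and $\mathbf 1_{\alpha=1}$ gives the third brace, and summing the three bounds yields \eqref{ineq:replenish-fulfill-UB}.

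\emph{Lower bound \eqref{ineq:replenish-fulfill-LB}.} First, the left inequality of \cref{thm:replenishment-gap} gives $\eta_0\ge a_2(T,h,r_1)-\bar a_1(T,h,r_M,L)$. Second, \cref{thm:on_vs_off_constant} gives $\eta_2=-(\pi_{\mathbf c}^{\text{on}}-\pi_{\mathbf c}^{\text{off}})\ge-(r_M-r_1-h)C_2T^\alpha$. The remaining requirement is $-\eta_1\ge-\{\cdots\}$, that is,
\[
\eta_1\le \frac{A}{2(1-\lambda_0)(L+1)}\Delta_T^{\text{greedy}}-\Bigl(\frac{AK_2}{1-\lambda_0}+M_1\Bigr)\sqrt T,
\]
with the braced quantity exactly as written in the statement.

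This $\eta_1$ step is the main obstacle. The displayed expression is precisely the right-hand side of \eqref{eq:LB-Delta-minus-sqrt}, which \cref{thm:offline-greedy-LB} establishes only as a \emph{lower} bound on $\eta_1$ (and only for $T\ge T_1$). Nothing stated earlier gives the reverse inequality with these constants: \cref{thm:offline-greedy-UB} bounds $\eta_1$ above only by a different expression, involving $\Delta_T^{\text{off}}$, $(r_M-r_1)$ and $K_2+\sqrt{(1-\lambda_0)\lambda_0}$. So to prove the statement as worded, I would have to establish directly that $\eta_1$ is at most the braced term.

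My first attempt would be to rerun the coupling in the proof of \cref{thm:offline-greedy-UB} at the common level $S^{\text{greedy}}$, which is admissible because $S^{\text{off}}$ can only raise $\pi_{\mathbf b}^{\text{off}}$. I would then try to sharpen the swap count $(D-I)^+$ to the deterministic rationing gap $D(I)$ from \cref{thm:offline-greedy-LB}, which has slope $A/(1-\lambda_0)$, controlling the stochastic error with \cref{lem:deter_relax_gap} and \cref{lem:stationary-and-concentration}.

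I expect this to fail on constants: an upper estimate of this kind naturally produces $+\sqrt T$ correction terms and a slope of order $(r_M-r_1)$, not the factor $A/(2(L+1))$ with a \emph{subtracted} $\sqrt T$ term. If it does fail, the honest conclusion is that only the corrected lower bound is provable, namely the one obtained by substituting the \cref{thm:offline-greedy-UB} bound for the brace. The remainder of the argument, meaning the $\eta_0$ and $\eta_2$ bounds and the summation, is unaffected either way.
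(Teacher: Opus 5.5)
Your upper bound \eqref{ineq:replenish-fulfill-UB} follows the paper's argument exactly. It uses the right side of \cref{thm:replenishment-gap} for $\eta_0$, the all-$T$ bound \eqref{eq:LB-allT} of \cref{thm:offline-greedy-LB} for $-\eta_1$, and \cref{thm:reg-total-constant} for $\eta_2$.

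For the lower bound, your diagnosis is correct, and the error it locates is in the paper, not in your argument. The paper's proof says $\eta_1$ is bounded above via \cref{thm:offline-greedy-UB}. However, the brace it actually writes into \eqref{ineq:replenish-fulfill-LB} is the right-hand side of \eqref{eq:LB-Delta-minus-sqrt}. That is a lower bound on $\eta_1$, so it has been substituted in the wrong direction.

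You can settle the question your ``first attempt'' leaves open: no sharpening can yield $\eta_1\le\{\cdots\}$ with that brace.
\begin{itemize}
\item $\eta_1\ge 0$ always. Under a common base-stock level, both algorithms consume $\min\{I_1^n,D^n\}$ units per cycle, so they share the same inventory trajectory. The myopic offline reward is pathwise at least the greedy reward. Hence $\pi_{\mathbf b}^{\text{off}}(T,S^{\text{off}},\infty)\ge\pi_{\mathbf b}^{\text{off}}(T,S^{\text{greedy}},\infty)\ge\pi_{\mathbf b}^{\text{greedy}}(T,S^{\text{greedy}},\infty)$.
\item The brace, however, is negative whenever $\Delta_T^{\text{greedy}}<2(L+1)\bigl(K_2+M_1(1-\lambda_0)/A\bigr)\sqrt T$. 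For example, this holds for all large $T$ when $\beta<1/2$.
\item Even where the brace is positive, combining the claimed upper bound with \eqref{eq:LB-Delta-minus-sqrt} would pin $\eta_1$ to exactly that value for $T\ge T_1$.
\end{itemize}
So the lower bound the decomposition actually delivers is the one you propose: replace the brace by the \cref{thm:offline-greedy-UB} expression in $\Delta_T^{\text{off}}$.

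There is one slip in your sketch. Coupling at the common level $S^{\text{greedy}}$, justified by optimality of $S^{\text{off}}$ for $\pi_{\mathbf b}^{\text{off}}$, gives $\eta_1\ge\pi_{\mathbf b}^{\text{off}}(T,S^{\text{greedy}},\infty)-\pi_{\mathbf b}^{\text{greedy}}(T,S^{\text{greedy}},\infty)$. That is a lower bound on $\eta_1$, which is how \cref{thm:offline-greedy-LB} is proved. An upper bound on $\eta_1$ instead requires coupling at $S^{\text{off}}$ and using optimality of $S^{\text{greedy}}$ for greedy. This is why $\Delta_T^{\text{off}}$, not $\Delta_T^{\text{greedy}}$, appears in \cref{thm:offline-greedy-UB}.
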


\medskip

\noindent\textbf{Interpretation and scaling.}
The bounds separate three forces:  
(i) a \emph{replenishment component}, coming from the difference between base-stock and constant-order costs via \(a_2,\bar a_1,\underline a_1\);  
(ii) a \emph{greedy-basestock component}, which is of order at most \(T^\beta+\sqrt{T}\) and at least \(\Delta_T^{\text{greedy}}-\sqrt{T}\) by \cref{thm:offline-greedy-UB,thm:offline-greedy-LB};  and  
(iii) an \emph{online-constant component}, of order at most \(T^\alpha\) under \cref{ass:online-algorithm} by Theorem~\ref{thm:on_vs_off_constant}.  
For the replenishment part, we may write
\[
\Delta_{\mathrm{rep}}^{\mathrm{LB}}(T)
:= a_2(T,h,r_M)-\underline{a}_1(T,h,r_1,L),\qquad
\Delta_{\mathrm{rep}}^{\mathrm{UB}}(T)
:= a_2(T,h,r_1)-\bar{a}_1(T,h,r_M,L),
\]
which are exactly the replenishment terms appearing in the lower and upper bounds.

For small \(T\) (short cycles), the closed-form expressions show that
\(a_2,\bar a_1,\underline a_1\) all grow essentially linearly in \(T\), so
\(\Delta_{\mathrm{rep}}^{\mathrm{LB}}(T)\) and
\(\Delta_{\mathrm{rep}}^{\mathrm{UB}}(T)\) are of order \(T\), and all three components in
\cref{ineq:replenish-fulfill-LB,ineq:replenish-fulfill-UB} scale roughly linearly in \(T\).
As \(T\to\infty\), \cref{coro:replenishment-gap-sqrtT-scaling} implies
\(a_2,\bar a_1,\underline a_1 = \Theta(\sqrt{T})\), hence the replenishment contributions
\(\Delta_{\mathrm{rep}}^{\mathrm{LB}}(T)\) and
\(\Delta_{\mathrm{rep}}^{\mathrm{UB}}(T)\) grow at most of the order of \(\sqrt{T}\).
In contrast, the greedy-basestock penalty scales as \(\mathcal{O}(T^\beta+\sqrt{T})\), while the online-constant component
is \(\mathcal{O}(T^\alpha)\).
Thus, for large \(T\), improvements in the fulfillment algorithm can change profit by \(\Omega(T^\beta)\) when \(\beta>1/2\),
whereas improving the replenishment algorithm (base-stock versus constant order) can only change profit by \(\mathcal{O}(\sqrt{T})\).
In this asymptotic regime, the quality of the fulfillment algorithm is therefore the first-order driver of performance, while replenishment design has a second-order effect.

\medskip
% \noindent\textbf{Why benchmark with greedy?}
% We use the greedy fulfillment algorithm deliberately as a \emph{conservative} (nearly worst-case) online benchmark: if replenishment dominates even against greedy, this furnishes a strong \emph{sufficient} condition that investing in replenishment is the higher-leverage lever. In practice, our framework is modular: one can substitute any other online policy (stronger than greedy but weaker than another online) for greedy on either side, re-evaluate the two bounds, and decide which investment—better replenishment or better fulfillment—offers the larger return under current constraints. This enables a principled, data-driven allocation of limited improvement capacity between replenishment planning and fulfillment logic.

\noindent\textbf{Implication of our results.}
Much recent research emphasizes online fulfillment (refer to details in \cref{sec:literature_review}), often treating replenishment as fixed or exogenous. In contrast, replenishment design itself has received less systematic attention. Our results suggest that careful replenishment can yield gains that persist even when paired with a simple greedy fulfillment; in fact, across broad parameter ranges, base-stock with greedy can outperform constant-order with a relatively better online policy. The bounds indicate that the relative advantage of replenishment is especially pronounced for shorter cycles and remains meaningful in many regimes. Taken together, these findings again bring the importance of the design of inventory replenishment, or elevating replenishment from a background assumption to a first-order decision variable, especially for shorter cycles or when the base-stock slack is at most of order $\sqrt{T}$.

% \elaine{from the replenishment side, too many literature study on online algorithm. But sometimes replenishment is more important (illustrative figure with 4 points:(weaker replenishment + weaker fulfillment)) with literature in online fulfillment \& inventory (1958)}

%%%%%%%%%%%%%%%%%%%%%%%%%%%%%%%%%%%%%%%%%%%%%%%%%%%%%%%%%%
\section{Look-Ahead Online Fulfillment}\label{sec:look-ahead}

In \cref{subsec:online-offline-constant}, we established a positive upper bound for $\pi_{\mathbf{c}}^{\text{on}} - \pi_{\mathbf{c}}^{\text{off}}$. However, intuitively, an offline algorithm should generally perform better than an online algorithm, since it has access to more information in advance, specifically the realized demand over each replenishment cycle. Despite this, we have previously pointed out that the \textit{myopic offline algorithm is not necessarily optimal} in our model. This limitation arises from the myopic offline algorithm’s failure to account for inventory reservation, which could improve fulfillment outcomes in future cycles.

To illustrate this, we provide an example where the online Bayes Selector algorithm defined in \cite{vera2021bayesian} (denoted by Algorithm BS) achieves a higher expected profit than the myopic offline algorithm. 

\begin{example}\label{example:online_better}
Consider a two-cycle horizon (\(N=2\)) with two periods per cycle (\(T=2\)). There are three customer types with rewards \((r_1,r_2,r_3)=(1,9,10)\) and per–period arrival probabilities \((\lambda_1,\lambda_2,\lambda_3)=(0.3,0.3,0.1)\) (implying a no-arrival probability of \(0.3\)). The holding cost is \(h=0.5\) per unit per cycle; inventory starts at \(I_1^1=1\); and one unit is replenished at the start of cycle \(2\) (\(q^2=1\)). Expected profits are obtained by averaging payoffs over all \(256\) four-period demand paths. The myopic offline algorithm yields \(16.4669\), whereas the online Bayes Selector attains \(16.6289\). \hfill\(\diamondsuit\)
\end{example}

To understand why the online Algorithm BS performs better than the myopic offline algorithm in \cref{example:online_better}, consider a specific sample path in the first replenishment cycle:
\(
    j_1^1 = 1,  j_2^1 = 0.
\)
The myopic offline algorithm, following its myopic approach, will accept the type-1 demand in period 1, as inventory is available. However, the online Algorithm BS will reject this demand, choosing to reserve inventory for potentially higher-rewarded demands in the next period. In a single-cycle setting, this decision may not always be beneficial, as there is no guarantee of higher-rewarded demand appearing. Nevertheless, in a multi-cycle setting with replenishment, this decision unintentionally preserves more inventory for future cycles, where it can be used for higher-rewarded demand, leading to an overall increase in expected profit.

\subsection{Algorithm}
Based on this observation, we propose a new online fulfillment algorithm that incorporates look-ahead algorithms to anticipate future demand under a replenishment setting. This approach aims to enhance decision-making by considering the long-term benefits of inventory reservation while balancing the risks associated with unfulfilled demand. We formally introduce this proposed algorithm as follows.

To define the offline fulfillment algorithm with look-ahead, we consider a setting where the decision at period \( t \) in cycle \( n \) accounts for both current and future demand, along with future replenishment arrivals over the next \( \tilde{N} \) cycles.
Let \( v_{t,j}^{n} \) and \( v_{t,0j}^{n} \) denote, respectively, the number of accepted and rejected type-$j$ customers arriving over the remaining periods $t,\ldots,T$ in cycle $n$. Similarly, let \( v_j^{n+i} \) and \( v_{0j}^{n+i} \) represent the number of accepted and rejected type-\( j \) customers in future cycle \( n+i \) for \( i \in \{1,2,\ldots,\tilde{N}\} \). Let $I_{t}^{n}$ denote the on-hand inventory at period $t$ in cycle $n$ and \( O^{n+i} \) denote the replenishment order arriving at cycle \( n+i \).
The offline fulfillment with look-ahead solves the following optimization problem, based on realized future demand \( D_j^n[t, T] \) within the current cycle:

\begin{equation}
\begin{aligned}
\max_{v} \quad & \sum_{j \in[M]} \left\{\left(r_j+\tilde{N} \cdot h\right) \cdot v_{t, j}^n+\sum_{i=1}^{\tilde{N}}\left[r_j+(\tilde{N}-i) \cdot h\right] \cdot v_j^{n+i}\right\} -h \tilde{N} \cdot I_t^n \\
\text{s.t.} \quad & \sum_{j \in [M]} v_{t, j}^n \leq I_t^n, \\
& v_{t, j}^n + v_{t, 0 j}^n = D_j^n[t, T], \quad \forall j \in[M], \\
& \sum_{j \in [M]}\left(v_{t, j}^n+\sum_{i=1}^k v_j^{n+i}\right) \leq I_t^n+\sum_{i=1}^k O^{n+i}, \quad \forall k \in[\tilde{N}], \\
& v_j^{n+i} + v_{0j}^{n+i} = \Lambda_j^{n+i}[1, T], \quad \forall j \in[M], i \in[\tilde{N}], \\
& v_{t, j}^n, v_{t, 0 j}^n, v_j^{n+i}, v_{0 j}^{n+i} \geq 0, \quad \forall j \in[M], i \in[\tilde{N}].
\end{aligned}
\label{opt:lookahead-opt}
\end{equation}
The objective accounts for holding costs after the current cycle over the next $\tilde N$ cycles $n+1,\ldots,n+\tilde N$. 
The term $-h\tilde N\, I_t^n$ subtracts the baseline cost of carrying the current inventory through this window, and an acceptance in cycle $n+i$ avoids holding only in cycles $n+i+1,\ldots,n+\tilde N$, giving the coefficient $(\tilde N-i)h$.
This formulation ensures that fulfillment decisions align with both current realized demand and anticipated future replenishment and demands.

To design an online fulfillment algorithm based on this structure, we replace the realized demand \( D_j^n[t, T] \) in \eqref{opt:lookahead-opt} with the expected future demand \( \Lambda_j^n[t, T] \). Thus, at each period \( t \) in cycle \( n \), the online algorithm solves a modified version of the optimization using expected demand forecasts instead of realized observations. Let \( v^n_{t,j_t} \) and \( v^n_{t,0j_t} \) be the corresponding solution values.
Based on this principle, we propose a real-time decision algorithm as in \cref{alg:look-ahead}. 

\begin{algorithm}[htpb] 
    \caption{Look-Ahead Online Fulfillment Algorithm}
    \label{alg:look-ahead}
    \begin{algorithmic}[1]
        \For{$t=1,2,\ldots,T$}
            \State Observe the realized demand sequence $ j^n_1,j^n_2,\dots,j^n_t $.
            \State Solve the look-ahead optimization problem for $ v^n_{t,j_t} $ and $ v^n_{t,0j_t} $.
            \If{$v^n_{t,j_t} \geq v^n_{t,0j_t}$}
                \State Accept customer $ j^n_t $ and update inventory: $ I_{t+1} \gets I_t - 1 $.
            \Else{}
                \State Reject customer $ j^n_t $ and maintain inventory: $ I_{t+1} \gets I_t $.
            \EndIf
        \EndFor 
        % \State Observe the full sequence $ j_1,\dots,j_T $ and make the final acceptance/rejection decision for $ j_T $ following the \textit{optimal offline algorithm}.
    \end{algorithmic}
\end{algorithm}

\begin{remark}
    This look-ahead algorithm can be easily extended to the multi-resource setting by generalizing the inventory and replenishment constraints in \eqref{opt:lookahead-opt} to apply component-wise across all resources. More broadly, for any online allocation problem whose offline version can be formulated as a linear program, our algorithm extends naturally by incorporating expected future demand and adding constraints for future replenishments. This makes the approach applicable to a wide class of multi-resource and dynamic allocation problems.
\end{remark}

In the next subsection, we conduct numerical experiments to evaluate the performance of the Look-Ahead Online Fulfillment Algorithm. We compare its profit outcomes against traditional online algorithms without look-ahead capabilities, to assess how effectively the Look-Ahead algorithm improves profit under varying scenarios.

\subsection{Simulation}

To evaluate the performance of various fulfillment algorithms under a replenishment system, we conduct simulations comparing four algorithms: the myopic offline algorithm, the online Algorithm BS, the offline look-ahead algorithm, and the online look-ahead algorithm—all under a base-stock replenishment framework. 
In the following simulations, we initialize at an \emph{even} pipeline that exactly hits the base-stock position:
\[
I_{1}^{1,\text{alg}}(S,\mathcal{H})=\left\lfloor \tfrac{S}{L+1}\right\rfloor,\qquad 
\bm{q}^{1,\text{alg}}(S,\mathcal{H})=\left(\left\lfloor \tfrac{S}{L+1}\right\rfloor,\ldots,\left\lfloor \tfrac{S}{L+1}\right\rfloor\right),
\]
and set \(q^{L+1,\text{alg}}(S,\mathcal{H})=S-I_{1}^{1,\text{alg}}(S,\mathcal{H})-\sum_{l=1}^{L-1}q^{1+l,\text{alg}}(S,\mathcal{H})\).
This choice yields integer, nonnegative quantities and avoids transient bias—extreme initializations (e.g., all stock on hand with an empty pipeline or vice versa) can materially skew early-cycle revenues when \(N\) is small.

To comprehensively assess their performance, we design experiments across three key dimensions: (1) lead times \(L\) and look-ahead cycles \(\tilde{N}\); (2) number of periods \(T\); and (3) holding cost \(h\), demand rewards \(r\) and arrival rates \(\lambda\).

\subsubsection{Lead time and look-ahead cycles}

For look-ahead under a base-stock policy, the practical upper bound of look-ahead cycles is \(\tilde{N}=L\), since at most \(L\) future replenishments can be anticipated. Increasing \(\tilde{N}\) improves foresight but raises computational and operational complexity. We therefore study \(\tilde{N}\in\{0,1,\dots,5\}\) for lead times \(L\in\{8,10\}\) and report average profit per period (total profit divided by \(NT\)). Parameters in \cref{fig:simulation-leadtime-lookahead} are: \(T=50\), \(N=50\), \(h=2\), \([r_1,r_2,r_3]=[1,9,10]\), \([\lambda_0,\lambda_1,\lambda_2,\lambda_3]=[0.2,0.2,0.3,0.3]\), and \(K=10,000\) sample paths.

\paragraph{Observations.}
\cref{fig:simulation-leadtime-lookahead} shows that, for any \(\tilde N>0\), the look-ahead variants dominate their non–look-ahead counterparts: offline (look-ahead) outperforms offline (myopic), and online (look-ahead) improves upon online BS. This shows that anticipatory inventory reservation is valuable even when the online policy relies on demand forecasts.
Second, even a short horizon is effective: moving from \(\tilde N=0\) to \(\tilde N=1\) already yields a noticeable gain, and the curves follow a concave pattern with shrinking marginal benefits. Beyond \(\tilde N=3\), improvements are negligible, where reserving inventory for more than three future cycles brings little added value once the additional holding costs are accounted for. Practically, a modest look-ahead horizon captures most of the benefit while keeping computation low.
Third, longer lead times increase the value of foresight. Comparing \(L=8\) (\cref{fig:L3}) and \(L=10\) (\cref{fig:L5}), the gap between look-ahead policies and their non--look-ahead baselines widens, because longer supply pipelines increase the variability of available inventory at the start of each cycle, creating more opportunities for inter-cycle substitution.

\begin{figure}[H]
    \centering
    \begin{subfigure}{0.48\textwidth}
        \centering
        \includegraphics[width=\textwidth]{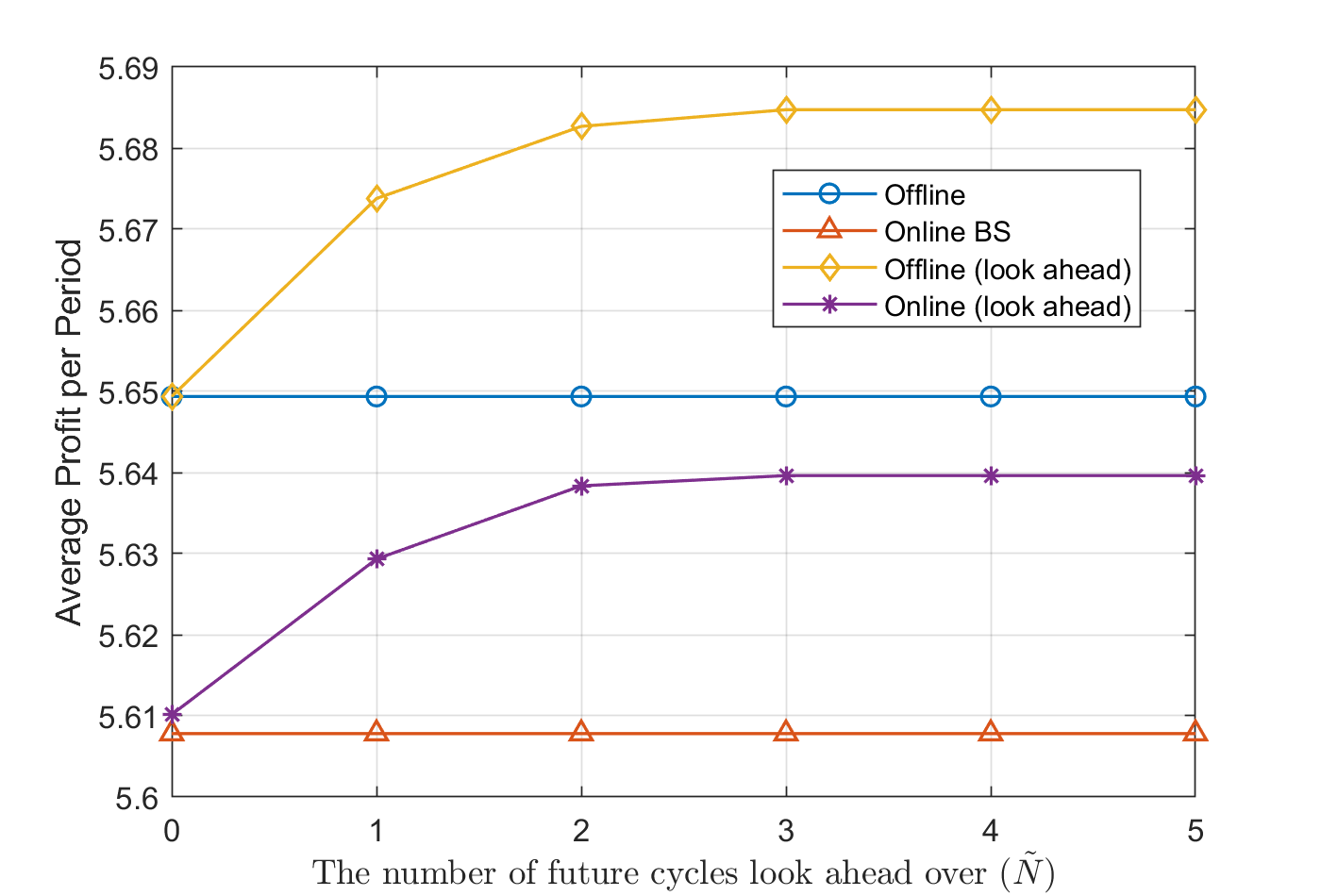}
        \caption{Lead time \( L = 8 \)}
        \label{fig:L3}
    \end{subfigure}
    \hfill
    \begin{subfigure}{0.48\textwidth}
        \centering
        \includegraphics[width=\textwidth]{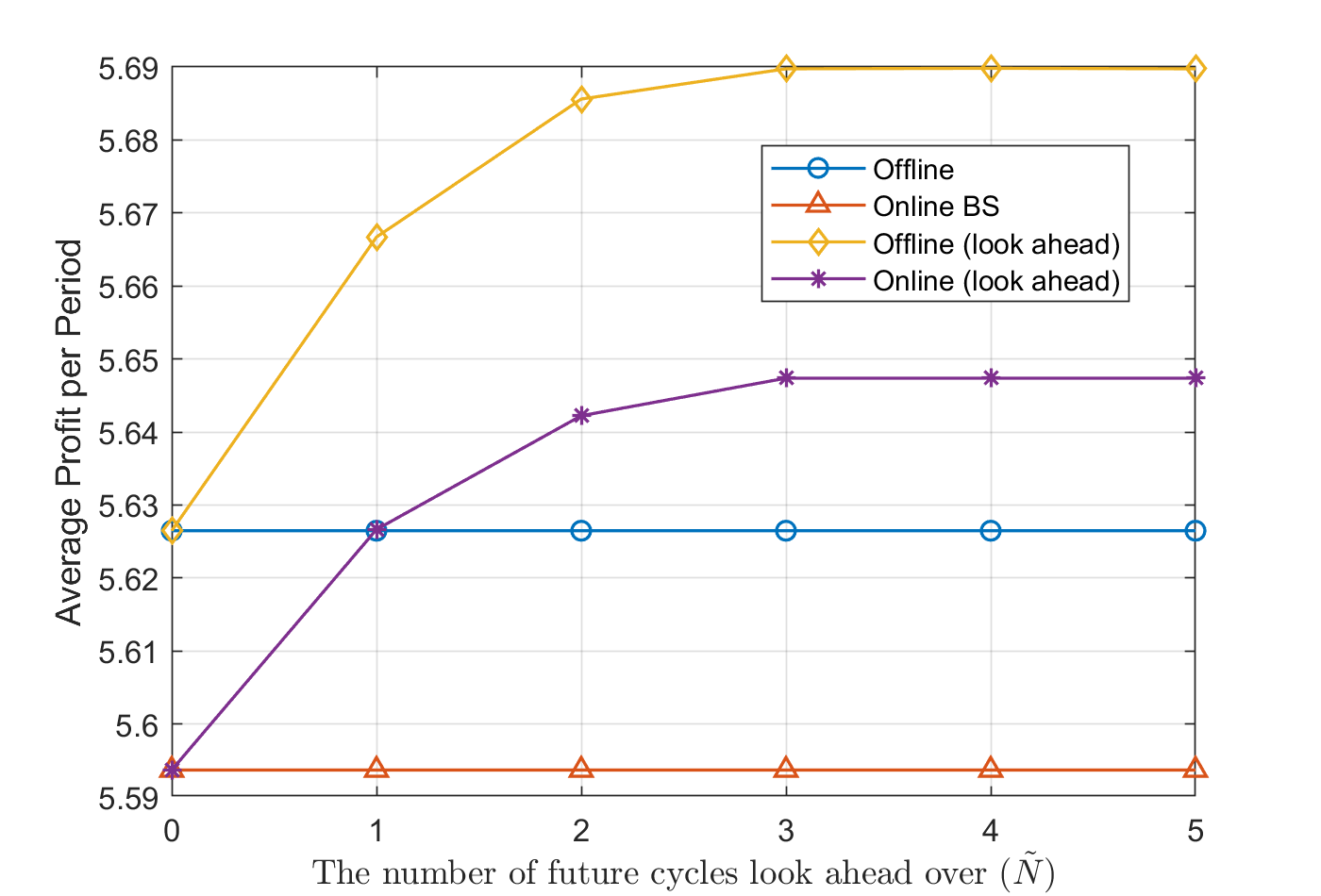}
        \caption{Lead time \( L = 10 \)}
        \label{fig:L5}
    \end{subfigure}
    \caption{Average profit under the myopic offline and online BS algorithms with and without look-ahead across different lead times and number of look-ahead cycles.}
    \label{fig:simulation-leadtime-lookahead}
\end{figure}

\subsubsection{Number of periods in one cycle}

We next vary the number of periods \(T\) while fixing \(N=50\), \(L=10\) with \(\tilde{N}=5\), \(h=3\), \([r_1,r_2,r_3]=[1,9,10]\), \([\lambda_0,\lambda_1,\lambda_2,\lambda_3]=[0.2,0.2,0.3,0.3]\), and \(K=10,000\).

\paragraph{Observations.}
\cref{fig:sim-comparison-T} shows that all policies improve as \(T\) increases and approach stable plateaus, as demand mixing better matches expectations over longer cycles. Across the entire range of \(T\), look-ahead consistently dominates non–look-ahead (offline look-ahead \(\ge\) offline myopic and online look-ahead \(\ge\) online BS). Although the gaps narrow slightly with larger \(T\) as randomness averages out, the advantage of look-ahead remains economically meaningful throughout.

\begin{figure}[htbp]
  \centering
  \includegraphics[width=0.7\textwidth]{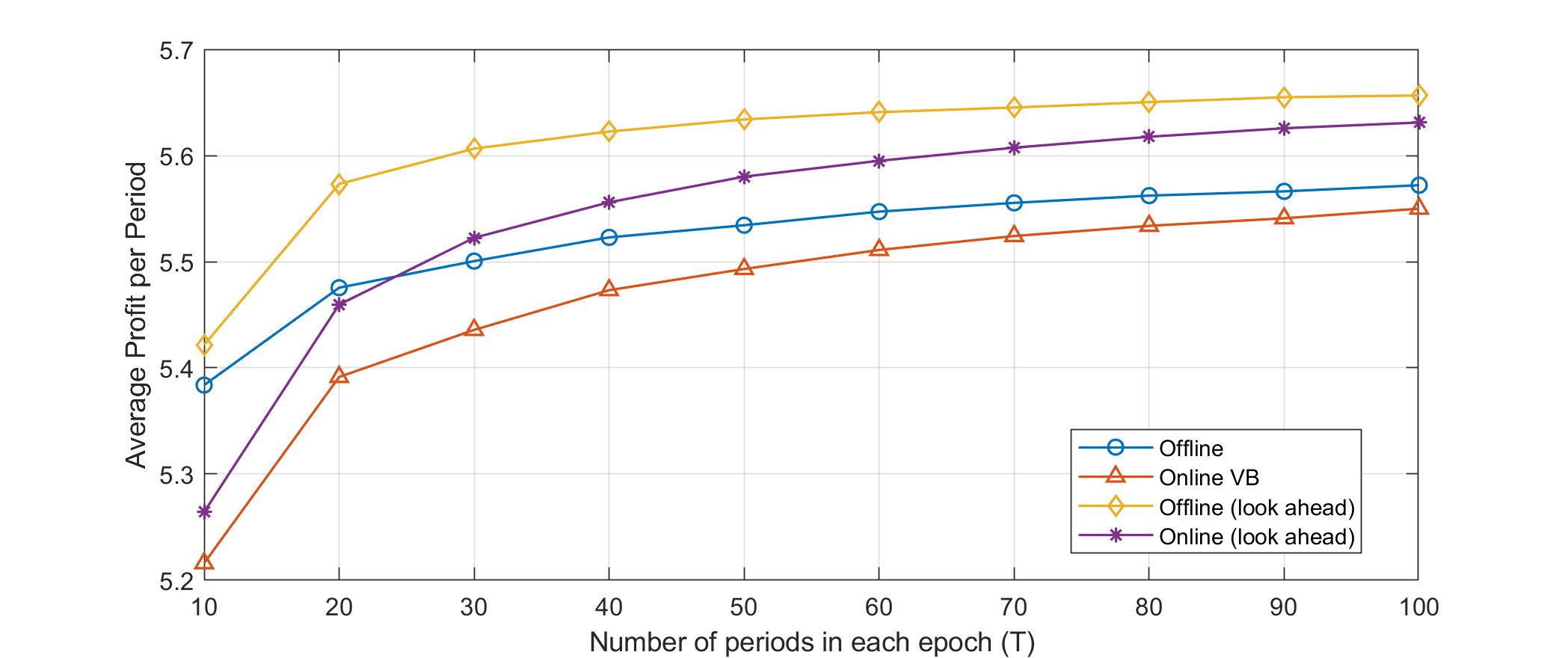}
  \caption{Average profit under the myopic offline and online BS algorithms with and without look-ahead across different number of periods in one cycle.
  % \linwei{VB? epoch?}
  }
  \label{fig:sim-comparison-T}
\end{figure}

\subsubsection{Holding costs, rewards and arrival rates}
% \elaine{we expect our alg to be better: 1. demand distribution more fluctuate 2. more resources 3. there exists replenishment accident in practical}

Finally, we jointly vary \((h,r,\lambda)\) because these primitives interact to shape both the profit level and the headroom for policy improvement, so changing one at a time can yield ambiguous movements in the improvement gap. For example, increasing the arrival rate of high–reward types can \emph{raise} the value of look-ahead (reserved inventory is more likely to be deployed on valuable demand) but can also \emph{lower} it (even myopic policies naturally allocate less to low–reward types), so the net effect on the profit \emph{gap} is ambiguous. Therefore, rather than seeking a monotone pattern in any single primitive, we evaluate how much profit improvement look-ahead delivers across combinations of these variables that jointly affect profits. In the experiments, we fix \(T=50\), \(N=50\), and \(L=10\), and vary \((h,r,\lambda)\) and \cref{tab:profit-results} reports average profit per period and the percentage uplift delivered by look-ahead in both offline and online settings.

Across all configurations in \cref{tab:profit-results}, the look-ahead variants deliver a persistent gain of roughly \(1\%\)–\(2\%\) in average profit per period over their non–look-ahead counterparts (both offline and online). At operational scale, such percentage lifts translate into substantial dollar value given the large transaction volumes and inventory carrying costs. The magnitude appears “modest’’ only because, in this class of settings, the headroom for improvement is intrinsically limited: the baseline gap between offline myopic and online BS without replenishment is already small, since the well–designed online algorithms capture most of the available value (see simulations in, e.g., \cite{vera2021bayesian,Xie24,bumpensanti2020re}). This point is also evident in our table, where the \emph{online (look-ahead)} policy frequently exceeds \emph{offline (myopic)}. Taken together with the small baseline gap, this highlights both the limited headroom and the practical value of adding short-horizon anticipation.

We further expect the advantage of look-ahead to be more pronounced as system variability rises: (i) With more \emph{volatile demand mixes} (greater temporal fluctuations in type composition), myopic algorithms tend to over-allocate to low-value types in unlucky periods; look-ahead explicitly weighs near-term opportunities against expected future high-value arrivals, improving intertemporal matching; (ii) With \emph{multiple resources}, the effective shadow prices evolve over time and across resources; the look-ahead planner internalizes future cross-resource bottlenecks and substitution options, reducing myopic cannibalization; (iii) Under \emph{replenishment disruptions} (e.g., a missed delivery for one cycle), the option value of inventory increases; look-ahead creates protective buffers and reallocates stock toward periods with higher expected marginal value. In all three cases, the ability to consider even a short forward horizon systematically converts additional variability into value.

\begin{table}[htbp]
\centering
\caption{Average Profits per Period (i.e., Total profits / (\(NT\))) and Percentage Improvements under Offline and Online Algorithms with and without Look-ahead.}
\label{tab:profit-results}
\resizebox{\textwidth}{!}{%
\begin{tabular}{cccccccccc}
\toprule
\textbf{Rewards} & \textbf{Demand $\boldsymbol{\lambda}$} & \textbf{h} &
\textbf{Offline (Myopic)} & \textbf{Offline (Look ahead)} & \textbf{↑\%} &
\textbf{Online BS} & \textbf{Online BS (Look ahead)} & \textbf{↑\%} \\
\midrule
$[1,9,10]$ & $[0.2, 0.3, 0.3]$ & 2 & 5.6265 & 5.6898 & 1.13\% & 5.5937 & 5.6472 & 0.96\% \\
$[1,9,10]$ & $[0.2, 0.3, 0.3]$ & 3 & 5.5343 & 5.6340 & 1.80\% & 5.4932 & 5.5801 & 1.58\% \\
$[1,9,10]$ & $[0.2, 0.2, 0.4]$ & 2 & 5.7304 & 5.7902 & 1.04\% & 5.6966 & 5.7341 & 0.66\% \\
$[1,9,10]$ & $[0.2, 0.2, 0.4]$ & 3 & 5.6298 & 5.7293 & 1.77\% & 5.5897 & 5.6663 & 1.37\% \\
$[1,9,10]$ & $[0.2, 0.1, 0.5]$ & 2 & 5.8250 & 5.8876 & 1.07\% & 5.7816 & 5.8402 & 1.01\% \\
$[1,9,10]$ & $[0.2, 0.1, 0.5]$ & 3 & 5.7256 & 5.8266 & 1.76\% & 5.6829 & 5.7700 & 1.53\% \\
\midrule
$[1,5,10]$ & $[0.2, 0.3, 0.3]$ & 2 & 4.4475 & 4.4985 & 1.15\% & 4.4121 & 4.4656 & 1.21\% \\
$[1,5,10]$ & $[0.2, 0.3, 0.3]$ & 3 & 4.3738 & 4.4480 & 1.69\% & 4.3392 & 4.3976 & 1.35\% \\
$[1,5,10]$ & $[0.2, 0.2, 0.4]$ & 2 & 4.9404 & 4.9966 & 1.14\% & 4.9102 & 4.9616 & 1.05\% \\
$[1,5,10]$ & $[0.2, 0.2, 0.4]$ & 3 & 4.8643 & 4.9583 & 1.93\% & 4.8286 & 4.9009 & 1.50\% \\
$[1,5,10]$ & $[0.2, 0.1, 0.5]$ & 2 & 5.4372 & 5.5000 & 1.16\% & 5.4048 & 5.4559 & 0.95\% \\
$[1,5,10]$ & $[0.2, 0.1, 0.5]$ & 3 & 5.3463 & 5.4527 & 1.99\% & 5.3087 & 5.3978 & 1.68\% \\
\midrule
$[1,2,10]$ & $[0.2, 0.3, 0.3]$ & 2 & 3.5797 & 3.6219 & 1.18\% & 3.5560 & 3.5877 & 0.89\% \\
$[1,2,10]$ & $[0.2, 0.3, 0.3]$ & 3 & 3.5174 & 3.6030 & 2.43\% & 3.4891 & 3.5556 & 1.90\% \\
$[1,2,10]$ & $[0.2, 0.2, 0.4]$ & 2 & 4.3636 & 4.4132 & 1.14\% & 4.3362 & 4.3766 & 0.93\% \\
$[1,2,10]$ & $[0.2, 0.2, 0.4]$ & 3 & 4.2878 & 4.3907 & 2.40\% & 4.2452 & 4.3472 & 2.40\% \\
$[1,2,10]$ & $[0.2, 0.1, 0.5]$ & 2 & 5.1417 & 5.2024 & 1.18\% & 5.1149 & 5.1635 & 0.95\% \\
$[1,2,10]$ & $[0.2, 0.1, 0.5]$ & 3 & 5.0591 & 5.1748 & 2.29\% & 5.0254 & 5.1089 & 1.66\% \\
\bottomrule
\end{tabular}
}
\end{table}

% ~~~
% {\color{red}
% (1) Main takeaways (base-stock and constant-order): look-ahead is better for both online and offline.\\
% -- For each $L$, run different look-ahead level (0,1,...L-1).\\
% -- the benefits of look-ahead first increase and then decrease as $h$ increases.\\
% -- as $T$ increases, the benefits of look-ahead diminish.\\
% -- the gap between $r_i$ and $r_{i-1}$\\
% -- $\lambda$
% %(2) replenishment vs. fulfillment: 
% }

\section{Extension to Multi-Resource Online Fulfillment} \label{sec:extension}

In this section, we will extend our results that regret does not accumulate over cycles to the setting with multiple inventory resources, 
enabling analysis of more complex and realistic systems with differentiated resource types. We begin by introducing an extended model tailored to the multi-resource setting.

Most structural elements of the original model remain unchanged: the system still operates over \( N \) replenishment cycles, each containing \( T \) periods, with at most one customer arriving per period.  There are \(M\) customer types with arrival probabilities \(\{\lambda_j\}_{j\in[M]}\).  The realized demand process \( D_j^n[t_1, t_2] \) is defined as the same in \cref{sec:model}. The key extension is the introduction of \(d\) distinct inventory resources; the state and replenishment policies are updated accordingly. 

\textbf{Multiple resources.}
We introduce \(d\in\mathbb N\) distinct inventory resources indexed by \(\ell\in[d]\). Any type \(j\in[M]\) can be fulfilled using any resource \(\ell\in[d]\), earning reward \(r_{j\ell}\) upon acceptance. We assume rewards are not all identical, i.e., there exist \(j_1\neq j_2\) and some \(\ell\) such that \(r_{j_1\ell}\neq r_{j_2\ell}\).

\textbf{State and inventory (multi–resource).}
Let \(L_\ell\in\mathbb N\) be the deterministic lead time for resource \(\ell\).
Fix a fulfillment algorithm \(\text{alg}\) (online, offline, or as specified when needed) and a replenishment parameter vector \(\mathbf{z}=(z_\ell)_{\ell\in[d]}\) chosen by the policy and independent of the realized arrival path \(\mathcal H\).
Given \(\mathcal H\), define for each resource \(\ell\):

\begin{itemize}
    \item \(I_{t,\ell}^{n,\text{alg}}(  \mathbf{z},\mathcal H)\): on–hand inventory of resource \(\ell\) at the \emph{beginning} of period \(t\) in cycle \(n\);
    \item \(q_{\ell}^{n+l,\text{alg}}(\mathbf{z},\mathcal H)\): the order of resource \(\ell\) that \emph{arrives} at the beginning of cycle \(n+l\) for \(l=1,\ldots,L_\ell\).
    Hence, the order of resource \(\ell\) \emph{placed} at the start of cycle \(n\) is \(q_{\ell}^{n+L_\ell,\text{alg}}(\mathbf{z},\mathcal H)\).
\end{itemize}
Accordingly, the per–resource pipeline at the start of cycle \(n\) is
\[
\bm{q}_{\ell}^{n,\text{alg}}(\mathbf{z},\mathcal H)
  \triangleq  
\left(q_{\ell}^{  n+1,\text{alg}}(\mathbf{z},\mathcal H),\ldots,q_{\ell}^{  n+L_\ell-1,\text{alg}}(\mathbf{z},\mathcal H)\right),
\]
and we collect order pipelines and inventories across resources as
\[
{\mathbf{Q}}^{  n,\text{alg}}(\mathbf{z},\mathcal H)  \triangleq  \left(\bm{q}_{\ell}^{  n,\text{alg}}(\mathbf{z},\mathcal H)\right)_{\ell\in[d]},
\qquad
\mathbf{I}_{t}^{n,\text{alg}}(\mathbf{z},\mathcal H)  \triangleq  \left(I_{t,\ell}^{n,\text{alg}}(\mathbf{z},\mathcal H)\right)_{\ell\in[d]}.
\]
The system state at the start of cycle \(n\) is
\(\left(\mathbf{I}_{1}^{  n,\text{alg}}(\mathbf{z},\mathcal H),\ {\mathbf{Q}}^{  n,\text{alg}}(\mathbf{z},\mathcal H)\right)\).
Here \(I_{1,\ell}^{n,\text{alg}}(\mathbf{z},\mathcal H)\) is the start–of–cycle inventory of resource \(\ell\) (after any arrival \(q_{\ell}^{  n,\text{alg}}\)),
and \(I_{T+1,\ell}^{n,\text{alg}}(\mathbf{z},\mathcal H)\) is the end–of–cycle leftover of resource \(\ell\).
If resource–specific holding costs \(h_\ell\) apply, the end–of–cycle holding charge is \(\sum_{\ell=1}^d h_\ell  I_{T+1,\ell}^{n,\text{alg}}(\mathbf{z},\mathcal H)\).

\textbf{Replenishment policies (multi–resource).}
The two policies are adapted as follows:
\begin{itemize}
\item \textbf{Base-Stock Policy} ($\mathbf{z}=  \mathbf{S}\triangleq \left(S_\ell\right)_{\ell\in[d]}$).
Each resource $\ell$ has a target position $S_\ell$ (optimized in advance). For every cycle $n$ and resource $\ell$,
\begin{equation}\label{eq:base-replenish-mr}
q_{\ell}^{  n+L_\ell,\text{alg}}(  \mathbf{S},\mathcal H)
  =  
S_\ell  -  I_{1,\ell}^{n,\text{alg}}(  \mathbf{S},\mathcal H)
  -  
\sum_{l=1}^{L_\ell-1} q_{\ell}^{  n+l,\text{alg}}(  \mathbf{S},\mathcal H),
\end{equation}
\[
I_{1,\ell}^{n+1,\text{alg}}(  \mathbf{S},\mathcal H)
  =  
I_{T+1,\ell}^{n,\text{alg}}(  \mathbf{S},\mathcal H)
  +  
q_{\ell}^{  n+1,\text{alg}}(  \mathbf{S},\mathcal H),
\]
with pipeline shift 
$\bm{q}_{\ell}^{  n+1,\text{alg}}(  \mathbf{S},\mathcal H)
=(q_{\ell}^{  n+2,\text{alg}},\ldots,q_{\ell}^{  n+L_\ell+1,\text{alg}})$.
Unless otherwise noted, we allow any feasible initial state 
$\left(\mathbf{I}_{1}^{  1,\text{alg}}(  \mathbf{S},\mathcal H),  {\mathbf{Q}}^{1,\text{alg}}(  \mathbf{S},\mathcal H)\right)$
with componentwise integer, nonnegative entries and per–resource positions satisfying
\[
I_{1,\ell}^{1,\text{alg}}(  \mathbf{S},\mathcal H)  +  \sum_{l=1}^{L_\ell-1} q_{\ell}^{  1+l,\text{alg}}(  \mathbf{S},\mathcal H)\ \le\ S_\ell
\qquad\text{for all }\ell\in[d].
\]

\item \textbf{Constant-Order Policy} ($  \mathbf{z}=\mathbf{c}\triangleq \left(c_\ell\right)_{\ell\in[d]}$).
Each resource $\ell$ orders a fixed quantity $c_\ell$ every cycle:
\[
q_{\ell}^{n,\text{alg}}(  \mathbf{c},\mathcal H)  =  c_\ell\quad\text{for all cycles }n,
\qquad
I_{1,\ell}^{n+1,\text{alg}}(  \mathbf{c},\mathcal H)
  =  
I_{T+1,\ell}^{n,\text{alg}}(  \mathbf{c},\mathcal H)+c_\ell.
\]
In the long-run average ($N\to\infty$), performance is independent of the lead times $\{L_\ell\}$.
We allow any feasible initial state; without loss of generality and for simplicity, we take
\(
I_{1,\ell}^{1,\text{alg}}(  \mathbf{c},\mathcal H)=c_\ell, \forall \ell\in[d].
\)
\end{itemize}

\textbf{Profit and performance metrics.} 
% Holding costs are assessed individually for each resource: a unit holding cost \( h_\ell \) is incurred for every leftover unit of resource \( \ell \) at the end of cycle. 
The objective remains to maximize the total expected reward from accepted customers minus holding costs across all resources and cycles. Similarly, we denote by \( \pi_\mathbf{b}^{\text{alg}}(T,   \mathbf{S},N) \) (resp., \( \pi_\mathbf{c}^{\text{alg}}(T,   \mathbf{c},N) \)) the \textit{expected average profit per cycle} under a base-stock replenishment policy with \(  \mathbf{S}=(S_\ell)_{\ell\in[d]}\) (resp., a constant-order replenishment policy with \(  \mathbf{c}=(c_\ell)_{\ell\in[d]}\)) and fulfillment algorithm \textit{alg}, computed over all resources and \(N\) replenishment cycles. For brevity, we slightly abuse the notation \(\pi\) to in both the single- and multi-resource settings and the arguments \(  \mathbf{S},   \mathbf{c}\) make the context clear.

This extended framework enables the study of more practical settings involving multiple inventory types (e.g., warehouses, machines, or locations), and lays a foundation for analyzing the performance of online fulfillment algorithms under richer system dynamics.

In the multi-resource fulfillment setting, we continue to apply \cref{ass:online-algorithm} to characterize a broad class of online fulfillment algorithms achieving a per-cycle regret of at most \( C_1 T^\alpha \) relative to the offline (partial-foresight) benchmark without replenishment. This assumption remains valid in the multi-resource case, as similar online algorithms have been developed and analyzed for generalized resource allocation problems (see, e.g., \citep{vera2021bayesian,Xie24}).

However, the structural result in \cref{lem:inv-diff} extends only partially to this setting. Specifically, it guarantees that under \cref{ass:online-algorithm}, if the start-of-cycle inventories coincide (\(I_{1,\ell}^{n,\mathrm{on}}(\mathcal{H})=I_{1,\ell}^{n,\mathrm{off}}(\mathcal{H}), \forall \ell\)), then the total difference in end-of-cycle inventory across all resources is nonnegative for every demand path \(\mathcal{H}\) and bounded in expectation:
\begin{equation} \label{ineq:extend-inv-diff-total}
    \sum_{\ell=1}^d \left[ I_{T+1,\ell}^{n,\mathrm{on}}(\mathcal{H}) - I_{T+1,\ell}^{n,\mathrm{off}}(\mathcal{H}) \right] \ge 0, 
    \quad \text{and} \quad \mathbb{E}_{\mathcal{H},\mathrm{on}}\left[\sum_{\ell=1}^d \left( I_{T+1,\ell}^{n,\mathrm{on}}(\mathcal{H}) - I_{T+1,\ell}^{n,\mathrm{off}}(\mathcal{H}) \right)\right] \leq C_2 T^\alpha.
\end{equation}
While this inequality gives a bounded deviation in total inventory, it does not imply that the deviation for each individual resource \( \ell \) is also bounded. Indeed, one resource could accumulate a large surplus while another depletes completely. Therefore, to analyze holding costs and resource-level decisions, we must establish stronger \emph{per-resource} guarantees as in the following lemma. 
% The following lemma addresses this need by providing a per-resource inventory bound in the extended multi-resource model.

\begin{lemma} \label{lem:extend-inv-diff-resource}
    Under \cref{ass:online-algorithm}, for any cycle \(n\), if the start-of-cycle inventories coincide
    (\(I_{1,\ell}^{n,\mathrm{on}}(\mathcal{H})=I_{1,\ell}^{n,\mathrm{off}}(\mathcal{H}),\forall \ell\)),
    then there exists a reward-optimal myopic offline allocation algorithm (when the optimizer is not unique) such that for any \(\ell\in[d]\),
    \begin{equation} \label{ineq:extend-inv-diff-resource}
        - C_3 T^\alpha \le \mathbb{E}_{\mathcal{H},\mathrm{on}}\!\left[I_{T+1,\ell}^{n,\mathrm{on}}(\mathcal{H}) - I_{T+1,\ell}^{n,\mathrm{off}}(\mathcal{H})\right] \leq C_3 T^\alpha,
    \end{equation}
    for some constant \( C_3 > 0 \) independent of the start-of-cycle inventories and \(T\).
\end{lemma}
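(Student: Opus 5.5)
We would prove \cref{lem:extend-inv-diff-resource} by writing the single-cycle multi-resource myopic offline problem as a transportation-type LP and using its stability in the right-hand side. Write $x_{j\ell}$ for the number of type-$j$ arrivals served from resource $\ell$. The offline problem maximizes $\sum_{j,\ell} r_{j\ell}x_{j\ell}$ subject to $\sum_\ell x_{j\ell}\le D_j^n[1,T]$ and $\sum_j x_{j\ell}\le I_{1,\ell}$. Because the constraint matrix is totally unimodular, integral optima exist, and the myopic offline allocation is one of them. The online consumption vector $u^{\mathrm{on}}_\ell\triangleq I_{1,\ell}-I^{n,\mathrm{on}}_{T+1,\ell}$ comes from a feasible allocation $x^{\mathrm{on}}$ of the same LP, since the online algorithm only serves realized arrivals. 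It is therefore enough to show that $x^{\mathrm{on}}$ lies within $O(T^\alpha)$ in expected $\ell_1$ distance of some offline optimizer $x^{\mathrm{off}}$. We would choose $x^{\mathrm{off}}$ measurably as the optimizer nearest to $x^{\mathrm{on}}$, or fix a deterministic tie-breaking rule that the argument shows is harmless. This choice is what the phrase ``there exists a reward-optimal myopic offline allocation'' in the statement allows. Once the $\ell_1$ bound holds, $|u^{\mathrm{on}}_\ell-u^{\mathrm{off}}_\ell|\le\|x^{\mathrm{on}}-x^{\mathrm{off}}\|_1$, and taking expectations gives both sides of \eqref{ineq:extend-inv-diff-resource}.

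The key step is an error bound of Hoffman/sharpness type for LPs: for a feasible $x$ with objective gap $g(x)=V^{\mathrm{off}}-r^\top x$, we want
\[
\operatorname{dist}_1\bigl(x,\ \mathcal X^*\bigr)\ \le\ \kappa\, g(x),
\]
where $\mathcal X^*$ is the optimal face. We must also make sure that $\kappa$ depends only on $(r_{j\ell})$ and the combinatorial structure, and not on the right-hand side $(D,I)$. We would prove this as follows. The optimal face is characterized by complementary slackness with an optimal dual $(\mu,\nu)$. The gap $g(x)$ equals a sum of nonnegative reduced costs times primal quantities, plus dual prices times slack. The feasible region is $\{x\ge0: Ax\le b\}$ with a fixed matrix $A$. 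Hoffman's lemma gives a constant depending only on $A$ and on the cost vector through the finitely many possible optimal bases, and this uniformity in $b$ is exactly what we need. Finiteness of the set of dual vertices, which range over finitely many bases of a fixed matrix, yields a uniform minimal positive reduced cost $\gamma>0$, hence $\kappa\lesssim 1/\gamma$. Here the nondegeneracy assumption, that rewards are not all identical, is used only to rule out trivialities; ties in rewards are handled by the freedom to choose $x^{\mathrm{off}}$.

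With this bound, $\mathbb E\|x^{\mathrm{on}}-x^{\mathrm{off}}\|_1\le \kappa\,\mathbb E[g(x^{\mathrm{on}})]=\kappa\,\mathrm{REG}\le\kappa C_1T^\alpha$ by \cref{ass:online-algorithm}, so $C_3=\kappa C_1$, independent of $I$ and $T$.

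The main obstacle is the uniformity of the Hoffman constant together with the slack terms. Resources left unused while their dual price is zero create no reward gap, yet they can shift per-resource consumption arbitrarily. To deal with this, we would choose $x^{\mathrm{off}}$ as the projection of $x^{\mathrm{on}}$ onto $\mathcal X^*$, so that such zero-price directions lie inside the optimal face and do not count against the distance. Hoffman's bound for the polyhedron $\{x\ge0,\ Ax\le b,\ r^\top x\ge V^{\mathrm{off}}\}$ then controls the distance by the single violated constraint $r^\top x\ge V^{\mathrm{off}}$, with a constant depending only on $(A,r)$. We would verify that this constant is indeed uniform in $b$, which holds for Hoffman constants, and that integrality of the projection can be restored. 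If an integral offline optimizer is required, this is done by rounding within the TU face at an $O(1)$ per-unit cost.
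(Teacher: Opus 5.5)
Your argument is correct, and it takes a different route from the paper's.

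\textbf{How the two proofs differ.} The paper argues by contradiction. Suppose that, for every reward-optimal offline selection, some resource had an expected leftover gap of at least $CT^\beta$ with $\beta>\alpha$. Then each such unit could only be redeployed at a reward loss of at least $\Delta r_{\min}$, the smallest positive difference between two rewards $r_{j\ell}$. This forces $\mathrm{REG}\ge \Delta r_{\min}CT^\beta$, contradicting \cref{ass:online-algorithm}.

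You prove the bound directly. Hoffman's lemma, applied to the fixed matrix obtained by stacking the transportation matrix $A$, $-I$ and $-r^\top$, gives $\mathrm{dist}_1(x^{\mathrm{on}},\mathcal X^*)\le H\,(V^{\mathrm{off}}-r^\top x^{\mathrm{on}})$. Here $H$ depends only on $(M,d,r)$, uniformly in the right-hand side $(D,I)$ and in $T$.

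Both proofs rest on the same idea: moving away from the optimal face costs reward at a uniformly linear rate. The paper's version is brief and suggests an explicit constant $C_1/\Delta r_{\min}$, but it is heuristic in two places. First, allocations differ along alternating paths whose net reward change is a signed sum of several $r_{j\ell}$; this can be positive yet smaller than $\Delta r_{\min}$. Second, the hypothesis ``$\ge CT^\beta$ for some $\beta>\alpha$'' is not the negation of the claim, since it misses gaps such as $T^\alpha\ln T$. Your version handles ties cleanly by projecting onto the whole optimal face, and it makes uniformity in the start-of-cycle inventories automatic. It even bounds $\mathbb{E}\bigl|I^{n,\mathrm{on}}_{T+1,\ell}-I^{n,\mathrm{off}}_{T+1,\ell}\bigr|$. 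The price is a non-explicit constant, roughly $C_3\approx HC_1+Md$.

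\textbf{Three points to tighten.}

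First, drop the fallback of a fixed deterministic tie-breaking rule, because it fails. Suppose $r_{j\ell}$ does not depend on $\ell$ and both inventories are at least $T$. An online algorithm that breaks ties toward resource~2 then has zero regret. Yet its per-resource consumption differs by order $T$ from an offline rule that always breaks ties toward resource~1. So the offline selection must be adapted to the online policy, which is exactly what the lemma's ``there exists'' allows. If you want the offline rule to depend only on $(\mathcal H,\mathbf I)$, and not on the online algorithm's internal randomness, project $\bar x^{\mathrm{on}}\triangleq\mathbb{E}[x^{\mathrm{on}}\mid\mathcal H]$ instead. It is feasible by convexity, its gap is $\mathbb{E}[g(x^{\mathrm{on}})\mid\mathcal H]$, and linearity of the expectation in the statement finishes the argument.

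Second, rounding costs an additive constant, not something per unit. $\mathcal X^*$ is a face of the TU polyhedron, so it is cut out by a TU system with integer data. Intersecting it with the box $\lfloor x^*\rfloor\le y\le\lceil x^*\rceil$ around the projection $x^*$ gives a nonempty TU polytope. Its integral vertex lies within $\ell_1$-distance $Md$ of $x^*$, and this is absorbed into $C_3T^\alpha$ because $T^\alpha\ge 1$.

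Third, the reduced-cost bound with $\gamma$ in your middle paragraph does not by itself control the distance to $\mathcal X^*$. The zero-price slack directions you later flag are exactly why. The Hoffman bound for the augmented system is what does the work, so present it as the key step.
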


\emph{Remark.}
When the myopic offline optimizer is not unique (e.g., due to reward ties across resources), we fix an arbitrary reward-optimal selection.
This does not affect the following regret comparisons because all such selections attain the same total reward.

In the next two subsections, we extend the inventory–difference and profit–regret results from the no-replenishment setting to the multi-resource online fulfillment model with replenishment. We evaluate these performance metrics under a given replenishment policy, thereby generalizing the analysis in \cref{sec:Replenishment}. Similarly, let \(\mathbf{S}^{\text{on}}\) and \(\mathbf{S}^{\text{off}}\) denote the optimal base-stock levels for the online and offline fulfillment algorithms, and \(\mathbf{c}^{\text{on}}\) and \(\mathbf{c}^{\text{off}}\) denote the corresponding optimal constant-order quantities in the multi-resource setting.

\subsection{Base-Stock Replenishment}
Following \cref{subsec:base-replenishment}, we proceed in two steps: first
establish bounds on the inventory difference of each resource in every cycle between two algorithms in \cref{lem:inv-diff-base-resource}, and then bound the profit difference across all resources and cycles in \cref{thm:extend-reg-total-base}.

\begin{lemma}
    \label{lem:inv-diff-base-resource}
    Under \cref{ass:online-algorithm}, the same initial inventory pipeline and base-stock policy with \(  \mathbf{S}=\{S_\ell\}_{\ell\in[d]}\)
    , for any cycle \(n\), the start-of-cycle inventory difference between the online and the myopic offline algorithms
    can be bounded by:
    \[
    -L_\ell\cdot C_3T^\alpha \le \mathbb{E}_{\mathcal{H},\text{on}}\left[I_{1,\ell}^{n,\text{on}}(  \mathbf{S},\mathcal H) - I_{1,\ell}^{n,\text{off}}(  \mathbf{S},\mathcal H)\right] \le (L_\ell^2-L_\ell+1)\cdot C_3 T^\alpha, \qquad\forall \ell\in[d];
    \]
    and similarly for the end-of-cycle inventory:
    \[
    -(L_\ell+1) \cdot C_3T^\alpha \le \mathbb{E}_{\mathcal{H},\text{on}}\left[I_{T+1,\ell}^{n,\text{on}}(  \mathbf{S},\mathcal H) - I_{T+1,\ell}^{n,\text{off}}(  \mathbf{S},\mathcal H)\right] \le (L_\ell^2-L_\ell+2)\cdot C_3 T^\alpha, \qquad\forall \ell\in[d].
    \]
\end{lemma}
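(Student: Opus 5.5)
The plan is to run the argument of \cref{lem:inv-diff-base} resource by resource. The single-resource sample-path sign property is replaced by the two-sided expectation bound of \cref{lem:extend-inv-diff-resource}.

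\textbf{Step 1: pipeline identity.} Fix $\ell$ and use \eqref{eq:base-replenish-mr}. The order placed at the start of cycle $m$ restores the inventory position of resource $\ell$ to $S_\ell$. Unrolling the pipeline shift over the last $L_\ell$ cycles should then give, for every algorithm,
\[
I_{1,\ell}^{n,\text{alg}}(\mathbf S,\mathcal H)=S_\ell-\sum_{k=1}^{L_\ell-1} u_\ell^{n-k,\text{alg}} ,
\]
for $n$ past the initialization, where $u_\ell^{m,\text{alg}}\triangleq I_{1,\ell}^{m,\text{alg}}-I_{T+1,\ell}^{m,\text{alg}}$ is the consumption of resource $\ell$ in cycle $m$. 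For the first few cycles, the common initial pipeline contributes identical terms to both algorithms, so they cancel. It will be necessary to check carefully whether the window has $L_\ell$ or $L_\ell-1$ terms. The stated constants suggest that the start-of-cycle gap involves $L_\ell$ consumption gaps, plus one more at the end of the cycle. Consequently
\[
I_{1,\ell}^{n,\text{on}}-I_{1,\ell}^{n,\text{off}}=\sum_{k}\bigl(u_\ell^{n-k,\text{off}}-u_\ell^{n-k,\text{on}}\bigr),
\]
and $I_{T+1,\ell}^{n,\text{on}}-I_{T+1,\ell}^{n,\text{off}}$ equals this quantity plus the cycle-$n$ consumption gap.

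\textbf{Step 2: per-cycle decomposition.} Each cycle's consumption gap splits as
\[
\bigl[u^{\text{off}}(\mathbf I^{\text{off}})-u^{\text{off}}(\mathbf I^{\text{on}})\bigr]+\bigl[u^{\text{off}}(\mathbf I^{\text{on}})-u^{\text{on}}(\mathbf I^{\text{on}})\bigr],
\]
where the argument is the start-of-cycle inventory vector. The second bracket is controlled by \cref{lem:extend-inv-diff-resource} after conditioning on the start inventories, which are independent of the in-cycle arrivals. It lies in $[-C_3T^\alpha,C_3T^\alpha]$ in expectation. For the first bracket I would use a sensitivity property of the reward-optimal myopic allocation selected in \cref{lem:extend-inv-diff-resource}. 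The property to aim for: raising the start inventory vector changes the consumption of resource $\ell$ by at most the total inventory perturbation, and only nonnegatively on resource $\ell$'s own coordinate. This is an LP/transportation monotonicity argument. Feeding the start-of-cycle gaps of the previous cycles, which are already bounded inductively, into this bound gives a recursion over the $L_\ell$-cycle window.

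\textbf{Step 3: asymmetric constants.} For the lower bound, I expect the first bracket to carry a favorable sign, so only the direct terms survive. This gives $-L_\ell C_3T^\alpha$ at the start of the cycle and $-(L_\ell+1)C_3T^\alpha$ at the end. For the upper bound, each of the last $L_\ell$ cycles can also inherit a spillover of order $(L_\ell-1)C_3T^\alpha$ from gaps earlier in the window. Summing should give $1+L_\ell(L_\ell-1)=L_\ell^2-L_\ell+1$, and adding one more cycle-$n$ term gives $L_\ell^2-L_\ell+2$. The induction over $n$ must be closed so that the bound does not grow with $n$. The base-stock reset ensures that only the last $L_\ell$ cycles matter, which truncates the recursion.

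\textbf{Main obstacle.} The key step is the sensitivity bound for the first bracket in Step 2. In the multi-resource problem the myopic offline allocation can substitute across resources. A different start inventory on other resources can therefore change how much of resource $\ell$ is used, and there is no pathwise ordering to rely on. I would first try to prove, via the transportation-LP structure and a consistent tie-breaking rule, that consumption of resource $\ell$ is nondecreasing in $I_\ell$ with slope at most one and changes by at most the $\ell_1$-size of the cross perturbation. If that fails, I would fall back on bounding the cross effect by the total-inventory bound \eqref{ineq:extend-inv-diff-total}.
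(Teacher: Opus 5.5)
Your Step 1 is correct once the count is settled. The base-stock rule gives $q_\ell^{m+L_\ell+1}=u_\ell^{m}$, so past initialization $I_{1,\ell}^n=S_\ell-\sum_{k=1}^{L_\ell}u_\ell^{n-k}$, a window of $L_\ell$ terms. This is the same bookkeeping the paper does with the order gaps $\Delta q_\ell$ and the position gap.

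The genuine gap is the first bracket in Step 2. The problem is not only that your sensitivity lemma is unproved: even if you grant it, the argument cannot close, for three reasons.

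\emph{Signed versus absolute inputs.} A Lipschitz bound controls $u_\ell^{\text{off}}(\mathbf I^{\text{off}})-u_\ell^{\text{off}}(\mathbf I^{\text{on}})$ only through $\sum_{\ell'}|\Delta I_{1,\ell'}^m|$, pathwise. The statement you are inducting on and \cref{lem:extend-inv-diff-resource} give only two-sided bounds on signed expectations such as $\mathbb E[\Delta I_{1,\ell'}^m]$ and $\mathbb E[\epsilon_\ell]$. So there is nothing to feed into the Lipschitz bound.

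\emph{No truncation.} Even with bounds on $\mathbb E|\cdot|$, slope one yields a recursion of the form $x_n\le\sum_{k=1}^{L_\ell}(d\,x_{n-k}+C_3T^\alpha)$. This is not uniform in $n$: it grows linearly already for $L_\ell=d=1$ and geometrically otherwise. The base-stock reset truncates the window of consumption gaps. But each gap in that window depends, through your first bracket, on the start-of-cycle gap of its own cycle, which depends on earlier gaps. So nothing is truncated, and a slope-$\le 1$ bound supplies no contraction.

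\emph{Wrong sign.} The sign you hope for in the lower bound goes the other way. When the online algorithm holds a surplus of resource $\ell$ (or a deficit of substitutes), the offline allocation run from $\mathbf I^{\text{on}}$ uses more of $\ell$, so the bracket is $\le 0$. Already for $d=1$ it equals $\min\{I^{\text{off}},D\}-\min\{I^{\text{on}},D\}\in[-\Delta I_1^m,0]$ when $\Delta I_1^m\ge0$. As a result, the Step 3 constants are read off from the target rather than derived.

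The missing idea is never to compare the offline allocation at two different states; the paper compares at the offline state instead. In cycle $k$ the online policy is run as if it started from $\mathbf I_1^{k,\text{off}}$, with its use of resource $\ell$ capped at $I_{1,\ell}^{k,\text{on}}$. This gives two cases.

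\emph{Case 1.} If $f_\ell^{k,\text{on}}(\mathbf I_1^{k,\text{off}})\le I_{1,\ell}^{k,\text{on}}$, one gets the identity $\Delta I_{T+1,\ell}^k=\Delta I_{1,\ell}^k+\epsilon_\ell(\mathbf I_1^{k,\text{off}})$.

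\emph{Case 2.} Otherwise the online algorithm exhausts resource $\ell$, so $\Delta I_{1,\ell}^k+\epsilon_\ell(\mathbf I_1^{k,\text{off}})\le\Delta I_{T+1,\ell}^k=-I_{T+1,\ell}^{k,\text{off}}\le0$.

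No sensitivity term appears, and every relation is linear in signed quantities, so expectation bounds are enough. The only cross-cycle coupling is the order rule \eqref{eq:base-replenish-mr}. It makes the new position gap $\Delta I_{1,\ell}^{k+1}+\sum_{l=1}^{L_\ell-1}\Delta q_\ell^{k+1+l}$ equal to $\epsilon_\ell(\mathbf I_1^{k,\text{off}})$ in Case 1, and sandwiches it between $\epsilon_\ell(\mathbf I_1^{k,\text{off}})$ and $-\Delta I_{1,\ell}^k$ in Case 2.

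A joint induction then closes with $n$-independent constants. It tracks the start gap, each in-transit order gap (in $[-L_\ell C_3T^\alpha,C_3T^\alpha]$), and the position gap (in $[-C_3T^\alpha,L_\ell C_3T^\alpha]$). For example, $L_\ell^2-L_\ell+1=L_\ell+(L_\ell-2)L_\ell+1$ collects three pieces: the position-gap bound, the $L_\ell-2$ in-transit orders at worst $-L_\ell C_3T^\alpha$ each, and one $\epsilon_\ell$ term.
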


\begin{theorem} \label{thm:extend-reg-total-base}
    % Given lead times \(\{L_\ell\}_{\ell\in[d]}\), under \cref{ass:online-algorithm} and the base-stock policy with their respective \(  \mathbf{S}^{\text{off}}\) and \(  \mathbf{S}^{\text{on}}\), the expected average profit difference can be upper bounded by:
    Under \cref{ass:online-algorithm},
    \begin{equation*} %\label{ineq:reg_total}
       \pi_\mathbf{b}^{\text{off}}(T,   \mathbf{S}^{\text{off}},N) - \pi_\mathbf{b}^{\text{on}}(T,   \mathbf{S}^{\text{on}},N)   \le C_1T^\alpha + \sum_{\ell=1}^d r_{_\ell,\max}\cdot L_\ell \cdot C_3T^\alpha +  \sum_{\ell=1}^d h_\ell\cdot (L_\ell^2-L_\ell+2)\cdot C_3T^\alpha ,
    \end{equation*}
    where \( r_{_\ell,\max}\triangleq \max_j\{r_{jl}\} \), \( C_1  \) and \( C_3 \) are defined in \cref{ass:online-algorithm} and \cref{lem:extend-inv-diff-resource}. In particular, \(\pi_\mathbf{b}^{\text{off}}(T,   \mathbf{S}^{\text{off}},N) - \pi_\mathbf{b}^{\text{on}}(T,   \mathbf{S}^{\text{on}},N) = \mathcal{O}(T^\alpha)\).
\end{theorem}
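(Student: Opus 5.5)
The plan follows the single-resource proof of \cref{thm:reg-total-base}. The first step reduces to a common base-stock vector. Since $\mathbf{S}^{\text{on}}$ maximizes the online profit, we have $\pi_\mathbf{b}^{\text{on}}(T,\mathbf{S}^{\text{on}},N)\ge \pi_\mathbf{b}^{\text{on}}(T,\mathbf{S}^{\text{off}},N)$. It therefore suffices to bound
\[
\pi_\mathbf{b}^{\text{off}}(T,\mathbf{S}^{\text{off}},N)-\pi_\mathbf{b}^{\text{on}}(T,\mathbf{S}^{\text{off}},N),
\]
with both algorithms started from the same initial pipeline. We then write the cycle-$n$ profit difference as a reward gap minus a holding-cost gap and average over $n$. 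The holding part is straightforward. It equals $\sum_\ell h_\ell\,\mathbb{E}[I_{T+1,\ell}^{n,\text{on}}-I_{T+1,\ell}^{n,\text{off}}]$, and the upper bound in \cref{lem:inv-diff-base-resource} bounds it by $\sum_\ell h_\ell (L_\ell^2-L_\ell+2)C_3T^\alpha$. This is exactly the third term of the theorem.

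The main work is the reward gap, because the two algorithms start cycle $n$ from different inventory vectors $\mathbf{I}_1^{n,\text{on}}$ and $\mathbf{I}_1^{n,\text{off}}$. We introduce a fictitious online run in cycle $n$ that starts from $\mathbf{I}_1^{n,\text{off}}$, and we decompose the gap as
\[
\bigl[R^{\text{off}}(\mathbf{I}_1^{n,\text{off}})-R^{\text{on}}(\mathbf{I}_1^{n,\text{off}})\bigr]+\bigl[R^{\text{on}}(\mathbf{I}_1^{n,\text{off}})-R^{\text{on}}(\mathbf{I}_1^{n,\text{on}})\bigr].
\]
The within-cycle demand is independent of the start-of-cycle state, so \cref{ass:online-algorithm}, applied conditionally, bounds the first bracket in expectation by $C_1T^\alpha$. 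For the second bracket, the online algorithm loses inventory only on resources where $I_{1,\ell}^{n,\text{on}}<I_{1,\ell}^{n,\text{off}}$. We would use a Lipschitz (coupling) property: removing one unit of resource $\ell$ lowers achievable reward by at most $r_{\ell,\max}$, since at most one acceptance on $\ell$ is lost. This bounds the bracket by $\sum_\ell r_{\ell,\max}\,(I_{1,\ell}^{n,\text{off}}-I_{1,\ell}^{n,\text{on}})^+$.

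The lower bound in \cref{lem:inv-diff-base-resource} controls only the expectation of the signed difference, $-L_\ell C_3T^\alpha$, and not its positive part. We expect this to be the main obstacle. To avoid the positive part, we would do the comparison from the offline side instead. The offline myopic reward is concave and monotone in inventory, and its one-sided increments are at most $r_{\ell,\max}$ per unit. Hence
\[
R^{\text{off}}(\mathbf{I}^{\text{off}})\le R^{\text{off}}(\mathbf{I}^{\text{on}})+\sum_\ell r_{\ell,\max}(I^{\text{off}}_\ell-I^{\text{on}}_\ell)+(\text{positive-side terms}),
\]
and we then apply the regret assumption at $\mathbf{I}_1^{n,\text{on}}$. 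If the signed expectation is not enough, we would go back into the proof of \cref{lem:inv-diff-base-resource}. That proof derives the bound from the pipeline identity $I_{1,\ell}^{n}=S_\ell-\sum_{k=1}^{L_\ell}q_\ell^{\cdot}$ together with the per-cycle deviations of \cref{lem:extend-inv-diff-resource}, each bounded by $C_3T^\alpha$. We would check that the same telescoping, over at most $L_\ell$ cycles of consumption differences, controls the expected negative part. That gives the $r_{\ell,\max}L_\ell C_3T^\alpha$ term.

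Summing the three contributions, dividing by $N$, and noting that every constant is independent of $n$ and $N$ gives the stated bound. Since $d$, $L_\ell$, $h_\ell$ and $r_{\ell,\max}$ are fixed, the total is $\mathcal{O}(T^\alpha)$.
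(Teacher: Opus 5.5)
Your reduction to the common level $\mathbf S^{\mathrm{off}}$ matches the paper. Your holding-cost step is correct, and it is cleaner than the paper's. By linearity the holding gap is $\sum_\ell h_\ell\,\mathbb E[I^{n,\mathrm{on}}_{T+1,\ell}-I^{n,\mathrm{off}}_{T+1,\ell}]$, so the signed upper bound of \cref{lem:inv-diff-base-resource} suffices. The paper instead passes through $\mathbb E[\,\cdot\,]^+$, which that lemma does not control.

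For the reward gap, your offline-side comparison is the right frame, and it is exactly what the paper's items (i)--(ii) amount to. It combines monotonicity of the offline optimum, per-unit increments of at most $r_{\ell,\max}$, and \cref{ass:online-algorithm} applied at $\mathbf I_1^{n,\mathrm{on}}$, which is independent of the cycle-$n$ arrivals. Drop the fictitious online run. Bounding $R^{\mathrm{on}}(\mathbf I_1^{n,\mathrm{off}})-R^{\mathrm{on}}(\mathbf I_1^{n,\mathrm{on}})$ needs a Lipschitz property of the online policy in its starting inventory, and \cref{ass:online-algorithm} does not give one.

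The genuine gap is the one you flagged, and your first remedy does not remove it. Monotonicity and the increment bound give exactly $R^{\mathrm{off}}(\mathbf I^{\mathrm{off}})-R^{\mathrm{off}}(\mathbf I^{\mathrm{on}})\le\sum_\ell r_{\ell,\max}(I^{\mathrm{off}}_\ell-I^{\mathrm{on}}_\ell)^+$. Your ``positive-side terms'' are $\sum_\ell r_{\ell,\max}(I^{\mathrm{on}}_\ell-I^{\mathrm{off}}_\ell)^+$, so the signed form is just the identity $x^+=x+x^-$. Bounding the expectation of those terms is equivalent to bounding the expected negative part you were trying to avoid. Nothing sharper holds in general, because the offline value can be flat in $I_\ell$ (for example, when every arrival is already served by its best resource). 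An online surplus on one resource or one sample path therefore does not offset a deficit elsewhere. A signed mean $\ge -L_\ell C_3T^\alpha$ is compatible with $\mathbb E[(I^{n,\mathrm{on}}_{1,\ell}-I^{n,\mathrm{off}}_{1,\ell})^-]$ of order $T$.

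Your telescoping fallback is a correct reduction but not yet a proof. The post-order position equals $S_\ell$ every cycle, so $\Delta I^{k+L_\ell}_{1,\ell}=-\sum_{m=k}^{k+L_\ell-1}(f^{m,\mathrm{on}}_\ell-f^{m,\mathrm{off}}_\ell)$ with actual consumptions, which explains the factor $L_\ell$. You then need $\mathbb E[(f^{m,\mathrm{on}}_\ell-f^{m,\mathrm{off}}_\ell)^+]\le C_3T^\alpha$ in cycles where the two systems start from \emph{different} inventory vectors. \cref{lem:extend-inv-diff-resource} controls only a signed expectation, and only from a \emph{common} start. Online surpluses, once consumed, become deficits over the next $L_\ell$ cycles, and nothing in the stated lemmas stops such deviations from compounding.

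The paper's proof does not supply this ingredient either. It bounds the reward term by $\sum_\ell r_{\ell,\max}\,\mathbb E[I^{n,\mathrm{on}}_{1,\ell}-I^{n,\mathrm{off}}_{1,\ell}]^-$ and inserts the signed lower bound $-L_\ell C_3T^\alpha$. That treats $\mathbb E[X^-]$ as $(\mathbb E X)^-$, which goes the wrong way since $\mathbb E[X^-]\ge(\mathbb E X)^-$. To make your outline a proof you need an additional lemma with two parts:
\begin{itemize}
\item an absolute per-resource bound, such as $\mathbb E|I^{n,\mathrm{on}}_{T+1,\ell}-I^{n,\mathrm{off}}_{T+1,\ell}|\le C_3T^\alpha$ from every common state;
\item a stability property of both allocation rules in the starting inventory, strong enough to carry that bound through the $L_\ell$-cycle window without compounding.
\end{itemize}
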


%%%%%%%%%%%%%%%%%%%%%%%%%%%%%%%%%%%%%%%%%%%%%%%%%%

\subsection{Constant-Order Replenishment}
In this subsection, we extend the results from \cref{subsec:constant-replenishment} to the multi-resource online fulfillment setting. In the multi-resource setting, we define \( c \triangleq \sum_{\ell=1}^d c_\ell \), which is the total replenishment quantity across all resources. Analogous to the single-resource case, we can also assume
\[
0\le c < T(1-\lambda_0)
\qquad\text{and}\qquad
T(1-\lambda_0)-c = T\delta_c^\prime+o(T),
\]
for some constant \( \delta_c^\prime>0 \), and initialize the system with total inventory \(\sum_{\ell=1}^d I_{1,\ell}^{1,\mathrm{off}}=c\) (the split across resources is arbitrary).
The structural result in \cref{lem:constant-off-more} carries over when aggregated across resources: for every cycle \(n\), it holds that
\[
\sum_{\ell=1}^d I_{T+1,\ell}^{n,\mathrm{off}} (\mathbf{c},\mathcal H)
  \le  
\sum_{\ell=1}^d I_{T+1,\ell}^{n,\mathrm{on}} (\mathbf{c},\mathcal H).
\]
Thus, by \cref{ineq:extend-inv-diff-total}, we may apply the bound of \cref{lem:Inv-on-bound} to the \emph{total} inventory process in this setting, yielding an upper bound on the online algorithm’s expected end-of-cycle \emph{total} inventory. This, in turn, controls the additional holding cost incurred under the online policy.

However, this advantage does not necessarily hold resource-by-resource: the online algorithm can end a cycle with less inventory than the offline benchmark for some \(\ell\). Nevertheless, because each \(c_\ell\) is fixed under the constant-order policy, the expected fulfilled rewards per cycle remain bounded. It therefore suffices to bound the \emph{total} offline end-of-cycle inventory (see \cref{lem:constant-Inv-off-bound-total}), as this bound controls the additional fulfilled rewards and hence the total reward difference, culminating in \cref{thm:extend-reg-total-constant}.

\begin{lemma}\label{lem:constant-Inv-off-bound-total}
    For any cycle \(n\), and all sufficiently large \(T\),
    $$
    \mathbb{E}_{\mathcal{H}}\left[\sum_{\ell=1}^d I_{T+1,\ell}^{n,\text{off}} (\mathbf{c},\mathcal H)\right]
    \le
    - \frac{2(1-\lambda_0)}{\delta_c^\prime}\,
    \ln \left( 1- \exp\left(-\frac{T\delta_c^{\prime2}}{8(1-\lambda_0)}\right)\right).
    $$
\end{lemma}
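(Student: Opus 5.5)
We reduce the statement to a one-dimensional Lindley recursion and then apply an exponential (Chernoff/Kingman-type) bound on the waiting time of the associated GI/GI/1 queue. Write $J^n \triangleq \sum_{\ell=1}^d I_{T+1,\ell}^{n,\mathrm{off}}(\mathbf c,\mathcal H)$. Our first step is to show that, under the reward-optimal myopic offline allocation, the total number of units consumed in cycle $n$ equals $\min\{\sum_\ell I_{1,\ell}^{n,\mathrm{off}}, D^n\}$. Since every type can be served by every resource and all rewards are positive, an allocation that leaves a demand unserved while some resource still has stock is strictly improvable, so the offline allocation is non-idling in the aggregate. Each $c_\ell$ is fixed, and the initial total inventory is $c$. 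Hence
\[
J^n = \bigl(J^{n-1} + c - D^n\bigr)^+,\qquad J^0 = 0,
\]
which is exactly the single-resource recursion behind \cref{lem:Inv_off_expectation}, with $c=\sum_\ell c_\ell$. Iterating the recursion gives $J^n = \max_{0\le k\le n-1}\bigl(\sum_{i=n-k}^{n}(c-D^i)\bigr)^+$, which is stochastically dominated by $W \triangleq \sup_{k\ge 1}\bigl(\sum_{i=1}^k (c-D^i)\bigr)^+$ because the $D^i$ are i.i.d.

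Second, we bound $\mathbb E[W]$ using $\mathbb E[W]=\sum_{k\ge 1}\Pr(W\ge k)$, or alternatively \cref{lem:Inv_off_expectation} together with a tail bound. The increments $X_i \triangleq c - D^i$ have mean $-\bigl(T(1-\lambda_0)-c\bigr) = -T\delta_c' + o(T)$, so for large $T$ the mean is at most $-T\delta_c'/2$. They are bounded, and $D^i$ is a sum of $T$ Bernoulli variables. For any $\theta>0$, $e^{\theta\sum_{i\le k} X_i}$ is a supermartingale provided $\mathbb E[e^{\theta X}]\le 1$. Doob's maximal inequality then gives $\Pr(W\ge x)\le e^{-\theta x}$, and therefore $\mathbb E[W]\le 1/\theta$. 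The stated form $-\frac{2(1-\lambda_0)}{\delta_c'}\ln(1-e^{-\cdots})$ instead suggests a union bound. By Hoeffding or Bernstein,
\[
\Pr\Bigl(\textstyle\sum_{i\le k} X_i \ge 0\Bigr)\le \exp\Bigl(-\frac{kT\delta_c'^2}{8(1-\lambda_0)}\Bigr),
\]
using the variance bound $T(1-\lambda_0)$ and a deviation of at least $kT\delta_c'/2$. Summing a quantity of the form $\frac{1}{k}\,\mathbb E[(\sum_{i\le k}X_i)^+]$ over $k$, as in \cref{lem:Inv_off_expectation}, and controlling each term by an exponential tail produces $\sum_k \frac{1}{k}\rho^k = -\ln(1-\rho)$ with $\rho = \exp\bigl(-\frac{T\delta_c'^2}{8(1-\lambda_0)}\bigr)$. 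The prefactor $\frac{2(1-\lambda_0)}{\delta_c'}$ comes from the conditional overshoot scale: given that the positive part is nonzero, its expected size is bounded by roughly $\frac{\mathrm{Var}}{\text{drift}} \sim \frac{2(1-\lambda_0)}{\delta_c'}$ per unit of $k$. To make this rigorous we will bound $\mathbb E[(Y_k)^+]\le \int_0^\infty \Pr(Y_k\ge y)\,dy$, where $Y_k=\sum_{i\le k}X_i$, using the Bernstein-type tail $\exp\bigl(-(kT\delta_c'/2 + y)^2/(2kT(1-\lambda_0)+\cdots)\bigr)$.

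The main obstacle is matching the exact constants. Specifically, we need to bound each $\frac{1}{k}\mathbb E[Y_k^+]$ by $\frac{2(1-\lambda_0)}{\delta_c'}\cdot\frac{1}{k}\rho^k$ uniformly in $k$. We will try a sub-Gaussian bound with variance proxy $kT(1-\lambda_0)$, splitting the square as $(a+y)^2\ge a^2+2ay$ with $a=kT\delta_c'/2$. This gives $\int_0^\infty e^{-(a^2+2ay)/(2\sigma^2)}\,dy = \frac{\sigma^2}{a}\,e^{-a^2/(2\sigma^2)}$, where $\frac{\sigma^2}{a}=\frac{2(1-\lambda_0)}{\delta_c'}$ and $\frac{a^2}{2\sigma^2}=\frac{kT\delta_c'^2}{8(1-\lambda_0)}$. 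This reproduces the stated bound exactly, so the remaining work is to justify the sub-Gaussian (Hoeffding-type) tail for the binomial $D^i$ with proxy $T(1-\lambda_0)$. That justification, together with the requirement that $T$ be large enough for the $o(T)$ term to be absorbed, accounts for the qualifier ``sufficiently large $T$''.
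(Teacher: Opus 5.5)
Your proposal is correct and follows essentially the same route as the paper: apply the Kingman/Spitzer identity (\cref{lem:Inv_off_expectation}) to the aggregate Lindley recursion, and bound each $\mathbb{E}[(jc-\sum_{i\le j}D^i)^+]$ by a tail integral using $\Delta_T\ge T\delta_c'/2$ and a Gaussian-type lower-tail bound. You then use $(a+y)^2\ge a^2+2ay$, which is equivalent to the paper's step $\int_a^\infty e^{-z^2}\,dz\le e^{-a^2}/(2a)$, and sum $\sum_j\rho^j/j=-\ln(1-\rho)$. One small correction: the variance proxy $jT(1-\lambda_0)$ must come from the multiplicative Chernoff lower-tail bound $\Pr(X\le\mu-t)\le e^{-t^2/(2\mu)}$ for a binomial with mean $\mu$, as the paper uses, not from Hoeffding, whose proxy $jT/4$ is too large (hence too weak) whenever $\lambda_0>3/4$.
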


% \begin{lemma}\label{lem:constant-Inv-off-bound-total}
%     % For the myopic offline fulfillment algorithm, the total expected end-of-cycle inventory across all resources \( \sum_{\ell=1}^d I_{T+1,\ell}^{n,\text{off}} \) for each cycle \( n \) satisfies:
%     For any cycle \(n\),
%     $$
%     \mathbb{E}_{\mathcal{H}}\left[\sum_{\ell=1}^d I_{T+1,\ell}^{n,\text{off}} (\mathbf{c},\mathcal H)\right] \le - \frac{1-\lambda_0}{\delta_c^\prime} \cdot \ln \left( 1- \exp\left(-\frac{T\delta_c^{\prime2}}{2(1-\lambda_0)}\right)\right).
%     $$
% \end{lemma}

Recall that \(C_1, C_2\) and \(\beta_\theta\) are defined in \cref{ass:online-algorithm}, \cref{lem:inv-diff} and \cref{lem:Inv-on-bound}, respectively. We also define
\(h_{\max}\triangleq \max_{\ell\in[d]} h_\ell\) and \(r_{\max}\triangleq \max_{j\in[M],   \ell\in[d]} r_{j\ell}\).
These constants and \(\beta_\theta\) evaluated at the total order quantity \(c=\sum_{\ell=1}^d c^{\text{off}}_\ell\) will be used in the following theorem.

% \begin{theorem} \label{thm:extend-reg-total-constant}
%     Under \cref{ass:online-algorithm}, if \( T \) is sufficiently large such that \( T(1-\lambda_0) \geq \sum_{\ell=1}^d c_\ell + C_2T^\alpha \), then for any \( \theta>0 \), 
%     % the expected average profit difference is upper bounded by:
%     \begin{equation*} 
%     \begin{aligned}
%         &~\pi_\mathbf{c}^{\text{off}}(T,   \mathbf{c}^{\text{off}},N) - \pi_\mathbf{c}^{\text{on}}(T,   \mathbf{c}^{\text{on}},N)\\
%        \le &~C_1T^\alpha - r_{\max} \cdot   \frac{1-\lambda_0}{\delta_c^\prime} \cdot \ln \left( 1- \exp\left(-\frac{T\delta_c^{\prime2}}{2(1-\lambda_0)}\right)\right) + h_{\max} \cdot \left[C_2T^\alpha - (e\theta)^{-1} \ln(1-\beta_\theta) \right].
%     \end{aligned}
%     \end{equation*}
%     In particular, 
%     \[\pi_\mathbf{c}^{\text{off}}(T,   \mathbf{c}^{\text{off}},N) - \pi_\mathbf{c}^{\text{on}}(T,   \mathbf{c}^{\text{on}},N)~= ~\mathcal{O}\left(T^\alpha +  \exp\left(-\frac{T\delta_c^{\prime2}}{2(1-\lambda_0)}\right) +  \exp(-C_\theta T+\theta C_2T^\alpha)\right).\]
% \end{theorem}

\begin{theorem} \label{thm:extend-reg-total-constant}
Under \cref{ass:online-algorithm},
\begin{enumerate}
\item[\textnormal{(i)}] \textbf{Case \(\alpha<1\).} There exists \(T_0<\infty\) such that for all \(T\ge T_0\) there exists \(\theta>0\) with \(\beta_\theta<1\) and
\begin{equation*} 
    \begin{aligned}
        &~\pi_\mathbf{c}^{\text{off}}(T,   \mathbf{c}^{\text{off}},N) - \pi_\mathbf{c}^{\text{on}}(T,   \mathbf{c}^{\text{on}},N)\\
       \le &~C_1T^\alpha 
       + r_{\max} \cdot   \left[-\frac{2(1-\lambda_0)}{\delta_c^\prime} \cdot \ln \left( 1- \exp\left(-\frac{T\delta_c^{\prime2}}{8(1-\lambda_0)}\right)\right)\right] 
       + h_{\max} \cdot \left[C_2T^\alpha - (e\theta)^{-1} \ln(1-\beta_\theta) \right],
    \end{aligned}
\end{equation*}
so the per-cycle gap is \(\mathcal{O}(T^\alpha)\).
\item[\textnormal{(ii)}] \textbf{Case \(\alpha=1\).} There exists a finite constant \(K\) (independent of \(N\)) such that
\begin{equation*}
\pi_{\mathbf c}^{\mathrm{off}}\big(T,\mathbf c^{\mathrm{off}},N\big)
-\pi_{\mathbf c}^{\mathrm{on}}\big(T,\mathbf c^{\mathrm{on}},N\big)
\le
C_1 T 
+ \left[-r_{\max}\frac{2(1-\lambda_0)}{\delta_c'}
\ln\left(1-\exp\left\{-\frac{T\delta_c'^2}{8(1-\lambda_0)}\right\}\right)\right]
+ h_{\max} K T,
\end{equation*}
so the per-cycle gap is \(\mathcal{O}(T)\).
\end{enumerate}
\end{theorem}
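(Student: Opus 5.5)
The plan is to follow the single-resource proof of \cref{thm:reg-total-constant}, replacing every per-resource comparison by an aggregate one where possible. The gap is split into a reward part and a holding-cost part.

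First, the optimizers are decoupled. Since $\mathbf c^{\mathrm{on}}$ maximizes the online profit, we have $\pi_{\mathbf c}^{\mathrm{on}}(T,\mathbf c^{\mathrm{on}},N)\ge \pi_{\mathbf c}^{\mathrm{on}}(T,\mathbf c^{\mathrm{off}},N)$. It therefore suffices to bound the gap when both algorithms use the common order vector $\mathbf c=\mathbf c^{\mathrm{off}}$ from the common initial state. Per cycle $n$ we write
\[
V^{n,\mathrm{off}}-V^{n,\mathrm{on}}=\bigl(R^{n,\mathrm{off}}-R^{n,\mathrm{on}}\bigr)+\sum_\ell h_\ell\bigl(I^{n,\mathrm{on}}_{T+1,\ell}-I^{n,\mathrm{off}}_{T+1,\ell}\bigr),
\]
where $R$ denotes fulfilled reward.

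For the holding part, we bound $\sum_\ell h_\ell(I^{\mathrm{on}}_{T+1,\ell}-I^{\mathrm{off}}_{T+1,\ell})$ from above by $h_{\max}\sum_\ell I^{\mathrm{on}}_{T+1,\ell}$, dropping the nonnegative offline inventories. The aggregated version of \cref{lem:constant-off-more} and \eqref{ineq:extend-inv-diff-total} give the same recursion as before, now for the total inventory:
\[
\textstyle\sum_\ell I^{n,\mathrm{on}}_{T+1,\ell}=\bigl(c+\sum_\ell I^{n-1,\mathrm{on}}_{T+1,\ell}-D^n\bigr)^++\epsilon^n,\qquad \mathbb E[\epsilon^n]\le C_2T^\alpha,
\]
with $c=\sum_\ell c_\ell^{\mathrm{off}}$. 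This needs care: total fulfillment of the myopic offline allocation is $\min\{\text{total inventory},D^n\}$ when every type can use every resource. Then \cref{lem:Inv-on-bound} applies verbatim to the total process. For $\alpha<1$ and $T\ge T_0$ (chosen so that $T(1-\lambda_0)>c+C_2T^\alpha$, which holds eventually since $T(1-\lambda_0)-c=T\delta_c'+o(T)$), it yields $h_{\max}[C_2T^\alpha-(e\theta)^{-1}\ln(1-\beta_\theta)]$.

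The reward part is the main obstacle. Per-resource inventories can diverge, so the online algorithm may start a cycle with less of some resource, and \cref{ass:online-algorithm} does not directly apply. The plan is to compare the offline reward at offline starting inventories $\mathbf I^{\mathrm{off}}$ with the offline reward at online starting inventories $\mathbf I^{\mathrm{on}}$, and then apply \cref{ass:online-algorithm} at $\mathbf I^{\mathrm{on}}$ to get a $C_1T^\alpha$ term. For the first comparison, note that rewards are bounded by $r_{\max}$. The offline allocation at $\mathbf I^{\mathrm{off}}$ can be mimicked at $\mathbf I^{\mathrm{on}}$ except for at most $\sum_\ell(I^{\mathrm{off}}_{1,\ell}-I^{\mathrm{on}}_{1,\ell})^+$ units. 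This carry-over deficit is at most the offline leftover from the previous cycle, because the order quantities $c_\ell$ coincide. Hence the loss is at most $r_{\max}\,\mathbb E[\sum_\ell I^{n-1,\mathrm{off}}_{T+1,\ell}]$, and \cref{lem:constant-Inv-off-bound-total} bounds this by the stated logarithmic term. Averaging over $n\le N$ gives (i), and the total is $\mathcal O(T^\alpha)$ since that term and $-(e\theta)^{-1}\ln(1-\beta_\theta)$ are $o(1)$.

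For (ii), with $\alpha=1$, the reward comparison is identical. The holding term is replaced, as in \cref{thm:reg-total-constant}(ii), by a stability argument on the total inventory at the optimal orders. The idea is that $c^{\mathrm{off}}$ and $c^{\mathrm{on}}$ have strictly negative drift, since otherwise the carry-over grows and the choice is suboptimal under positive holding cost. This gives $\mathbb E[\sum_\ell I^{\mathrm{on}}_{T+1,\ell}]\le KT$ uniformly in $N$, and hence the bound $C_1T+r_{\max}(\cdots)+h_{\max}KT$.
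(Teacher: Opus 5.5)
Your proposal is correct and follows essentially the same argument as the paper. You decouple the optimizers via optimality of $\mathbf c^{\mathrm{on}}$. You bound the holding term by $h_{\max}$ times the online total inventory, using \cref{lem:Inv-on-bound} on the aggregate recursion with $c=\sum_\ell c_\ell^{\mathrm{off}}$. You bound the reward term by $C_1T^\alpha$ plus $r_{\max}$ times the previous-cycle offline leftover, controlled by \cref{lem:constant-Inv-off-bound-total}. For $\alpha=1$ you use the same stability-from-optimality step. Your two explicit justifications are a small improvement in rigor over the paper's bare assertions: that the per-resource start-of-cycle deficit is at most the previous offline leftover because the $c_\ell$ coincide, and that total offline fulfillment equals $\min\{\sum_\ell I_{1,\ell},D^n\}$ when every type can use every resource.
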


\section{Conclusion}
\label{sec:conclusion}
% \elaine{Only \(\pi_{\mathbf{b}}^{off}-\pi_{\mathbf{c}}^{off}\) is hard to extend to multi-resource in theory, but it's easy to simulate due to both using myopic offline algorithm, so our framework in section 4 can still be applied in a more general setting.}

This paper integrates online fulfillment with inventory replenishment and uses a regret framework to compare their relative impact on long-run performance. Three messages emerge. First, online fulfillment exhibits \emph{regret stability} under replenishment: for large cycle length \(T\), the per-cycle regret remains of the same order as in the corresponding single-cycle problem and does not accumulate over cycles; this insight also extends to multiple resources. Second, the framework enables a quantitative comparison between improving replenishment and improving fulfillment, and identifies regimes, especially for short cycles, in which replenishment plays the more decisive role. In particular, careful replenishment can generate gains that persist even when paired with simple fulfillment algorithms: across broad parameter ranges, a base-stock policy with greedy fulfillment can outperform a constant-order policy paired with a relatively stronger online algorithm. Third, the analysis motivates look-ahead fulfillment methods that account for multi-cycle effects, and numerical results show clear improvements over myopic baselines.

These findings complement the recent emphasis on online fulfillment by showing that replenishment design should not be treated as fixed or exogenous; rather, it can be a first-order decision variable with persistent impact on profit. One limitation is that extending the theoretical characterization of the offline gap \(\pi_{\mathbf{b}}^{\mathrm{off}}-\pi_{\mathbf{c}}^{\mathrm{off}}\) to multi-resource settings is technically challenging, although it remains straightforward to evaluate via simulation since both terms use the same myopic offline benchmark. Developing tractable analytical bounds for this offline gap in multi-resource systems is an interesting direction for future work.

% One limitation concerns the multi-resource extension: providing a full theoretical characterization of the offline gap
% \(\pi_{\mathbf{b}}^{\mathrm{off}}-\pi_{\mathbf{c}}^{\mathrm{off}}\)
% between base-stock and constant-order policies is challenging. Nevertheless, this quantity is easy to evaluate numerically in multi-resource settings because both terms use the same myopic offline benchmark, so the comparison framework in \cref{sec:reple-vs-fulfill} remains applicable beyond the single-resource model. Establishing tractable analytical bounds for this offline gap in multi-resource systems is a promising direction for future work.

\bibliographystyle{informs2014}
\bibliography{secretary}

\ECSwitch
%\ECDisclaimer
% \ECHead{Electronic Companion for ``"}

\ECHead{Electronic Companion for ``Online Order Fulfillment with Replenishment"}
In this e-companion, we provide all the proofs of the results in the main body of the paper.

% \elaine{consider to put more simulations for motivation in section 4 of the same pattern under different values}

%%%%%%%%%%%%%%%%%%%%%%%%%%%%%%%%%
%%%%%%%%%%%%%%%%%%%%%%%%%%%%%%%%%
\section{Proofs in \cref{sec:model}}

\proof{Proof of \cref{lem:inv-diff}.}
Because the offline algorithm has access to future information within this cycle, it can always deplete all the inventory or fulfill all the realized demand if inventory is available (which the online algorithm may hold for the future demand). In other words, starting with the same initial inventory. (i.e., \(I_1^{n,\mathrm{on}}(\mathcal{H})=I_1^{n,\mathrm{off}}(\mathcal{H})\)), for any instance \( \mathcal{H} \), it holds that $I_{T+1}^{n,\mathrm{on}}(\mathcal{H}) - I_{T+1}^{n,\mathrm{off}}(\mathcal{H}) \ge 0$.

Assume by contradiction that there exists an initial inventory \(I_1^{n,\mathrm{on}}(\mathcal{H})=I_1^{n,\mathrm{off}}(\mathcal{H})=I\) and a demand instance \(\mathcal{H}_0\) for which
\[
\mathbb{E}_{\mathcal{H},\mathrm{on}}  \left[I_{T+1}^{n,\mathrm{on}}(\mathcal{H}) - I_{T+1}^{n,\mathrm{off}}(\mathcal{H})\right] \geq C T^\beta
\]
for some constant \( C > 0 \) and exponent \( \beta > \alpha \). Then the offline reward must exceed the online reward by at least
\[
\text{REG} \geq r_1 \cdot \mathbb{E}_{\mathcal{H},\mathrm{on}}  \left[I_{T+1}^{n,\mathrm{on}}(\mathcal{H}) - I_{T+1}^{n,\mathrm{off}}(\mathcal{H})\right] \geq r_1 C T^\beta,
\]
this contradicts the regret assumption that
\(
\text{REG} \leq C_1 T^\alpha,
\)
since \( \beta > \alpha \). Thus, our assumption must be false, and it must hold that
\(
\mathbb{E}_{\mathcal{H},\mathrm{on}}  \left[I_{T+1}^{n,\mathrm{on}}(\mathcal{H}) - I_{T+1}^{n,\mathrm{off}}(\mathcal{H})\right] \leq C_2 T^\alpha,
\)
for some constant \( C_2 > 0 \). \hfill \Halmos
\endproof

%%%%%%%%%%%%%%%%%%%%%%%%%%%%%%%%%
%%%%%%%%%%%%%%%%%%%%%%%%%%%%%%%%%
\section{Proofs in \cref{subsec:base-replenishment}}

\proof{Proof of \cref{lem:inv-diff-base}.} 
For convenience, we define the notation $\Delta I_t^n(S,\mathcal{H}) \triangleq I_t^{n,\text{on}}(S,\mathcal{H}) - I_t^{n,\text{off}}(S,\mathcal{H}) $ and $\Delta q^n(S,\mathcal{H}) \triangleq q^{n,\text{on}}(S,\mathcal{H}) - q^{n,\text{off}}(S,\mathcal{H}) $. Additionally, let \(\epsilon(I)\) denote the end-of-cycle inventory difference between the two policies after one fulfillment cycle, given that both start the cycle with the same initial inventory level \(I\).

We now prove the first part of \cref{lem:inv-diff-base} by induction.

\textbf{Inductive Hypothesis:} Suppose that for some \( n = k \), the following conditions hold: for every demand path \(\mathcal{H}\),
\begin{enumerate}
    \item[(a)] \(\Delta I_1^k(S,\mathcal{H}) \ge 0\) and \(\mathbb{E}_{\mathcal{H},\text{on}}[\Delta I_1^k(S,\mathcal{H})] \le LC_2 T^\alpha\), which corresponds to the desired results in the first part of \cref{lem:inv-diff-base}.
    \item[(b)] \(\Delta q^{k+l}(S,\mathcal{H}) \le 0\) and \(\mathbb{E}_{\mathcal{H},\text{on}}[\Delta q^{k+l}(S,\mathcal{H})] \ge -C_2 T^\alpha\) for all \( l = 1, 2, \ldots, L-1 \).
    \item[(c)] \(\Delta I_1^k(S,\mathcal{H}) + \sum_{l=1}^{L-1}\Delta q^{k+l}(S,\mathcal{H}) \ge 0\), and \(\mathbb{E}_{\mathcal{H},\text{on}}\left[\Delta I_1^k(S,\mathcal{H}) + \sum_{l=1}^{L-1}\Delta q^{k+l}(S,\mathcal{H})\right] \le C_2 T^\alpha\).
\end{enumerate}

\textbf{Base Case (\( n=1 \)):} Since both the online and offline algorithms start from the same initial inventory pipeline at the beginning of the first replenishment cycle 
% under the same $S$ and $L$ by definition of the initial state at cycle 1
, we immediately have
\[
    \Delta I_1^1(S,\mathcal{H}) = 0, \quad\text{and}\quad \Delta q^{1+l}(S,\mathcal{H})=0\quad\text{for}\quad l=1,2,\ldots,L-1,
\] 
thus conditions (a), (b), and (c) trivially hold.

\textbf{Inductive Step (\( n=k+1 \)):}  
First, by the order update equation \cref{eq:base-replenish} in base-stock replenishment policy, the definition of \(\Delta q^n(S,\mathcal{H})\), and the inductive hypothesis, we have
\begin{equation} \label{eq:delta_q}
    \Delta q^{k+L}(S,\mathcal{H}) = - \left(\Delta I_1^k(S,\mathcal{H}) + \sum_{l=1}^{L-1}\Delta q^{k+l}(S,\mathcal{H})\right) \le 0,
\end{equation}
and taking expectations gives
\[
    \mathbb{E}_{\mathcal{H},\text{on}}\left[\Delta q^{k+L}(S,\mathcal{H})\right] 
    = -   \mathbb{E}_{\mathcal{H},\text{on}}\left[\Delta I_1^k(S,\mathcal{H}) + \sum_{l=1}^{L-1}\Delta q^{k+l}(S,\mathcal{H})\right] 
    \ge -C_2 T^\alpha.
\]
Thus, the condition (b) holds at step \( n=k+1 \).

Next, we verify condition (a) at \( n = k+1 \). By hypothesis (a), we have \(\Delta I_1^k(S,\mathcal{H}) \ge 0\) and \(\mathbb{E}_{\mathcal{H},\text{on}}[\Delta I_1^k(S,\mathcal{H})] \le LC_2 T^\alpha\). We can implement the online algorithm by intentionally ignoring the additional inventory \(\Delta I_1^k(S,\mathcal{H})\) at the start of cycle \( k \) and then conducting fulfillment as if its initial inventory were exactly \( I_1^{k,\text{off}}(S,\mathcal{H}) \). 
% Denote the corresponding end-of-cycle inventory under this adjusted online algorithm by \(\tilde{I}_{T+1}^{k,\text{on}}\). 
By the definition of \(\epsilon(I)\), it follows that
\begin{equation} \label{eq:delta_ending_I}
\Delta I_{T+1}^k(S,\mathcal{H}) 
% = \Delta I_1^k + \tilde{I}_{T+1}^{k,\text{on}} - I_{T+1}^{k,\text{off}} 
= \Delta I_1^k(S,\mathcal{H}) + \epsilon(I_1^{k,\text{off}}(S,\mathcal{H})).
\end{equation}
Then the start-of-cycle inventory difference at cycle \( k+1 \) satisfies
\begin{align*}
    \Delta I_1^{k+1}(S,\mathcal{H}) 
    & = \Delta I_{T+1}^k(S,\mathcal{H}) + \Delta q^{k+1}(S,\mathcal{H}) \\
    & = \Delta I_1^k(S,\mathcal{H}) + \epsilon(I_1^{k,\text{off}}(S,\mathcal{H})) + \Delta q^{k+1}(S,\mathcal{H}) \\
    & \ge -\sum_{l=2}^{L-1}\Delta q^{k+l}(S,\mathcal{H}) + \epsilon(I_1^{k,\text{off}}(S,\mathcal{H})) \ge 0,
\end{align*}
where the first equality follows directly from the inventory update rule, the first inequality results from rearranging terms based on hypothesis (c) at step \( n=k \), and the final inequality is guaranteed by \cref{lem:inv-diff} and hypothesis (b) at step \( n=k \). Taking expectations, linearity implies
\begin{equation*}
    \begin{aligned}
        \mathbb{E}_{\mathcal{H},\text{on}}\left[ \Delta I_1^{k+1}(S,\mathcal{H}) \right] = & \mathbb{E}_{\mathcal{H},\text{on}}\left[\Delta I_{T+1}^k(S,\mathcal{H}) + \Delta q^{k+1}(S,\mathcal{H})\right] \\
        = &  \mathbb{E}_{\mathcal{H},\text{on}}\left[ \Delta I_1^k(S,\mathcal{H}) + \epsilon(I_1^{k,\text{off}}(S,\mathcal{H})) + \Delta q^{k+1}(S,\mathcal{H}) \right] \\
        \le & C_2 T^\alpha - \mathbb{E}_{\mathcal{H},\text{on}}\left[\sum_{l=2}^{L-1} \Delta q^{k+l}(S,\mathcal{H})\right] + \mathbb{E}_{\mathcal{H},\text{on}}\left[\epsilon(I_1^{k,\text{off}}(S,\mathcal{H}))\right]\\
        \le & C_2 T^\alpha - (L-2)\cdot(-C_2 T^\alpha) + C_2 T^\alpha = LC_2 T^\alpha ,
    \end{aligned}
\end{equation*}
where the inequalities follow from assumptions (b), (c), and \cref{lem:inv-diff}. 
% Note that \cref{lem:inv-diff} applies here because the overall demand process 
% \(\mathcal{H}\) across \(N\) cycles is composed of independent single-cycle demand instances, therefore the bound from \cref{lem:inv-diff} holds for each cycle individually when taking expectations over the entire demand process.
Thus, condition (a) is satisfied at \( n = k+1 \).

Lastly, condition (c) at \( n = k+1 \) follows from 
\begin{equation*}
    \begin{aligned}
        & \Delta I_1^{k+1}(S,\mathcal{H}) + \sum_{l=1}^{L-1}\Delta q^{k+1+l}(S,\mathcal{H}) \\
        = & \Delta I_1^k(S,\mathcal{H}) + \epsilon(I_1^{k,\text{off}}(S,\mathcal{H})) + \Delta q^{k+1}(S,\mathcal{H}) + \sum_{l=2}^{L-1}\Delta q^{k+l}(S,\mathcal{H}) + \Delta q^{k+L}(S,\mathcal{H}) \\
        = & \epsilon(I_1^{k,\text{off}}(S,\mathcal{H})),
    \end{aligned}
\end{equation*}
using \cref{eq:delta_q,eq:delta_ending_I}. Condition (c) thus holds by \cref{lem:inv-diff}, completing the inductive step.

The results in the second part of \cref{lem:inv-diff-base} regarding the difference of end-of-cycle inventory directly follow from \cref{eq:delta_ending_I,lem:inv-diff} when  \(\Delta I_1^n(S,\mathcal{H}) \ge 0\) and \(\mathbb{E}_{\mathcal{H},\text{on}}[\Delta I_1^n(S,\mathcal{H})] \le LC_2 T^\alpha\) hold for all $n$.

Note that when $L=1$, the conditions (a), (b), and (c) collapse to the single condition (a), and the same arguments apply straightforwardly. Thus, the proof of \cref{lem:inv-diff-base} is completed. 
\hfill \Halmos
\endproof

%%%%%%%%%%%%%%%%%%%%%%%%%%%%%%%
\vspace{20pt}

\proof{Proof of \cref{thm:reg-total-base}.}
We first establish the bound for any fixed base-stock level \(S\), i.e., the bound of \(\pi_\mathbf{b}^{\text{off}}(T, S,N) - \pi_\mathbf{b}^{\text{on}}(T, S,N)\). To analyze the profit difference, we can decompose it into two components: the \textit{fulfilled rewards} and the \textit{holding costs}.

\textbf{Fulfilled Rewards}: By \cref{ass:online-algorithm}, when starting with the same initial inventory, the regret of the online algorithm over a single replenishment cycle is upper bounded by \( C_1T^\alpha \). Although the starting inventories may differ in subsequent cycles, \cref{lem:inv-diff-base} ensures that the online algorithm's start-of-cycle inventory is always at least as large as that of the offline algorithm. Therefore, the expected regret in fulfilled rewards per cycle remains bounded by \( C_1T^\alpha \) throughout the horizon.

\textbf{Holding Costs}: From \cref{lem:inv-diff-base}, the expected difference in end-of-cycle inventory between the online and the myopic offline algorithms in each cycle is bounded by \((L+1)C_2T^\alpha\). As holding costs are linear in inventory, the corresponding expected difference in holding costs per cycle is bounded by
\(
h \cdot (L+1)C_2T^\alpha.
\)

Summing the bounds on fulfilled rewards and holding costs across cycles as analyzed above, we obtain: for any fixed \(T, S\) and \(N\),
\begin{equation*}
    \begin{aligned}
        &~ \pi_\mathbf{b}^{\text{off}}(T, S,N) - \pi_\mathbf{b}^{\text{on}}(T, S,N) \\
        % = &~ \frac{1}{N}  \mathbb{E}_{\mathcal{H},\text{on}}\left[\sum_{n=1}^N \left( \text{fulfilled reward difference at cycle \(n\)} - \text{holding cost difference at cycle \(n\)} \right) \right] \\
        \le &~ \frac{1}{N}  \mathbb{E}_{\mathcal{H},\text{on}}\left[\sum_{n=1}^N \left( C_1T^\alpha + h \left| I_{T+1}^{n,\text{on}}(S,\mathcal{H}) - I_{T+1}^{n,\text{off}}(S,\mathcal{H}) \right| \right) \right] \\
        % = &~ \frac{1}{N} \sum_{n=1}^N \left( C_1T^\alpha + h     \mathbb{E}_{\mathcal{H},\text{on}}\left[\left| I_{T+1}^{n,\text{on}} - I_{T+1}^{n,\text{off}} \right| \right] \right) \\
        \le &~ C_1T^\alpha + h(L+1)C_2T^\alpha.
    \end{aligned}
\end{equation*}
Since \( S^{\text{on}} \) is the optimal base-stock level for the online algorithm, we have
\(
\pi^{\text{on}}_\mathbf{b}(T, S^{\text{on}},N) \ge \pi^{\text{on}}_\mathbf{b}(T,S^{\text{off}},N)
\). Combining the above two inequalities yields
\[
\pi^{\text{off}}_\mathbf{b}(T, S^{\text{off}},N) - \pi^{\text{on}}_\mathbf{b}(T,S^{\text{on}},N) \le \pi^{\text{off}}_\mathbf{b}(T, S^{\text{off}},N) - \pi^{\text{on}}_\mathbf{b}(T, S^{\text{off}},N) \le  C_1T^\alpha + h(L+1)C_2T^\alpha.
\]
This completes the proof.
\hfill \Halmos 
\endproof

%%%%%%%%%%%%%%%%%%%%%%%%%%%%%%%%%
%%%%%%%%%%%%%%%%%%%%%%%%%%%%%%%%%
\section{Proofs in \cref{subsec:constant-replenishment}}

\proof{Proof of \cref{lem:constant-off-more}.}
Since both algorithms follow the same constant-order replenishment policy \(\left( I_1^{n+1}(c,\mathcal{H}) = I_{T+1}^{n}(c,\mathcal{H}) + c \right)\), it suffices to establish \( I_{T+1}^{n,\text{off}}(c,\mathcal{H}) \leq I_{T+1}^{n,\text{on}}(c,\mathcal{H}) \) by proving that \( I_1^{n,\text{off}}(c,\mathcal{H}) \leq I_1^{n,\text{on}}(c,\mathcal{H}) \) for all \( n \). We proceed by induction.

\textbf{Base Case:} For \( n=1 \), since, by the definition of the initial state at cycle 1, both the online and offline algorithms start with the same initial inventory under a given constant-order policy 
\(c\),  we immediately have \( I_1^{1,\text{off}}(c,\mathcal{H}) = I_1^{1,\text{on}}(c,\mathcal{H}) \).

\textbf{Inductive Hypothesis:} Suppose that for some \( n = k \), we have 
$
I_1^{k,\text{off}}(c,\mathcal{H}) \leq I_1^{k,\text{on}}(c,\mathcal{H}).
$

\textbf{Inductive Step:} For \( n = k+1 \), we analyze two possible cases based on the realized demand \( D^k \) in cycle \( k \):

\begin{itemize}
    \item \textbf{Case 1:} If \( D^k \leq I_1^{k,\text{off}}(c,\mathcal{H}) \), the myopic offline algorithm fulfills all demand, implying
    \[
    f^{k,\text{off}}\left(I_1^{k,\text{off}}(c,\mathcal{H})\right) = D^k \geq f^{k,\text{on}}\left(I_1^{k,\text{on}}(c,\mathcal{H})\right).
    \]
    The inequality follows from the fact that the online algorithm can fulfill at most all demand in cycle \( k \), regardless of the initial inventory. Consequently, we obtain
    \[
    I_{T+1}^{k,\text{off}}(c,\mathcal{H}) = I_1^{k,\text{off}}(c,\mathcal{H}) - f^{k,\text{off}}\left(I_1^{k,\text{off}}(c,\mathcal{H})\right)  
    \leq I_1^{k,\text{on}}(c,\mathcal{H}) - f^{k,\text{on}}\left(I_1^{k,\text{on}}(c,\mathcal{H})\right) 
    = I_{T+1}^{k,\text{on}}(c,\mathcal{H}).
    \]

    \item \textbf{Case 2:} If \( D^k > I_1^{k,\text{off}}(c,\mathcal{H}) \), the myopic offline algorithm depletes all available inventory, i.e.,  
    $
    f^{k,\text{off}}\left(I_1^{k,\text{off}}(c,\mathcal{H})\right) = I_1^{k,\text{off}}(c,\mathcal{H}).
    $
    Since the online algorithm cannot fulfill more demand than its start-of-cycle inventory, we have
    $I_1^{k,\text{on}}(c,\mathcal{H}) - f^{k,\text{on}}\left(I_1^{k,\text{on}}(c,\mathcal{H})\right) \geq 0.$
    Thus, we obtain
    \[
    I_{T+1}^{k,\text{off}}(c,\mathcal{H}) = I_1^{k,\text{off}}(c,\mathcal{H}) - f^{k,\text{off}}\left(I_1^{k,\text{off}}(c,\mathcal{H})\right) = 0 
    \leq I_1^{k,\text{on}}(c,\mathcal{H}) - f^{k,\text{on}}\left(I_1^{k,\text{on}}(c,\mathcal{H})\right) = I_{T+1}^{k,\text{on}}(c,\mathcal{H}).
    \]
\end{itemize}

By combining both cases, we conclude that \( I_{T+1}^{k,\text{off}}(c,\mathcal{H}) \leq I_{T+1}^{k,\text{on}}(c,\mathcal{H}) \). By the replenishment policy, the initial inventory for cycle \( k+1 \) satisfies  
\[
I_1^{k+1,\text{off}}(c,\mathcal{H}) = I_{T+1}^{k,\text{off}}(c,\mathcal{H}) + c \leq I_{T+1}^{k,\text{on}}(c,\mathcal{H}) + c = I_1^{k+1,\text{on}}(c,\mathcal{H}),
\]
completing the induction. \hfill \Halmos  
\endproof

%%%%%%%%%%%%%%%%%%%%%%%%%%%%%%%
\vspace{20pt}

\proof{Proof of \cref{lem:Inv-on-bound}.}
First, note that \( c + \mathbb{E}_{\mathcal{H},\text{on}}[\epsilon^n] \leq \mathbb{E}[D^n] \) holds for any \( n \), since \(T(1-\lambda_0) > c+C_2T^\alpha\) and \(\mathbb{E}_{\mathcal{H},\text{on}}[\epsilon^n]\le C_2T^\alpha\) by \cref{lem:inv-diff}.Let \(\epsilon^0 = 0\) and by \cref{lem:Inv_off_expectation}, for any \( n \geq 1 \), we have:
\[
\mathbb{E}_{\mathcal{H},\text{on}}\left[\tilde I_{T+1}^{n,\mathrm{off}}(c,\mathcal H)\right]
=
\sum_{j=1}^n \frac{1}{j}
\mathbb{E}_{\mathcal{H},\text{on}}\left[\left(\sum_{i=1}^j (c+\epsilon^{i-1}-D^i)\right)^+\right].
\]
Since $I_{T+1}^{n,\mathrm{on}}=\tilde I_{T+1}^{n,\mathrm{off}}+\epsilon^n$ and
$\mathbb{E}_{\mathcal{H},\text{on}}[\epsilon^n]\le C_2T^\alpha$ by \cref{lem:inv-diff}, it follows that
\[
\mathbb{E}_{\mathcal{H},\text{on}}\left[I_{T+1}^{n,\mathrm{on}}\right]
\le
\sum_{j=1}^n \frac{1}{j}
\mathbb{E}_{\mathcal{H},\text{on}}\left[\left(\sum_{i=1}^j (c+\epsilon^{i-1}-D^i)\right)^+\right]
+ C_2T^\alpha.
\]
Using $(x)^+ \le (e\theta)^{-1}e^{\theta x}$ for $\theta>0$,
\[
\mathbb{E}_{\mathcal{H},\text{on}}\left[I_{T+1}^{n,\mathrm{on}}\right]
\le
\frac{1}{e\theta}\sum_{j=1}^n \frac{1}{j}
\mathbb{E}_{\mathcal{H},\text{on}}\left[\exp\left(\theta\sum_{i=1}^j (c+\epsilon^{i-1}-D^i)\right)\right]
+ C_2T^\alpha.
\]

Let $\mathcal F_{j-1}$ be the history up to the end of cycle $j-1$, and define
$S_j\triangleq \sum_{i=1}^j (c+\epsilon^{i-1}-D^i)$, with $S_0=0$.
By the tower property,
\[
\mathbb{E}_{\mathcal{H},\text{on}}[e^{\theta S_j}]
=
\mathbb{E}_{\mathcal{H},\text{on}}\left[e^{\theta S_{j-1}}
\mathbb{E}\left[e^{\theta(c+\epsilon^j-D^j)}\mid \mathcal F_{j-1}\right]\right].
\]
By Cauchy-Schwarz and the fact that $D^j$ is independent of $\mathcal F_{j-1}$,
\[
\mathbb{E}\left[e^{\theta(c+\epsilon^j-D^j)}\mid \mathcal F_{j-1}\right]
\le
e^{\theta c}
\Big(\mathbb{E}[e^{2\theta\epsilon^j}\mid \mathcal F_{j-1}]\Big)^{1/2}
\Big(\mathbb{E}[e^{-2\theta D^j}]\Big)^{1/2}.
\]
The demand term is explicit since $D^j\sim \mathrm{Binomial}(T,1-\lambda_0)$:
\[
\mathbb{E}[e^{-2\theta D^j}]
=
\left(\lambda_0+(1-\lambda_0)e^{-2\theta}\right)^T.
\]

For the $\epsilon^j$ term, set $m_j\triangleq \mathbb{E}[\epsilon^j\mid\mathcal F_{j-1}]$.
By the uniform version of \cref{lem:inv-diff} applied at any identical starting inventory
$I_1^{j,\mathrm{on}}$, one has $m_j\le C_2T^\alpha$.
Moreover, since $0\le \epsilon^j\le T$, we have
$\mathrm{Var}(\epsilon^j\mid\mathcal F_{j-1}) \le \mathbb{E}[(\epsilon^j)^2\mid\mathcal F_{j-1}]
\le T m_j \le C_2T^{\alpha+1}$.
A standard Bennett MGF bound for bounded random variables then gives, for any $\lambda\ge 0$,
\[
\mathbb{E}\left[\exp\big(\lambda(\epsilon^j-m_j)\big)\mid \mathcal F_{j-1}\right]
\le
\exp\left(C_2T^{\alpha-1}h(\lambda T)\right),
\qquad h(u)=(1+u)\ln(1+u)-u.
\]
Taking $\lambda=2\theta$ and using $m_j\le C_2T^\alpha$ yields
\[
\mathbb{E}[e^{2\theta\epsilon^j}\mid \mathcal F_{j-1}]
=
e^{2\theta m_j}
\mathbb{E}\left[e^{2\theta(\epsilon^j-m_j)}\mid \mathcal F_{j-1}\right]
\le
\exp\left(2\theta C_2T^\alpha + C_2T^{\alpha-1}h(2\theta T)\right).
\]
Combining the pieces,
\[
\mathbb{E}\left[e^{\theta(c+\epsilon^j-D^j)}\mid \mathcal F_{j-1}\right]
\le
\exp\left(\theta c+\theta C_2T^\alpha + \frac{C_2}{2}T^{\alpha-1}h(2\theta T)\right)
\left(\lambda_0+(1-\lambda_0)e^{-2\theta}\right)^{T/2}
= \beta_\theta.
\]
Therefore, $\mathbb{E}_{\mathcal{H},\text{on}}[e^{\theta S_j}] \le \beta_\theta \mathbb{E}_{\mathcal{H},\text{on}}[e^{\theta S_{j-1}}]\le \cdots \le \beta_\theta^j$.
Plugging back,
\[
\mathbb{E}_{\mathcal{H},\text{on}}[I_{T+1}^{n,\mathrm{on}}]
\le
\frac{1}{e\theta}\sum_{j=1}^n \frac{\beta_\theta^j}{j} + C_2T^\alpha.
\]
If $\beta_\theta<1$, then $\sum_{j=1}^\infty \beta_\theta^j/j = -\ln(1-\beta_\theta)$, hence
\[
\mathbb{E}_{\mathcal{H},\text{on}}[I_{T+1}^{n,\mathrm{on}}]
\le -\frac{1}{e\theta}\ln(1-\beta_\theta) + C_2T^\alpha.
\]

Finally, it suffices to show that there exists \(\theta>0\) with \(\beta_\theta<1\) and we construct such a \(\theta\) explicitly.
Using $c=T(1-\lambda_0)-T\delta_c+o(T)$ and the inequality
$\ln(\lambda_0+(1-\lambda_0)e^{-2\theta})
\le -2(1-\lambda_0)\theta + 2(1-\lambda_0)\theta^2$ for $\theta\in(0,1]$,
\[
\theta c + \frac{T}{2}\ln(\lambda_0+(1-\lambda_0)e^{-2\theta})
\le
-\theta T\delta_c + \theta o(T) + T(1-\lambda_0)\theta^2.
\]
With $\theta\le \delta_c/(4(1-\lambda_0))$, the RHS is at most $-(3/4)\theta T\delta_c + \theta o(T)$.
The remaining terms in $\ln\beta_\theta$ are $\theta C_2T^\alpha + \frac{C_2}{2}T^{\alpha-1}h(2\theta T)
=O(T^\alpha\ln T)$ for fixed $\theta$, which is $o(T)$ since $\alpha<1$.
Thus for all sufficiently large $T$,
$\ln\beta_\theta \le -\kappa T$ for some $\kappa>0$, implying $\beta_\theta<1$ and even $\beta_\theta\le e^{-\kappa T}$.
Consequently, $-(e\theta)^{-1}\ln(1-\beta_\theta)=o(1)$, and the bound is $O(T^\alpha)$.
\hfill \Halmos 
\endproof

%%%%%%%%%%%%%%%%%%%%%%%%%%%%%%%
\vspace{20pt}

\proof{Proof of \cref{thm:reg-total-constant}.}
The proof follows the same decomposition as in \cref{thm:reg-total-base}. Fix any constant order \(c\) and split the per-cycle profit gap into fulfilled rewards and holding costs.

\emph{Fulfilled rewards.} By \cref{ass:online-algorithm} and \cref{lem:constant-off-more}, the expected regret in fulfilled rewards per cycle is at most \(C_1 T^\alpha\).

\emph{Holding costs, case \(\alpha<1\).}
Since \(\alpha<1\), the condition \(T(1-\lambda_0)-c=T\delta_c+o(T)\) implies \(T(1-\lambda_0)>c+C_2T^\alpha\) for all sufficiently large \(T\), so the hypothesis of \cref{lem:Inv-on-bound} holds.
Thus there exists \(T_0<\infty\) such that for all \(T\ge T_0\) there exists \(\theta>0\) with \(\beta_\theta<1\) and
\[
\mathbb{E}_{\mathcal{H},\text{on}}\left[I_{T+1}^{n,\text{on}}(c,\mathcal{H})\right]
\le - (e\theta)^{-1}\ln\left(1-\beta_\theta\right) + C_2 T^\alpha.
\]
Using \(I_{T+1}^{n,\text{off}}\le I_{T+1}^{n,\text{on}}\) (\cref{lem:constant-off-more}), the expected holding-cost difference per cycle is at most
\[
h\cdot\left[C_2 T^\alpha - (e\theta)^{-1}\ln\left(1-\beta_\theta\right)\right].
\]
Summing the reward and holding-cost bounds and averaging over \(N\) cycles gives, for any fixed \(c\) and \(T\ge T_0\),
\[
\pi_\mathbf{c}^{\text{off}}(T, c,N) - \pi_\mathbf{c}^{\text{on}}(T, c,N)
\le C_1 T^\alpha + h\cdot\left[C_2 T^\alpha - (e\theta)^{-1}\ln\left(1-\beta_\theta\right)\right].
\]
By optimality of \(c^{\text{on}}\) for the online algorithm,
\[
\pi_\mathbf{c}^{\text{off}}(T, c^{\text{off}},N) - \pi_\mathbf{c}^{\text{on}}(T, c^{\text{on}},N)
\le \pi_\mathbf{c}^{\text{off}}(T, c^{\text{off}},N) - \pi_\mathbf{c}^{\text{on}}(T, c^{\text{off}},N),
\]
and substituting \(c=c^{\text{off}}\) yields \eqref{ineq:reg-total-constant-case-a}.

\emph{Holding costs, case \(\alpha=1\).}
When \(\alpha=1\), for the optimal choices \(c^{\text{on}}\) and \(c^{\text{off}}\), the constant-order systems are stabilizing in the sense that there exists \(K<\infty\) (independent of \(N\)) with
\[
\sup_{n\ge 1}\ \mathbb{E}\left[I_{T+1}^{n,\text{on}}(c^{\text{on}},\mathcal{H})\right] \le K T,
\qquad
\sup_{n\ge 1}\ \mathbb{E}\left[I_{T+1}^{n,\text{off}}(c^{\text{off}},\mathcal{H})\right] \le K T.
\]
This is because that with strictly positive holding cost \(h\) and bounded per-unit rewards, any order choice that produces superlinear growth of the expected ending inventory would be dominated by a smaller order.
Since \(I_{T+1}^{n,\text{off}}\le I_{T+1}^{n,\text{on}}\),
\[
\mathbb{E}\left|I_{T+1}^{n,\text{on}}(c^{\text{on}},\mathcal{H}) - I_{T+1}^{n,\text{off}}(c^{\text{off}},\mathcal{H})\right|
\le \mathbb{E}\left[I_{T+1}^{n,\text{on}}(c^{\text{on}},\mathcal{H})\right]
\le K T,
\]
so the holding-cost difference per cycle is at most \(hKT\). Adding the \(C_1T\) reward-regret term yields \eqref{ineq:reg-total-constant-case-b}.
Combining the two cases completes the proof.
\hfill\Halmos
\endproof

%%%%%%%%%%%%%%%%%%%%%%%%%%%%%%%%%
%%%%%%%%%%%%%%%%%%%%%%%%%%%%%%%%%
\section{Proofs in \cref{subsec:base-constant-off}}
\label{app:proof_base_constant_off} 

Before proceeding to the proofs in \cref{subsec:base-constant-off}, we first recall three standard bridges between lost-sales and backorder models that will be used repeatedly in the
proofs. For a fixed base-stock level $S$, let $I_1^{\infty,\mathscr{L},S}$ and $I_1^{\infty,\mathscr{B},S}$ denote the
steady-state inventory levels in the lost-sales and backorder systems, respectively.

\begin{lemma}[Lost-sales–backorder bridges] ~
\label{lem:lost-back-bridges}
\begin{enumerate}[label=(\alph*)]
\item \emph{Cost sandwich \cite[Lemma~5]{huh2009asymptotic}.} Fix any base-stock level \(S\),
\label{lem:lost-back-bridges:a}
\[
C_\mathbf{b}^{\mathscr{B}, S} \left(h,\tfrac{p}{L+1}\right)  \le  C_\mathbf{b}^{\mathscr{L}, S}(h,p)   \le   C_\mathbf{b}^{\mathscr{B}, S}  \left(h,p+Lh\right).
\]
\item \emph{Stochastic order \cite[Corollary~1]{huh2009asymptotic}, \cite[Chapter~1]{shaked2007stochastic}.}
\label{lem:lost-back-bridges:b}
The random variable $I_1^{\infty, \mathscr{B},S}$ is stochastically smaller than $I_1^{\infty, \mathscr{L},S}$. Then, for any non-increasing function \(f\), the following inequality holds:
\[
\mathbb{E}  \left[f  \left(I_1^{\infty,\mathscr{L},S}\right)\right]
  \le  
\mathbb{E}  \left[f  \left(I_1^{\infty,\mathscr{B},S}\right)\right].
\]
\item \emph{Optimal base-stock comparison \cite[Theorem~4]{huh2009asymptotic}.}
\label{lem:lost-back-bridges:c}
For any \(h\ge0\) and \(b\ge0\),
\[
S^{\mathscr{L}*}(h,b)   \ge   S^{\mathscr{B}*}  \left(2h(L+1),   b-h(L+1)\right).
\]
\end{enumerate}
\end{lemma}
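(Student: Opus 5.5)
The plan is to reduce each part to the cited results of \cite{huh2009asymptotic} by checking that the cycle-level dynamics of our model under myopic offline fulfillment are exactly those of a classical single-product periodic-review lost-sales system. The key observation is that the myopic offline algorithm, whatever order it serves types in, consumes exactly $\min\{I_1^{n}, D^n\}$ units in cycle $n$. Hence the inventory recursion under a base-stock level $S$ depends only on the aggregate cycle demands $\{D^n\}$, which are i.i.d.\ $\mathrm{Binomial}(T,1-\lambda_0)$. Treating each cycle as one review period with demand $D^n$ and lead time $L$ (in cycles), holding cost $h$ on end-of-cycle leftover, and uniform penalty $p$ per unmet unit, we recover the standard model of \cite{huh2009asymptotic}. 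The backorder counterpart is obtained by letting unmet demand carry over, with penalty charged on the backlog. Their standing assumptions (i.i.d.\ nonnegative demand with finite mean, integer $L\ge 0$, stationary base-stock ordering) hold trivially here. The arbitrary feasible initial pipeline is harmless, since we only use long-run averages and stationary quantities.

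For part~\ref{lem:lost-back-bridges:a}, once the reduction is in place the sandwich is \cite[Lemma~5]{huh2009asymptotic}. For self-containedness I would sketch the idea. Couple both systems on the same demand path. In the lost-sales system, each lost unit reduces future inventory relative to the backorder system for at most $L+1$ cycles. This yields the lower bound with penalty $p/(L+1)$: a lost sale spreads into at most $L+1$ periods of backlog. It also yields the upper bound with $p+Lh$: a lost sale prevents at most $L$ periods of extra holding relative to backordering. Part~\ref{lem:lost-back-bridges:b} then follows from the same sample-path coupling. Under a common $S$, the lost-sales on-hand level dominates $\max\{S-\sum_{i=0}^{L}D^{n-i},\,\cdot\}$, which is the backorder net inventory. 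So $I_1^{\infty,\mathscr{B},S}\le_{st} I_1^{\infty,\mathscr{L},S}$, and the inequality for non-increasing $f$ is the defining property of the usual stochastic order \cite[Chapter~1]{shaked2007stochastic}.

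Part~\ref{lem:lost-back-bridges:c} is \cite[Theorem~4]{huh2009asymptotic}. It uses convexity of the backorder cost in $S$ (a newsvendor in total lead-time demand $\sum_{n=1}^{L+1}D^n$) together with the comparison in (a) applied to the derivative/difference of costs in $S$. I expect the main obstacle to be not mathematical but verifying that the ``period'' conventions match. Specifically, I need to check three things. First, holding is charged on end-of-cycle inventory. Second, the timing of order arrival relative to fulfillment in our model (order placed at start of cycle $n$ arrives at start of cycle $n+L$) gives lead-time demand over $L+1$ cycles, consistent with $P_k$. Third, the penalty parameter $b$ in (c) corresponds to our uniform $p$. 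I would carefully align these indices, and if there is an off-by-one shift in $L$, adjust the constants in (c) accordingly before invoking the cited theorem.
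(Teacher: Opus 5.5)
Your proposal is essentially the paper's argument: the paper gives no separate proof and simply invokes \cite[Lemma~5, Corollary~1, Theorem~4]{huh2009asymptotic} together with the standard characterization of the usual stochastic order. Your check that myopic offline fulfillment consumes exactly $\min\{I_1^n,D^n\}$ units per cycle is precisely the reduction the paper leaves implicit: it makes the cycle-level dynamics a classical lost-sales system with i.i.d.\ $\mathrm{Binomial}(T,1-\lambda_0)$ demand and lead-time demand over $L+1$ cycles.

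One caution if you write out your optional sketches. Under a common $S$, the lost-sales on-hand inventory \emph{exceeds} the backorder net inventory by the units lost in the previous $L$ cycles. So each lost unit causes (rather than prevents) at most $L$ extra cycles of holding. The upper bound in (a) also needs the pathwise fact that per-cycle lost sales never exceed the end-of-cycle backlog of the coupled backorder system. Finally, (c) does not follow from the level sandwich in (a); it rests on a separate marginal-cost comparison in $S$, which is why citing Theorem~4 directly is the right move.
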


The above results allow us to bound the cost in the lost-sales model by analyzing a corresponding backorder system with an adjusted penalty parameter, so it suffices to obtain explicit envelopes for the lost-sales case by computing backorder costs at the appropriate newsvendor levels.

\proof{Proof of \cref{lem:bounds-base-off}.}
\emph{Upper bound.}
By the optimality of \(S^{\mathscr{L}*}  \left(h,p\right)\) and  \cref{lem:lost-back-bridges}\cref{lem:lost-back-bridges:a}, we have:
\[
C_\mathbf{b}^{\mathscr{L},    S^{\mathscr{L}*}(h,p)}  \left(h,p\right)  
\le C_\mathbf{b}^{\mathscr{L}, \bar{S}^\mathscr{B}(h,p)}(h, p) 
\le C_\mathbf{b}^{\mathscr{B}, \bar{S}^\mathscr{B}(h,p)}(h, p+Lh).
\]
In a backorder system with base-stock \(S\),
\[
C_\mathbf{b}^{\mathscr{B},    S}  \left(h,p+Lh\right)
=
h   \mathbb{E}  \left[\left(S-\sum_{n=1}^{L+1} D^n\right)^+\right]
+\left(p+Lh\right)\mathbb{E}  \left[\left(\sum_{n=1}^{L+1} D^n - S\right)^+\right].
\]
Expanding with \(P_k\triangleq \Pr  \left(\sum_{n=1}^{L+1} D^n = k\right)\),
\[
\mathbb{E}  \left[\left(S-\sum_{n=1}^{L+1} D^n\right)^+\right]
= \sum_{k=0}^{S} \left(S-k\right) P_k,\qquad
\mathbb{E}  \left[\left(\sum_{n=1}^{L+1} D^n - S\right)^+\right]
= \sum_{k=S+1}^{\left(L+1\right)T} \left(k-S\right) P_k.
\]
Plugging \(S=\bar{S}^{\mathscr{B}}(h,p)\) yields exactly \(\bar{a}_1\left(T,h,p,L\right)\), hence
\[
C_\mathbf{b}^{\mathscr{L},    S^{\mathscr{L}*}(h,p)}  \left(h,p\right)   \le   \bar{a}_1\left(T,h,p,L\right).
\]

\emph{Lower bound.}
By \cref{lem:lost-back-bridges}\cref{lem:lost-back-bridges:a} and the optimality of \(\underline{S}^{\mathscr{B}}(h,p)\), we have: for any \(S\),
\[
C_\mathbf{b}^{\mathscr{L},    S}  \left(h,p\right)
  \ge  
C_\mathbf{b}^{\mathscr{B},    S}  \left(h,\tfrac{p}{L+1}\right)
  \ge  
C_\mathbf{b}^{\mathscr{B},    \underline{S}^{\mathscr{B}}(h,p)}  \left(h,\tfrac{p}{L+1}\right).
\]
Using the same expansion as above,
\[
C_\mathbf{b}^{\mathscr{B},    \underline{S}^{\mathscr{B}}(h,p)}  \left(h,\tfrac{p}{L+1}\right)
=
h \sum_{k=0}^{\underline{S}^{\mathscr{B}}(h,p)} \left(\underline{S}^{\mathscr{B}}(h,p)-k\right) P_k
+ \frac{p}{L+1} \sum_{k=\underline{S}^{\mathscr{B}}(h,p)+1}^{\left(L+1\right)T} \left(k-\underline{S}^{\mathscr{B}}(h,p)\right) P_k
= \underline{a}_1\left(T,h,p,L\right).
\]
Thus \(C_\mathbf{b}^{\mathscr{L},    S}  \left(h,p\right) \ge \underline{a}_1\left(T,h,p,L\right)\) for all \(S\); in particular, taking \(S=S^{\mathscr{L}*}  \left(h,p\right)\) yields the claim.
Finally, by the standard newsvendor critical-fractile characterization in the backorder system,
\(\bar{S}^{\mathscr{B}}(h,p)\) and \(\underline{S}^{\mathscr{B}}(h,p)\) satisfy the stated fractiles.  \hfill \Halmos 
\endproof

%%%%%%%%%%%%%%%%%%%%%%%%%%%%%%%
\vspace{20pt}

\proof{Proof of \cref{lem:bound-constant-off}.}
Under a constant order \(c\), the stationary inventory level (\(I_{T+1}^{\infty, \text{off}}(c,\mathcal{H})\)) follows the Kingman Approximation for waiting time in a \( G/G/1 \) queue, where the waiting time is:
\[
I_{T+1}^{\infty, \text{off}}(c,\mathcal{H}) = \max_{j \geq 0} \left( j c - \sum_{i=1}^{j} D^n \right),
\]
which gives the expected inventory cost as: 
\[
h \mathbb{E}_\mathcal{H}[I_{T+1}^{\infty, \text{off}}(c,\mathcal{H})] = h \sum_{n=1}^{\infty} \frac{1}{n} \mathbb{E}_\mathcal{H} \left[ \left(n c - \sum_{i=1}^{n} D^n \right)^{+} \right].
\]
Combining the expected lost-sale penalty and expanding the expectation via \(P_{n,k}\), we can get the total cost function as follows:
\[
C^c_\mathbf{c}(h, p) = h \sum_{n=1}^{\infty} \sum_{k=0}^{n c} \left(c - \frac{k}{n} \right) P_{n,k} + p T(1-\lambda_0) - p c.
\]
Taking the derivative with respect to \( c \):
\[
\frac{d}{d c} C^c_\mathbf{c}(h, p) = h \sum_{n=1}^{\infty} \Pr\left(\sum_{i=1}^n D^n \leq n c\right) - p.
\]
Setting this equal to zero to find the optimal order quantity \( c^*(h,p) \) satisfying:
\[
\sum_{n=1}^{\infty} \Pr\left(\sum_{i=1}^n D^n \leq n c^*(h,p)\right) = \frac{p}{h}.
\]
Thus, the exact optimal cost is:
\[
C_\mathbf{c}^{c^*(h,p)}(h, p) = -h \sum_{n=1}^{\infty} \sum_{k=0}^{n c^*(h,p)} \frac{k}{n} P_{n,k} + p T (1 - \lambda_0).
\]
\hfill \Halmos 
\endproof

%%%%%%%%%%%%%%%%%%%%%%%%%%%%%%%
\vspace{20pt}

\proof{Proof of \cref{thm:replenishment-gap}.}
We start with the proof of the lower bound. Denote \( S_0 \) as the optimal base-stock level in the original problem. Then,
\begin{align*}
\pi_{\mathbf{b}}^{\text{off}} 
= & \mathbb{E}_\mathcal{H}[\text{rewards of }\mathcal{H}] - h \mathbb{E}_\mathcal{H}\left[I_{T+1}^{\infty, \text{off}}(S_0,\mathcal{H})\right] - \mathbb{E}_\mathcal{H}[\text{reward of lost sales} \mid S_0] \\
\geq & \mathbb{E}_\mathcal{H}[\text{rewards of }\mathcal{H}] - h \mathbb{E}_\mathcal{H}\left[I_{T+1}^\infty \left( S^{\mathscr{L}*}(h,r_M),\mathcal{H}\right)\right] - \mathbb{E}_\mathcal{H}[\text{reward of lost sales} \mid S^{\mathscr{L}*}(h,r_M)]  \\
\geq& \mathbb{E}_\mathcal{H}[\text{rewards of }\mathcal{H}] - h \mathbb{E}_\mathcal{H}\left[I_{T+1}^\infty \left( S^{\mathscr{L}*}(h,r_M),\mathcal{H}\right)\right] - r_M \mathbb{E}_\mathcal{H}[\text{number of lost sales} \mid S^{\mathscr{L}*}(h,r_M)] \\
= &\mathbb{E}_\mathcal{H}[\text{rewards of }\mathcal{H}] - C_\mathbf{b}^{\mathscr{L}, S^{\mathscr{L}*}(h,r_M)}(h, r_M),
\end{align*}
where the first inequality follows from the optimality of \( S_0 \), while the second follows by using \( r_M \) to upper-bound the lost-sales reward.

Similarly, let \( c_0 \) be the optimal constant-order quantity in the original problem, and we can have:
\begin{align*}
\pi_{\mathbf{c}}^{\text{off}} 
= &\mathbb{E}_\mathcal{H}[\text{rewards of }\mathcal{H}] - h \mathbb{E}_\mathcal{H}\left[I_{T+1}^{\infty, \text{off}}(c_0,\mathcal{H})\right] - \mathbb{E}_\mathcal{H}[\text{reward of lost sales} \mid c_0] \\
\leq &\mathbb{E}_\mathcal{H}[\text{rewards of }\mathcal{H}] - h \mathbb{E}_\mathcal{H}\left[I_{T+1}^{\infty, \text{off}}(c_0,\mathcal{H})\right] - r_1 \mathbb{E}_\mathcal{H}[\text{number of lost sales} \mid c_0]\\  
\leq &\mathbb{E}_\mathcal{H}[\text{rewards of }\mathcal{H}] - h \mathbb{E}_\mathcal{H}\left[I_{T+1}^\infty \left(  c^*(h,r_1),\mathcal{H}\right)\right]  - r_1 \mathbb{E}_\mathcal{H}[\text{number of lost sales} \mid c^*(h,r_1)] \\
= &\mathbb{E}_\mathcal{H}[\text{rewards of }\mathcal{H}] - C_\mathbf{c}^{c^*(h,r_1)}(h, r_1),
\end{align*}
where the first inequality follows by lower bounding the lost-sales reward by \( r_1 \), and the second follows from that \( c^*(h,r_1) \) minimizes \( C^c_\mathbf{c}(h, r_1) \). Combining the above results with \cref{lem:bounds-base-off,lem:bound-constant-off} completes the proof of the lower bound.

Then, using similar arguments as above can also give:
\begin{align*}
\pi_{\mathbf{b}}^{\text{off}} 
\le & \mathbb{E}_\mathcal{H}[\text{rewards of }\mathcal{H}] - h \mathbb{E}_\mathcal{H}\left[I_{T+1}^{\infty, \text{off}}(S_0,\mathcal{H})\right] - r_1 \mathbb{E}_\mathcal{H}[\text{number of lost sales} \mid S_0] \\
= &\mathbb{E}_\mathcal{H}[\text{rewards of }\mathcal{H}] - C_\mathbf{b}^{\mathscr{L}, S_0}(h, r_1)\\
\le &\mathbb{E}_\mathcal{H}[\text{rewards of }\mathcal{H}] - C_\mathbf{b}^{\mathscr{L}, S^{\mathscr{L}*}(h,r_1)}(h, r_1),
\end{align*}
where the inequalities follow by using \(r_1\) to lower bound the lost-sales reward and the optimality of \(S^{\mathscr{L}*}(h,r_1)\), and
\begin{align*}
\pi_{\mathbf{c}}^{\text{off}} 
\geq & \mathbb{E}_\mathcal{H}[\text{rewards of }\mathcal{H}] - h \mathbb{E}_\mathcal{H}\left[I_{T+1}^\infty \left(  c^*(h,r_M),\mathcal{H}\right)\right] - \mathbb{E}_\mathcal{H}[\text{reward of lost sales} \mid c^*(h,r_M)]  \\
\geq& \mathbb{E}_\mathcal{H}[\text{rewards of }\mathcal{H}] - h \mathbb{E}_\mathcal{H}\left[I_{T+1}^\infty \left(  c^*(h,r_M),\mathcal{H}\right)\right] - r_M \mathbb{E}_\mathcal{H}[\text{number of lost sales} \mid c^*(h,r_M)] \\
= &\mathbb{E}_\mathcal{H}[\text{rewards of }\mathcal{H}] - C_\mathbf{c}^{c^*(h,r_M)}(h, r_M),
\end{align*}
where the first follows from the optimality of $\pi_{\mathbf{c}}^{\text{off}}$ and the second follows by upper bounding the lost-sales reward by \( r_M \). The proof of the upper bound can be completed by the above two results and \cref{lem:bounds-base-off,lem:bound-constant-off}.
\hfill \Halmos 
\endproof

%%%%%%%%%%%%%%%%%%%%%%%%%%%%%%%
\vspace{20pt}

\proof{Proof of \cref{coro:replenishment-gap-sqrtT-scaling}.} We prove the $\Theta(\sqrt{T})$ scaling for $\bar a_1(T,h,p,L)$ in detail and the companion result for $\underline a_1(T,h,p,L)$ follows by the same argument which is omitted for brevity.

\textit{Part I: $\bar a_1(T,h,p,L)=\Theta(\sqrt{T})$.}
Let $K\sim\mathrm{Bin}\left((L+1)T,1-\lambda_0\right)$ denote the total demand in a pipeline of length $(L+1)T$, and let
\[
\mu_T \triangleq (L+1)T(1-\lambda_0),
\qquad
\sigma_T \triangleq \sqrt{(L+1)T\lambda_0(1-\lambda_0)}.
\]
Define the newsvendor fractile \(q \triangleq \frac{p+Lh}{p+(L+1)h}\in(0,1)\) and write $F_T(x)\triangleq \Pr\left((K-\mu_T)/\sigma_T\le x\right)$ for the cdf of the standardized sum.
For simplicity, we suppress \(\bar{S}^{\mathscr{B}}(h,p) \triangleq \bar{S}^{\mathscr{B}}\) in this proof. By definition of \(\bar a_1\), we have
\[
\bar a_1(T,h,p,L)
=
h\mathbb{E}\left[(\bar S^{\mathscr{B}}-K)^+\right]
+(p+Lh)\mathbb{E}\left[(K-\bar S^{\mathscr{B}})^+\right],
\]
where \(\Pr(K\le \bar S^{\mathscr{B}})\ge q\) and \(\Pr(K\le \bar S^{\mathscr{B}}-1)<q\). The definition of $\bar S^{\mathscr{B}}$ also gives
\[
\frac{\bar S^{\mathscr{B}}-\mu_T}{\sigma_T} = F_T^{-1}(q) + \varepsilon_T,
\qquad |\varepsilon_T|\le \frac{1}{\sigma_T},
\] where $\varepsilon_T$ accounts for the integer rounding of the quantile.
By the Berry–Esseen theorem in Kolmogorov metric for binomial distributions, there exists a constant $C_K>0$ independent of \(T\) such that
\[
\sup_{x}|F_T(x)-\Phi(x)| \le \frac{C_K}{\sqrt{(L+1)T}}.
\]
Then, there exists a \(T_0>0\) such that \(\varepsilon_0\triangleq \tfrac12\min\{q,1-q\}\ge \frac{C_K}{\sqrt{(L+1)T}}\) for all \(T\ge T_0\) and thus
\begin{equation}
\label{ineq:quantile-inverse}
z_- \triangleq \Phi^{-1}(q-\varepsilon_0) \le F_T^{-1}(q) \le \Phi^{-1}(q+\varepsilon_0) \triangleq z_+,
\qquad \forall T\ge T_0.
\end{equation}

Next, by the Berry–Esseen theorem in Wasserstein metric for binomial distributions, there exists a constant $C_W>0$ independent of \(T\) such that, for any 1-Lipschitz function \(g\),
\[
\left|\mathbb{E}g\left(\frac{K-\mu_T}{\sigma_T}\right) - \mathbb{E}g(Z)\right|
\le \frac{C_W}{\sqrt{(L+1)T}},
\]
where \(Z\sim \mathcal{N}(0,1)\). Applying this to functions \(g^+(x)=(x-a)^+\) and \(g^-(x)=(a-x)^+\) for any given \(a\), which both are 1-Lipschitz, yields
\begin{equation} \label{ineq:a1-term1-bound}
    \begin{aligned}
        & \left|\mathbb{E}\left[(K-\bar S^{\mathscr B})^+\right]
        - \sigma_T\mathbb{E}\left[(Z-F_T^{-1}(q))^+\right]\right|\\
        = ~& \sigma_T \left|\mathbb{E}\left[\left(\frac{K-\mu_T}{\sigma_T}-\frac{\bar S^{\mathscr B}-\mu_T}{\sigma_T}\right)^+\right]
        - \mathbb{E}\left[(Z-F_T^{-1}(q))^+\right]\right|\\
        \le ~& \underbrace{\sigma_T\left|\mathbb{E}\left[\left(\frac{K-\mu_T}{\sigma_T}-\left(F_T^{-1}(q){+}\varepsilon_T\right)\right)^+\right]
        -\mathbb{E}\left[\left(Z-\left(F_T^{-1}(q){+}\varepsilon_T\right)\right)^+\right]\right|}_{\le \sigma_T C_W /\sqrt{(L+1)T} = \sqrt{\lambda_0(1-\lambda_0)}C_W} \\
        & + ~ \underbrace{\sigma_T\left|\mathbb{E}\left[\left(Z-\left(F_T^{-1}(q){+}\varepsilon_T\right)\right)^+\right]
        -\mathbb{E}\left[\left(Z-F_T^{-1}(q)\right)^+\right]\right|}_{\le \sigma_T|\varepsilon_T|\le1}
        \le \ C_0 ,
    \end{aligned}
\end{equation}
and similarly
\begin{equation} \label{ineq:a1-term2-bound}
\left|\mathbb{E}\left[(\bar S^{\mathscr B}-K)^+\right]
- \sigma_T\mathbb{E}\left[\left(F_T^{-1}(q)-Z\right)^+\right]\right|
\le C_0,
\end{equation}
where \(C_0 = \sqrt{\lambda_0(1-\lambda_0)}C_W + 1\) also independent of \(T\).

Since \(\mathbb{E}[(Z-x)^+]\) is decreasing in \(x\) and \(\mathbb{E}[(x-Z)^+]\) is increasing in \(x\), using \cref{ineq:quantile-inverse} implies:
\begin{equation*}
    \begin{aligned}
    &\mathbb{E}\left[(Z-z_+)^+\right] \le \mathbb{E}\left[(Z-F_T^{-1}(q))^+\right] \le \mathbb{E}\left[(Z-z_-)^+\right], \\
    &\mathbb{E}\left[(z_--Z)^+\right] \le \mathbb{E}\left[(F_T^{-1}(q)-Z)^+\right] \le \mathbb{E}\left[(z_+-Z)^+\right].
    \end{aligned}
\end{equation*}
Therefore, combining with \cref{ineq:a1-term1-bound,ineq:a1-term2-bound}, we obtain: for all $T\ge T_0$,
\begin{equation*}
    \begin{aligned}
    &\bar a_1(T,h,p,L) \le 
    \sigma_T\left[h\mathbb{E}\left[(z_+-Z)^+\right] + (p+Lh)\mathbb{E}\left[(Z-z_-)^+\right]\right] + \left(p+(L+1)h\right)C_0 = \Theta(\sqrt{T}),\\
    &\bar a_1(T,h,p,L) \ge 
    \sigma_T\left[h\mathbb{E}\left[(z_--Z)^+\right] + (p+Lh)\mathbb{E}\left[(Z-z_+)^+\right]\right] - \left(p+(L+1)h\right)C_0 = \Theta(\sqrt{T}),
    \end{aligned}
\end{equation*}
which completes the proof of \(\bar{a}_1(T,h,p,L) = \Theta(\sqrt{T})\).

\textit{Part II: $a_2(T,h,p)=\Theta(\sqrt{T})$.}
Recall that \(a_2(T,h,p)\) can be expressed as:
\[
a_2(T,h,p)
= h \sum_{n=1}^{\infty} \frac{1}{n}\mathbb{E}\left[(nc^*(h,p)-K_{n,T})^{+}\right]
+ pT(1-\lambda_0)-pc^*(h,p),
\]
where \(K_{n,T}\sim \mathrm{Bin}\left(nT,1-\lambda_0\right)\) and \(c^*(h,p)\) solves the first–order condition
\[
\sum_{n=1}^{\infty}\Pr\left(K_{n,T}\le nc^*(h,p)\right)=\frac{p}{h}.
\]
Write
\[
\mu_{n,T} \triangleq nT(1-\lambda_0),\qquad
\sigma_{n,T} \triangleq \sqrt{nT\lambda_0(1-\lambda_0)},\qquad
\delta_T \triangleq \frac{T(1-\lambda_0)-c^*(h,p)}{\sqrt{T\lambda_0(1-\lambda_0)}}.
\]
Note that \(\delta_T>0\): assume by contradiction that \(\delta_T\le0\) (equivalently \(c^*(h,p)\ge T(1-\lambda_0)\)), then \(\Pr(K_{n,T}\le n c^*)\ge \Pr(K_{n,T}\le \mu_{n,T})\ge \tfrac12\) for all \(n\), which makes the left-hand side of the FOC diverge, contradicting that \(\sum_{n=1}^{\infty}\Pr\left(K_{n,T}\le nc^*(h,p)\right)=\frac{p}{h}\).

We first claim there exist constants \(\delta_-,\delta_+\) that \(0<\delta_- \le \delta_T \le \delta_+<\infty\) for all sufficiently large \(T\). By the Berry--Esseen theorem in Kolmogorov metric for binomials,
there exists a constant $C_{\rm BE}$ (independent of \(n,T\)) such that
for all $n\ge1$, $T\ge1$ and all $x\in\mathbb{R}$,
\[
\left|\Pr\left(\frac{K_{n,T}-\mu_{n,T}}{\sigma_{n,T}}\le x\right)-\Phi(x)\right|
\le \frac{C_{\rm BE}}{\sqrt{nT\lambda_0(1-\lambda_0)}}.
\]
Fix any $M>0$. Evaluating at $x=-M$, there exists \(T_1\) (large enough that $\frac{C_{\rm BE}}{\sqrt{T_1\lambda_0(1-\lambda_0)}}
\le \tfrac12\Phi(-M)$) such that for all $n\ge1$, $T\ge T_1$,
\[
\Pr\left(K_{n,T}\le \mu_{n,T}-M\sigma_{n,T}\right)
\ge \Phi(-M)-\frac{C_{\rm BE}}{\sqrt{nT\lambda_0(1-\lambda_0)}}
\ge \tfrac12\Phi(-M)\triangleq c_0>0.
\]
We now split into two exhaustive cases:\\
\underline{Case A: $\delta_T\ge M$.} Then $\delta_T\ge M$ immediately yields
$\delta_T\ge \delta_-$ for any choice $\delta_-\le M$.\\
\underline{Case B: $\delta_T<M$.} Then the index set
$\{n\ge1:\ \delta_T\sqrt{n}\le M\}$ is nonempty.  For any such $n$,
\(
n c^*(h,p) = \mu_{n,T} - \delta_T\sqrt{n}\sigma_{n,T}
\ge \mu_{n,T}-M\sigma_{n,T},
\)
then summing the first–order condition over those $n$ yields
\[
\frac{p}{h}
\ge
\sum_{\{n\ge1:\ \delta_T\sqrt{n}\le M\}}\Pr(K_{n,T}\le n c^*(h,p))\ge \sum_{\{n\ge1:\ \delta_T\sqrt{n}\le M\}} \Pr(K_{n,T}\le \mu_{n,T}-M\sigma_{n,T})
\ge c_0\left\lfloor \frac{M^2}{\delta_T^2}\right\rfloor,
\]
so \(\delta_T \ge M\sqrt{\frac{hc_0}{p+hc_0}}.\) Combining the two cases, with
\(
\delta_- \triangleq M\sqrt{\frac{hc_0}{p+hc_0}} >0,
\)
we have $\delta_T\ge \delta_-$ for all $T\ge T_1$.

Next, to achieve the upper bound of \(\delta_T\), fix $\delta>0$. By Chernoff’s bound for binomials, there exists
$c(\delta,\lambda_0)>0$ such that, uniformly in $n\ge1$ and $T\ge1$,
\begin{align*}
    & \Pr\left(K_{n,T}\le \mu_{n,T}-\delta \sqrt{n} \sigma_{n,T} \right) \le e^{-c(\delta,\lambda_0)n}\\
    \Rightarrow ~ & \sum_{n\ge1}\Pr\left(K_{n,T}\le \mu_{n,T}-\delta\sqrt{n}\sigma_{n,T}\right)
    \le \sum_{n\ge1} e^{-c(\delta,\lambda_0)n}
    = \frac{1}{e^{c(\delta,\lambda_0)}-1}.
\end{align*}
Choose $\delta_+$ large enough so that the right–hand side is $<p/h$.
If $\delta_T>\delta_+$ then
\[
\sum_{n\ge1}\Pr\left(K_{n,T}\le n c^*(h,p)\right)
\le \sum_{n\ge1}\Pr\left(K_{n,T}\le \mu_{n,T}-\delta_+\sqrt{n}\sigma_{n,T}\right)
< \frac{p}{h},
\]
contradicting the first–order condition. Hence $\delta_T\le \delta_+$.

Now we can derive the upper bound for \(a_2(T,h,p)\). Let \(\Delta_n\triangleq \mu_{n,T}-n c^*(h,p)\), we can get:
\begin{align*}
    \frac{1}{n}\mathbb{E}\left[(n c^*(h,p)-K_{n,T})^{+}\right]
    ~=~ & \frac{1}{n}\int_0^\infty \Pr\left(K_{n,T}\le n c^*(h,p)-u\right)du \\
    ~= ~ &\frac{1}{n}\int_0^\infty \Pr\left(K_{n,T}\le \mu_{n,T} - (\Delta_n + u)\right)du\\
    ~\le ~ &\frac{1}{n}\int_0^\infty \exp\left(-\frac{(\Delta_n+u)^2}{2\mu_{n,T}}\right)du &(\text{Chernoff's bound})\\
    ~\le ~ &\exp\left(-\frac{\Delta_n^2}{2\mu_{n,T}}\right) \frac{1}{n}\int_0^\infty \exp\left(-\frac{\Delta_n u}{\mu_{n,T}}\right)du \\
    ~=~ & \frac{1}{n} \frac{\mu_{n,T}}{\Delta_n} \exp\left(-\frac{\Delta_n^2}{2\mu_{n,T}}\right).
\end{align*}
Since
\(
\frac{\mu_{n,T}}{\Delta_n}
=\frac{\sqrt{T(1-\lambda_0)/\lambda_0}}{\delta_T},
\frac{\Delta_n^2}{2\mu_{n,T}}=\frac{\lambda_0\delta_T^2}{2}n,
\)
and \(\delta_T\ge \delta_-\), we obtain
\[
\frac{1}{n}\mathbb{E}\left[(n c^*(h,p)-K_{n,T})^{+}\right]
\le \frac{\sqrt{T(1-\lambda_0)/\lambda_0}}{\delta_-}\cdot \frac{e^{-(\lambda_0\delta_-^2/2)n}}{n}.
\]
Summing over \(n\ge1\) and multiplying by \(h\),
\[
h\sum_{n\ge1}\frac{1}{n}\mathbb{E}\left[(n c^*(h,p)-K_{n,T})^{+}\right]
\ \le\ h\frac{\sqrt{T(1-\lambda_0)/\lambda_0}}{\delta_-}
\sum_{n\ge1}\frac{e^{-c_2 n}}{n}
\ =\ C^{(A)}\sqrt{T},
\]
with \(c_2\triangleq \lambda_0\delta_-^2/2>0\) and finite \(C^{(A)}\triangleq h\delta_-^{-1}\sqrt{\tfrac{1-\lambda_0}{\lambda_0}}\sum_{n\ge1}e^{-c n}/n\).
The linear piece satisfies
\[
pT(1-\lambda_0)-pc^*(h,p)
= p\delta_T\sqrt{T\lambda_0(1-\lambda_0)}
\ \le\ p\delta_+\sqrt{T\lambda_0(1-\lambda_0)}.
\]
Hence, for all \(T\ge T_1\),
\[
a_2(T,h,p)\ \le\ \left(C^{(A)} + p\delta_+\sqrt{\lambda_0(1-\lambda_0)}\right)\sqrt{T}
\ = \Theta(\sqrt{T}).
\]
Lastly, to derive the lower bound of \(a_2(T,h,p)\), we keep only the \(n=1\) term and the linear term:
\[
a_2(T,h,p)\ \ge\ h\mathbb{E}\left[(c^*(h,p)-K_{1,T})^{+}\right]
+ p\delta_T\sqrt{T\lambda_0(1-\lambda_0)}
\ \ge\ p\delta_-\sqrt{T\lambda_0(1-\lambda_0)}.
\]
Thus, for all \(T\ge T_1\),
\[
a_2(T,h,p)\ \ge\ p\delta_-\sqrt{\lambda_0(1-\lambda_0)}\sqrt{T}
\ = \Theta(\sqrt{T}).
\]
Combining the upper and lower bounds, we obtain \(a_2(T,h,p)=\Theta(\sqrt{T})\), which completes the proof.
\hfill \Halmos 
\endproof

%%%%%%%%%%%%%%%%%%%%%%%%%%%%%%%%%
%%%%%%%%%%%%%%%%%%%%%%%%%%%%%%%%%
\section{Proofs in \cref{subsec:off-greedy-base}}

\proof{Proof of \cref{lem:stationary-and-concentration}.}
For fixed \(T\) and \(S\), existence of a stationary distribution for the pipeline Markov chain follows from Lemma~4 in
\cite{huh2009asymptotic}. Shifting one step maps
\((I_1^n,q^{n+1},\ldots,q^{n+L})\) to \((I_1^{n+1},q^{n+2},\ldots,q^{n+L+1})\) without changing the stationary law.
Therefore, the marginal distributions of the pipeline orders are identical in stationarity:
\(q^{n+1}\stackrel{d}{=}\cdots\stackrel{d}{=}q^{n+L}\).

Consider the state \emph{immediately after} the order is placed at the end of a cycle, and let
\((Y_0,\ldots,Y_L)\) denote the nonnegative pipeline components indexed by remaining lead time, where \(Y_0\) is the
on-hand inventory at the next cycle start.
Under the base-stock policy, the post-order inventory position equals the base-stock level \(S\), so $\sum_{\ell=0}^L Y_\ell = S.$

By stationarity and the one-step shift of remaining lead times, the marginals of \(Y_0,\ldots,Y_L\) are identical.
Combining this symmetry with $\sum_{\ell=0}^L Y_\ell = S.$ yields
\begin{equation}\label{eq:EY0}
\mathbb{E}[Y_0]=\cdots=\mathbb{E}[Y_L]=\frac{S}{L+1}.
\end{equation}
Since \(Y_0\) is exactly the next cycle start inventory, in stationarity \(Y_0\stackrel{d}{=}I_1^{\infty,S}\), and hence
\(\mathbb{E}[I_1^{\infty,S}]=S/(L+1)\).

Next, condition on the post-order pipeline state except for the within-cycle arrival sequence, and view the mapping from the
\(T\) arrival slots in the next cycle to the next cycle start inventory. Changing a single arrival slot can change the number of
units consumed in that cycle by at most one, hence it can change the next cycle start inventory by at most one.
Therefore, the bounded-differences (Efron--Stein) inequality gives
\[
\mathrm{Var}\!\left(I_1^{\infty,S}\right)\le T\lambda_0(1-\lambda_0) \le \frac{T}{4}.
\]
Finally,
\[
\mathbb{E}\!\left[\left|I_1^{\infty,S}-\frac{S}{L+1}\right|\right]
=
\mathbb{E}\!\left[\left|I_1^{\infty,S}-\mathbb{E}[I_1^{\infty,S}]\right|\right]
\le
\sqrt{\mathrm{Var}\!\left(I_1^{\infty,S}\right)}
\le
\frac{1}{2}\sqrt{T},
\]
so \eqref{eq:bs-concentration} holds with \(K_2=\tfrac12\).

By Markov's inequality and \eqref{eq:bs-concentration}, for any \(\varepsilon>0\),
\[
\Pr\!\left(\left|I_1^{\infty,S}-\frac{S}{L+1}\right|>\varepsilon T\right)
\le
\frac{\mathbb{E}\!\left[\left|I_1^{\infty,S}-\frac{S}{L+1}\right|\right]}{\varepsilon T}
\le
\frac{K_2}{\varepsilon}\cdot \frac{1}{\sqrt{T}}
\to 0,
\]
which implies \(T^{-1}(I_1^{\infty,S}-S/(L+1))\to 0\) in probability.
% If \(\Delta_T=o(T)\), then
% \[
% \frac{S}{(L+1)T}
% =
% (1-\lambda_0)-\frac{\Delta_T}{(L+1)T}
% \to (1-\lambda_0),
% \]
% so combining with the previous convergence yields \(I_1^{\infty,S}/T\to (1-\lambda_0)\) in probability.
\hfill \Halmos 
\endproof

%%%%%%%%%%%%%%%%%%%%%%%%%%%%%%%
\vspace{20pt}

% \begin{proof}[Proof of \cref{thm:offline-greedy-UB}.]
\proof{Proof of \cref{thm:offline-greedy-UB}.}
By the definition of
\(\pi_\mathbf b^{\mathrm{off}}, \pi_\mathbf b^{\mathrm{greedy}}\) and the optimality of \(S^{\mathrm{greedy}}\), we have 
\begin{equation}\label{eq:UB-reduce-to-commonS}
\pi_\mathbf b^{\mathrm{off}}-\pi_\mathbf b^{\mathrm{greedy}}
\le
\pi_\mathbf b^{\mathrm{off}}(T,S^{\mathrm{off}},\infty)
-
\pi_\mathbf b^{\mathrm{greedy}}(T,S^{\mathrm{off}},\infty).
\end{equation}
Under a base-stock policy with fixed \(S=S^{\mathrm{off}}\), the inventory trajectory depends only on the number of units consumed in each
cycle, which equals \(\min\{I_1^n,D^n\}\). Hence, conditional on the same start-of-cycle inventory \(I_1^n\) and arrival
sequence \(\mathcal H^n\), the offline and greedy fulfillment consume the same number of units and end the cycle with the
same inventory \((I_1^n-D^n)^+\). In particular, the holding-cost and replenishment terms coincide pathwise, so the per-cycle
profit difference equals the per-cycle reward difference:
\[
V^{n,\mathrm{off}}(T,I_1^n)-V^{n,\mathrm{greedy}}(T,I_1^n)
=
R^{n,\mathrm{off}}(I_1^n,\mathcal H^n)-R^{n,\mathrm{greedy}}(I_1^n,\mathcal H^n).
\]
Taking long-run averages under stationarity, the RHS of \eqref{eq:UB-reduce-to-commonS} is thus bounded by the stationary
expected per-cycle reward gap under the common base stock \(S\).

Now fix a cycle and condition on \(I_1^n=I\) and \(D^n=I+k\) with \(k\ge 0\).
On such a sample path, the offline algorithm can exceed greedy only by using some of the last \(k\) arrivals (which greedy cannot
serve due to stockout) to replace units that greedy would have allocated earlier to type-1 arrivals. Each replacement increases
reward by at most \(r_M-r_1\). Let \(H_k\) denote the number of arrivals among the last \(k\) non-empty arrivals whose types lie
in \(\{2,\dots,M\}\). Then, pathwise,
\[
R^{n,\mathrm{off}}(I,\mathcal H^n)-R^{n,\mathrm{greedy}}(I,\mathcal H^n)
\le (r_M-r_1)\,H_k.
\]
Conditional on \(D^n=I+k\), the \(I+k\) non-empty arrivals are i.i.d. with
\(\Pr(j_t^n=j\mid j_t^n\neq 0)=\lambda_j/(1-\lambda_0)\), so by exchangeability,
\begin{align*}
    & \mathbb E[H_k\mid D^n=I+k] =k\cdot \frac{1-\lambda_0-\lambda_1}{1-\lambda_0}\\
    \Rightarrow ~ & \mathbb E\!\left[
    R^{n,\mathrm{off}}(I,\mathcal H^n)-R^{n,\mathrm{greedy}}(I,\mathcal H^n)\,\middle|\, D^n=I+k
    \right]
    \le
    (r_M-r_1)\frac{1-\lambda_0-\lambda_1}{1-\lambda_0}\,k,
\end{align*}
and then averaging over \(D^n\) yields, for any fixed \(I\),
\[
\mathbb E\!\left[
R^{n,\mathrm{off}}(I,\mathcal H^n)-R^{n,\mathrm{greedy}}(I,\mathcal H^n)
\right]
\le
(r_M-r_1)\frac{1-\lambda_0-\lambda_1}{1-\lambda_0}\,
\mathbb E\!\left[(D^n-I)^+\right].
\]

Under the common base-stock level \(S=S^{\mathrm{off}}\), let \(I\stackrel d=I_1^{\infty,S^{\mathrm{off}}}\) denote the
stationary start-of-cycle inventory, and note that \(D^n\sim\mathrm{Binomial}(T,1-\lambda_0)\) is independent of \(I\).
Using \((x+y)^+\le x^+ + y^+\) with \(x=D^n-T(1-\lambda_0)\) and \(y=T(1-\lambda_0)-I\),
\[
\mathbb E[(D^n-I)^+]
\le
\mathbb E[(D^n-T(1-\lambda_0))^+] + \mathbb E[(T(1-\lambda_0)-I)^+].
\]
The demand fluctuation term satisfies
\[
\mathbb E[(D^n-T(1-\lambda_0))^+]
\le
\mathbb E|D^n-T(1-\lambda_0)|
\le
\sqrt{\mathrm{Var}(D^n)}
=
\sqrt{T(1-\lambda_0)\lambda_0}.
\]
For the inventory term, write \(S^{\mathrm{off}}/(L+1)\) for the mean pipeline component and note that
\(T(1-\lambda_0)-S^{\mathrm{off}}/(L+1)=\Delta_T^{\mathrm{off}}/(L+1)\). Then
\[
(T(1-\lambda_0)-I)^+
=
\left(\frac{\Delta_T^{\mathrm{off}}}{L+1}+\frac{S^{\mathrm{off}}}{L+1}-I\right)^+
\le
\frac{\Delta_T^{\mathrm{off}}}{L+1} + \left|I-\frac{S^{\mathrm{off}}}{L+1}\right|.
\]
Taking expectations and applying \cref{lem:stationary-and-concentration} (with \(S=S^{\mathrm{off}}\)) gives
\[
\mathbb E[(T(1-\lambda_0)-I)^+]
\le
\frac{\Delta_T^{\mathrm{off}}}{L+1} + K_2\sqrt{T}.
\]
Combining the last displays, we obtain
\[
\mathbb E[(D^n-I)^+]
\le
\frac{\Delta_T^{\mathrm{off}}}{L+1}
+\Bigl(K_2+\sqrt{(1-\lambda_0)\lambda_0}\Bigr)\sqrt{T}.
\]
Substituting into the reward-gap bound and then into \eqref{eq:UB-reduce-to-commonS} yields the desired upper bound.
Finally, since \(\Delta_T^{\mathrm{off}}=\Theta(T^\beta)\), the bound is \(\mathcal O(T^\beta+\sqrt T)\).
\hfill \Halmos 
\endproof
% \end{proof}

%%%%%%%%%%%%%%%%%%%%%%%%%%%%%%%
\vspace{20pt}

% \begin{proof}[Proof of \cref{thm:offline-greedy-LB}.]
\proof{Proof of \cref{thm:offline-greedy-LB}.}
By optimality of \(S^{\mathrm{off}}\) for \(\pi_{\mathbf b}^{\mathrm{off}}(T,S,\infty)\),
\[
\pi_{\mathbf b}^{\mathrm{off}}-\pi_{\mathbf b}^{\mathrm{greedy}}
=
\pi_{\mathbf b}^{\mathrm{off}}(T,S^{\mathrm{off}},\infty)-\pi_{\mathbf b}^{\mathrm{greedy}}(T,S^{\mathrm{greedy}},\infty)
\ge
\pi_{\mathbf b}^{\mathrm{off}}(T,S^{\mathrm{greedy}},\infty)-\pi_{\mathbf b}^{\mathrm{greedy}}(T,S^{\mathrm{greedy}},\infty).
\]
Fix \(S=S^{\mathrm{greedy}}\) and let \(I\stackrel d=I_1^{\infty,S}\). Under a common base stock, both fulfillment algorithms have
the same inventory trajectory, hence the long-run profit difference equals the stationary one-cycle reward difference:
\begin{equation}\label{eq:LB-commonS-short}
\pi_{\mathbf b}^{\mathrm{off}}-\pi_{\mathbf b}^{\mathrm{greedy}}
\ \ge\
\mathbb E_{I}\mathbb E_{\mathcal H}\!\left[
R^{n,\mathrm{off}}(I,\mathcal H^n)-R^{n,\mathrm{greedy}}(I,\mathcal H^n)
\right].
\end{equation}

We first introduce the following bound for the deterministic relaxation of offline reward. 

\begin{lemma}[Proposition 6 in \cite{arlotto2019uniformly}]
\label{lem:deter_relax_gap} There exists a constant \(M_1=\sum_{j=1}^M\sqrt{\lambda_j(1-\lambda_j)}\) such that for any \(I\ge 0\), \[ 0 \le R^{n,\mathrm{off}}\!\left(I,\mathbb E[\mathcal H^n]\right) -\mathbb E\!\left[R^{n,\mathrm{off}}(I,\mathcal H^n)\right] \le M_1\sqrt{T}. \] \end{lemma}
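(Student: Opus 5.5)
The plan is to express the myopic offline reward as the value of a small linear program in the per-type demand vector. Both inequalities then follow from two structural properties of that value function: concavity gives the left inequality and Lipschitz continuity gives the right.

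\textbf{Step 1 (LP representation).} For a demand vector $d=(d_1,\dots,d_M)\in\mathbb R_{\ge 0}^M$, define
\[
g(I,d)\triangleq \max\Big\{\textstyle\sum_{j} r_j x_j:\ 0\le x_j\le d_j,\ \sum_j x_j\le I\Big\}.
\]
This LP is solved by filling inventory in decreasing order of $r_j$. When $d$ is integer the greedy solution is integral, and it coincides with the myopic offline allocation. Hence $R^{n,\mathrm{off}}(I,\mathcal H^n)=g(I,D^n)$ with $D^n=(D^n_j[1,T])_j$. The quantity $R^{n,\mathrm{off}}(I,\mathbb E[\mathcal H^n])$ is read as $g(I,\bar d)$ with $\bar d_j=T\lambda_j$, fractional acceptance being allowed.

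\textbf{Step 2 (lower bound $0\le\cdot$).} The optimal value of a maximization LP is concave in its right-hand side: take a convex combination of feasible solutions. So $d\mapsto g(I,d)$ is concave, and Jensen's inequality gives $\mathbb E[g(I,D^n)]\le g(I,\mathbb E D^n)=g(I,\bar d)$.

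\textbf{Step 3 (upper bound).} Next I show that $g$ is Lipschitz in $d$ in the $\ell_1$ sense, with weights given by the rewards. Raising $d_j$ by $\delta$ raises $g$ by at most $r_j\delta$, because any extra value must come from newly admitted type-$j$ units. Lowering $d_j$ by $\delta$ lowers $g$ by at most $r_j\delta$, because one simply deletes the excess type-$j$ acceptances. Hence
\[
g(I,\bar d)-\mathbb E[g(I,D^n)]\le \sum_j r_j\,\mathbb E|D_j^n-T\lambda_j|\le \sum_j r_j\sqrt{T\lambda_j(1-\lambda_j)},
\]
using Cauchy--Schwarz and $D^n_j\sim\mathrm{Binomial}(T,\lambda_j)$. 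The bound is uniform in $I$.

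\textbf{Main obstacle.} The delicate point is matching the constant exactly as stated, $M_1=\sum_j\sqrt{\lambda_j(1-\lambda_j)}$, which carries no reward factor. The argument above naturally yields $r_M M_1\sqrt T$. I expect the cited result in \cite{arlotto2019uniformly} to use rewards normalized to $[0,1]$, i.e.\ $r_j\le 1$; under that normalization the bound is exactly $M_1\sqrt T$. Otherwise the factor $r_M$ must be carried into the constant, and it only rescales the $O(\sqrt T)$ relaxation term in \cref{thm:offline-greedy-LB}. I would therefore state the proof with the normalization and record the general-reward version with $r_M M_1$. A secondary check is that fractional versus integral acceptance causes no gap in Step 1. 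This holds because the greedy optimum of the LP is integral whenever $I$ and $d$ are integers.
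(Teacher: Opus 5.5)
Your proof is correct. The paper gives no argument of its own and simply imports the bound from \cite{arlotto2019uniformly}, so your LP argument is a self-contained alternative rather than a reproduction of the paper's proof. You write the myopic offline reward as the value $g(I,d)$ of the unit-weight fractional knapsack LP. Concavity of $g$ in the right-hand side plus Jensen gives the left inequality. The weighted Lipschitz bound $|g(I,d)-g(I,d')|\le\sum_j r_j|d_j-d_j'|$ (truncate $x_j$ at the smaller $d_j$), together with $\mathbb E|D_j^n[1,T]-T\lambda_j|\le\sqrt{T\lambda_j(1-\lambda_j)}$, gives the right inequality uniformly in $I$. That right inequality actually comes with constant $\sum_j r_j\sqrt{\lambda_j(1-\lambda_j)}$, slightly sharper than the $r_M M_1$ you quote.

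What the self-contained route buys is visibility of the constant, and your caveat is not a technicality. The gap is positively homogeneous of degree one in $(r_1,\dots,r_M)$, whereas $M_1\sqrt T$ does not depend on the rewards, so the statement as written fails in this paper's unnormalized setting. Take $M=2$, $T=2$, $I=1$, $\lambda_1=\lambda_2=0.4$. Then $R^{n,\mathrm{off}}(1,\mathbb E[\mathcal H^n])=0.8r_2+0.2r_1$ and $\mathbb E[R^{n,\mathrm{off}}(1,\mathcal H^n)]=0.64r_2+0.32r_1$, so the gap is $0.16r_2-0.12r_1$. For $r=(1,10)$ this equals $1.48>M_1\sqrt2\approx1.39$.

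Whatever normalization the cited source uses, the correct statement here is yours: replace $M_1$ by $\sum_j r_j\sqrt{\lambda_j(1-\lambda_j)}$ (or by $r_M M_1$), unless one assumes $r_M\le 1$. The same replacement should be carried into the $M_1\sqrt T$ terms of \cref{thm:offline-greedy-LB} and \cref{coro:replenish-fulfill-gap}. This leaves every $O(\sqrt T)$ conclusion intact.
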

Fix any \(I\ge0\). Under greedy, conditional on \(D^n\), the served non-empty demands are i.i.d. with type probabilities
\(\lambda_j/(1-\lambda_0)\), so
\begin{align*}
\mathbb E_{\mathcal H}\!\left[R^{n,\mathrm{greedy}}(I,\mathcal H^n)\right]
&=
\left(\sum_{j=1}^M r_j\frac{\lambda_j}{1-\lambda_0}\right)\mathbb E\!\left[\min\{I,D^n\}\right]\\
&\le
\left(\sum_{j=1}^M r_j\frac{\lambda_j}{1-\lambda_0}\right)\min\{I,\mathbb E[D^n]\}
=
R^{n,\mathrm{greedy}}\!\left(I,\mathbb E[\mathcal H^n]\right),
\end{align*}
where we used concavity of \(x\mapsto \min\{I,x\}\) and \(\mathbb E[D^n]=T(1-\lambda_0)\).
Therefore, combining \cref{lem:deter_relax_gap} yields
\begin{equation}\label{eq:LB-relax-short}
\mathbb E_{\mathcal H}\!\left[
R^{n,\mathrm{off}}(I,\mathcal H^n)-R^{n,\mathrm{greedy}}(I,\mathcal H^n)
\right]
\ge
D(I)-M_1\sqrt{T},
\end{equation}
where \(D(I)\triangleq
R^{n,\mathrm{off}}\!\left(I,\mathbb E[\mathcal H^n]\right)-R^{n,\mathrm{greedy}}\!\left(I,\mathbb E[\mathcal H^n]\right)\).
Taking \(\mathbb E_I[\cdot]\) in \eqref{eq:LB-relax-short} and substituting into \eqref{eq:LB-commonS-short} gives
\begin{equation}\label{eq:LB-basic-short}
\pi_{\mathbf b}^{\mathrm{off}}-\pi_{\mathbf b}^{\mathrm{greedy}}
\ \ge\
\mathbb E_{I}\!\left[D(I)\right]-M_1\sqrt{T}.
\end{equation}

Under mean arrivals, for \(k\in\{2,\dots,M\}\) and
\(T\sum_{j=k}^M\lambda_j \le I < T\sum_{j=k-1}^M\lambda_j\), a direct evaluation of the deterministic rewards yields
\[
D(I)\ \ge\ \left(T-\frac{I}{1-\lambda_0}\right)A_k,
\qquad
A_k\triangleq \sum_{j=k}^M r_j\lambda_j-r_{k-1}\sum_{j=k}^M\lambda_j,
\]
hence for all \(I\in[T\lambda_M,\ T(1-\lambda_0)]\),
\begin{equation}\label{eq:DI-lower-A-short}
D(I)\ \ge\ A\left(T-\frac{I}{1-\lambda_0}\right)
=\frac{A}{1-\lambda_0}\bigl(T(1-\lambda_0)-I\bigr),
\qquad
A\triangleq \min_{2\le k\le M}A_k.
\end{equation}

On the event
\[
E_T\triangleq
\left\{
T\lambda_M < I \le T(1-\lambda_0)-\frac{\Delta_T^{\mathrm{greedy}}}{4(L+1)}
\right\},
\]
we have \(I\in[T\lambda_M,\ T(1-\lambda_0)]\) and \(T(1-\lambda_0)-I\ge \Delta_T^{\mathrm{greedy}}/(4(L+1))\). Thus,
by \eqref{eq:DI-lower-A-short},
\[
D(I)\mathbf 1_{E_T}
\ \ge\
\frac{A}{1-\lambda_0}\cdot \frac{\Delta_T^{\mathrm{greedy}}}{4(L+1)}\,\mathbf 1_{E_T}.
\]
Taking \(\mathbb E_I[\cdot]\) and using \eqref{eq:LB-basic-short} gives the first desired claim \eqref{eq:LB-allT}.

Let \(S=S^{\mathrm{greedy}}\), so \(S/(L+1)=T(1-\lambda_0)-\Delta_T^{\mathrm{greedy}}/(L+1)\), and define
\[
F_T\triangleq
\left\{\left|I-\frac{S}{L+1}\right|\le \frac{\Delta_T^{\mathrm{greedy}}}{2(L+1)}\right\}.
\]
On \(F_T\), \(T(1-\lambda_0)-I\ge \Delta_T^{\mathrm{greedy}}/(2(L+1))\). 
% When \(\beta<1\) (so
% \(\Delta_T^{\mathrm{greedy}}=o(T)\)), the interior condition \(S>(L+1)T\lambda_M\) implies there exists \(T_1<\infty\)
% such that for all \(T\ge T_1\), \(F_T\subseteq\{T\lambda_M<I<T(1-\lambda_0)\}\);
The interior condition \(S>(L+1)T\lambda_M\) implies there exists \(T_1<\infty\)
such that for all \(T\ge T_1\), \(F_T\subseteq\{T\lambda_M<I<T(1-\lambda_0)\}\); hence by \eqref{eq:DI-lower-A-short},
for all \(T\ge T_1\),
\[
D(I)\mathbf 1_{F_T}
\ \ge\
\frac{A}{1-\lambda_0}\cdot \frac{\Delta_T^{\mathrm{greedy}}}{2(L+1)}\,\mathbf 1_{F_T}.
\]
By Markov's inequality and \cref{lem:stationary-and-concentration},
\[
\Pr(F_T^c)
\le
\frac{\mathbb E_I\!\left[\left|I-\frac{S}{L+1}\right|\right]}{\Delta_T^{\mathrm{greedy}}/(2(L+1))}
\le
\frac{2(L+1)K_2\sqrt{T}}{\Delta_T^{\mathrm{greedy}}}.
\]
Therefore, 
for \(T\ge T_1\),
\[
\mathbb E_I[D(I)]
\ge
\frac{A}{1-\lambda_0}\cdot \frac{\Delta_T^{\mathrm{greedy}}}{2(L+1)}\,\Pr(F_T)
\ge
\frac{A}{2(1-\lambda_0)(L+1)}\,\Delta_T^{\mathrm{greedy}}
-\frac{A K_2}{1-\lambda_0}\sqrt{T}.
\]
Substituting into \eqref{eq:LB-basic-short} proves the second claim \eqref{eq:LB-Delta-minus-sqrt}.
\hfill \Halmos 
\endproof
% \end{proof}

%%%%%%%%%%%%%%%%%%%%%%%%%%%%%%%
\vspace{20pt}

% \begin{proof}[Proof of \cref{coro:offline-greedy-base}.]
\proof{Proof of \cref{coro:offline-greedy-base}.}
Under \cref{eq:base-stock-assump}, we have
\(\Delta_T^{\mathrm{greedy}}=\Theta(T^\beta)\) and \(\Delta_T^{\mathrm{off}}=\Theta(T^\beta)\).
The upper bound in \cref{thm:offline-greedy-UB} gives
\[
\pi_\mathbf b^{\mathrm{off}}-\pi_\mathbf b^{\mathrm{greedy}}
\ \le\
(r_M-r_1)\frac{1-\lambda_0-\lambda_1}{1-\lambda_0}
\left(
\frac{\Delta_T^{\mathrm{off}}}{L+1}
+\Bigl(K_2+\sqrt{(1-\lambda_0)\lambda_0}\Bigr)\sqrt{T}
\right)
=
\mathcal O(T^\beta+\sqrt T).
\]
If \(\beta>1/2\), then \(\sqrt{T}=o(T^\beta)\), so the RHS is \(\mathcal O(T^\beta)\).

For the lower bound, by \cref{thm:offline-greedy-LB} (the second inequality \eqref{eq:LB-Delta-minus-sqrt}),
there exists \(T_1<\infty\) such that for all \(T\ge T_1\),
\[
\pi_\mathbf b^{\mathrm{off}}-\pi_\mathbf b^{\mathrm{greedy}}
\ \ge\
\frac{A}{2(1-\lambda_0)(L+1)}\,\Delta_T^{\mathrm{greedy}}
-
\left(\frac{A K_2}{1-\lambda_0}+M_1\right)\sqrt{T}.
\]
Since \(\Delta_T^{\mathrm{greedy}}=\Theta(T^\beta)\) and \(\beta>1/2\), we again have \(\sqrt{T}=o(T^\beta)\), so the
negative \(\sqrt{T}\) term is lower order. Hence there exists \(T_2<\infty\) and a constant \(c>0\) such that for all
\(T\ge \max\{T_1,T_2\}\),
\[
\pi_\mathbf b^{\mathrm{off}}-\pi_\mathbf b^{\mathrm{greedy}}
\ \ge\
c\,\Delta_T^{\mathrm{greedy}}
=
\Omega(T^\beta).
\]
Combining \(\mathcal O(T^\beta)\) and \(\Omega(T^\beta)\) yields
\(\pi_\mathbf b^{\mathrm{off}}-\pi_\mathbf b^{\mathrm{greedy}}=\Theta(T^\beta)\) when \(\beta>1/2\).
\hfill \Halmos 
\endproof
% \end{proof}

%%%%%%%%%%%%%%%%%%%%%%%%%%%%%%%%%
%%%%%%%%%%%%%%%%%%%%%%%%%%%%%%%%%
\section{Proofs in \cref{subsec:online-offline-constant,subsec:comparison}}

\proof{Proof of \cref{thm:on_vs_off_constant}.}
From \cref{lem:constant-off-more}, under the same constant-order replenishment policy, the inventory levels of the two fulfillment algorithms satisfy
\[
I_{T+1}^{n,\text{off}}(c,\mathcal{H}) \leq I_{T+1}^{n,\text{on}}(c,\mathcal{H}), \quad \forall n.
\]
Thus, the online algorithm incurs at least as much holding cost as the myopic offline algorithm in each cycle.

Regarding fulfillment rewards, \cref{lem:inv-diff} establishes that, starting from the same initial inventory, the expected difference in end-of-cycle inventory between the online and the myopic offline algorithms during each cycle is bounded above by \( C_2 T^\alpha \).
This surplus inventory under the online algorithm corresponds to items that would have been used by the myopic offline algorithm to fulfill demands with at least \( r_1 \)-rewarded items. In contrast, the online algorithm may eventually use this surplus to fulfill demands with rewards no greater than \( r_M \), and must additionally incur at least one cycle of holding cost \(h\).

Hence, the per-cycle profit advantage of the online policy relative to the offline one is bounded above by \((r_M - r_1 - h)\) times the expected surplus units. Then, allowing each algorithm to optimize its own constant order does not weaken the bound, i.e.,
\[
\pi_{\mathbf{c}}^{\text{on}} - \pi_{\mathbf{c}}^{\text{off}} = \eta_2 \leq (r_M - r_1 - h) C_2 T^\alpha,
\]
completing the proof.
\hfill \Halmos
\endproof

%%%%%%%%%%%%%%%%%%%%%%%%%%%%%%%
\vspace{20pt}
\proof{Proof of \cref{coro:replenish-fulfill-gap}.}
Decompose the profit difference as
\[
\pi_{\mathbf{b}}^{\text{greedy}} - \pi_{\mathbf{c}}^{\text{on}}
=
\bigl(\pi_{\mathbf{b}}^{\text{greedy}} - \pi_{\mathbf{b}}^{\text{off}}\bigr)
+ \bigl(\pi_{\mathbf{b}}^{\text{off}} - \pi_{\mathbf{c}}^{\text{off}}\bigr)
+ \bigl(\pi_{\mathbf{c}}^{\text{off}} - \pi_{\mathbf{c}}^{\text{on}}\bigr).
\]
The term \(\pi_{\mathbf{b}}^{\text{off}} - \pi_{\mathbf{c}}^{\text{off}}\) is bounded from above and below by \cref{thm:replenishment-gap}. The difference between offline and greedy fulfillment under base-stock, \(\pi_{\mathbf{b}}^{\text{off}} - \pi_{\mathbf{b}}^{\text{greedy}}\), is bounded above by \cref{thm:offline-greedy-UB} and below by \cref{thm:offline-greedy-LB}. Finally, the difference between the offline and online fulfillment under the constant-order, \(\pi_{\mathbf{c}}^{\text{off}} - \pi_{\mathbf{c}}^{\text{on}}\), is bounded above by \cref{thm:reg-total-constant} and bounded below by rearranging \cref{thm:on_vs_off_constant}.  Substituting these bounds into the decomposition and rearranging yields \eqref{ineq:replenish-fulfill-LB} and \eqref{ineq:replenish-fulfill-UB}.
\hfill \Halmos
\endproof

%%%%%%%%%%%%%%%%%%%%%%%%%%%%%%%%%
%%%%%%%%%%%%%%%%%%%%%%%%%%%%%%%%%
\section{Proofs in \cref{sec:extension}}

\proof{Proof of \cref{lem:extend-inv-diff-resource}.}
To avoid redundancy, we focus on proving the lower bound; the corresponding upper bound can be established through analogous arguments. Assume by contradiction that there exists a sequence of problem instances such that, for every myopic offline algorithm (which may not be unique), there exists at least one resource \( \ell \in [d] \) for which the following inequality holds:
\[
\mathbb{E}_{\mathcal{H},\mathrm{on}}\left[I_{T+1,\ell}^{n,\mathrm{off}}(\mathcal{H}) - I_{T+1,\ell}^{n,\mathrm{on}}(\mathcal{H})\right] \geq C T^\beta
\]
for some constant \( C > 0 \) and \( \beta > \alpha \). This implies that the offline algorithm deliberately avoids using at least \( CT^\beta\) units of inventory from resource \( \ell \), even though they are used in the online algorithm. In other words, reallocating any of these units from resource \( \ell \) to fulfill demand—replacing units actually used in the offline solution—would strictly decrease the total reward.

Let \( \Delta r_{\min} \) denote the smallest positive reward difference between any two fulfillments across resources and customer types:
\[
\Delta r_{\min} \triangleq \min\left\{ r_{j_1\ell_1} - r_{j_2\ell_2} > 0 : j_1, j_2 \in [M], \ell_1, \ell_2 \in [d] \right\}.
\]
Then, by construction, using any of the \( C T^\beta \) surplus units of resource \( \ell \) (as used in the online algorithm) instead of the units chosen by the offline algorithm would reduce the total reward by at least \( \Delta r_{\min} \) per unit. Thus, the expected regret must satisfy
\[
\text{REG} \geq \Delta r_{\min} \cdot \mathbb{E}_{\mathcal{H},\mathrm{on}}\left[I_{T+1,\ell}^{n,\mathrm{off}}(\mathcal{H}) - I_{T+1,\ell}^{n,\mathrm{on}}(\mathcal{H})\right] \geq \Delta r_{\min} \cdot C T^\beta,
\]
this contradicts the regret assumption that
\(
\text{REG} \leq C_1 T^\alpha,
\)
for some constant \( C_1 > 0 \) and \( \alpha < \beta \). Hence, no such sequence of problem instances can exist, completing the proof by contradiction.
\hfill \Halmos
\endproof

%%%%%%%%%%%%%%%%%%%%%%%%%%%%%%%
\vspace{20pt}

\proof{Proof of \cref{lem:inv-diff-base-resource}.} 
For convenience, we define the notation $\Delta I_{t,\ell}^n ( \mathbf{S},\mathcal H) \triangleq I_{t,\ell}^{n,\text{on}} ( \mathbf{S},\mathcal H) - I_{t,\ell}^{n,\text{off}} ( \mathbf{S},\mathcal H) $ and $\Delta q_\ell^n ( \mathbf{S},\mathcal H) \triangleq q_\ell^{n,\text{on}} ( \mathbf{S},\mathcal H) - q_\ell^{n,\text{off}} ( \mathbf{S},\mathcal H) $ for each \( \ell \in [d] \). Let \(\epsilon_\ell(\mathbf{I}) \) denote the end-of-cycle inventory difference of resource \(\ell\) between the two algorithms after one fulfillment cycle, given that both start the cycle with the same initial inventory level vector \(\mathbf{I}=(I_\ell)_{\ell\in[d]}\) across all resources. Additionally, let \( f_\ell^{n,\text{on}}(\mathbf{I})  \) and \( f_\ell^{n,\text{off}}(\mathbf{I})  \) denote the number of demands of all types fulfilled by resource \(\ell\) in cycle \( n \), starting from initial inventory vector \( \mathbf{I} \), under the online and the myopic offline algorithms, respectively.

We now prove the first part of \cref{lem:inv-diff-base-resource} by induction.

\textbf{Inductive Hypothesis:} Suppose that for some \( n = k \), the following conditions hold:
\begin{enumerate}
    \item[(a)] \( -L_\ell\cdot C_3T^\alpha \le \mathbb{E}_{\mathcal{H},\text{on}}[\Delta I_{1,\ell}^k ( \mathbf{S},\mathcal H)] \le (L_\ell^2-L_\ell+1)\cdot C_3 T^\alpha \quad \forall \ell\in[d]\), which corresponds to the desired results in the first part of \cref{lem:inv-diff-base}.
    \item[(b)] \(-L_\ell\cdot C_3T^\alpha \le \mathbb{E}_{\mathcal{H},\text{on}}[\Delta q_\ell^{k+l} ( \mathbf{S},\mathcal H)] \le C_3 T^\alpha \quad \forall l = 1, 2, \ldots, L_\ell-1 \),  \(\forall \ell\in[d]\).
    \item[(c)] \(-C_3T^\alpha \le \mathbb{E}_{\mathcal{H},\text{on}}\left[\Delta I_{1,\ell}^k ( \mathbf{S},\mathcal H) + \sum_{l=1}^{L_\ell-1}\Delta q_\ell^{k+l} ( \mathbf{S},\mathcal H)\right] \le L_\ell\cdot C_3 T^\alpha \quad \forall \ell\in[d]\).
\end{enumerate}

\textbf{Base Case (\( n=1 \)):} Since both the online and offline algorithms start from the same initial inventory pipeline of each resource at the beginning of the first replenishment cycle 
% under the same \(\{S_\ell\}_{\ell\in[d]}\) and \(\{L_\ell\}_{\ell\in[d]}\) by definition of the initial state at cycle 1
, we immediately have
\[
    \Delta I_{1,\ell}^1 ( \mathbf{S},\mathcal H) = 0, \quad\text{and}\quad \Delta q_\ell^{1+l} ( \mathbf{S},\mathcal H)=0\quad\text{for}\quad l=1,2,\ldots,L_\ell-1, \forall \ell\in[d],
\] 
where conditions (a), (b), and (c) trivially hold.

\textbf{Inductive Step (\( n=k+1 \)):} 
First, by the order update equation \cref{eq:base-replenish-mr} in base-stock replenishment policy, the definition of \(\Delta q_\ell^n ( \mathbf{S},\mathcal H)\), and the inductive hypothesis (c), we have: for \(\forall\ell\in [d]\)
\[
    -L_\ell\cdot C_3T^\alpha \le \mathbb{E}_{\mathcal{H},\text{on}}\left[\Delta q_\ell^{k+L_\ell} ( \mathbf{S},\mathcal H)\right] 
    = -\mathbb{E}_{\mathcal{H},\text{on}}\left[\Delta I_{1,\ell}^k ( \mathbf{S},\mathcal H) + \sum_{l=1}^{L_\ell-1}\Delta q_\ell^{k+l} ( \mathbf{S},\mathcal H)\right] 
    \le C_3 T^\alpha,
\]
Thus, the condition (b) holds at step \( n=k+1 \).

Next, we verify conditions (a) and (c) at \( n = k+1 \). Consider the scenario where the online algorithm conducts fulfillment as if its initial inventory were exactly \( \mathbf{I}_1^{k,\text{off}} ( \mathbf{S},\mathcal H) = (I_{1,\ell}^{k,\text{off}} ( \mathbf{S},\mathcal H))_{\ell\in [d]} \). That is, the number of demands fulfilled using resource \(\ell\) in this online setting is given by \(\min\left\{f_\ell^{n,\text{on}}(\mathbf{I}_1^{k,\text{off}} ( \mathbf{S},\mathcal H)), I_{1,\ell}^{k,\text{on}} ( \mathbf{S},\mathcal H)\right\}\). Based on this formulation, we analyze \(\mathbb{E}_{\mathcal{H},\text{on}}[\Delta I_{1,\ell}^k ( \mathbf{S},\mathcal H)]\) by considering the following two cases.
\begin{case}
    \normalfont
    \( f_\ell^{n,\text{on}}(\mathbf{I}_1^{k,\text{off}} ( \mathbf{S},\mathcal H)) \le  I_{1,\ell}^{k,\text{on}} ( \mathbf{S},\mathcal H)\).
    
    By the definition of \(\epsilon_\ell(\mathbf{I}) \), it follows that
    \begin{equation} \label{eq:delta_ending_I_case1}
    \Delta I_{T+1,\ell}^k ( \mathbf{S},\mathcal H) = \Delta I_{1,\ell}^k ( \mathbf{S},\mathcal H) + f_\ell^{k,\text{off}}(\mathbf{I}_1^{k,\text{off}} ( \mathbf{S},\mathcal H)) - f_\ell^{k,\text{on}}(\mathbf{I}_1^{k,\text{off}} ( \mathbf{S},\mathcal H)) = \Delta I_{1,\ell}^k ( \mathbf{S},\mathcal H) + \epsilon_\ell(\mathbf{I}_1^{k,\text{off}} ( \mathbf{S},\mathcal H)).
\end{equation}
Then, the expected start-of-cycle inventory difference of each resource at cycle \( k+1 \) satisfies
\begin{equation}
\begin{aligned}
    \mathbb{E}_{\mathcal{H},\text{on}}[\Delta I_{1,\ell}^{k+1} ( \mathbf{S},\mathcal H)]
    & = \mathbb{E}_{\mathcal{H},\text{on}}[\Delta I_{T+1,\ell}^k ( \mathbf{S},\mathcal H) + \Delta q_\ell^{k+1} ( \mathbf{S},\mathcal H)] \\
    & = \mathbb{E}_{\mathcal{H},\text{on}}[\Delta I_{1,\ell}^{k+1} ( \mathbf{S},\mathcal H) + \epsilon_\ell(\mathbf{I}_1^{k,\text{off}} ( \mathbf{S},\mathcal H)) + \Delta q_\ell^{k+1} ( \mathbf{S},\mathcal H)] \\
    & \ge -C_3T^\alpha -\mathbb{E}_{\mathcal{H},\text{on}}\left[\sum_{l=2}^{L_\ell-1}\Delta q_\ell^{k+l} ( \mathbf{S},\mathcal H)\right] + \mathbb{E}_{\mathcal{H},\text{on}}\left[\epsilon_\ell(\mathbf{I}_1^{k,\text{off}} ( \mathbf{S},\mathcal H))\right]\\ 
    & \ge -C_3T^\alpha - (L_\ell-2)\cdot C_3T^\alpha - C_3T^\alpha = -L_\ell\cdot C_3T^\alpha ,
\end{aligned}
\label{ineq:lb-Delta-startingI-case1}
\end{equation}
where the first equality follows directly from the inventory update rule, the first inequality results from rearranging terms based on hypothesis (c) at step \( n=k \), and the final inequality is guaranteed by \cref{lem:extend-inv-diff-resource} and hypothesis (b) at step \( n=k \). Similarly, the upper bound of \(\mathbb{E}_{\mathcal{H},\text{on}}[\Delta I_{1,\ell}^{k+1} ( \mathbf{S},\mathcal H)]\) can be given by:
\begin{align*}
    \mathbb{E}_{\mathcal{H},\text{on}}[\Delta I_{1,\ell}^k ( \mathbf{S},\mathcal H)]
    & = \mathbb{E}_{\mathcal{H},\text{on}}[\Delta I_{1,\ell}^k ( \mathbf{S},\mathcal H) + \epsilon_\ell(\mathbf{I}_1^{k,\text{off}} ( \mathbf{S},\mathcal H)) + \Delta q_\ell^{k+1} ( \mathbf{S},\mathcal H)] \\
    & \le L_\ell\cdot C_3T^\alpha -\mathbb{E}_{\mathcal{H},\text{on}}[\sum_{l=2}^{L_\ell-1}\Delta q_\ell^{k+l} ( \mathbf{S},\mathcal H)] + \mathbb{E}_{\mathcal{H},\text{on}}[\epsilon_\ell(\mathbf{I}_1^{k,\text{off}} ( \mathbf{S},\mathcal H))]\\ & \le L_\ell\cdot C_3T^\alpha + (L_\ell-2)\cdot L_\ell\cdot C_3T^\alpha + C_3T^\alpha = (L_\ell^2-L_\ell+1)\cdot C_3T^\alpha ,
\end{align*}
where the inequalities follow from assumptions (b), (c), and \cref{lem:extend-inv-diff-resource}. Thus, condition (a) is satisfied at \( n = k+1 \) in this case. Lastly, condition (c) at \( n = k+1 \) follows from 
\begin{equation*}
    \begin{aligned}
        & \Delta I_{1,\ell}^{k+1} ( \mathbf{S},\mathcal H) + \sum_{l=1}^{L_\ell-1}\Delta q_\ell^{k+1+l} ( \mathbf{S},\mathcal H)\\
        = & \Delta I_{1,\ell}^k ( \mathbf{S},\mathcal H) + \epsilon_\ell(\mathbf{I}_1^{k,\text{off}} ( \mathbf{S},\mathcal H)) + \Delta q_\ell^{k+1} ( \mathbf{S},\mathcal H) + \sum_{l=2}^{L_\ell-1}\Delta q_\ell^{k+l} ( \mathbf{S},\mathcal H) + \Delta q_\ell^{k+l} ( \mathbf{S},\mathcal H) \\
        = & \epsilon_\ell(\mathbf{I}_1^{k,\text{off}} ( \mathbf{S},\mathcal H)),
    \end{aligned}
\end{equation*}
using \cref{eq:base-replenish-mr,eq:delta_ending_I_case1}. Condition (c) thus holds by \cref{lem:extend-inv-diff-resource} in this case.
\end{case}
\begin{case}
    \normalfont
    \( f_\ell^{n,\text{on}}(\mathbf{I}_1^{k,\text{off}} ( \mathbf{S},\mathcal H)) >  I_{1,\ell}^{k,\text{on}} ( \mathbf{S},\mathcal H) \)
    
    The end-of-cycle inventory difference of resource \(\ell\) can be bounded by:  
    \begin{equation} \label{ineq:delta_ending_I_case2}
    \begin{aligned}
    \Delta I_{T+1,\ell}^k(\mathbf{S},\mathcal H)
    &= \Delta I_{1,\ell}^k(\mathbf{S},\mathcal H)
    + f_\ell^{k,\text{off}}(\mathbf I_1^{k,\text{off}}(\mathbf{S},\mathcal H))
    - I_{1,\ell}^{k,\text{on}}(\mathbf{S},\mathcal H)\\
    &\ge
    \Delta I_{1,\ell}^k
    + f_\ell^{k,\text{off}}(\mathbf I_1^{k,\text{off}}(\mathbf{S},\mathcal H))
    - f_\ell^{k,\text{on}}(\mathbf I_1^{k,\text{off}}(\mathbf{S},\mathcal H))\\
    &=
    \Delta I_{1,\ell}^k(\mathbf{S},\mathcal H) + \epsilon_\ell(\mathbf I_1^{k,\text{off}}(\mathbf{S},\mathcal H)).
    \end{aligned}
    \end{equation}
Then, the expected start-of-cycle inventory difference of each resource at cycle \( k+1 \) satisfies
\begin{align*}
    \mathbb{E}_{\mathcal{H},\text{on}}[\Delta I_{1,\ell}^{k+1} ( \mathbf{S},\mathcal H)]
    \ge \mathbb{E}_{\mathcal{H},\text{on}}[\Delta I_{1,\ell}^k ( \mathbf{S},\mathcal H) + \epsilon_\ell(\mathbf{I}_1^{k,\text{off}} ( \mathbf{S},\mathcal H)) + \Delta q_\ell^{k+1} ( \mathbf{S},\mathcal H)] \ge -L_\ell\cdot C_3T^\alpha ,
\end{align*}
where the first inequality follows from \cref{ineq:delta_ending_I_case2}, and the second is derived following the same argument in \cref{ineq:lb-Delta-startingI-case1}. Similarly, the upper bound of \(\mathbb{E}_{\mathcal{H},\text{on}}[\Delta I_{1,\ell}^{k+1} ( \mathbf{S},\mathcal H)]\) can be given by:
\begin{align*}
    &\Delta I_{T+1,\ell}^k(\mathbf S,\mathcal H)
    = I_{T+1,\ell}^{k,\text{on}}(\mathbf S,\mathcal H)
    - I_{T+1,\ell}^{k,\text{off}}(\mathbf S,\mathcal H)
    = - I_{T+1,\ell}^{k,\text{off}}(\mathbf S,\mathcal H)
    \le 0\\
    \Rightarrow ~& \mathbb{E}_{\mathcal{H},\text{on}}[\Delta I_{1,\ell}^{k+1} ( \mathbf{S},\mathcal H)]
    \le \mathbb{E}_{\mathcal{H},\text{on}}[\Delta q_\ell^{k+1} ( \mathbf{S},\mathcal H)] \le C_3T^\alpha ,
\end{align*}
where the last inequality follows from assumption (b).  Thus, condition (a) is also satisfied at \( n = k+1 \) in this case. Lastly, we establish both the lower and upper bounds required by condition (c) at \( n = k+1 \). For the lower bound, we have:
\begin{equation*}
    \begin{aligned}
        & \mathbb{E}_{\mathcal{H},\text{on}}\left[\Delta I_{1,\ell}^{k+1} ( \mathbf{S},\mathcal H) + \sum_{l=1}^{L_\ell-1}\Delta q_\ell^{k+1+l} ( \mathbf{S},\mathcal H)\right ]\\
        \ge & \mathbb{E}_{\mathcal{H},\text{on}}\left[\Delta I_{1,\ell}^k ( \mathbf{S},\mathcal H) + \epsilon_\ell(\mathbf{I}_1^{k,\text{off}} ( \mathbf{S},\mathcal H)) + \Delta q_\ell^{k+1} ( \mathbf{S},\mathcal H) + \sum_{l=2}^{L_\ell-1}\Delta q_\ell^{k+l} ( \mathbf{S},\mathcal H) + \Delta q_\ell^{k+l} ( \mathbf{S},\mathcal H)\right]\\
        = & \mathbb{E}_{\mathcal{H},\text{on}} \left[\epsilon_\ell(\mathbf{I}_1^{k,\text{off}} ( \mathbf{S},\mathcal H))\right] \ge C_3T^\alpha,
    \end{aligned}
\end{equation*}
where the first inequality follows from \cref{ineq:delta_ending_I_case2}, the equality from the base-stock update rule in \cref{eq:base-replenish-mr}, and the final inequality from \cref{lem:extend-inv-diff-resource}. For the upper bound:
\begin{equation*}
    \begin{aligned}
        \mathbb{E}_{\mathcal{H},\text{on}}\left[\Delta I_{1,\ell}^{k+1} ( \mathbf{S},\mathcal H) + \sum_{l=1}^{L_\ell-1}\Delta q_\ell^{k+1+l} ( \mathbf{S},\mathcal H)\right ]
        \le & \mathbb{E}_{\mathcal{H},\text{on}}\left[\Delta q_\ell^{k+1} ( \mathbf{S},\mathcal H) + \sum_{l=2}^{L_\ell-1}\Delta q_\ell^{k+l} ( \mathbf{S},\mathcal H) + \Delta q_\ell^{k+l} ( \mathbf{S},\mathcal H)\right]\\
        = & \mathbb{E}_{\mathcal{H},\text{on}} \left[-\Delta I_{1,\ell}^k ( \mathbf{S},\mathcal H)\right] \le L_\ell \cdot C_3T^\alpha,
    \end{aligned}
\end{equation*}
where the inequality uses \cref{ineq:delta_ending_I_case2}, the equality follows again from \cref{eq:base-replenish-mr}, and the final bound uses the inductive hypothesis (a).
\end{case}
Therefore, conditions (a) and (c) also hold at \( n=k+1 \), considering the above two cases, which completes the inductive step. 

The results in the second part of \cref{lem:inv-diff-base} regarding the difference of end-of-cycle inventory directly follow from \cref{eq:delta_ending_I_case1,ineq:delta_ending_I_case2,lem:extend-inv-diff-resource}, when  \(-L_\ell\cdot C_3T^\alpha \le \mathbb{E}_{\mathcal{H},\text{on}}\left[I_{1,\ell}^{n,\text{on}} ( \mathbf{S},\mathcal H) - I_{1,\ell}^{n,\text{off}} ( \mathbf{S},\mathcal H)\right] \le (L_\ell^2-L_\ell+1)\cdot C_3 T^\alpha\) holds for all $n,\ell$.

Note that when $L=1$, the conditions (a), (b), and (c) collapse to the single condition (a), and the same arguments apply straightforwardly. Thus, the proof of \cref{lem:inv-diff-base-resource} is completed. 
\hfill \Halmos
\endproof

%%%%%%%%%%%%%%%%%%%%%%%%%%%%%%%
\vspace{20pt}

\proof{Proof of \cref{thm:extend-reg-total-base}.}
The proof follows the same structure as in \cref{thm:reg-total-base}, by decomposing the total expected profit difference into fulfilled rewards and holding costs for any fixed base-stock levels \(  \mathbf{S}\) and then establishing the bound for their respective optimal base-stock levels. Define \( r_{_\ell,\max}\triangleq \max_j\{r_{jl}\} \) for each resource \( \ell\in [d] \).

For the difference in fulfilled rewards per cycle with the same \(  \mathbf{S}\), it can be bounded by two terms: 
(i) the regret under the same initial inventory pipeline (bounded in expectation by \(C_1T^\alpha\) by \cref{ass:online-algorithm}), and
(ii) the additional reward earned by the myopic offline algorithm due to having more start-of-cycle inventory of some resources. The latter is bounded by \(  \sum_{\ell=1}^d r_{_\ell,\max}\cdot \left[I_{1,\ell}^{n,\text{on}} ( \mathbf{S},\mathcal H) - I_{1,\ell}^{n,\text{off}} ( \mathbf{S},\mathcal H)\right]^-\).

For the difference in holding costs per cycle, it is determined by the excess end-of-cycle inventory in the online algorithm, bounded by \(\sum_{\ell=1}^d h_\ell\cdot \left[I_{T+1,\ell}^{n,\text{on}} (\mathbf{S},\mathcal H) - I_{T+1,\ell}^{n,\text{off}} (\mathbf{S},\mathcal H)\right]^+\).

Summing bounds as analyzed above over
\(N\) cycles and taking expectations, the expected profit difference with their respective optimal base-stock levels \(  \mathbf{S}^{\text{off}}\) and \(  \mathbf{S}^{\text{on}}\) can be bounded by:
\begin{align*}
    & ~\pi_\mathbf{b}^{\text{off}}(T,   \mathbf{S}^{\text{off}},N) - \pi_\mathbf{b}^{\text{on}}(T,   \mathbf{S}^{\text{on}},N)\\
    \le & ~\pi_\mathbf{b}^{\text{off}}(T,   \mathbf{S}^{\text{off}},N) - \pi_\mathbf{b}^{\text{on}}(T,   \mathbf{S}^{\text{off}},N)\\
    \le &~ \frac{1}{N}  \mathbb{E}_{\mathcal{H},\text{on}}\left[\sum_{n=1}^N \left(C_1T^\alpha + \sum_{\ell=1}^d r_{_\ell,\max} \cdot \left[I_{1,\ell}^{n,\text{on}} ( \mathbf{S},\mathcal H) - I_{1,\ell}^{n,\text{off}} ( \mathbf{S},\mathcal H)\right]^- +  \sum_{\ell=1}^d h_\ell\cdot \left[I_{T+1,\ell}^{n,\text{on}} (\mathbf{S},\mathcal H) - I_{T+1,\ell}^{n,\text{off}} (\mathbf{S},\mathcal H)\right]^+ \right)\right] \\
    = &~ C_1T^\alpha + \sum_{\ell=1}^d r_{_\ell,\max} \cdot \mathbb{E}_{\mathcal{H},\text{on}}\left[I_{1,\ell}^{n,\text{on}} ( \mathbf{S},\mathcal H) - I_{1,\ell}^{n,\text{off}} ( \mathbf{S},\mathcal H)\right]^- +  \sum_{\ell=1}^d h_\ell\cdot \mathbb{E}_{\mathcal{H},\text{on}}\left[I_{T+1,\ell}^{n,\text{on}} (\mathbf{S},\mathcal H) - I_{T+1,\ell}^{n,\text{off}} (\mathbf{S},\mathcal H)\right]^+\\
    \le & ~C_1T^\alpha + \sum_{\ell=1}^d r_{_\ell,\max}\cdot L_\ell \cdot C_3T^\alpha +  \sum_{\ell=1}^d h_\ell\cdot (L_\ell^2-L_\ell+2)\cdot C_3T^\alpha,
\end{align*}
where the first inequality follows from the optimality of \(  \mathbf{S}^{\text{on}}\), and the final follows from \cref{lem:inv-diff-base-resource}, which bounds the inventory differences for each resource across cycles. This completes the proof. 

\hfill\Halmos
\endproof

%%%%%%%%%%%%%%%%%%%%%%%%%%%%%%%
\vspace{20pt}

\proof{Proof of \cref{lem:constant-Inv-off-bound-total}.}
Fix a cycle \(n\). By \cref{lem:Inv_off_expectation} applied to the \emph{total} inventory process,  
\[
\mathbb{E}_{\mathcal H}\!\left[\sum_{\ell=1}^d I_{T+1,\ell}^{n,\mathrm{off}}(\mathbf c,\mathcal H)\right]
=
\sum_{j=1}^n \frac{1}{j}\,
\mathbb{E}_{\mathcal H}\!\left[\left(jc-\sum_{i=1}^j D^i\right)^+\right].
\]
Let \(\Delta_T \triangleq T(1-\lambda_0)-c\). By the assumption \(\Delta_T=T\delta_c'+o(T)\) with \(\delta_c'>0\),
there exists \(T_0<\infty\) such that for all \(T\ge T_0\),
\begin{equation}\label{eq:delta-lb}
\Delta_T \ge \tfrac12 T\delta_c'.
\end{equation}
For each \(j\ge1\), since \(\sum_{i=1}^j D^i\sim \mathrm{Binomial}(jT,1-\lambda_0)\),
\begin{align*}
\mathbb{E}_{\mathcal H}\!\left[\left(jc-\sum_{i=1}^j D^i\right)^+\right]
&=\int_0^\infty 
\mathbb{P}\!\left(jc-\sum_{i=1}^j D^i>x\right)\,dx\\
&=\int_0^\infty 
\mathbb{P}\!\left(\sum_{i=1}^j D^i-j\mathbb{E}[D^n]<-j\Delta_T-x\right)\,dx\\
&\le \int_0^\infty 
\exp\!\left(-\frac{(j\Delta_T+x)^2}{2jT(1-\lambda_0)}\right)\,dx
\qquad\text{(Chernoff bound)}\\
&\le \int_0^\infty 
\exp\!\left(-\frac{(jT\delta_c'/2+x)^2}{2jT(1-\lambda_0)}\right)\,dx,
\qquad\text{by \eqref{eq:delta-lb}.}
\end{align*}
With the change of variables
\[
z \triangleq \frac{jT\delta_c'/2+x}{\sqrt{2jT(1-\lambda_0)}},
\qquad
a \triangleq \frac{jT\delta_c'/2}{\sqrt{2jT(1-\lambda_0)}},
\]
the last integral equals \(\sqrt{2jT(1-\lambda_0)}\int_a^\infty e^{-z^2}\,dz\). Using
\(\int_a^\infty e^{-z^2}\,dz \le \frac{1}{2a}e^{-a^2}\), we obtain
\[
\mathbb{E}_{\mathcal H}\!\left[\left(jc-\sum_{i=1}^j D^i\right)^+\right]
\le \frac{2(1-\lambda_0)}{\delta_c'}\,
\exp\!\left(-\frac{jT\delta_c'^2}{8(1-\lambda_0)}\right).
\]
Therefore, for all \(T\ge T_0\),
\begin{align*}
\mathbb{E}_{\mathcal H}\!\left[\sum_{\ell=1}^d I_{T+1,\ell}^{n,\mathrm{off}}(\mathbf c,\mathcal H)\right]
&\le \sum_{j=1}^n \frac{1}{j}\cdot \frac{2(1-\lambda_0)}{\delta_c'}\,
\exp\!\left(-\frac{jT\delta_c'^2}{8(1-\lambda_0)}\right)\\
&\le \frac{2(1-\lambda_0)}{\delta_c'}\sum_{j=1}^\infty \frac{1}{j}
\left[\exp\!\left(-\frac{T\delta_c'^2}{8(1-\lambda_0)}\right)\right]^j\\
&= -\frac{2(1-\lambda_0)}{\delta_c'}\,
\ln\!\left(1-\exp\!\left(-\frac{T\delta_c'^2}{8(1-\lambda_0)}\right)\right),
\end{align*}
which is the desired bound.
\hfill \Halmos  
\endproof

%%%%%%%%%%%%%%%%%%%%%%%%%%%%%%%
\vspace{20pt}

\proof{Proof of \cref{thm:extend-reg-total-constant}.}
The proof also proceeds first by decomposing the total profit difference into fulfilled rewards and holding costs for any fixed constant-order quantities \(  \mathbf{c}\).  

For the expected difference in fulfilled rewards per cycle, note that the replenishment quantity is fixed and identical for each resource under the constant-order policy. Therefore, the total additional fulfilled rewards earned by the offline algorithm can be bounded by two terms: 
(i) the regret in fulfilled rewards under the same replenishment profile (bounded in expectation by \(C_1T^\alpha\) by \cref{ass:online-algorithm}), and 
(ii) the additional rewards obtained due to surplus inventory in the offline algorithm, which is upper bounded by the product of the maximum possible reward per unit and the expected total end-of-cycle inventory under the offline algorithm. Specifically, this term is bounded by \( r_{\max} \cdot  \mathbb{E}_{\mathcal{H}}\left[\sum_{\ell=1}^d I_{T+1,\ell}^{n,\text{off}} (\mathbf{c},\mathcal H) \right] \).

For the expected difference in holding costs per cycle, it is determined by the excess end-of-cycle inventory in
the online algorithm, bounded by \(h_{\max}\cdot\mathbb{E}_{\mathcal{H},\text{on}}\left[\sum_{\ell=1}^d I_{T+1,\ell}^{n,\text{on}} (\mathbf{S},\mathcal H) \right] \).

Summing over \(N\) cycles, the expected profit difference with their respective optimal order quantities \(  \mathbf{c}^{\text{off}}\) and \(  \mathbf{c}^{\text{on}}\) is bounded by:
\begin{align*}
    & ~\pi_\mathbf{c}^{\text{off}}(T,   \mathbf{c}^{\text{off}},N) - \pi_\mathbf{c}^{\text{on}}(T,   \mathbf{c}^{\text{on}},N)\\
    \le & ~\pi_\mathbf{c}^{\text{off}}(T,   \mathbf{c}^{\text{off}},N) - \pi_\mathbf{c}^{\text{on}}(T,   \mathbf{c}^{\text{off}},N)\\
    \le &~ \frac{1}{N}  \mathbb{E}_{\mathcal{H},\text{on}}\left[\sum_{n=1}^N \left(C_1T^\alpha + r_{\max} \cdot \left[\sum_{\ell=1}^d I_{T+1,\ell}^{n,\text{off}} (\mathbf{c},\mathcal H) \right] + h_{\max}\cdot\left[\sum_{\ell=1}^d I_{T+1,\ell}^{n,\text{on}} (\mathbf{c},\mathcal H) \right] \right)\right] \\
    = &~ C_1T^\alpha + r_{\max} \cdot  \mathbb{E}_{\mathcal{H}}\left[\sum_{\ell=1}^d I_{T+1,\ell}^{n,\text{off}} (\mathbf{c},\mathcal H) \right] + h_{\max}\cdot\mathbb{E}_{\mathcal{H},\text{on}}\left[\sum_{\ell=1}^d I_{T+1,\ell}^{n,\text{on}} (\mathbf{c},\mathcal H) \right]\\
    \le & ~C_1T^\alpha - r_{\max} \cdot   \frac{1-\lambda_0}{\delta_c^\prime} \cdot \ln \left( 1- \exp\left(-\frac{T\delta_c^{\prime2}}{2(1-\lambda_0)}\right)\right) + h_{\max} \cdot \left[C_2T^\alpha - (e\theta)^{-1} \ln(1-\beta_\theta) \right],
\end{align*}
where the first inequality follows from the optimality of \(  \mathbf{c}^{\text{on}}\), while the last follows from \cref{lem:constant-Inv-off-bound-total,lem:Inv-on-bound}. The proof is completed.
\hfill\Halmos
\endproof

\proof{Proof of \cref{thm:extend-reg-total-constant}.}
The proof proceeds by decomposing the total profit difference into fulfilled rewards and holding costs for any fixed constant-order quantities \( \mathbf{c}\). Recall \(c\triangleq \sum_{\ell=1}^d c_\ell\).

\textit{Fulfilled rewards.} By \cref{ass:online-algorithm}, under the same replenishment profile the expected regret per cycle in fulfilled rewards is at most \(C_1 T^\alpha\).
In addition, under the constant-order policy the myopic offline algorithm may start a cycle with more units of some resources than the online algorithm (due to differing end-of-cycle inventories in the previous cycle); each such extra unit can increase fulfilled reward by at most \(r_{\max}\).
Thus, summing over resources and taking expectations yields the coarse bound
\[
\text{(extra offline reward)} \le r_{\max}\cdot \mathbb{E}\left[\sum_{\ell=1}^d I_{T+1,\ell}^{n-1,\text{off}}(\mathbf c,\mathcal H)\right].
\]
By \cref{lem:constant-Inv-off-bound-total}, under the assumption
\(0\le \sum_{\ell=1}^d c_\ell < T(1-\lambda_0)\) and \(T(1-\lambda_0)-\sum_{\ell=1}^d c_\ell = T\delta_c'+o(T)\) with \(\delta_c'>0\), for all sufficiently large \(T\),
\[
\mathbb{E}\left[\sum_{\ell=1}^d I_{T+1,\ell}^{n-1,\text{off}}(\mathbf c,\mathcal H)\right]
\le
- \frac{2(1-\lambda_0)}{\delta_c^\prime} \cdot \ln \left( 1- \exp\left(-\frac{T\delta_c^{\prime2}}{8(1-\lambda_0)}\right)\right).
\]

\textit{Holding costs.} For constant-order replenishment, the myopic offline algorithm always fulfills at least as much \emph{total} demand as the online algorithm starting from the same state; hence, the total end-of-cycle inventory under offline is no larger than under online, so the holding-cost difference per cycle is bounded by
\[
h_{\max}\cdot \mathbb{E}\left[\sum_{\ell=1}^d I_{T+1,\ell}^{n,\text{on}}(\mathbf c,\mathcal H)\right].
\]
For \(\alpha<1\), the assumption \(T(1-\lambda_0)-c=T\delta_c'+o(T)\) implies \(T(1-\lambda_0)>c+C_2T^\alpha\) for all sufficiently large \(T\), so we can apply \cref{lem:Inv-on-bound} to the total constant order quantity \(c\):
there exists \(\theta>0\) with \(\beta_\theta<1\) and
\[
\mathbb{E}\left[\sum_{\ell=1}^d I_{T+1,\ell}^{n,\text{on}}(\mathbf c,\mathcal H)\right]
\le - (e\theta)^{-1} \ln(1-\beta_\theta) + C_2T^\alpha .
\]

Collecting the pieces, for any fixed \(\mathbf c\) and \(N\), there exists \(T_0<\infty\) such that for all \(T\ge T_0\) there exists \(\theta>0\) with \(\beta_\theta<1\) and
\begin{align*}
    & \pi_\mathbf{c}^{\text{off}}(T,\mathbf c,N) - \pi_\mathbf{c}^{\text{on}}(T,\mathbf c,N)\\
    \le
    &C_1T^\alpha
    + r_{\max} \cdot   \left[- \frac{2(1-\lambda_0)}{\delta_c^\prime} \cdot \ln \left( 1- \exp\left(-\frac{T\delta_c^{\prime2}}{8(1-\lambda_0)}\right)\right)\right]
    + h_{\max} \cdot \left[- (e\theta)^{-1} \ln(1-\beta_\theta) + C_2T^\alpha \right].
\end{align*}
Using the optimality of \(\mathbf c^{\text{on}}\) for the online algorithm,
\[
\pi_\mathbf{c}^{\text{off}}(T,\mathbf c^{\text{off}},N) - \pi_\mathbf{c}^{\text{on}}(T,\mathbf c^{\text{on}},N)
\le
\pi_\mathbf{c}^{\text{off}}(T,\mathbf c^{\text{off}},N) - \pi_\mathbf{c}^{\text{on}}(T,\mathbf c^{\text{off}},N),
\]
and the displayed bounds above with \(\mathbf c=\mathbf c^{\text{off}}\) yield the desired claim for \(\alpha<1\).

Note that with \(\alpha<1\), the existence of \(\theta>0\) (and \(T_0\)) follows from the condition \(T(1-\lambda_0)>\sum_\ell c_\ell+C_2 T^\alpha\) for all large \(T\) and the standard argument in \cref{lem:Inv-on-bound}, hence the per-cycle gap is \(\mathcal O(T^\alpha)\).

For  \(\alpha=1\), or when \(T(1-\lambda_0)\le \sum_\ell c_\ell+C_2 T\), invoke the stability-from-optimality argument in the constant-order setting: with strictly positive holding costs \(\{h_\ell\}\) and bounded rewards, there exists \(K<\infty\) such that
\[
\sup_{n\ge1}\ \mathbb E\left[\sum_{\ell=1}^d I_{T+1,\ell}^{n,\text{on}}(\mathbf c^{\text{on}},\mathcal H)\right] \le K T,
\]
which implies the holding-cost term is at most \(h_{\max} K T\) and gives the stated \(\mathcal O(T)\) bound.

This completes the proof. 
\hfill\Halmos
\endproof

\end{document}